\pgfplotsset{compat=newest,ticks=none}
\definecolor{mycolor}{HTML}{750000}
\newcommand\customref[2]{\hyperref[#2]{#1}}
\newcommand\iref[2]{\customref{\Cref*{#1}~\ref*{#2}}{#1}}
\newcommand\axref[1]{\customref{axiom~\ref*{#1}}{#1}}
\newcommand\Axref[1]{\customref{Axiom~\ref*{#1}}{#1}}
\newcommand{\andd}{\qquad\textnormal{and}\qquad}
\newcommand{\suchthat}[2]{\left\{#1\ \middle\vert\ \begin{matrix}#2\end{matrix}\right\}}
\newcommand{\leftl}{\mathopen{}\mathclose\bgroup\left}
\newcommand{\rightr}{\aftergroup\egroup\right}
\renewcommand{\emptyset}{\varnothing}
\renewcommand{\epsilon}{\varepsilon}
\renewcommand{\phi}{\varphi}
\renewcommand{\bar}{\overline}
\renewcommand{\hat}{\widehat}
\renewcommand{\tilde}{\widetilde}
\newcommand{\N}{\mathbb N}
\newcommand{\Z}{\mathbb Z}
\newcommand{\Q}{\mathbb Q}
\newcommand{\R}{\mathbb R}
\newcommand{\bbS}{\mathbb S}
\newcommand{\rK}{\mathrm K}
\newcommand{\rH}{\mathrm H}
\newcommand{\rT}{\mathrm T}
\newcommand{\cC}{\mathcal C}
\newcommand{\cF}{\mathcal F}
\newcommand{\sT}{\mathscr T}
\newcommand{\sE}{\mathscr E}
\newcommand{\sF}{\mathscr F}
\newcommand{\ttN}{\mathtt{N}}
\newcommand{\nbot}{\not\perp}
\newcommand{\card}[1]{\leftl|#1\rightr|} 
\newcommand{\comp}[1]{{#1}^{\complement}} 
\newcommand{\compb}[1]{\comp{\{#1\}}}
\newcommand{\lex}{\mathrm{lex}}
\newcommand{\eel}{\tilde{0}}
\newcommand{\eer}{\tilde{1}}
\newcommand{\re}{\mathrm{e}}
\newcommand{\convT}{\overset\sT\longrightarrow}
\newcommand{\convE}{\overset\sE\longrightarrow}
\newcommand{\words}{2^{<\N}}
\newcommand{\zero}{{}^\frown0}
\newcommand{\one}{{}^\frown1}
\DeclareMathOperator{\Cho}{Chords} 
\DeclareMathOperator{\BCir}{BCir}
\DeclareMathOperator{\Sepa}{Sepa}
\DeclareMathOperator{\Conn}{Conn}
  \let\HyPsd@CatcodeWarning\@gobble
\newcounter{mycounter}[section]
\theoremstyle{plain}
\newtheorem{theorem}[mycounter]{Theorem}
\newtheorem{corollary}[mycounter]{Corollary}
\newtheorem{proposition}[mycounter]{Proposition}
\newtheorem{lemma}[mycounter]{Lemma}
\theoremstyle{remark}
\newtheorem{remark}[mycounter]{Remark}
\theoremstyle{definition}
\newtheorem{definition}[mycounter]{Definition}
\newtheorem{example}[mycounter]{Example}
\Crefname{subsection}{Subsection}{Subsections}
\newlist{enumroman}{enumerate}{1}
\setlist[enumroman, 1]{label=(\roman*)}
\titleformat{\section}[block]{\normalfont\centering\scshape\large}{\thesection.}{1em}{}
\titleformat{\subsection}[block]{\normalfont\large}{\thesubsection.}{1em}{\bf}
\renewenvironment{abstract}{%
\par\noindent\rule{\textwidth}{1pt}
\par\noindent\textsc{Abstract.}}
{\par\noindent\rule{\textwidth}{1pt}}
\title{
  Flimsy Spaces
}
\author{
  Robin Khanfir%
  \footnote{
    Department of Mathematics and Statistics,
    McGill University.
    Email: \texttt{robin.khanfir@mcgill.ca}
  }
  \and
  Béranger Seguin%
  \footnote{
    Universität Paderborn,
    Fakultät EIM,
    Institut für Mathematik,
    Warburger Str.\ 100,
    33098 Paderborn,
    Germany.
    Email: \texttt{bseguin@math.upb.de}.
  }
}
\date{November 21, 2025}
\begin{document}

\maketitle{}

\begin{abstract}
  We study \emph{$n$-flimsy spaces}, which are the topological spaces that remain connected when removing fewer than $n$ points but become disconnected when removing exactly~$n$ points.
  We show that no such space exists for $n \geq 3$, and that the compact $2$-flimsy spaces are precisely the dense and order-complete cyclically ordered sets equipped with their order topology.
  Furthermore, we examine variants of the definition obtained by replacing connectedness by path-connectedness, where paths are either parametrized by~$[0,1]$ or by arbitrary compact linear continua.

  \bigskip

  \par\noindent\textbf{MSC 2020:} 54D05
 $\cdot$ 54F15 $\cdot$ 54A05 $\cdot$ 06F30
 
 \smallskip
 
 \par\noindent\textbf{Keywords:} connectedness
 $\cdot$ cut-points $\cdot$ linear continua $\cdot$ cyclic orders $\cdot$ compact Hausdorff spaces
\end{abstract}

{
  \hypersetup{linkcolor=black}
  \tableofcontents{}
}
\par\noindent\rule{\textwidth}{1pt}

\section{Introduction}

\subsection{Context}

A classical argument showing that~$\R$ is not homeomorphic to~$\R^d$ for~$d > 1$ goes as follows: when removing any single point, the space~$\R$ becomes disconnected, whereas~$\R^d$ stays connected.
A tempting generalization is to remove more than a single point, leading to the following definition:

\begin{definition}
\label{def:flimsy_top}
  Let~$n \in \N$.
  A topological space~$X$ is \emph{$n$-flimsy} if~$|X| > n$ and if, for any subset~$S \subseteq X$, the set $X \setminus S$ is connected whenever $\card S < n$ and is not connected whenever $\card S = n$.
\end{definition}

For instance, the line~$\R$ is $1$-flimsy, and the circle~$\bbS^1 := \R/\Z$ is $2$-flimsy.
On the other hand, when $d \geq 2$, the spaces~$\R^d$ and~$\bbS^d$ are not $n$-flimsy for any~$n$.
Flimsiness is a topological invariant of a space---in fact, it is preserved not only by homeomorphisms, but by any bijective map preserving connectedness and non-connectedness.

In the literature, $1$-flimsy spaces have been studied under the name \emph{cut-point spaces}, cf.~\cite{cutpoint}.
It has been shown that there are no compact cut-point spaces, and that there is a specific countable cut-point space that is minimal in a certain sense, the \emph{Khalimsky line}.
Other examples of $1$-flimsy spaces include any topological tree without leaves,%
\footnote{
  In particular, the number of connected components of the space obtained after removing a single point may depend on the point.
  In \Cref{only-two-components}, we will see that $2$-flimsy spaces are more constrained.
}
two copies of the topologist's sine curve glued back to back (a non-path-connected example), and the long line (more generally, any linear continuum without ends, cf.~\Cref{cor:lin-cont-1f}).

There has also been previous work on $2$-flimsy spaces (under the name \emph{cut$^*$ spaces}) and $n$-flimsy spaces (under the name \emph{cut$^{(n)}$-spaces}) in the articles \cite{penghuang,pengcao}.
There, it was shown that there are no locally compact $n$-flimsy spaces when $n \geq 3$ (a result that we improve in \Cref{thm:no-3f}) and that all $n$-flimsy spaces are Hausdorff when $n \geq 2$ (a result that we recover in \Cref{thm:2f-is-hausdorff}).
The term \emph{flimsy spaces} was coined in a 2018 \texttt{math.stackexchange} question by user ``Babelfish'', who explicitly raised the question of the existence of a $3$-flimsy space (which we answer in \Cref{thm:no-3f}).%
\footnote{
  See \url{https://math.stackexchange.com/questions/2939445/flimsy-spaces-removing-any-n-points-results-in-disconnectedness} for the original question.
  See also \url{https://math.stackexchange.com/questions/3044369/more-on-flimsy-spaces} for a version taking higher homotopy groups into account.
}

Besides the circle, examples of $2$-flimsy spaces include a non-compact variant of the Warsaw circle (\Cref{ex:non-compact-2f}), as well as the (non-path-connected) one-point compactification of the long line (more generally, any big circle, see~\Cref{def:big-circle} and \Cref{prop:big-circle-2f}).
However, we shall see that $2$-flimsy spaces are less diverse than cut-point spaces: they are all somewhat circle-like.

\paragraph{Other notions of connectedness.}

Besides the usual notion of connectedness, the definition of flimsy spaces naturally extends to any other notion expressing the idea of ``being in one piece''.
In this paper, we consider two variants.
The first is the standard notion of \emph{path-connectedness}, requiring that any two points be joined by a \emph{path}---a continuous map from the standard interval~$[0,1]$.
The second, called \emph{big-path-connectedness}, allows paths parametrized by longer intervals.
More precisely, a \emph{big path} is a continuous map whose domain is a \emph{big interval}---a totally-ordered set that is compact and connected when equipped with the order topology (\Cref{def:lin-cont}).
This terminology is borrowed from~\cite{cannon2,penrod}, where big paths are used to extend homotopy theory to spaces that are ``so large'' that paths and homotopies parametrized by $[0,1]$ do not adequately capture their geometry.

\begin{definition}
  \label{def:flimsy_path}
  Let~$n \in \N$.
  A topological space~$X$ is \emph{$n$-(big-)path-flimsy} if~$|X| > n$ and if, for any subset~$S \subseteq X$, the set $X \setminus S$ is (big-)path-connected whenever $\card S < n$ and is not (big-)path-connected whenever $\card S = n$.
\end{definition}

\subsection{Main results}

Our first main result is the following theorem, answering the original question of Babelfish:

\begin{theorem}[cf.~\Cref{thm:no-3f-cspace} and~\Cref{prop:top-is-cspace,prop:path-is-cspace,prop:continuum-is-cspace}]
  \label{thm:no-3f}
  Let $n \geq 3$ be an integer.
  There are no $n$-flimsy spaces, no $n$-big-path-flimsy spaces, and no $n$-path-flimsy spaces.
\end{theorem}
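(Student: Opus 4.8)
The plan is to derive all three non-existence statements from a single fact about the abstract framework---that no \emph{$3$-flimsy cspace} exists (\Cref{thm:no-3f-cspace})---and to reduce the case of arbitrary $n \geq 3$ to the case $n = 3$. The first reduction is immediate from \Cref{prop:top-is-cspace,prop:path-is-cspace,prop:continuum-is-cspace}: each of connectedness, path-connectedness, and big-path-connectedness turns a topological space into a cspace in such a way that $n$-flimsiness in the sense of \Cref{def:flimsy_top} or \Cref{def:flimsy_path} becomes $n$-flimsiness of the associated cspace. The second reduction is equally soft: if $X$ is $n$-flimsy with $n > 3$, then because $|X| > n$, deleting any $n - 3$ points yields a space with more than $3$ points in which removing $k$ further points disconnects exactly when $k = 3$---that is, a $3$-flimsy space of the same flavour. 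So it suffices to show that no $3$-flimsy cspace $X$ exists.

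Assume $X$ is a $3$-flimsy cspace. The same point-deletion principle shows that $X \setminus \{x\}$ is $2$-flimsy and $X \setminus \{x, y\}$ is $1$-flimsy for all $x, y \in X$. By the separation results for $2$-flimsy cspaces recovered in \Cref{thm:2f-is-hausdorff}, the space $X$---and hence each of its subspaces---is Hausdorff. Moreover, by the structural analysis of $2$-flimsy cspaces carried out earlier (in particular \Cref{only-two-components}), each $X \setminus \{x\}$ is ``circle-like'': deleting one further point turns it into a connected, order-complete, densely ordered set without endpoints (a linear continuum without ends, cf.\ \Cref{cor:lin-cont-1f}), and deleting two further points produces exactly two connected components, each of them an ``open arc'' whose closure recovers precisely the two deleted points, one at each of its two ends.

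Now pick $p, q, r \in X$ with $X \setminus \{p, q, r\}$ disconnected, and let $A, B$ be its two connected components, obtained by viewing $X \setminus \{p, q, r\}$ as $(X \setminus \{r\}) \setminus \{p, q\}$ and applying the previous paragraph. Then $A$ is a linear continuum without ends, and reading off how $A$ sits inside $X \setminus \{r\}$ shows that its two ends converge, within $X \setminus \{r\}$ and therefore within $X$, to $p$ and to $q$ respectively. On the other hand $X \setminus \{p, q\}$ is connected and equals $A \sqcup B \sqcup \{r\}$, so $r$ lies in the closure of $A$ in $X$; since $X$ is $T_1$, the trace on $A$ of the neighbourhood filter of $r$ is a \emph{free} filter $\mathcal{F}$. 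If $\mathcal{F}$ accumulated along the $p$-end of $A$, then, because that end converges to $p$ in $X$, no neighbourhoods of $r$ and of $p$ could be disjoint, contradicting Hausdorffness; similarly $\mathcal{F}$ cannot accumulate along the $q$-end. Hence some member of $\mathcal{F}$ is contained in a closed bounded order-subinterval of $A$, which is compact by order-completeness, so $\mathcal{F}$ clusters at some point $a \in A$; but then no neighbourhoods of $r$ and of $a$ are disjoint, while $r \neq a$---the desired contradiction.

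The delicate part is the second paragraph: extracting from the $2$-flimsiness of $X \setminus \{x\}$ the precise ``arc'' description of the components of a two-point deletion, with the correct endpoint behaviour, and phrasing the notions of ``end'', ``convergence along an end'', and the compactness step so that they are valid for all three variants at once---that is, working through the cspace axioms and the induced order structure rather than through ad hoc topological constructions. Granting that structure theory, the concluding separation argument is the routine filter manipulation sketched above.
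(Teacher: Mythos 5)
Your two reductions are the same as the paper's: the three propositions \Cref{prop:top-is-cspace,prop:path-is-cspace,prop:continuum-is-cspace} reduce everything to connectivity spaces, and deleting points reduces $n\geq 3$ to $n=3$. The gap is in the core argument. The heart of your proof is topological---Hausdorffness, the neighbourhood filter of $r$, closures of components, convergence of ``ends'', compactness of closed order-subintervals---but the statement must also be proved for path- and big-path-flimsiness, where none of these tools control the relevant components (a point can lie in the topological closure of a path-component without being joinable to it by a path, and \Cref{thm:2f-is-hausdorff} is a statement about topological $2$-flimsy spaces, not about connectivity spaces). You flag this as ``the delicate part'' and defer it, but it is not a routine rephrasing: it is essentially the entire difficulty, and the filter argument has no analogue in a bare connectivity space.

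Even restricted to ordinary connectedness the argument does not close. The assertion that a component $A$ of $X\setminus\{p,q,r\}$ is a linear continuum without ends whose two ends converge in $X$ to $p$ and $q$ is not available at this stage (nor in general): the order structure on a $2$-flimsy space is extracted only via the separation relation of \Cref{sn:from_connectivity_to_separation}, and the topology for which components are genuine order-intervals with the expected convergence and compactness is the \emph{coarser} topology $\tau_\rK$ of \Cref{thm:intro-2f-top}, not the given one. In the given topology the trace filter of $r$ on $A$ could cluster ``at an end'' without violating Hausdorffness (topologist's-sine-curve behaviour), and a closed bounded order-subinterval of $A$ need not be compact in the subspace topology, so the final contradiction is not obtained. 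The paper's actual proof avoids all of this with a four-point combinatorial argument inside the connectivity-space axioms: setting $D := C_{\ni s}(\compb{x,y,t}) \cap C_{\ni t}(\compb{x,y,s})$ and applying \Cref{intersect-intervals} in the two $2$-flimsy subspaces $\compb{x}$ and $\compb{y}$ shows that $D$ is simultaneously a $\cC$-component of $\compb{x,t,s}$ and of $\compb{y,t,s}$; then \Cref{Ccupx} applied in $\compb{t}$ makes $D\cup\{x\}$ $\cC$-connected, contradicting the maximality of $D$ in $\compb{y,t,s}$. That argument uses only axioms \ref{ax:singleton-connected}--\ref{ax:complement-connected} and so covers all three notions at once.
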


Since there are no $n$-flimsy spaces when $n>2$, and since $1$-flimsy spaces (cut-point spaces) are already well-studied, the remainder of the article is devoted to $2$-flimsy spaces and to their classification.
The first non-trivial properties of $2$-flimsy spaces that we prove are that removing any two distinct points always leaves exactly two connected components (\Cref{only-two-components}), and that the $2$-flimsy spaces are exactly the $\rT_1$ spaces in which the complement of any connected subset is connected (\Cref{flimsy-is-t1}).
The same properties hold for $2$-(big-)path-flimsy spaces, replacing the notion of connectedness everywhere appropriately.

A key fact for the classification of $2$-flimsy spaces is the following connection between all $2$-flimsy spaces and compact $2$-flimsy spaces.

\begin{theorem}[proved in \Cref{sn:2f-top}]
  \label{thm:intro-2f-top}
  Let $(X, \tau)$ be a $2$-flimsy topological space.
  Then, $X$ is Hausdorff, and there exists a unique topology~$\tau_\rK$ on~$X$ coarser than~$\tau$ such that $(X, \tau_\rK)$ is a compact $2$-flimsy space.
  Namely, $\tau_\rK$ is the topology generated by the connected components of the subsets $X\setminus \{x,y\}$ as $(x,y)$ ranges over all pairs of distinct points of~$X$.
  Furthermore, $(X, \tau)$ and $(X, \tau_\rK)$ have the same connected subsets and the same open connected subsets.
\end{theorem}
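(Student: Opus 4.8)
The plan is to take $\tau_\rK$ to be the topology generated, as a subbasis, by the family $\mathscr{S}$ of all connected components of the sets $X\setminus\{x,y\}$ for $x\neq y$, and to verify the asserted properties in turn. By \Cref{flimsy-is-t1}, $X$ is $\rT_1$, so each $X\setminus\{x,y\}$ is $\tau$-open; by \Cref{only-two-components} it has exactly two connected components, which are therefore clopen in it and $\tau$-open in $X$. Hence $\mathscr{S}\subseteq\tau$ and $\tau_\rK\subseteq\tau$. Also, for each $p$ the set $X\setminus\{p\}=\bigcup_{q\neq p}(\text{components of }X\setminus\{p,q\})$ lies in $\tau_\rK$, so $\tau_\rK$ is $\rT_1$.

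Next I would establish a \emph{separation lemma}: for distinct $x,y\in X$ there are distinct $a,b\in X\setminus\{x,y\}$ such that $x$ and $y$ lie in different components of $X\setminus\{a,b\}$. One proves this by letting $A,B$ be the two (nonempty) components of $X\setminus\{x,y\}$, picking $a\in A$ and $b\in B$, and ruling out the possibility that $x$ and $y$ lie in a common component of $X\setminus\{a,b\}$ by a short argument using that complements of connected sets are connected (\Cref{flimsy-is-t1}) together with the fact that deleting two points leaves exactly two pieces. The two components of $X\setminus\{a,b\}$ are then disjoint $\tau_\rK$-open sets separating $x$ and $y$, so $\tau_\rK$ is Hausdorff, and hence so is the finer topology $\tau$. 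The same lemma is the engine for showing that $\tau$ and $\tau_\rK$ have the same connected subsets: one direction is immediate since $\tau_\rK\subseteq\tau$, and for the other, a $\tau$-separation $E=P\sqcup Q$ of a subspace can be refined into a $\tau_\rK$-separation because $\mathscr{S}$ supplies, for the relevant pairs of points, enough open sets to do so. Consequently $(X,\tau_\rK)$ inherits from $(X,\tau)$ the property that the complement of every connected set is connected, so, being $\rT_1$, it is $2$-flimsy by \Cref{flimsy-is-t1}; the claim about \emph{open} connected subsets follows along the same lines, by checking that a $\tau$-open connected set is a union of members of $\mathscr{S}$ (using that its complement is connected).

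The technical heart is compactness of $(X,\tau_\rK)$. The idea is that the members of $\mathscr{S}$ are exactly the open arcs of a cyclic order on $X$ that can be recovered from the two-component structure, and that $\tau_\rK$ is the associated order topology. A ``gap'' in this cyclic order --- a cut realised by no point --- would, upon deleting one suitably chosen point, produce a disconnection of $X$ minus a single point, contradicting $2$-flimsiness; so the order is dense and order-complete. Deleting two points then displays $X$ as the union of two ``closed arcs'' sharing their two endpoints, each a compact linear continuum, whence $X$ is $\tau_\rK$-compact. (An equivalent route: map $X$ into the compact product, over all pairs $\{x,y\}$, of the four-element set consisting of $x$, $y$, and the two components of $X\setminus\{x,y\}$, by recording in each coordinate which of these four pieces a given point lies in; one checks this is a topological embedding whose image is closed, the closedness being precisely the no-gap statement.) This is the step that uses ``\emph{exactly} two components'' essentially, and I expect it to be the main obstacle.

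Finally, for uniqueness, suppose $\tau'\subseteq\tau$ is such that $(X,\tau')$ is compact and $2$-flimsy. The argument of the second paragraph applies equally to $(X,\tau')$ and shows that it too has the same connected subsets as $(X,\tau)$, hence the same components of the sets $X\setminus\{x,y\}$; these are $\tau'$-open, so $\tau_\rK\subseteq\tau'$. Since $(X,\tau')$ is compact and $(X,\tau_\rK)$ is Hausdorff, the identity map $(X,\tau')\to(X,\tau_\rK)$ is a continuous bijection from a compact space onto a Hausdorff space, hence a homeomorphism, and therefore $\tau'=\tau_\rK$.
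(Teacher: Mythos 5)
Your overall architecture is the paper's: generate $\tau_\rK$ by the components of the sets $X\setminus\{x,y\}$, realize these components as the open arcs of a dense, order-complete cyclic order (equivalently, a separation relation) recovered from the two-component structure, deduce compactness from the linear-continuum description, and get uniqueness from the compact-versus-Hausdorff comparison (\Cref{lattice_Hausdorff-compact}). Your separation lemma is the symmetry and density of the relation $\bot_\cC$ (\Cref{lem:four-points}, \Cref{prop:2f-seprel}), and your ``technical heart'' is indeed where the paper spends its effort (\Cref{prop:2f-seprel}, \Cref{cor:seprel-big-circ}); note that the paper's proof of order-completeness does not go through a ``gap disconnects $X$ minus a point'' argument but through the classification of connected subsets (\Cref{prop:list-of-connected-parts}) together with \Cref{lemma:open-avoids-edges}, and that classification rests on \axref{ax:button-zip}.

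There is, however, a genuine gap in your second paragraph. The nontrivial direction of ``same connected subsets'' is that every $\tau_\rK$-connected set is $\tau$-connected, and your justification --- that a $\tau$-separation $E=P\sqcup Q$ ``can be refined into a $\tau_\rK$-separation because $\mathscr S$ supplies enough open sets'' --- does not work as stated: being able to separate pairs of points by members of $\mathscr S$ (Hausdorffness) does not let you refine an arbitrary relative separation of an arbitrary subspace $E$ by $\tau_\rK$-open sets. Worse, your logical order is circular: you use ``same connected subsets'' to deduce that $(X,\tau_\rK)$ is $2$-flimsy, but the only available proof of ``same connected subsets'' (\Cref{cor:comparison_connectivities} in the paper) requires first knowing that $(X,\tau_\rK)$ is $2$-flimsy --- which you only obtain from the big-circle/compactness argument of your third paragraph --- and then invoking the classification \Cref{cor:list-of-connected-parts} to conclude that two nested $2$-flimsy connectivities coincide. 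The fix is to reorder: establish the big-circle structure first, so that the $\tau_\rK$-connected sets are exactly the arcs $C$, $C\cup\{x\}$, $C\cup\{x,y\}$ with $C$ a component of $X\setminus\{x,y\}$, and then observe that these are all $\tau$-connected by \Cref{Ccupx}; the statement about open connected subsets then follows from \Cref{cor:list-of-connected-parts} together with \Cref{C_open}.
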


For $2$-big-path-flimsy spaces, we obtain a dual statement in which the roles of compactness and Hausdorffness are swapped.

\begin{theorem}[proved in \Cref{sn:2f-big-path}]
  \label{thm:intro-2f-big_path}
  Let $(X, \tau)$ be a $2$-big-path-flimsy topological space.
  Then, $X$ is compact, and there exists a unique topology~$\tau_\rH$ on $X$ finer than~$\tau$ such that $(X, \tau_\rH)$ is a Hausdorff $2$-big-path-flimsy space.
  Namely, $\tau_\rH$ is the topology generated by the big-path-connected components of the subsets $X\setminus \{x,y\}$ as $(x,y)$ ranges over all pairs of distinct points of~$X$.
  Furthermore, $(X, \tau)$ and~$(X, \tau_\rH)$ have the same big-path-connected subsets and the same big paths.
\end{theorem}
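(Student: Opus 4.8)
The plan is to imitate the proof of \Cref{thm:intro-2f-top} --- most likely by isolating a single statement about $c$-spaces that specializes to both --- with the roles of compactness and Hausdorffness (and of ``coarser'' and ``finer'') interchanged and with ``connected'' replaced throughout by ``big-path-connected'', and then to read the last two assertions off the resulting order-theoretic picture. Throughout, $\tau_\rH$ is by definition the topology on $X$ generated by the big-path-connected components of the sets $X\setminus\{x,y\}$, $x\neq y$; the whole content of the theorem is an identification of this topology.

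I would begin by extracting the combinatorial skeleton. Since $|X|>2$, taking $S=\emptyset$ in \Cref{def:flimsy_path} shows $X$ is big-path-connected, so the big-path version of \Cref{only-two-components} gives that every $X\setminus\{x,y\}$ with $x\neq y$ has exactly two big-path-connected components, and \Cref{flimsy-is-t1} gives that $X$ is $\rT_1$. As for general $2$-flimsy spaces, these inputs feed $X$ into the paper's $c$-space formalism (\Cref{prop:continuum-is-cspace}, cf.\ \Cref{thm:no-3f-cspace}) and equip the underlying set with a cyclic order $\prec$ for which the two big-path-connected components of $X\setminus\{x,y\}$ are the two open arcs $(x,y)$, $(y,x)$, and for which the big-path-connected subsets of $X$ are exactly the arcs of $\prec$. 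The inclusion ``big-path-connected $\Rightarrow$ arc'' is compatibility of the $c$-structure with continuous images of big intervals; its converse needs a big path inside each arc, which follows once $\prec$ is dense and order-complete. Density is immediate: a pair of consecutive points would cut $X$ into one empty arc and one arc equal to everything else, against \Cref{only-two-components}. Order-completeness I would derive from big paths as one does for linear continua: a big path from $a$ to $b$ avoiding a point of $(b,a)$ has image an arc of the linearly ordered set $X\setminus\{z\}$ containing $a$ and $b$, hence containing the closed arc $[a,b]$; replacing it by a monotone surjection in the standard way exhibits a closed arc containing $[a,b]$ as an order-preserving continuous image of a compact, hence order-complete, big interval, and order-preserving surjective images of order-complete orders are order-complete --- so every closed arc, and therefore $\prec$ itself, is order-complete.

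Granting that $\prec$ is dense and order-complete, its order topology $\tau_o$ is compact Hausdorff, its usual subbasis of open arcs is exactly the family of big-path-connected components of the sets $X\setminus\{x,y\}$, and so $\tau_o=\tau_\rH$; moreover $(X,\tau_\rH)$ is $2$-big-path-flimsy, since deleting one or no point leaves a dense order-complete ``interval'' --- big-path-connected because any two of its points span a closed sub-arc, which is a big interval --- and deleting two points leaves a disjoint union of two such. Compactness of $(X,\tau)$ itself I would prove directly, without reference to $\tau_\rH$: picking $z_1\in(b,a)$ and $z_2\in(a,b)$, a big path $a\to b$ inside $X\setminus\{z_1\}$ has image containing $[a,b]$ and a big path $b\to a$ inside $X\setminus\{z_2\}$ has image containing $[b,a]$, so their concatenation (an ordinal sum of two big intervals is again a big interval) is a big path whose image is all of $X$; thus $X$ is a continuous image of a compact space. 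By contrast with \Cref{thm:intro-2f-top}, $(X,\tau)$ need not be Hausdorff, which is exactly why $\tau_\rH$ may be strictly finer than $\tau$.

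The step I expect to be the main obstacle is the inclusion $\tau\subseteq\tau_\rH=\tau_o$, i.e.\ that every $\tau$-open set is a union of open arcs. I would argue by contradiction: if $U$ is $\tau$-open, $p\in U$, and no open arc about $p$ lies in $U$, then on at least one side of $p$ in the order there is a net $(f_\alpha)$ in $F:=X\setminus U$, increasing for $\prec$, with supremum $p$; using a monotone $\tau$-continuous surjection $\gamma\colon M\to[f_{\alpha_0},p]$ from a big interval $M$ and setting $b_\alpha:=\max\gamma^{-1}(f_\alpha)$, the net $(b_\alpha)$ is increasing, its supremum $m^{*}$ satisfies $\gamma(m^{*})\succeq f_\alpha$ for all $\alpha$ and hence $\gamma(m^{*})=p$, so $m^{*}\neq b_\alpha$ for every $\alpha$ and therefore $b_\alpha\to m^{*}$ in $M$; continuity of $\gamma$ then gives $f_\alpha\to p$ in $\tau$, whence $p\in\overline{F}^{\tau}=F$, a contradiction. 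With $\tau\subseteq\tau_\rH$ in hand the remaining assertions are formal. For uniqueness, any Hausdorff $2$-big-path-flimsy $\tau'\supseteq\tau$ is compact by the argument above, has cyclic order $\prec$ (each of its big-path-connected components of $X\setminus\{x,y\}$ is $\tau$-big-path-connected, hence lies in $(x,y)$ or in $(y,x)$, and the two must then be $(x,y)$ and $(y,x)$), and satisfies $\tau'\subseteq\tau'_o=\tau_\rH$ by the same argument applied to $(X,\tau')$; since a continuous bijection from the compact space $(X,\tau_\rH)$ onto the Hausdorff space $(X,\tau')$ is a homeomorphism, $\tau'=\tau_\rH$. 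Finally, a $\tau_\rH$-big path is a $\tau$-big path because $\tau\subseteq\tau_\rH$, and conversely if $\gamma\colon L\to(X,\tau)$ is a big path and $(x,y)$ an open arc, then $L\setminus\gamma^{-1}(\{x,y\})$ is $L$-open, its connected components are open intervals of the big interval $L$, each of which is big-path-connected, so each maps under $\gamma$ into a single big-path-connected component of $X\setminus\{x,y\}$; hence $\gamma^{-1}((x,y))$ is a union of such components and is $L$-open, so $\gamma$ is $\tau_\rH$-continuous. Therefore $\tau$ and $\tau_\rH$ have the same big paths, and a fortiori the same big-path-connected subsets.
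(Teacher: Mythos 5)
Your overall architecture matches the paper's: identify $\tau_\rH$ with the order topology of the separation relation coming from the $\cC$-components, get compactness from a surjective big path covering both arcs, prove uniqueness by the compact-vs-Hausdorff comparison (\Cref{lattice_Hausdorff-compact}), and get ``same big paths'' from the openness of $\gamma^{-1}(C)$ for components $C$. But the step you yourself single out as the main obstacle --- the inclusion $\tau\subseteq\tau_\rH$ --- has a genuine gap. Your net argument hinges on ``a monotone $\tau$-continuous surjection $\gamma\colon M\to[f_{\alpha_0},p]$ from a big interval $M$'', and no such map is available. The monotone factor of the monotone--light factorization is monotone with respect to the order on the quotient of the \emph{domain}, not with respect to the cyclic order $\prec$ on $X$; a light big path from $f_{\alpha_0}$ to $p$ can still oscillate arbitrarily in $\prec$, so $b_\alpha:=\max\gamma^{-1}(f_\alpha)$ need not be increasing and $\gamma(\sup_\alpha b_\alpha)$ need not be $p$ (extracting a convergent subnet by compactness of $L$ only shows that the $f_\alpha$ $\tau$-converge to \emph{some} point of $F$, which is no contradiction since $\tau$ is not Hausdorff). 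Trying to manufacture a genuinely $\prec$-monotone reparametrization by running suprema is circular: its continuity into $(X,\tau)$ is exactly the comparison $\tau|_{[a,b]}\subseteq\tau_\rH|_{[a,b]}$ you are trying to establish. The paper closes this step differently, via the final topology $\tau_\cF$ of all big paths on $(X,\tau)$: trivially $\tau\subseteq\tau_\cF$; your own last-paragraph lemma (open preimages of arcs) gives $\tau_\rH\subseteq\tau_\cF$; $(X,\tau_\cF)$ has the same big paths as $(X,\tau)$, hence is $2$-big-path-flimsy and compact by \Cref{thm:2-big-path-f-is-compact}; and since $(X,\tau_\rH)$ is Hausdorff, \Cref{lattice_Hausdorff-compact} forces $\tau_\rH=\tau_\cF\supseteq\tau$. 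You already have every ingredient of this argument; you just need to assemble it this way instead of the direct net argument.

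A secondary issue: your topological derivation of order-completeness of $\prec$ is both redundant and shaky. It again appeals to ``replacing by a monotone surjection in the standard way'', and the running-supremum construction presupposes completeness of the closed arc $[a,b]$, which is what is being proved. The paper's \Cref{prop:2f-seprel} already establishes density and order-completeness of $\bot_\cC$ for an arbitrary $2$-flimsy connectivity space by a purely combinatorial argument (via \Cref{lemma:open-avoids-edges} and the classification of $\cC$-connected subsets), so you should simply invoke it here, exactly as the proof of \Cref{thm:intro-2f-top} does.
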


We have an exactly analogous statement for path-connectedness.

\begin{theorem}[proved in \Cref{sn:2f-path}]
  \label{thm:intro-2f-path}
  Let $(X, \tau)$ be a $2$-path-flimsy topological space.
  Then, $X$ is compact, and there exists a unique topology~$\tau_\rH$ on $X$ finer than~$\tau$ such that $(X, \tau_\rH)$ is a Hausdorff $2$-path-flimsy space.
  Namely, $\tau_\rH$ is the topology generated by the path-connected components of the subsets $X\setminus \{x,y\}$ as $(x,y)$ ranges over all pairs of distinct points of~$X$.
  Furthermore, $(X, \tau)$ and $(X, \tau_\rH)$ have the same path-connected subsets and the same paths.
\end{theorem}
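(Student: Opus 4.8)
The plan is to follow the template of \Cref{thm:intro-2f-big_path} almost verbatim, since the two statements are formally parallel, with "big-path-connected" replaced by "path-connected" throughout. The only place where the argument genuinely differs is wherever one used a property of big paths that is not shared by ordinary paths (parametrized by $[0,1]$), so I would first identify those places and check that ordinary paths suffice, or adapt the argument.

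First I would verify that a $2$-path-flimsy space $(X,\tau)$ is compact. The natural route: by the analogue of \Cref{flimsy-is-t1} for path-connectedness, the complement of any path-connected subset is path-connected (hence in particular connected), so $X$ is also $2$-flimsy in the ordinary sense; then \Cref{thm:intro-2f-top} provides the coarser compact topology $\tau_\rK \subseteq \tau$. To conclude compactness of $\tau$ itself, I would use the structural description: by \Cref{only-two-components}, removing two distinct points $a,b$ leaves exactly two path-connected "arcs" $A$ and $B$; each $\bar A = A \cup \{a,b\}$ should be a compact linear continuum (an arc), and $X = \bar A \cup \bar B$ is a union of two compact sets glued at two points, hence compact. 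The key input here is that a path-connected, separated-by-every-point piece behaves like an interval — this is exactly where we must be careful, because "path-connected" gives us continuous images of $[0,1]$, and we must check that the local structure of $X$ (already known to be circle-like via its compact $2$-flimsy core) forces these arcs to be honest arcs. I expect this to be the main obstacle: ensuring that path-connectedness of $X\setminus\{a,b\}$, combined with compactness of the coarser topology, upgrades to an arc structure in the finer topology $\tau$.

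Next I would construct $\tau_\rH$: let $\tau_\rH$ be the topology generated by the path-connected components of the sets $X \setminus \{x,y\}$ over all pairs of distinct points. Clearly $\tau_\rH \supseteq \tau$ since each such component is open in $\tau$ (its complement, being a path-connected set together with two points, or a union of such, is checked to be closed — this uses $\rT_1$ and the two-component lemma). One shows $(X,\tau_\rH)$ is Hausdorff: given $p \neq q$, pick a third point to split $X$ and separate $p,q$ inside one of the resulting arcs using its order topology, whose open intervals are among the generators of $\tau_\rH$. One shows $(X,\tau_\rH)$ is still $2$-path-flimsy: since $\tau_\rH$ is finer, there are at least as many "cuts", but the generators are built precisely from the path-components, so removing one or zero points cannot disconnect, and removing two still does; here the crucial lemma is that a path in $(X,\tau)$ whose image avoids $\{x,y\}$ already has image inside a single $\tau$-path-component of $X\setminus\{x,y\}$, hence inside a single $\tau_\rH$-open arc, so it remains continuous into $(X,\tau_\rH)$ — this gives "same paths" and "same path-connected subsets" simultaneously.

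Finally, uniqueness: if $\tau'$ is any Hausdorff $2$-path-flimsy topology finer than $\tau$, then by the previous paragraph applied to $\tau'$ its path-components of $X\setminus\{x,y\}$ coincide with those of $\tau$ (same paths), so $\tau_\rH \subseteq \tau'$; conversely a compactness-type or minimality argument — e.g. $(X,\tau_\rH)$ compact (inherited, since $\tau_\rH$ and $\tau$ have the same paths and the arcs are still compact) and $\tau'$ Hausdorff with $\tau_\rH \subseteq \tau'$ forces $\tau' = \tau_\rH$ — gives the reverse inclusion, so $\tau' = \tau_\rH$. Throughout, I would lean on the fact, already available from \Cref{thm:intro-2f-top} applied to the underlying $2$-flimsy structure, that the "connected" and "path-connected" classifications run in parallel, so most lemmas transfer by replacing the word "connected"; the genuinely new content is confined to the arc-structure step flagged above.
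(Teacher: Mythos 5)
There is a genuine gap at the very first step, and it propagates. You claim that since complements of path-connected subsets are path-connected (hence connected), a $2$-path-flimsy space is $2$-flimsy in the ordinary sense, so that \Cref{thm:intro-2f-top} applies. This inference is false: it gives connectedness of $X\setminus S$ for $\card S\leq 1$, but it does \emph{not} give disconnectedness of $X\setminus\{x,y\}$ --- the two path-components could fail to be separated by open sets. The paper's own \Cref{ex:non-hausdorff-2pf} (the circle with the co-countable Euclidean topology) is a $2$-path-flimsy space in which $X\setminus\{x,y\}$ \emph{is} connected (any two non-empty open sets meet), so it is not $2$-flimsy and \Cref{thm:intro-2f-top} cannot be invoked. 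Consequently your entire route to compactness (coarser compact core $\tau_\rK$, then ``arcs are compact linear continua'') has no starting point. The paper instead proves compactness directly (\Cref{thm:2pf-is-compact}): concatenate paths $\gamma_1,\gamma_2$ running through $C_1\cup\{x,y\}$ and $C_2\cup\{x,y\}$ to get a continuous map $\bbS^1\to X$, and use $2$-path-flimsiness (the complements of $\gamma_1([0,1])$ and $\gamma_2([0,1])$ are path-connected, and their union $X\setminus\{x,y\}$ is not, so they are disjoint) to conclude the map is surjective. No claim that $C_i\cup\{x,y\}$ is an arc in $\tau$ is needed or available at this stage.

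The second gap is your assertion that each path-component of $X\setminus\{x,y\}$ is open in $\tau$, from which you deduce a comparison of $\tau_\rH$ with $\tau$. This is wrong on two counts. First, it is false in substance: in \Cref{ex:non-hausdorff-2pf} the path-components of $X\setminus\{x,y\}$ are the two Euclidean open arcs, which are not $\tau$-open (their complements are uncountable). The analogue of \Cref{C_open} for path-components simply does not hold for a general $2$-path-flimsy topology. Second, even if the generators were $\tau$-open, that would yield $\tau_\rH\subseteq\tau$, the opposite of the inclusion $\tau\subseteq\tau_\rH$ asserted in the theorem. The correct argument for $\tau\subseteq\tau_\rH$ is indirect: one introduces the final topology $\tau_\cF$ of all paths on $(X,\tau)$, observes that $\tau\subseteq\tau_\cF$ and that $(X,\tau_\cF)$ has the same paths as $(X,\tau)$ (hence is $2$-path-flimsy and compact by \Cref{thm:2pf-is-compact}), shows $\tau_\rH\subseteq\tau_\cF$ by checking that the preimage under any path of a path-component of $X\setminus\{x,y\}$ is open in $[0,1]$, and concludes $\tau_\rH=\tau_\cF$ from \Cref{lattice_Hausdorff-compact}. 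Your third paragraph contains the germ of this last local argument, and your uniqueness sketch is essentially right once the preceding machinery is in place, but as written the proposal rests on two false claims and does not constitute a proof.
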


The next step is to classify compact $2$-flimsy spaces and Hausdorff $2$-(big-)path-flimsy spaces.
This is done by relating them to order theory.
The following theorem vastly generalizes \cite[Theorem~28.14]{willard}, which deals with the special case of metric spaces:

\begin{theorem}[proved in \Cref{sn:equi}]
  \label{thm:intro-classification}
  Let~$X$ be a topological space.
  Then, the following assertions are equivalent.
  Furthermore, if they hold, a subset of~$X$ is connected if and only if it is big-path-connected.
  \begin{enumroman}
    \item
      \label{item:intro-classification-compact-top}
      The space $X$ is compact and $2$-flimsy.
    \item
      \label{item:intro-classification-hausdorff-bp}
      The space $X$ is Hausdorff and $2$-big-path-flimsy.
    \item
      \label{item:intro-classification-top-bp}
      The space $X$ is $2$-flimsy and $2$-big-path-flimsy.
    \item
      \label{item:intro-classification-big-circle}
      The space $X$ is a big circle (cf.~\Cref{def:big-circle}).
  \end{enumroman}
\end{theorem}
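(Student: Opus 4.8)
The plan is to prove the equivalence by establishing the implications $\ref{item:intro-classification-big-circle} \Rightarrow \ref{item:intro-classification-compact-top}$, $\ref{item:intro-classification-big-circle} \Rightarrow \ref{item:intro-classification-hausdorff-bp}$ together with the ``connected $=$ big-path-connected'' claim, and then closing the cycle using the two bridge theorems from the introduction. Concretely, I would first show that every big circle is both compact $2$-flimsy and Hausdorff $2$-big-path-flimsy: a big circle is, by \Cref{def:big-circle}, something like a big interval with its two endpoints identified (or a cyclically ordered analogue), so it is compact Hausdorff, and removing one point turns it into a big interval (hence connected and big-path-connected), while removing two points splits it into two big-interval arcs. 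This is essentially a direct verification of the axioms in \Cref{def:flimsy_top} and \Cref{def:flimsy_path} from the order structure, together with the observation that on a big circle the connected subsets are exactly the ``arcs'', which are all big-path-connected (a big interval is big-path-connected, being the image of itself under the identity big path); this simultaneously gives the final ``furthermore'' clause, at least in case \ref{item:intro-classification-big-circle}.

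Next I would close the cycle of implications. For $\ref{item:intro-classification-compact-top} \Rightarrow \ref{item:intro-classification-big-circle}$, I would invoke the forthcoming classification result referenced in the abstract (compact $2$-flimsy spaces are exactly the dense order-complete cyclically ordered sets with their order topology) and identify such spaces with big circles --- this is really a matter of matching definitions, since a dense order-complete cyclic order is precisely what a big circle is. For $\ref{item:intro-classification-hausdorff-bp} \Rightarrow \ref{item:intro-classification-compact-top}$: by \Cref{thm:intro-2f-big_path} a $2$-big-path-flimsy space is automatically compact, so a Hausdorff $2$-big-path-flimsy space is compact Hausdorff; one then needs that it is also $2$-flimsy, i.e.\ that the ordinary connected components of $X \setminus \{x,y\}$ coincide with the big-path-connected components. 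Here the uniqueness part of \Cref{thm:intro-2f-top} and \Cref{thm:intro-2f-big_path} should be leveraged: $\tau = \tau_\rH = \tau_\rK$ once we know $\tau$ is both compact and Hausdorff, forcing the two notions of flimsiness to agree. For \ref{item:intro-classification-top-bp}, combine \Cref{thm:intro-2f-top} (giving Hausdorffness) with the fact that being $2$-big-path-flimsy already implies compactness, landing in case \ref{item:intro-classification-compact-top}; conversely a big circle is both.

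The main obstacle I anticipate is the coincidence ``connected $\iff$ big-path-connected'' on these spaces, and the tight interplay it has with the topology-changing theorems. The subtle point is that \emph{a priori} a Hausdorff $2$-big-path-flimsy space and a compact $2$-flimsy space are defined by conditions on genuinely different equivalence relations (topological components versus big-path-components of twice-punctured spaces), and to merge the two classifications one must show these relations agree. The cleanest route is probably to first prove the implications into case \ref{item:intro-classification-big-circle} are available from only \emph{one} of the two hypotheses at a time (using the respective bridge theorem to reduce to the compact-Hausdorff case, then the order classification), and only afterwards observe that on a big circle both notions visibly coincide with ``being an arc''; then the equivalence of \ref{item:intro-classification-compact-top}, \ref{item:intro-classification-hausdorff-bp}, \ref{item:intro-classification-top-bp} and the ``furthermore'' clause all follow. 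I would also need to be careful that \Cref{thm:intro-2f-top} and \Cref{thm:intro-2f-big_path} apply as black boxes without circularity --- i.e.\ that the order-theoretic classification of compact $2$-flimsy spaces does not itself secretly rely on \Cref{thm:intro-classification} --- but since that classification is proved in an earlier section, this should be fine.
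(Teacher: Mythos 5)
Your proposal is correct and follows essentially the same route as the paper: \ref{item:intro-classification-big-circle} implies the rest via \Cref{prop:big-circle-2f}; \ref{item:intro-classification-compact-top} and \ref{item:intro-classification-hausdorff-bp} each imply \ref{item:intro-classification-big-circle} because the uniqueness clauses of \Cref{thm:intro-2f-top,thm:intro-2f-big_path} force the given topology to equal the order topology of the associated separation relation, which is a big-circle topology by \Cref{cor:seprel-big-circ}; and \ref{item:intro-classification-top-bp} lands in \ref{item:intro-classification-compact-top} and \ref{item:intro-classification-hausdorff-bp} via the automatic Hausdorffness and compactness. The only divergences are minor: the paper gets the ``furthermore'' clause in one line from \Cref{cor:comparison_connectivities} (since $\cC_{\mathrm{bp}}\subseteq\cC$ and both are $2$-flimsy connectivities, they must coincide) rather than by inspecting arcs of a big circle, and your first-stated idea of citing ``the classification from the abstract'' for \ref{item:intro-classification-compact-top}$\Rightarrow$\ref{item:intro-classification-big-circle} would be circular (as would invoking $\tau_\rK$ before knowing the space is $2$-flimsy), but your refined plan via the bridge theorems avoids both issues.
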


Although \Cref{thm:intro-classification} gives a classification of compact $2$-flimsy spaces and of Hausdorff $2$-big-path-flimsy spaces in terms of the order-theoretic notion of linear continuum (cf.~\Cref{def:lin-cont,def:big-circle}), it should be noted that the list of linear continua depends heavily on the specific axioms chosen for set theory, cf.~\cite{jechsuslin}.

In contrast, the catalog of Hausdorff $2$-path-flimsy spaces is reduced to a single reference object:

\begin{theorem}[cf.~\Cref{thm:2pf-is-compact}]
  \label{thm:hausdorff-2pf-is-circle}
  Any Hausdorff $2$-path-flimsy space is homeomorphic to the standard circle~$\bbS^1$.
\end{theorem}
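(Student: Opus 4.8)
The plan is to promote a Hausdorff $2$-path-flimsy space $X$, in three steps, to a compact $2$-flimsy space, then to a big circle, then to $\bbS^1$. By \Cref{thm:intro-2f-path}, $X$ is compact; being also Hausdorff, it is $\rT_1$.

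\emph{Step 1: $X$ is $2$-flimsy.} By the $\rT_1$-characterization of $2$-flimsy spaces (\Cref{flimsy-is-t1}), it suffices to prove that $X \setminus C$ is connected for every connected $C \subseteq X$; the corresponding statement with ``path-connected'' in place of ``connected'' holds because $X$ is $2$-path-flimsy (\Cref{flimsy-is-t1} again), so the only point is to upgrade ``not path-connected'' to ``not connected''. Removing at most one point leaves a path-connected, hence connected, space. When two distinct points $x,y$ are removed, write $X \setminus \{x,y\} = A \sqcup B$ for its two path-connected components (\Cref{only-two-components}, path version); then $A \cup \{x,y\} = X \setminus B$ and $B \cup \{x,y\} = X \setminus A$, so both are path-connected by \Cref{flimsy-is-t1}. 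The crux---and the main obstacle of the whole argument---is to show that $A \cup \{x,y\}$ and $B \cup \{x,y\}$ are closed in $X$, i.e.\ that $\overline{A} \cap B = \overline{B} \cap A = \emptyset$. This is where compactness enters: I would argue that if some $b \in B$ belonged to $\overline{A}$, then removing $b$ together with a carefully chosen second point would fail to path-disconnect $X$, contradicting $2$-path-flimsiness. Granting this, $X$ is the union of the two closed sets $A \cup \{x,y\}$ and $B \cup \{x,y\}$, meeting exactly in $\{x,y\}$, so $A$ and $B$ are relatively clopen in $X \setminus \{x,y\}$; hence $X \setminus \{x,y\}$ is disconnected and $X$ is $2$-flimsy.

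\emph{Steps 2--3: $X \cong \bbS^1$.} Being compact and $2$-flimsy, $X$ is a big circle by \Cref{thm:intro-classification} (the implication \ref{item:intro-classification-compact-top}~$\Rightarrow$~\ref{item:intro-classification-big-circle}). Remove two distinct points $x,y$: the two complementary open arcs of the big circle $X$ coincide with the path-connected components $A$ and $B$ of $X \setminus \{x,y\}$ (there are two of each, and path-components refine components), and the closed arcs $A \cup \{x,y\}$ and $B \cup \{x,y\}$---which are linearly ordered spaces, being arcs of the big circle---are path-connected as in Step 1. Pick a path $f \colon [0,1] \to A \cup \{x,y\}$ from $x$ to $y$; its image is a connected subset of the linearly ordered space $A \cup \{x,y\}$ containing both of its endpoints, hence is all of $A \cup \{x,y\}$. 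Thus $A \cup \{x,y\}$ is a compact, connected, metrizable linearly ordered space (a continuous Hausdorff image of $[0,1]$ is metrizable), so it is homeomorphic to $[0,1]$; likewise $B \cup \{x,y\} \cong [0,1]$. Finally $X = (A \cup \{x,y\}) \cup (B \cup \{x,y\})$ is the union of two arcs meeting precisely in their common pair of endpoints $\{x,y\}$, and such a space is homeomorphic to $\bbS^1$.

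Everything in Steps 2--3, and everything in Step 1 except the closedness assertion, is a direct appeal to a result quoted in the introduction (\Cref{thm:intro-2f-path}, \Cref{flimsy-is-t1}, \Cref{only-two-components}, \Cref{thm:intro-classification}) together with classical facts (Hausdorff continuous images of $[0,1]$ are metrizable; a compact connected metrizable linearly ordered space is an arc). The genuine work is therefore concentrated in Step 1: showing that, in a compact Hausdorff $2$-path-flimsy space, removing two points truly disconnects rather than merely path-disconnects---equivalently, that the closures of the two path-components are as small as possible.
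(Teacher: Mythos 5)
Your Steps 2--3 are sound (given Step 1), and your reduction of Step 1 to the single claim $\overline{A}\cap B=\overline{B}\cap A=\emptyset$ is correct --- but that claim is exactly where the proof has to happen, and what you offer for it (``I would argue that if some $b\in B$ belonged to $\overline{A}$, then removing $b$ together with a carefully chosen second point would fail to path-disconnect $X$'') is not an argument. The difficulty is that a point of $\overline{A}$ is not, in general, reachable from $A$ by a path (think of the topologist's sine curve): knowing $b\in\overline{A}\cap B$ gives you no path crossing between the two path-components of $X\setminus\{b,c\}$ for any choice of $c$, so no contradiction with $2$-path-flimsiness is in sight. As written, the proposal therefore has a genuine gap at its self-identified crux.

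The paper closes exactly this gap by a different route: since $X$ is Hausdorff, the path-connected sets $C_i\cup\{x,y\}$ are \emph{arcwise} connected (\cite[Theorem~31.6]{willard}), so one may take the paths $\gamma_1,\gamma_2$ from $x$ to $y$ to be injective; the surjectivity argument of \Cref{thm:2-big-path-f-is-compact} shows $\gamma_1([0,1])\cup\gamma_2([0,1])=X$, and gluing gives a continuous \emph{bijection} $\bbS^1\to X$, which is a homeomorphism by compactness of $\bbS^1$ and Hausdorffness of $X$. (Note that this same input would repair your Step 1: the two arcs are compact, hence closed, and their complements are precisely $B$ and $A$, so $A$ and $B$ are open and $X\setminus\{x,y\}$ is genuinely disconnected.) Alternatively, you could bypass Step 1 entirely: the uniqueness clause of \Cref{thm:intro-2f-path} forces $\tau=\tau_\rH$ when $(X,\tau)$ is already Hausdorff and $2$-path-flimsy, and $\tau_\rH$ is the order topology of a dense order-complete separation relation, so $X$ is a big circle by \Cref{cor:seprel-big-circ}; your Steps 2--3 then finish the argument. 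Either repair works, but some version of the arc/injective-path input (or of the order-topology machinery) is unavoidable, and it is missing from the proposal.
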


The assumptions of our theorems are minimal, and the three different notions of $2$-flimsiness are generally independent of one another.
Indeed, \Cref{ex:non-compact-2f} exhibits a non-compact $2$-flimsy space (a variant of the Warsaw circle), which is thus neither $2$-big-path-flimsy nor $2$-path-flimsy.
Moreover, we prove in \Cref{ex:non-hausdorff-2bpf} that keeping only the co-countable open subsets in the topology of $\bbS^1$ yields a non-Hausdorff $2$-big-path-flimsy space, which is thus not $2$-flimsy.
While it turns out that this particular example is also $2$-path-flimsy (see \Cref{ex:non-hausdorff-2pf}), we obtain many $2$-big-path-flimsy spaces that are not $2$-path-flimsy using \Cref{thm:intro-classification,thm:hausdorff-2pf-is-circle}, such as the big circles that come from extended long lines (see also \Cref{lexico_order_prop}).
Constructing a $2$-path-flimsy space that is not $2$-big-path-flimsy is more complex.
This is the purpose of \Cref{sn:2f-path-not-big-path}, which does so by endowing~$\bbS^1$ with a highly pathological topology defined using a map that, loosely speaking, visits every place of the circle at every moment.
The construction of that map relies on a generalization of Bernstein sets for products of topological spaces (\Cref{bernstein_rectangle}), which may be of independent interest.

\subsection{Connectivity spaces, informally}

In fact, a significant part of our work studies \Cref{def:flimsy_top,def:flimsy_path} in a unified way, using an axiomatic notion of connected subsets that encompasses connectedness (\Cref{prop:top-is-cspace}), big-path-connectedness (\Cref{prop:continuum-is-cspace}), and path-connectedness (\Cref{prop:path-is-cspace}).
This notion of \emph{connectivity spaces} may be of independent interest---we present it in detail in \Cref{subsn:c-spaces}.
It consists of the four following axioms (see \Cref{def:c-space} for a more formal description):
\begin{itemize}
  \item
    any singleton is connected;
  \item
    any union of non-disjoint connected subsets is connected;
  \item
    if removing a point~$x$ from a connected subset~$A$ disconnects it, then~$A$ remains connected upon removing any of the connected components of~$A \setminus \{x\}$;
  \item
    if $A$ and $B$ are two connected subsets whose union is also connected, then there are two possibilities: either there is a point~$x \in A \cup B$ such that both $A \cup \{x\}$ and $B \cup \{x\}$ are connected (a ``button''), or there is a connected subset $S \subseteq A\cup B$ whose intersection with $A$ or with $B$ is not connected (a ``seam'').
    (This is a weak converse to the second axiom.)
\end{itemize}

\paragraph{Illustration of the axioms.}

The following figure illustrates the third axiom:
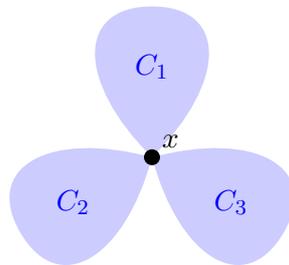
\begin{figure}[H]
  \begin{center}
    \begin{tikzpicture}[scale=2]
      \foreach \a in {1,2,3} {
        \pgfmathsetmacro\angle{120*(\a-1)}
        \fill[blue!20,rotate=\angle]
          (0,0)
            .. controls (0.5,0.5) and (0.5,1) .. (0,1)
            .. controls (-0.5,1) and (-0.5,0.5) .. (0,0)
          -- cycle;
          \node[blue] at ({\angle+90}:.6) {$C_\a$};
      }

      \fill[black] (0,0) circle (1.5pt) node[above right] {$x$};
    \end{tikzpicture}
    \caption{In this example, removing $x$ yields three components $C_1$, $C_2$ and $C_3$, and indeed removing~$C_i$ (for any $i \in \{1,2,3\}$) does not disconnect the figure.}
    \label{fig-flower}
  \end{center}
\end{figure}

The next two figures illustrate the two situations that the fourth axiom allows:
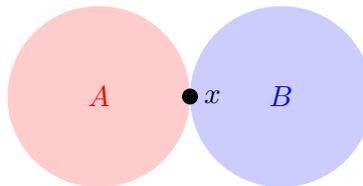
\begin{figure}[H]
  \begin{center}
    \begin{tikzpicture}
      \fill[red!20] (0,0) circle (1.2);
      \node[red] at (0,0) {$A$};
      \fill[blue!20] (2.4,0) circle (1.2);
      \node[blue] at (2.4,0) {$B$};
      \fill[black] (1.2,0) circle (3pt) node[right=1.5pt] {$x$};
    \end{tikzpicture}
    \caption{A ``button'' situation.}
    \label{fig-button}
  \end{center}
\end{figure}
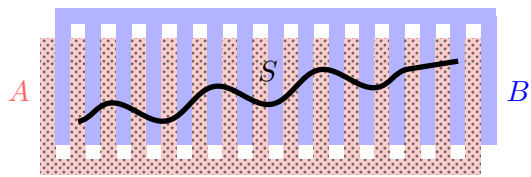
\begin{figure}[H]
  \begin{center}
    \begin{tikzpicture}
      \def\boxwidth{6}
      \def\boxheight{2}
      \def\nstripes{30}

      \foreach \i in {0,...,\numexpr\nstripes-1} {
        \pgfmathsetmacro{\x}{\i * \boxwidth/\nstripes}
        \ifodd\i
          \fill[blue!30] (\x,3/2*\boxwidth/\nstripes) rectangle (\x+\boxwidth/\nstripes,\boxheight);
        \else
          \begin{scope}
            \fill[red!20] (\x,0) rectangle ++(\boxwidth/\nstripes,\boxheight-3/2*\boxwidth/\nstripes);
            \pgfsetfillpattern{crosshatch dots}{black!60}
            \fill (\x,0) rectangle ++(\boxwidth/\nstripes,\boxheight-3/2*\boxwidth/\nstripes);
          \end{scope}
        \fi
      }

      \begin{scope}
        \fill[red!20] (0,-\boxwidth/\nstripes/2) rectangle ++(\boxwidth-\boxwidth/\nstripes,\boxwidth/\nstripes);
        \pgfsetfillpattern{crosshatch dots}{black!60}
        \fill (0,-\boxwidth/\nstripes/2) rectangle ++(\boxwidth-\boxwidth/\nstripes,\boxwidth/\nstripes);
      \end{scope}
      \fill[blue!30] (\boxwidth/\nstripes,\boxheight-\boxwidth/\nstripes/2) rectangle (\boxwidth,\boxheight+\boxwidth/\nstripes/2);

      \draw[black,line width=2pt,decorate,decoration={snake,amplitude=5pt,segment length=40pt}]
        (0.5,0.6) -- (5.5,1.4);

      \node[red!60, left] at (0,1) {$A$};
      \node[blue, right] at (\boxwidth,1) {$B$};
      \node[black,above] at (3,1) {$S$};
    \end{tikzpicture}
  \caption{
    A ``seam'' situation.
    The red (dotted) region~$A$ consists of the bottom horizontal edge together with the red vertical stripes, and the blue (non-dotted) region~$B$ consists of the top edge together with the blue vertical stripes.
    If the red and blue stripes alternate ``densely often'' and we are thinking in terms of (big-)path-connectedness, then it is impossible to find a button as in \Cref{fig-button}, but a seam~$S$ does indeed exist.
  }
  \end{center}
\end{figure}

\paragraph{Comparison of connectivity spaces with other notions.}

Previous definitions for connectivity spaces have been proposed and studied in~\cite{borger-cspace,serra,connectology,dugowson} among others; see~\cite{stadler} for a general survey.
However, our specific collection of axioms seems original: it is weaker than that of a ``connectology'' as defined in~\cite{connectology} (our third axiom~\ref{ax:complement-connected} is weaker than their axiom~(iv), and our fourth axiom~\ref{ax:button-zip} is weaker than their axiom~(iii)), but stronger than that of an ``integral connectivity space'' as defined in \cite{stadler} (where our third and fourth axioms are not mentioned).
The reason for our choices is to ensure that topological spaces equipped with their (big-)path-connected subsets are connectivity spaces, while retaining the ability to establish the key properties of flimsy spaces (see also \Cref{rmk:rational_circle}).

\subsection{Outline}

In \Cref{sn:prelim}, we recall definitions of standard order-theoretic structures, the connections between them, and some of their elementary properties related to flimsiness; we also define big circles and big-path-connected spaces.
In \Cref{sn:props-2f}, we introduce our axiomatic notion of connectivity spaces, we study $2$-flimsy connectivity spaces, and we show that there are no $n$-flimsy connectivity spaces when $n \geq 3$ (\Cref{thm:no-3f-cspace}).
In \Cref{sn:2f-top}, we specialize the results of \Cref{sn:props-2f} to the usual notion of connectedness to prove \Cref{thm:intro-2f-top}.
In \Cref{sn:2f-big-path} and \Cref{sn:2f-path}, we similarly deal with big-path-connectedness and path-connectedness, and we prove \Cref{thm:intro-2f-big_path,thm:intro-2f-path,thm:hausdorff-2pf-is-circle}.
In \Cref{sn:equi}, we unify various ``circle-like structures'' by explaining how $2$-flimsy connectivity spaces can be equivalently described through the lens of topology or of order theory, and we prove \Cref{thm:intro-classification}.
Finally, in \Cref{sn:2f-path-not-big-path}, we construct a $2$-path-flimsy space which is not $2$-big-path-flimsy.

\subsection{Acknowledgements}

The first author was supported by the Natural Sciences and Engineering
Research Council of Canada (NSERC) via a Banting postdoctoral fellowship [BPF-198443].
The second author was supported by the Deutsche Forschungsgemeinschaft (DFG, German Research Foundation) --- Project-ID 491392403 --- TRR 358.
The authors are grateful to Fabian Gundlach, Raphaël Ruimy, and Rolf Suabedissen for helpful discussions.

\section{Preliminaries}
\label{sn:prelim}

\subsection{Linear continua, big intervals, big paths}
\label{sn:prelim_linear}

A (strictly) ordered set $(X,<)$ is a \emph{linear order} if the strict order~$<$ is total, \emph{dense} if $\card X \geq 2$ and if whenever $x<y$ there exists $z \in X$ with $x < z < y$, and \emph{order-complete} if any non-empty subset~$S \subseteq X$ with an upper bound
admits a supremum.
We say that a linear order~$(X,<)$ is \emph{without ends} if~$X$ has neither a minimum nor a maximum, and \emph{with ends} if~$X$ has both a minimum and a maximum.
The \emph{open intervals} of~$X$ are the sets of the following forms: $(x,y) := \suchthat{z \in X}{x<z<y}$, $(-\infty, y) :=  \suchthat{z \in X}{z<y}$, $(x,+\infty) := \suchthat{z \in X}{x<z}$, $(-\infty,+\infty) := X$.
The \emph{order topology of~$(X,<)$} is the topology on~$X$ generated by the open intervals.

\begin{definition}
  \label{def:lin-cont}
  A topological space~$X$ is a \emph{linear continuum} if its topology is the order topology associated with some dense order-complete linear order on~$X$.
  A \emph{big interval} is a linear continuum associated with a dense order-complete linear order with ends.
\end{definition}

Big intervals generalize the standard interval~$[0,1] \subseteq \R$.
As such, big intervals induce a generalized notion of path-connectedness as follows:

\begin{definition}
  \label{defn:conticonn}
  Let~$X$ be a topological space.
  For any points~$x,y \in X$, a \emph{big path} (on~$X$) from~$x$ to~$y$ is a continuous map $\gamma \colon L \to X$, where~$L$ is a big interval, mapping the minimum and the maximum of~$L$ to~$x$ and~$y$, respectively.
  The space~$X$ is \emph{big-path-connected} if for any two points~$x,y \in X$, there is a big path from~$x$ to~$y$.
  (Note that~$\emptyset$ is big-path-connected.)
\end{definition}

Let us briefly discuss some properties of big-path-connectedness (see also \Cref{sn:2f-big-path}).
The notion of two points being joinable by a big path defines an equivalence relation on points of~$X$---transitivity follows from the possibility of concatenating two big intervals~$L_1$ and~$L_2$ by identifying the maximum of~$L_1$ with the minimum of~$L_2$.
Moreover, for any continuous map $f \colon X \to Y$ and any big path $\gamma \colon L \to X$ on~$X$, the composition $f \circ \gamma \colon L \to Y$ is a big path on~$Y$.
It follows that the image of a big-path-connected space under a continuous map is also big-path-connected.

The following proposition gathers some basic properties of linear continua.

\begin{proposition}
  \label{prop:continuum-facts}
  Let~$X$ be the linear continuum associated to some dense order-complete linear order~$<$ on~$X$.
  Then, the following properties hold:
  \begin{enumroman}
    \item
      \label{item:continuum-hausdorff}
      The space $X$ is Hausdorff.
    \item
      \label{item:continuum-connected-subsets}
      The connected subsets of~$X$ are exactly the \emph{intervals}\footnote{
        The \emph{intervals} are the subsets of one of the forms $(a,b)$, $[a,b)$, $(a,b]$, $[a,b]$ where $a,b \in X \cup \{\pm\infty\}$.
      } of $(X,<)$.
      In particular, $X$ is connected.
    \item
      \label{item:continuum-connected-open-subsets}
      The open connected subsets of~$X$ are exactly the open intervals of $(X,<)$.
    \item
      \label{item:continuum-compact-subsets}
      The compact connected subsets of~$X$ are exactly the \emph{closed intervals}, i.e., the subsets of the form $[a,b] := \suchthat{z \in X}{a \leq z \leq b}$ for some $a,b \in X$.
      In particular, $X$ is compact if and only if $(X, <)$ has both a minimum and a maximum, i.e., if~$X$ is a big interval.
  \end{enumroman}
\end{proposition}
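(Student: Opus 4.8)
The plan is to prove the four items in order, with \ref{item:continuum-connected-subsets} carrying most of the weight and the other parts building on it. Throughout, the two hypotheses — density and order-completeness — play complementary roles: density lets one ``take a small step'' past a point, and order-completeness lets one form suprema.

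For \ref{item:continuum-hausdorff}, given distinct points with $x < y$, density produces $z$ with $x < z < y$, and then $(-\infty, z)$ and $(z, +\infty)$ are disjoint open neighborhoods of $x$ and $y$. For \ref{item:continuum-connected-subsets}, one direction is immediate: if $A$ is not an interval, there are $a, b \in A$ with $a < b$ and some $c \in (a,b) \setminus A$, and then $A \cap (-\infty,c)$ and $A \cap (c,+\infty)$ form a separation of $A$. For the converse — that every interval $I$ is connected — I would argue by contradiction in the style of the intermediate value theorem: given a partition $I = U \sqcup V$ into nonempty relatively open sets, pick $a \in U$ and $b \in V$ with $a < b$, and set $c := \sup\bigl(U \cap [a,b]\bigr)$, which exists by order-completeness and lies in $[a,b] \subseteq I$. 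Then $c$ belongs to $U$ or to $V$; in either case the corresponding set is relatively open, so it contains a basic neighborhood $(p,q) \ni c$, and density lets one find a point of $(p,q) \cap [a,b]$ strictly on the ``wrong'' side of $c$, contradicting the definition of $c$ as a supremum. Applying this to $I = X = (-\infty,+\infty)$ gives connectedness of $X$.

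Part \ref{item:continuum-connected-open-subsets} is then short: open intervals are open by definition of the order topology and connected by \ref{item:continuum-connected-subsets}; conversely, an open connected set $A$ is an interval by \ref{item:continuum-connected-subsets}, and if it had a minimum $m$, openness would give a basic neighborhood $(p,q) \subseteq A$ with $p < m$, whence density produces a point of $A$ below $m$ — impossible; similarly $A$ has no maximum, and an interval with neither a minimum nor a maximum is one of the four forms of open interval. For \ref{item:continuum-compact-subsets}, closed intervals $[a,b]$ are connected by \ref{item:continuum-connected-subsets}, and I would prove their compactness by a generalized Heine--Borel argument: given an open cover $\mathcal{U}$, let $c$ be the supremum (using order-completeness) of the set of $x \in [a,b]$ for which $[a,x]$ admits a finite subcover; choosing a single $U \in \mathcal{U}$ with $c \in U$ and a basic neighborhood $(p,q) \subseteq U$ of $c$, one finds an $s < c$ close to $c$ with $[a,s]$ finitely covered, whence $[a,c]$ and (using density to step slightly past $c$, unless $c=b$) a bit more is finitely covered, forcing $c = b$ and exhibiting a finite subcover of $[a,b]$. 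Conversely, if $A$ is compact and connected, it is an interval by \ref{item:continuum-connected-subsets}, it is closed by \ref{item:continuum-hausdorff}, and a standard argument covering $A$ by the rays $\{(-\infty,x) : x \in A\}$ (resp.\ $\{(x,+\infty):x\in A\}$) shows $A$ has a maximum (resp.\ minimum), so $A = [\min A, \max A]$. The ``in particular'' follows since $X$ is connected: $X$ is compact iff $X = [\min X, \max X]$, iff $X$ has both a minimum and a maximum, iff $X$ is a big interval.

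The main obstacle is purely bookkeeping rather than conceptual: the connectedness and compactness arguments both require carefully splitting into cases according to where a supremum lands and invoking density at the right moment to produce a point just beyond it (and handling the degenerate cases where an interval endpoint is $\pm\infty$). No single step is deep, but the proof of \ref{item:continuum-connected-subsets} and the Heine--Borel step in \ref{item:continuum-compact-subsets} are where the two hypotheses genuinely interact and deserve the most care.
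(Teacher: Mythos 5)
Your proposal follows exactly the classical route that the paper itself defers to (the paper's ``proof'' is a citation of Munkres, Theorems~24.1 and~27.1, with the remark that the arguments adapt from the case $X=\R$), and parts \ref{item:continuum-hausdorff}, \ref{item:continuum-connected-subsets} and \ref{item:continuum-compact-subsets} of your sketch are correct renditions of those standard arguments.

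There is one concrete slip in your argument for \ref{item:continuum-connected-open-subsets}: the intermediate claim that an open connected subset $A$ has no minimum is false whenever $(X,<)$ itself has a minimum $x_0$ --- which is precisely the case of big intervals, central to this paper. For example, in $X=[0,1]$ the set $A=[0,\tfrac12)$ is open, connected, and has a minimum. Your argument breaks at the step ``openness would give a basic neighborhood $(p,q)\subseteq A$ with $p<m$'': when $m=x_0$, the basic neighborhood of $m$ inside $A$ may be a ray $(-\infty,q)=\suchthat{z\in X}{z<q}$, and there is no point of $X$ below $m$ to produce. The conclusion of \ref{item:continuum-connected-open-subsets} nevertheless survives, because in that case $A$'s lower end is $[x_0,\cdot)=(-\infty,\cdot)$, which is one of the four permitted forms of open interval per the paper's conventions. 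The correct statement to prove is therefore not ``$A$ has no minimum'' but ``if $A$ contains its infimum $a\in X$, then $a=\min X$'' (and dually for the supremum); with that amendment your argument, including the use of density to find a point of $A$ strictly below an interior infimum, goes through. The same caveat applies when you invoke a basic neighborhood ``$(p,q)\ni c$'' in parts \ref{item:continuum-connected-subsets} and \ref{item:continuum-compact-subsets}: the neighborhood may be a ray, but there the ray cases only make the contradictions easier, so no harm is done.
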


\begin{proof}
  These facts are classical; cf.~\cite[Theorem~24.1, Theorem~27.1]{munkres}.
  The proofs are straightforward adaptations of the corresponding proofs in the classical special case~$X = \R$.
\end{proof}

These facts allow us to relate big-path-connectedness and connectedness:

\begin{corollary}
  \label{cor:bigpathconn-is-conn}
  Any big-path-connected space is connected, and any linear continuum is big-path-connected.
  In a linear continuum, a subset is big-path-connected if and only if it is connected.
\end{corollary}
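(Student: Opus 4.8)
The plan is to establish the three assertions in sequence, using \Cref{prop:continuum-facts} as the workhorse. First I would prove that any big-path-connected space~$X$ is connected. Suppose for contradiction that $X = U \sqcup V$ is a separation into nonempty disjoint open sets, and pick $x \in U$, $y \in V$. By hypothesis there is a big path $\gamma \colon L \to X$ from~$x$ to~$y$, where $L$ is a big interval. By \iref{prop:continuum-facts}{item:continuum-connected-subsets}, $L$ is connected (being a linear continuum); since $\gamma$ is continuous, $\gamma(L)$ is a connected subset of~$X$ containing both~$x$ and~$y$, contradicting the separation $X = U \sqcup V$. Hence $X$ is connected. (The same argument shows more generally that any big-path-connected \emph{subset} of any space is connected, which I would record since it is needed for the last sentence.)

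Next I would show that any linear continuum~$X$, associated with a dense order-complete linear order~$<$, is big-path-connected. Given two points~$x, y \in X$, assume without loss of generality $x \leq y$ (if $x = y$ the constant map works). Consider the closed interval $L := [x,y] = \{z \in X \mid x \leq z \leq y\}$ equipped with the subspace topology, which coincides with its own order topology for the restricted order. By \iref{prop:continuum-facts}{item:continuum-compact-subsets}, $L$ is a compact connected subset of~$X$; it is order-complete as a closed sub-order of an order-complete order, it is dense (here I should note the mild caveat: if $x$ and $y$ are consecutive or equal, $[x,y]$ may fail to be dense, but density of $X$ between $x$ and $y$ handles the generic case, and the degenerate cases admit an explicit short path), and it has~$x$ as minimum and~$y$ as maximum. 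Thus $L$ is a big interval, and the inclusion map $L \hookrightarrow X$ is a big path from~$x$ to~$y$. Therefore $X$ is big-path-connected.

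Finally, for the subset statement inside a linear continuum~$X$: one direction is the general fact proved in the first paragraph (big-path-connected implies connected). For the converse, let $A \subseteq X$ be connected. By \iref{prop:continuum-facts}{item:continuum-connected-subsets}, $A$ is an interval of $(X,<)$, say with endpoints $a, b \in X \cup \{\pm\infty\}$. Given $x, y \in A$ with $x \leq y$, the closed interval $[x,y]$ lies entirely in~$A$ (since $A$ is an interval), and by the previous paragraph $[x,y]$ is a big interval whose inclusion into~$A$ is a big path from~$x$ to~$y$. Hence $A$ is big-path-connected.

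The main obstacle I anticipate is bookkeeping around the degenerate cases in the second and third paragraphs: when $x = y$, or when $[x,y]$ has only two points, the interval $[x,y]$ is not dense and hence not literally a big interval in the sense of \Cref{def:lin-cont}. These must be dispatched separately — the constant path for $x=y$, and for a two-point interval $[x,y]$ one can use $[0,1]$ mapping $[0,1/2]$ to~$x$ and $(1/2,1]$ to~$y$ only if that is continuous, which it is not in general; more robustly, one concatenates with a genuine big interval or simply observes that big-path-connectedness is an equivalence relation (as noted after \Cref{defn:conticonn}) and that any two points in $X$ with at least one point strictly between them are joined, then uses density of~$X$ to connect arbitrary pairs via an intermediate point. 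I would handle this by first treating the case where $(x,y) \neq \emptyset$ cleanly, then invoking transitivity and density of~$X$ to cover all pairs.
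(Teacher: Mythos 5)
Your proof is correct and follows essentially the same route as the paper: images of big paths are connected because big intervals are (\iref{prop:continuum-facts}{item:continuum-connected-subsets}), closed intervals $[a,b]$ with $a<b$ are big intervals so the inclusion is a big path from~$a$ to~$b$, and connected subsets of a linear continuum are intervals and hence big-path-connected by the same mechanism. The only remark is that your worry about two-point closed intervals is vacuous: in a dense linear order, $x<y$ forces $(x,y)\neq\emptyset$, so $[x,y]$ is automatically dense and order-complete with ends, leaving only the trivial case $x=y$, which a constant path on $[0,1]$ dispatches.
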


\begin{proof}
  By \iref{prop:continuum-facts}{item:continuum-connected-subsets}, any big interval is connected, so the image of any big path is connected as the continuous image of a big interval.
  Therefore, any big-path-connected space is connected: it is the union of the images of all the big paths starting from a common (arbitrary) basepoint.

  Now, consider a linear continuum $X$.
  For any $a,b \in X$ with $a < b$, the interval~$[a,b]$ is a big interval, so the identity map $[a,b] \to [a,b]$ is a big path joining~$a$ to~$b$.
  Thus, every linear continuum is big-path-connected.

  The last point follows from the first two: we have shown that all big-path-connected subsets are connected, and by \iref{prop:continuum-facts}{item:continuum-connected-subsets}, any connected subset of a linear continuum is an interval; hence, it is itself a linear continuum, so it is big-path-connected.
\end{proof}

Finally, from the description of the (big-path-)connected subsets of linear continua (\iref{prop:continuum-facts}{item:continuum-connected-subsets} and \Cref{cor:bigpathconn-is-conn}), we easily deduce the following:

\begin{corollary}
  \label{cor:lin-cont-1f}
  Any linear continuum without ends is $1$-flimsy and $1$-big-path-flimsy.
\end{corollary}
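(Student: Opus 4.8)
The plan is to check the three defining clauses of $1$-flimsiness and $1$-big-path-flimsiness one at a time, using only \Cref{prop:continuum-facts} and \Cref{cor:bigpathconn-is-conn}. Write $<$ for the dense order-complete order without ends whose order topology gives $X$.

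First, the cardinality requirement $|X| > 1$: since $<$ is dense we have $|X| \geq 2$ by definition, so this clause is immediate (in fact density forces $X$ to be infinite). Second, the ``$\card S < 1$'' clause (i.e.\ $S = \emptyset$): we must know that $X$ itself is connected and big-path-connected, which are exactly \iref{prop:continuum-facts}{item:continuum-connected-subsets} and \Cref{cor:bigpathconn-is-conn}.

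Third, the ``$\card S = 1$'' clause: fix $x \in X$, and show $X \setminus \{x\}$ is neither connected nor big-path-connected. Because $X$ has no minimum and no maximum, choose $a, b \in X$ with $a < x < b$. Then $X \setminus \{x\} = (-\infty, x) \sqcup (x, +\infty)$ is a partition into two sets that are open (being open intervals, which generate the order topology), disjoint, and nonempty (witnessed by $a$ and $b$). Hence $X \setminus \{x\}$ is disconnected. Since any big-path-connected space is connected (\Cref{cor:bigpathconn-is-conn}), $X \setminus \{x\}$ is also not big-path-connected. Combining the three clauses gives that $X$ is both $1$-flimsy and $1$-big-path-flimsy.

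There is no genuine obstacle here; the proof is a direct unwinding of the definitions. The only subtle point worth flagging is where the hypothesis ``without ends'' is actually used: it is precisely what guarantees that \emph{both} half-lines $(-\infty, x)$ and $(x, +\infty)$ are nonempty for every $x$. If, say, $X$ had a minimum $m$, then $X \setminus \{m\} = (m, +\infty)$ would still be an interval, hence connected (and big-path-connected), so $X$ would fail to be $1$-flimsy; this shows the hypothesis cannot be dropped.
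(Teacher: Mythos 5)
Your proof is correct and follows exactly the route the paper intends: the paper states that the corollary is ``easily deduced'' from \iref{prop:continuum-facts}{item:continuum-connected-subsets} and \Cref{cor:bigpathconn-is-conn} and omits the details, which are precisely the ones you supply (connectedness of $X$ from the interval description, disconnection of $X \setminus \{x\}$ into the two non-empty open half-lines, and the reduction of big-path-connectedness to connectedness). Your remark about where ``without ends'' is used is a nice bonus but changes nothing about the correctness.
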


\subsection{Big circles}

In the same way that big intervals generalize the segment~$[0,1]$, we can generalize the standard circle by identifying the two ends of a big interval:

\begin{definition}
  \label{def:big-circle}
  A \emph{big circle} is a topological space obtained by identifying the minimum and the maximum of a big interval, equipped with the quotient topology.
\end{definition}

Big circles are the flagship examples of $2$-(big-path-)flimsy spaces:

\begin{proposition}
  \label{prop:big-circle-2f}
  Any big circle is Hausdorff, compact, $2$-flimsy and $2$-big-path-flimsy.
\end{proposition}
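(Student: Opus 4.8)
Let $L$ be a big interval, coming from a dense order-complete linear order with minimum $\eel$ and maximum $\eer$, and let $C$ be the big circle obtained by gluing $\eel$ to $\eer$. Write $\pi\colon L \to C$ for the quotient map and $* := \pi(\eel) = \pi(\eer)$ for the glued point. The plan is to analyze $C \setminus S$ for $S$ of size $0$, $1$, $2$ by pulling back to $L$, where everything is controlled by \Cref{prop:continuum-facts}, and to verify Hausdorffness separately. The key observation is that $\pi$ restricts to a homeomorphism $L \setminus \{\eel, \eer\} \to C \setminus \{*\}$ (the open interval maps bijectively and the quotient topology agrees with the subspace topology there), and more generally, for any point $p \in L \setminus \{\eel, \eer\}$, the map $\pi$ restricts to a homeomorphism from the "cut open" interval $[\eel, p) \sqcup (p, \eer]$ — better, from $L\setminus\{p\}$ — onto $C \setminus \{\pi(p)\}$; here the two ends $\eel$ and $\eer$ get glued, so $C\setminus\{\pi(p)\}$ is homeomorphic to the order space obtained from the linear continuum $(p,\eer] \cup [\eel,p)$ (ordered by putting the $(p,\eer]$ part first), which is again a big interval, hence connected by \iref{prop:continuum-facts}{item:continuum-connected-subsets}.

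\textbf{Connectedness after removing $0$ or $1$ point.} The space $C$ itself is connected, being the continuous image of the connected space $L$ (\iref{prop:continuum-facts}{item:continuum-connected-subsets}). For a single point $x \in C$: if $x = *$, then $C \setminus \{*\} \cong L \setminus \{\eel,\eer\} = (\eel,\eer)$, an open interval of $L$, which is connected (it is nonempty since $\card L \geq 2$ and $L$ is dense, so $\card L$ is infinite); if $x \neq *$, write $x = \pi(p)$ with $\eel < p < \eer$, and by the observation above $C \setminus \{x\}$ is a big interval, hence connected. So removing fewer than $2$ points never disconnects $C$. For big-path-connectedness, note that $C$, $C\setminus\{*\}$, and each $C\setminus\{\pi(p)\}$ are all linear continua (or open intervals thereof, which are themselves linear continua), so they are big-path-connected by \Cref{cor:bigpathconn-is-conn}.

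\textbf{Disconnectedness after removing $2$ points.} Let $x \neq y$ in $C$. Pick lifts: we may assume $x = \pi(a)$, $y = \pi(b)$ with $\eel \leq a < b \leq \eer$ and not both of $a,b$ in $\{\eel,\eer\}$ unless $\{x,y\}=\{*,?\}$ — in all cases at most one of $a, b$ equals an endpoint, and we can choose $a, b$ with $\eel \le a < b \le \eer$ representing $x,y$. Then $C \setminus \{x,y\}$ is the image under $\pi$ of $L \setminus \pi^{-1}(\{x,y\})$, and one checks this is the disjoint union of the images of $(a,b)$ and of $[\eel,a) \cup (b,\eer]$ (the latter glued at the ends into a single open interval of the big interval $(b,\eer]\cup[\eel,a)$). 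Both pieces are open in $C\setminus\{x,y\}$ (their preimages are open in $L$, using density to see $(a,b)\ne\emptyset$ and $[\eel,a)\cup(b,\eer]\ne\emptyset$), nonempty, and disjoint, so $C \setminus \{x,y\}$ is disconnected; being disconnected, it is also not big-path-connected by the contrapositive of \Cref{cor:bigpathconn-is-conn}. This also shows $\card C > 2$. The main obstacle here is the bookkeeping when one of $x, y$ is the glue point $*$ versus when neither is: I expect to handle this by the uniform device of choosing representatives in $[\eel,\eer]$ and always decomposing the complement of the two chosen values into "inside" and "outside" arcs, using order-completeness only implicitly through \Cref{prop:continuum-facts}.

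\textbf{Hausdorff and compact.} Compactness is immediate: $C = \pi(L)$ is the continuous image of the compact space $L$ (\iref{prop:continuum-facts}{item:continuum-compact-subsets}). For Hausdorffness, take distinct $x, y \in C$. If neither is $*$, they lie in the open Hausdorff subspace $C\setminus\{*\} \cong (\eel,\eer)$ and can be separated there by \iref{prop:continuum-facts}{item:continuum-hausdorff}; since this subspace is open in $C$, the separating sets are open in $C$. If $x = *$ and $y = \pi(p)$ with $\eel < p < \eer$, pick $u, v \in L$ with $\eel < u < p < v < \eer$ (possible by density); then $\pi([\eel,u) \cup (v,\eer])$ and $\pi((u,v))$ are disjoint open neighborhoods of $*$ and $y$ respectively — openness of the first in the quotient topology is exactly the statement that its preimage $[\eel,u)\cup(v,\eer]$ is open in $L$, which holds. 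This completes the verification that $C$ is Hausdorff, compact, $2$-flimsy, and $2$-big-path-flimsy. $\qed$
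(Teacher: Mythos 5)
Your proof is correct and follows essentially the same route as the paper's: push everything through the quotient map $\pi$, use \Cref{prop:continuum-facts} and \Cref{cor:bigpathconn-is-conn} for the connectedness statements, split $C\setminus\{x,y\}$ into two disjoint non-empty open arcs via the saturated-open-set observation, and get compactness and Hausdorffness as you do. One small correction: the cyclically rearranged space $C\setminus\{\pi(p)\}$ is a linear continuum \emph{without} ends (it cannot be a big interval, since it is a proper open subset of the compact Hausdorff space $C$ and hence not compact), but as linear continua are connected and big-path-connected regardless of having ends, this mislabel does not affect any of your conclusions.
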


\begin{proof}
  Let $X$ be a big circle.
  By definition, there exists a dense order-complete linear order $(\tilde X, <)$ with ends (denoted $\eel=\min \tilde{X}$ and $\eer= \max \tilde{X}$) such that $X$ is the quotient of $\tilde X$ identifying~$\eel$ and~$\eer$ (we denote by~$\re$ their common image in $X$).
  In particular, since~$\tilde X$ is compact, connected, and big-path-connected by~\Cref{prop:continuum-facts} and \Cref{cor:bigpathconn-is-conn}, so is its image~$X$ under the continuous surjection $\pi \colon \tilde X \twoheadrightarrow X$.
  Similarly, the images of the intervals $(\eel,\eer)$, $[\eel,x)$, and $(x,\eer]$ under~$\pi$ are connected and big-path-connected for all $x\in (\eel,\eer)$.
  Thus, the sets $X\setminus\{\re\}=\pi\bigl((\eel,\eer)\bigr)$ and $X\setminus\{\pi(x)\}=\pi\bigl([\eel,x)\bigr)\cup \pi\bigl((x,\eer]\bigr)$ are connected and big-path-connected.

  Now, observe that if an open subset $\tilde{U}$ of $\tilde{X}$ either avoids or contains $\{\eel,\eer\}$, then $\pi^{-1}(\pi(\tilde{U}))=\tilde{U}$, so $\pi(\tilde{U})$ is open in~$X$.
  This observation implies that $X$ inherits the Hausdorff property from~$\tilde{X}$.
  Moreover, for any $x,y\in \tilde{X}$ with $\eel<x<y<\eer$, it implies that the subsets $\pi\bigl((x,y)\bigr)$ and $\pi\bigl((-\infty,x)\cup (y,+\infty)\bigr)$, and $\pi\bigl((\eel,x)\bigr)$ and $\pi\bigl((x,\eer)\bigr)$ are all open in~$X$.
  When $\pi(x)\neq\pi(y)$, it follows that $X\setminus\{\pi(x),\pi(y)\}$ is not connected as the disjoint union of two non-empty open subsets.
  It is not big-path-connected either, thanks to \Cref{cor:bigpathconn-is-conn}.
\end{proof}

\subsection{Cyclic orders, separation relations}

The main results of this article deal with relating $2$-flimsy spaces to certain order-theoretic structures.
Since a $2$-flimsy space (just like the ordinary circle) has neither a canonical distinguished ``point at infinity'' nor a canonical ``sense'' (or orientation), it is more natural to use a notion of order that is less conventional than linear orders, namely \emph{separation relations}.
In this subsection, we recall known facts about these relations and explain how they are related to linear orders.

\paragraph{Cyclic orders.}

Let~$\tilde X$ be a linear order with ends (denoted $\eel$ and $\eer$), and let~$X$ be the quotient set obtained by identifying $\eel$ and $\eer$ into a single point, denoted by $\re$.
Then, $X$ is equipped with a \emph{cyclic order}~$[-,-,-]$, induced by the order of~$\tilde X$: the ternary relation $[x,y,z]$ must be understood as meaning ``when rotating positively around $X$ starting from $x$, we encounter $y$ before $z$''.
Concretely, if $x,y,z$ are distinct points of $X \setminus \{\re\}$, we have
\begin{equation}
  \label{linear-to-cyclic}
  [x,y,z]
  \quad\textnormal{when}\quad
  \bigl(
    x<y<z
  \bigr)
  \textnormal{ or }
  \bigl(
    z<x<y
  \bigr)
  \textnormal{ or }
  \bigl(
    y<z<x
  \bigr).
\end{equation}
\begin{figure}[H]
  \begin{center}
    \begin{tabular}{ccc}
      \begin{tikzpicture}
        \def\shift{5}

        \def\radius{1}
        \def\anglex{90}
        \def\angley{210}
        \def\anglez{-30}

        \newcommand{\printcircle}[6]{
          \begin{scope}[xshift=#1*\shift cm]
            \begin{scope}[yshift=2cm]
              \draw[thick] (-1,0) -- (1,0);
              \node[anchor=south] at (-.75,0) {#4};
              \fill (-.75,0) circle (1.5pt);
              \node[anchor=south] at (0,0) {#5};
              \fill (0,0) circle (1.5pt);
              \node[anchor=south] at (.75,0) {#6};
              \fill (.75,0) circle (1.5pt);
            \end{scope}

            \def\angleinf{#2}
            \path (\anglex:\radius) coordinate (x);
            \path (\angley:\radius) coordinate (y);
            \path (\anglez:\radius) coordinate (z);
            \path (\angleinf:\radius) coordinate (inf);

            \draw[thick, color=black] (x) arc (\anglex:\anglex+360:\radius);

            \node[anchor=south] at (x) {$x$};
            \fill (x) circle (1.5pt);
            \node[anchor=east] at (y) {$y$};
            \fill (y) circle (1.5pt);
            \node[anchor=west] at (z) {$z$};
            \fill (z) circle (1.5pt);
            \node[anchor=#3,red] at (inf) {$\re$};
            \fill[red] (inf) circle (1.5pt);
          \end{scope}
        }
        
        \printcircle{0}{30}{west}{$x$}{$y$}{$z$}
        \printcircle{1}{-90}{north}{$z$}{$x$}{$y$}
        \printcircle{2}{150}{east}{$y$}{$z$}{$x$}
      \end{tikzpicture}
    \end{tabular}
  \end{center}
  \caption{The three possible meanings of $[x,y,z]$ when $x,y,z \in X\setminus\{\re\}$.}
\end{figure}
\noindent
The definition when one of $x$, $y$, or $z$ is $\re$ is analogous: the three relations $[x, y, \re]$, $[y, \re, x]$ and $[\re, x, y]$ are all equivalent to $x<y$.
The notion of cyclic orders can be axiomatized without reference to linear orders (cf.~\cite[\nopp 3.]{huntington}), and conversely any cyclic order induces a linear order by picking an arbitrary point $\re$ and by cutting at that point.
More precisely:

\begin{theorem}[{{\cite[Paragraphs~1.2 and~2.1]{huntington}}}]
  \label{thm:linord-cycord}
  Any (total) cyclic order is induced from a linear order with ends using the procedure described in \Cref{linear-to-cyclic}.
  Moreover, two linear orders with ends induce the same cyclic order if and only if they are cyclic rearrangements of each other.%
  \footnote{
    Let~$X$ be a linear order with minimum $y^+$ and maximum $y^-$.
    A \emph{cyclic rearrangement of~$X$} is a linear order of the form $\{x^+\} \sqcup (x, y^-) \sqcup \{y\} \sqcup (y^+, x) \sqcup \{x^-\}$ for some point~$x \in X \setminus \{y^+, y^-\}$ (the order on the disjoint union is the concatenation of the respective orders, where summands are read from left to right)---the two ends~$y^+$ and~$y^-$ have been merged into a single point~$y$, and the point~$x$ has been split into two ends~$x^+$ and~$x^-$.
  }
\end{theorem}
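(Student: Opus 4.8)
The plan is to establish the two assertions in turn, working from an axiomatic description of a total cyclic order---say the four standard postulates (cf.\ \cite{huntington}): \emph{asymmetry} ($[x,y,z]\Rightarrow\neg[x,z,y]$), \emph{cyclicity} ($[x,y,z]\Rightarrow[y,z,x]$), \emph{transitivity} ($[x,y,z]\wedge[x,z,w]\Rightarrow[x,y,w]$), and \emph{totality} ($[x,y,z]\vee[x,z,y]$ for pairwise distinct $x,y,z$). For the first assertion, fix a cyclic order $[-,-,-]$ on a set $X$ and a point $\re\in X$, and define a binary relation on $X\setminus\{\re\}$ by declaring $x<_\re y$ precisely when $[\re,x,y]$. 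Instantiating the postulates with first entry $\re$ shows immediately that $<_\re$ is a strict total order: irreflexivity is vacuous (the relation only concerns distinct triples), asymmetry and totality are direct translations, and transitivity is the instance $[\re,x,y]\wedge[\re,y,z]\Rightarrow[\re,x,z]$. Let $\tilde X$ be the set $X\setminus\{\re\}$ with two fresh points adjoined as a minimum and a maximum, ordered in the evident way; this is a linear order with ends, and let $q\colon\tilde X\twoheadrightarrow X$ collapse the two endpoints to $\re$ and fix all other points. It remains to check that the cyclic order induced on $X$ by $(\tilde X,<)$ through \Cref{linear-to-cyclic} is the original one. On a triple containing $\re$ there is nothing to do: the relevant clauses of \Cref{linear-to-cyclic} state precisely that $[\re,x,y]$ holds in the induced order iff $x<_{\tilde X}y$, which by construction holds iff $[\re,x,y]$ held originally. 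On a triple of distinct points of $X\setminus\{\re\}$ it reduces to the identity
\[
  [x,y,z]
  \iff
  \bigl([\re,x,y]\wedge[\re,y,z]\bigr)
  \lor\bigl([\re,y,z]\wedge[\re,z,x]\bigr)
  \lor\bigl([\re,z,x]\wedge[\re,x,y]\bigr).
\]

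Both sides of this identity are invariant under cyclically permuting $(x,y,z)$---the left side by cyclicity, the right side visibly---so it suffices to treat the two cyclic types of the $<_\re$-ordering of $\{x,y,z\}$. If $x<_\re y<_\re z$, the first disjunct on the right holds; for the left side, assume $[x,z,y]$ towards a contradiction, so that $[y,x,z]$ by cyclicity, and apply transitivity with first entry $y$ to $[y,x,z]$ and $[y,z,\re]$ (the latter being $[\re,y,z]$ rewritten by cyclicity) to get $[y,x,\re]$, i.e.\ $[\re,y,x]$, contradicting $[\re,x,y]$ by asymmetry; hence $[x,y,z]$. If instead $x<_\re z<_\re y$, the same argument with $y$ and $z$ interchanged gives $[x,z,y]$, so both sides are false by asymmetry. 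This proves the first assertion; it also records, for later use, that a linear order with ends is determined up to order-isomorphism by the cyclic order it induces together with its merged point $c$---indeed the quotient map restricts to an order-isomorphism from its interior onto $(X\setminus\{c\},<_c)$, and the two ends are adjoined freely.

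For the ``if'' direction of the second assertion, let $\tilde X'$ be the cyclic rearrangement of $\tilde X$ at an interior point $a$ (notation as in the footnote to \Cref{thm:linord-cycord}) and set $c:=q(a)$. A short explicit computation of the order $<_c$ attached to the cyclic order of $\tilde X$---it enumerates $X\setminus\{c\}$ as the points above $a$ in $\tilde X$ (in their $\tilde X$-order), then the merged point of $\tilde X$, then the points below $a$ (in their $\tilde X$-order)---shows that the linear order explicitly defining $\tilde X'$ is order-isomorphic to $\{-\infty\}+(X\setminus\{c\},<_c)+\{+\infty\}$. By the first assertion, this latter order induces the cyclic order of $\tilde X$, hence so does $\tilde X'$. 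For the ``only if'' direction, suppose two linear orders with ends $\tilde X,\tilde X'$ induce the same cyclic order; identify their quotients via the cyclic isomorphism, call the common cyclically ordered set $X$, and let $\re,\re'$ be the respective merged points. If $\re=\re'$, the determinacy fact above gives $\tilde X\cong\tilde X'$. If $\re\neq\re'$, let $a$ be the (necessarily interior) point of $\tilde X$ with $q(a)=\re'$; by the ``if'' direction, the cyclic rearrangement of $\tilde X$ at $a$ induces the same cyclic order as $\tilde X$, hence as $\tilde X'$, and has merged point $\re'$, so the determinacy fact makes it order-isomorphic to $\tilde X'$. In both cases $\tilde X'$ is order-isomorphic to $\tilde X$ or to a cyclic rearrangement of it---and symmetrically, $\tilde X$ to $\tilde X'$ or to a cyclic rearrangement of it---which is the assertion, read up to order-isomorphism (and allowing the trivial rearrangement when $\re=\re'$).

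The formal scaffolding above is routine; the genuine work lies in two places. The first is the displayed identity in the first assertion: a short but delicate manipulation of the cyclic-order postulates, where the trap is to conflate the two directions of the equivalence. The second is the explicit computation of $<_c$ and the ensuing order-isomorphism in the ``if'' direction, which requires tracking separately how \Cref{linear-to-cyclic} treats a triple containing the merged point (a different clause than for a generic triple); treating the merged point uniformly as a symbolic $\pm\infty$ keeps this bookkeeping under control. (As the statement is classical, one may alternatively just cite \cite{huntington}.)
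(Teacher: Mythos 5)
The paper does not prove this statement at all---it is quoted as a classical result with a citation to Huntington---so there is no internal proof to compare against. Your argument is a correct, self-contained derivation from the standard axioms (cyclicity, asymmetry, transitivity, totality). The two substantive steps both check out: the displayed three-disjunct identity is exactly the unfolding of \Cref{linear-to-cyclic} for the order $<_{\re}$, and your reduction to two cyclic types followed by the transitivity-plus-asymmetry contradiction is sound; likewise the computation showing that the order $<_c$ attached to an interior image point $c=q(a)$ enumerates the points above $a$, then the old merged point, then the points below $a$, which is precisely the cyclic rearrangement at $a$. The only point of interpretation is the one you already flag: the theorem's phrase ``cyclic rearrangements of each other'' must be read up to order-isomorphism and as including the trivial rearrangement (when the two merged points coincide), since a rearrangement formally changes the underlying set; this is a looseness in the statement, not a gap in your proof. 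In short: where the paper defers to \cite{huntington}, you have supplied the proof, and it is correct.
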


\paragraph{Separation relations.}

Cyclic orders come in pairs: if $[-,-,-]$ is a cyclic order, the \emph{opposite cyclic order}~$[-,-,-]^\textnormal{op}$ is defined by the equivalence $[x,y,z]^\textnormal{op} \Leftrightarrow [z,y,x]$.
In order to specify a cyclic order without choosing a sense (that is, a pair of two opposite cyclic orders), we use the notion of \emph{separation relations} (also called \emph{point-pair separations}).
We refer to \cite[\nopp 4.]{huntington} for a reference on these relations.
We interpret separation relations in terms of ``intersecting chords'', as this will be a more natural way to think about them when relating them to $2$-flimsy spaces---consequently, the notation ``$ABCD$'' in \cite{huntington} corresponds to $\{A,C\} \bot \{B,D\}$ in our framework.

\begin{definition}
  Let $X$ be a set.
  A \emph{chord} $\{a,b\}$ is an unordered pair of distinct points $a,b \in X$.
  We denote by $\Cho(X)$ the set of all chords of~$X$.
\end{definition}

\begin{definition}
  \label{def:sep_rel}
  A \emph{separation relation} on $X$ is a binary relation $\bot$ on $\Cho(X)$ such that:
  \begin{enumerate}[label=(S\arabic*)]
    \item
      \label{ax:separation-symmetric}
      $\bot$ is a symmetric relation: $\{a,b\} \bot \{c,d\} \Leftrightarrow \{c,d\} \bot \{a,b\}$.
    \item
      \label{ax:separation-strict}
      For all $a \in X$ and $b,c \in X \setminus \{a\}$, we have $\{a,b\} \nbot \{a,c\}$.
    \item
      \label{ax:separation-total}
      If $a,b,c,d$ are four distinct points of~$X$, exactly one of the following holds:
      \[
        \{a,b\} \bot \{c,d\},
        \qquad\qquad
        \{a,c\} \bot \{b,d\},
        \qquad\qquad
        \{a,d\} \bot \{b,c\}.
      \]
      For example, in the drawings below, only the chords $\{a,c\}$ and $\{b,d\}$ intersect.
      \begin{center}
      \begin{tikzpicture}
        \def\radius{2}
        \def\shift{5}
        \def\angleA{150}
        \def\angleB{100}
        \def\angleC{50}
        \def\angleD{0}

        \begin{scope}
            \draw[thick] (\angleA:\radius) arc (\angleA:\angleD:\radius);
            
            \path (\angleA:\radius) coordinate (a1);
            \path (\angleB:\radius) coordinate (b1);
            \path (\angleC:\radius) coordinate (c1);
            \path (\angleD:\radius) coordinate (d1);

            \draw[red, thick] (a1) -- (b1);
            \draw[red, thick] (c1) -- (d1);

            \foreach \pt in {a1,b1,c1,d1} {
                \fill (\pt) circle (1.5pt);
            }
            \node[anchor=east] at (a1) {$a$};
            \node[anchor=south] at (b1) {$b$};
            \node[anchor=south] at (c1) {$c$};
            \node[anchor=west] at (d1) {$d$};
        \end{scope}

        \begin{scope}[xshift=\shift cm]
            \draw[thick] (\angleA:\radius) arc (\angleA:\angleD:\radius);

            \path (\angleA:\radius) coordinate (a2);
            \path (\angleB:\radius) coordinate (b2);
            \path (\angleC:\radius) coordinate (c2);
            \path (\angleD:\radius) coordinate (d2);

            \draw[red, thick] (a2) -- (c2);
            \draw[red, thick] (b2) -- (d2);

            \foreach \pt in {a2,b2,c2,d2} {
                \fill (\pt) circle (1.5pt);
            }
            \node[anchor=east] at (a2) {$a$};
            \node[anchor=south] at (b2) {$b$};
            \node[anchor=south] at (c2) {$c$};
            \node[anchor=west] at (d2) {$d$};

        \end{scope}

        \begin{scope}[xshift=2*\shift cm]
            \draw[thick] (\angleA:\radius) arc (\angleA:\angleD:\radius);

            \path (\angleA:\radius) coordinate (a3);
            \path (\angleB:\radius) coordinate (b3);
            \path (\angleC:\radius) coordinate (c3);
            \path (\angleD:\radius) coordinate (d3);

            \draw[red, thick] (a3) -- (d3);
            \draw[red, thick] (b3) -- (c3);
            \foreach \pt in {a3,b3,c3,d3} {
                \fill (\pt) circle (1.5pt);
            }
            \node[anchor=east] at (a3) {$a$};
            \node[anchor=south] at (b3) {$b$};
            \node[anchor=south] at (c3) {$c$};
            \node[anchor=west] at (d3) {$d$};

        \end{scope}
      \end{tikzpicture}
      \end{center}
    \item
      \label{ax:separation-transitive}
      For any five distinct points $a,b,c,d,e$ of~$X$, either none or exactly two of the following hold:
      \[
        \{a,b\} \bot \{c,d\},
        \qquad
        \{a,b\} \bot \{d,e\},
        \qquad
        \{a,b\} \bot \{c,e\}.
      \]
      Below, we depict on the left a situation where none of the properties hold, and on the right a situation where both $\{a,b\}\bot\{c,e\}$ and $\{a,b\}\bot\{d,e\}$ hold.
      \begin{center}
      \begin{tikzpicture}
        \def\radius{2}
        \def\shift{8}
        \def\angleA{150}
        \def\angleB{0}
        \def\angleC{125}
        \def\angleD{75}
        \def\angleE{25}

        \begin{scope}
          \draw[thick] (\angleA:\radius) arc (\angleA:\angleB:\radius);
          
          \path (\angleA:\radius) coordinate (a1);
          \path (\angleB:\radius) coordinate (b1);
          \path (\angleC:\radius) coordinate (c1);
          \path (\angleD:\radius) coordinate (d1);
          \path (\angleE:\radius) coordinate (e1);

          \draw[red, thick] (a1) -- (b1);
          \draw[red, thick] (c1) -- (d1);
          \draw[red, thick] (d1) -- (e1);
          \draw[red, thick] (c1) -- (e1);

          \foreach \pt in {a1,b1,c1,d1,e1} {
              \fill (\pt) circle (1.5pt);
          }
          \node[anchor=east] at (a1) {$a$};
          \node[anchor=west] at (b1) {$b$};
          \node[anchor=south] at (c1) {$c$};
          \node[anchor=south] at (d1) {$d$};
          \node[anchor=west] at (e1) {$e$};
        \end{scope}

        \begin{scope}[xshift=\shift cm]
          \draw[thick] (\angleA:\radius) arc (\angleA:\angleB:\radius);
          
          \path (\angleA:\radius) coordinate (a1);
          \path (\angleB:\radius) coordinate (b1);
          \path (\angleC:\radius) coordinate (c1);
          \path (\angleD:\radius) coordinate (d1);
          \path (\angleE:\radius) coordinate (e1);

          \draw[red, thick] (a1) -- (e1);
          \draw[red, thick] (c1) -- (d1);
          \draw[red, thick] (d1) -- (b1);
          \draw[red, thick] (c1) -- (b1);

          \foreach \pt in {a1,b1,c1,d1,e1} {
              \fill (\pt) circle (1.5pt);
          }
          \node[anchor=east] at (a1) {$a$};
          \node[anchor=west] at (e1) {$b$};
          \node[anchor=south] at (c1) {$c$};
          \node[anchor=south] at (d1) {$d$};
          \node[anchor=west] at (b1) {$e$};
        \end{scope}
      \end{tikzpicture}
      \end{center}
  \end{enumerate}
\end{definition}

Any cyclic order $[-,-,-]$ on a set~$X$ induces a natural separation relation~$\bot$ on~$X$, defined as follows (cf.~\cite[Paragraph~1.5]{huntington}):
\begin{equation}
  \label{cyclic-to-seprel}
  \{a,b\} \bot \{c,d\}
  \quad\textnormal{when}\quad
  \Big(
    [a,c,b] \textnormal{ and } [b,d,a]
  \Big)
  \textnormal{ or }
  \Big(
    [a,d,b] \textnormal{ and } [b,c,a]
  \Big).
\end{equation}

\begin{theorem}[{{\cite[Paragraph~4.2]{huntington}}}]
  \label{thm:seprel-cycord}
  Any separation relation is induced from a cyclic order using the procedure described in \Cref{cyclic-to-seprel}.
  Moreover, two cyclic orders induce the same separation relation if and only if they are equal or opposite.
\end{theorem}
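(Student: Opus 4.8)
The plan is to prove both halves by reducing everything to linear orders. The degenerate cases are immediate: if $\card X \leq 2$ there are no four distinct points, so the empty relation is the only separation relation, there is an essentially unique cyclic order, and both assertions hold trivially. So assume $\card X \geq 3$ and fix three distinct points $u, v, w \in X$, where $u$ will serve as a ``cut point'' and the ordered triple $(u, v, w)$ will fix a ``sense of rotation''.

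\emph{Existence.} Applying axiom~\ref{ax:separation-total} to $\{u, v, w, x\}$, every $x \in X \setminus \{u, v, w\}$ satisfies exactly one of $\{u, v\} \bot \{w, x\}$, $\{v, w\} \bot \{u, x\}$, $\{w, u\} \bot \{v, x\}$; this partitions $X \setminus \{u, v, w\}$ into three ``arcs'' $L_1$, $L_2$, $L_3$ --- heuristically, the points lying between $u$ and $v$, between $v$ and $w$, and between $w$ and $u$. I would then define a relation $<$ on $X \setminus \{u\}$: the ``coarse'' order declares $L_1 < \{v\} < L_2 < \{w\} < L_3$, and inside each arc a chord condition built from $\bot$ and two landmarks is used --- for instance $a < b$ on $L_1$ iff $\{u, b\} \bot \{a, v\}$, and similarly on $L_2$ (landmarks $v, w$) and $L_3$ (landmarks $w, u$). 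Because points in distinct coarse blocks are compared unambiguously, checking that $<$ is a linear order reduces to showing the chord condition is a strict total order within each arc; this is extracted from axioms~\ref{ax:separation-total} and~\ref{ax:separation-transitive} applied to four, respectively five, suitable points (for totality one uses~\ref{ax:separation-transitive} to rule out the ``$\{u,v\}\bot\{a,b\}$'' alternative and then~\ref{ax:separation-total} leaves exactly one of $a<b$, $b<a$; transitivity chains a few applications of the two axioms). Adjoining a minimum $u^+$ and a maximum $u^-$ to $(X \setminus \{u\}, <)$ yields a linear order with ends, which induces a cyclic order on $X$ via \Cref{linear-to-cyclic} once $u^+$ and $u^-$ are identified and the common point is renamed $u$. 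Finally, composing \Cref{linear-to-cyclic} with \Cref{cyclic-to-seprel}, the claim that this cyclic order induces $\bot$ becomes the statement that $\{a,b\}\bot\{c,d\}$ holds if and only if the two chords ``interleave'' in the order $<$ (reading $u$ as simultaneously $u^+$ and $u^-$), which one confirms by a finite case analysis over how the four points are distributed among $L_1, \{v\}, L_2, \{w\}, L_3, \{u\}$.

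\emph{Uniqueness.} \Cref{cyclic-to-seprel} is visibly invariant under replacing a cyclic order by its opposite --- the two disjunctive clauses exchange roles --- so opposite cyclic orders induce the same separation relation. Conversely, suppose two cyclic orders on $X$ induce the same $\bot$; assuming $\card X \geq 3$, pick distinct $u, v, w$ with $[u,v,w]$ in the first, and, replacing the second by its opposite if needed, arrange that $[u,v,w]$ holds in it too. Then $\bot$ and the triple $[u,v,w]$ recover the cyclic order entirely: by \Cref{cyclic-to-seprel}, which of the three arcs (relative to $u, v, w$) contains a further point $x$ is read from $\bot$ exactly as above, and for $x, y$ in a common arc whether $x$ precedes $y$ is likewise read from $\bot$ (e.g.\ ``$x$ precedes $y$ in the arc from $u$ to $v$'' means $\{u, y\}\bot\{x, v\}$). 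Since a cyclic order is determined by a distinguished triple together with the resulting arc-partition and the linear order induced on each arc, the two cyclic orders agree.

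The conceptual content here is slight; the difficulty is bookkeeping. I expect the main obstacles to be choosing the chord conditions defining $<$ so that they faithfully reproduce the intended circular picture in every positional configuration, and the within-arc verification of totality and transitivity, where the ``none or exactly two'' dichotomy of axiom~\ref{ax:separation-transitive} does the essential work. The remaining steps --- the cross-arc cases, the ``interleaving equals $\bot$'' check, and the uniqueness argument --- are routine but lengthy enumerations of configurations of four or five points.
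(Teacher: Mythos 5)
The paper does not prove this statement at all: it is quoted verbatim from Huntington, so there is no in-paper argument to compare yours against. Your outline is the standard (essentially Huntington's) linearization argument --- cut the circle at a point $u$, recover the three arcs determined by a reference triple from \axref{ax:separation-total}, order each arc by a chord condition, and glue --- and the plan is sound. In particular, the two places where you lean on the axioms are the right ones: for totality within an arc, \axref{ax:separation-transitive} applied to the chord $\{u,v\}$ and the points $w,a,b$ forces $\{u,v\}\nbot\{a,b\}$ once $\{u,v\}\bot\{w,a\}$ and $\{u,v\}\bot\{w,b\}$ both hold, so \axref{ax:separation-total} leaves exactly one of the two chord conditions $\{u,b\}\bot\{a,v\}$, $\{u,a\}\bot\{b,v\}$; and for transitivity the same parity axiom reduces the claim to excluding one alternative, which a second application handles. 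The uniqueness half is also correct: the two disjuncts in \Cref{cyclic-to-seprel} visibly swap under reversal, and conversely $\bot$ plus the orientation of a single triple determines the arc-partition and the order within each arc, hence the whole cyclic order. The only caveat is that what you have is a proof plan rather than a proof: the within-arc transitivity chain, the cross-arc consistency checks, and the final verification that the reconstructed cyclic order induces exactly $\bot$ are each multi-case enumerations, and they constitute essentially all of the work. Since the paper treats this as a citable classical fact, deferring those enumerations is reasonable, but a self-contained writeup would need them spelled out.
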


\subsection{Intervals, denseness, order-completeness of separation relations.}

Let $(X, \bot)$ be a set equipped with a separation relation.

\begin{definition}
  \label{def:open-interval-seprel}
  An \emph{open interval} of~$(X,\bot)$ is a set of one of the two forms
  \[
    I_{\not \ni c}(a,b)
    :=
    \suchthat{
      d \in X\setminus \{a,b,c\}
    }{
      \{a,b\} \bot \{c,d\}
    }\quad
    \text{ or }
    \quad 
    I_{\ni c}(a,b)
    :=
    \Bigl(X\setminus \{a,b\}\Bigr)
    \setminus
    I_{\not\ni c}(a,b),
  \]
  where $a,b,c \in X$ are distinct.
  We call $I_{\not \ni c}(a,b)$ and $I_{\ni c}(a,b)$ the \emph{open intervals between $a$ and $b$}.
\end{definition}

For distinct $a,b,c\in X$, note that $c\notin I_{\not\ni c}(a,b)$ and $c\in I_{\ni c}(a,b)$.
Moreover, \axref{ax:separation-transitive} implies that for any $d\in I_{\not\ni c}(a,b)$ we have $I_{\ni c}(a,b)=I_{\not\ni d}(a,b)$.

\begin{definition}
  \label{def:order-top_seprel}
  The \emph{order topology of~$(X,\bot)$} is the topology on~$X$ generated by the open intervals.
\end{definition}

\begin{definition}
\label{def:dense-complete_seprel}
  We say that $(X, \bot)$ is \emph{dense} if $\card X \geq 2$ and if for any $\{a,b\} \in \Cho(X)$ there is a $\{c,d\} \in \Cho(X)$ such that $\{a,b\} \bot \{c,d\}$.
  By \axref{ax:separation-transitive}, assuming that $\card X \geq 3$ (which is a necessary condition), this is equivalent to all open intervals being non-empty.
  
  We say that $(X,\bot)$ is \emph{order-complete} if, for any open interval $I$, the union of any non-empty chain of open intervals contained in $I$ is an open interval.
\end{definition}

Observing what intervals become under the equivalences of \Cref{thm:linord-cycord,thm:seprel-cycord}, we obtain:

\begin{proposition}
  \label{prop:intervals_from_linear_to_separation}
  Assume that $\card X \geq 3$.
  Let $(\tilde X, <)$ be a linear order with ends inducing the separation relation $\bot$ on~$X$ via the equivalences of \Cref{thm:linord-cycord,thm:seprel-cycord}, and let $\pi \colon \tilde X \twoheadrightarrow X$ be the natural surjection identifying both ends of~$\tilde X$.
  Then, for any $a,b\in\tilde{X}$ such that $a<b$ and $\pi(a)\neq\pi(b)$, the open intervals of $(X,\bot)$ between $\pi(a)$ and $\pi(b)$ are exactly $\pi\bigl((a,b)\bigr)$ and $X\setminus \pi\bigl([a,b]\bigr)$.
\end{proposition}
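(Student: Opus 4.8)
The plan is to unwind the two-stage construction of $\bot$: the order $<$ on $\tilde X$ first induces a cyclic order $[-,-,-]$ on $X$ via \Cref{linear-to-cyclic}, which in turn induces $\bot$ via \Cref{cyclic-to-seprel}. Fix $a<b$ in $\tilde X$ with $\pi(a)\neq\pi(b)$ and set $\alpha:=\pi(a)$, $\beta:=\pi(b)$, $A:=\pi\bigl((a,b)\bigr)$ and $B:=X\setminus\pi\bigl([a,b]\bigr)$. The first thing I would record is that, since $\pi$ identifies only $\eel$ with $\eer$ and since $\pi(a)\neq\pi(b)$ forces at most one of $a,b$ to be an end of $\tilde X$, one gets a genuine partition $X=\{\alpha\}\sqcup A\sqcup\{\beta\}\sqcup B$; here $B$ agrees with $\pi\bigl([\eel,a)\cup(b,\eer]\bigr)$ except that the end identification removes $\re$ from this image precisely when $[a,b]$ contains exactly one of $\eel,\eer$. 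In the generic situation where $a,b\in(\eel,\eer)$, one has $\re\in B$.

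The heart of the matter is the following claim: for distinct $\gamma,\delta\in X\setminus\{\alpha,\beta\}$, one has $\{\alpha,\beta\}\bot\{\gamma,\delta\}$ if and only if exactly one of $\gamma,\delta$ lies in $A$ (equivalently, since $\gamma,\delta\in A\sqcup B$, one lies in $A$ and the other in $B$). To prove this I would expand \Cref{cyclic-to-seprel} to $\bigl([\alpha,\gamma,\beta]$ and $[\beta,\delta,\alpha]\bigr)$ or $\bigl([\alpha,\delta,\beta]$ and $[\beta,\gamma,\alpha]\bigr)$, and then rewrite each ternary relation via \Cref{linear-to-cyclic}. When none of the four points is $\re$, write $g,d\in(\eel,\eer)$ for the preimages of $\gamma,\delta$; the constraint $a<b$ eliminates some of the three defining disjuncts in each case, leaving $[\alpha,\gamma,\beta]\Leftrightarrow a<g<b\Leftrightarrow\gamma\in A$ and $[\beta,\delta,\alpha]\Leftrightarrow (d<a$ or $d>b)\Leftrightarrow\delta\in B$, which yields the claim. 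The remaining cases—where one of the four points equals $\re$, which forces either $a$ or $b$ to be an end, or one of $\gamma,\delta$ to be $\re$, but never two of these at once—are handled identically using the supplementary clause ``$[x,y,\re]\Leftrightarrow \tilde x<\tilde y$'' of \Cref{linear-to-cyclic}.

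Granting the claim, the conclusion follows quickly. For $c\in X\setminus\{\alpha,\beta\}=A\sqcup B$, the claim gives $I_{\not\ni c}(\alpha,\beta)=B$ if $c\in A$ and $I_{\not\ni c}(\alpha,\beta)=A$ if $c\in B$; correspondingly $I_{\ni c}(\alpha,\beta)=(X\setminus\{\alpha,\beta\})\setminus I_{\not\ni c}(\alpha,\beta)$ is $A$, resp.\ $B$. Hence every open interval of $(X,\bot)$ between $\alpha$ and $\beta$ equals $A$ or $B$; conversely both are realized, taking $c\in B$ to obtain $A$ and $c\in A$ to obtain $B$ (and if one of $A,B$ is empty—which forces the other to be nonempty, since $\card X\geq3$—the empty set still arises as $I_{\not\ni c}(\alpha,\beta)$ for $c$ in the nonempty part). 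This is exactly the assertion that the open intervals between $\pi(a)$ and $\pi(b)$ are $\pi\bigl((a,b)\bigr)$ and $X\setminus\pi\bigl([a,b]\bigr)$.

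The step I expect to be the main obstacle is not any individual deduction—each is a short translation between $<$, $[-,-,-]$ and $\bot$—but rather the uniform bookkeeping around the quotient point $\re$: checking the partition $X=\{\alpha\}\sqcup A\sqcup\{\beta\}\sqcup B$ and the claim across the sub-cases ``no point is $\re$'', ``$a$ or $b$ is an end'', and ``$\gamma$ or $\delta$ is $\re$'', while keeping straight that $\pi^{-1}(\re)=\{\eel,\eer\}$ makes $X\setminus\pi([a,b])$ differ from $\pi(\tilde X\setminus[a,b])$ exactly when $[a,b]$ contains one end.
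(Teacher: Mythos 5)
Your proof is correct and follows exactly the route the paper intends: the paper states \Cref{prop:intervals_from_linear_to_separation} as a direct observation obtained by unwinding the equivalences of \Cref{thm:linord-cycord,thm:seprel-cycord}, and gives no written proof, so your explicit case analysis (the partition $X=\{\alpha\}\sqcup A\sqcup\{\beta\}\sqcup B$, the translation of $[\alpha,\gamma,\beta]$ and $[\beta,\delta,\alpha]$ into membership in $A$ and $B$, and the bookkeeping around $\re$) is precisely the verification the authors are leaving to the reader. The edge cases you flag (one of $a,b$ being an end, one of $\gamma,\delta$ being $\re$, and $A$ possibly empty when the order is not dense) are handled correctly.
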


\begin{corollary}
  \label{prop:from_linear_to_separation_intervals}
  Assume that $\card{X}\geq 3$ and that $(\tilde X, <)$ is a linear order with ends inducing the separation relation~$\bot$ on~$X$ via the equivalences of \Cref{thm:linord-cycord,thm:seprel-cycord}.
  Then:
  \begin{itemize}
    \item
      $(X,\bot)$ is dense if and only if the linear order $(\tilde X, <)$ is dense.
    \item
      $(X,\bot)$ is order-complete if and only if the linear order $(\tilde X, <)$ is order-complete.
  \end{itemize}
\end{corollary}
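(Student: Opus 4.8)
The plan is to read both equivalences off \Cref{prop:intervals_from_linear_to_separation}, which describes all the open intervals of $(X,\bot)$ in terms of $\pi$. Write $\re=\pi(\eel)=\pi(\eer)$, so that $\pi$ restricts to a bijection from $(\eel,\eer)=\tilde X\setminus\{\eel,\eer\}$ onto $X\setminus\{\re\}$. First I would record the precise shape of the open intervals of $(X,\bot)$. Since $\pi((a,b))$ never contains $\re$ (no end of $\tilde X$ lies strictly between $a$ and $b$) while $X\setminus\pi([a,b])$ contains $\re$ precisely when $\eel,\eer\notin[a,b]$, \Cref{prop:intervals_from_linear_to_separation} gives: the open intervals of $(X,\bot)$ \emph{avoiding} $\re$ are exactly the sets $\pi((x,y))$ with $x<y$ in $\tilde X$ and $\{x,y\}\neq\{\eel,\eer\}$ (and for such a set $\pi^{-1}(\pi((x,y)))=(x,y)$), whereas the open intervals of $(X,\bot)$ \emph{containing} $\re$ are exactly the sets $X\setminus\pi([a,b])$ with $\eel<a<b<\eer$. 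I would also observe, straight from \Cref{def:open-interval-seprel}, that every open interval of $(X,\bot)$ omits at least two points; in particular neither $X$ nor any set of the form $X\setminus\{\text{point}\}$ is an open interval of $(X,\bot)$.

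For denseness I would use the criterion recorded in \Cref{def:dense-complete_seprel} (valid since $\card X\geq 3$): the relation $\bot$ is dense if and only if every open interval of $(X,\bot)$ is non-empty. If $(\tilde X,<)$ is dense, then for any $a<b$ in $\tilde X$ with $\{a,b\}\neq\{\eel,\eer\}$ denseness yields a point of $(a,b)$, so $\pi((a,b))\neq\varnothing$; and since one of $a>\eel$, $b<\eer$ holds, denseness yields a point of $\tilde X$ lying in $(\eel,\eer)$ but outside $[a,b]$, whose image witnesses $X\setminus\pi([a,b])\neq\varnothing$. So all open intervals of $(X,\bot)$ are non-empty and $(X,\bot)$ is dense. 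Conversely, if $(\tilde X,<)$ is not dense then, as $\tilde X$ has more than one element, there are $x<y$ in $\tilde X$ with $(x,y)=\varnothing$; here $\{x,y\}=\{\eel,\eer\}$ is excluded (it would give $(\eel,\eer)=\varnothing$, hence $\card X\leq 1$), so $\pi((x,y))=\varnothing$ is an empty open interval of $(X,\bot)$ and $(X,\bot)$ is not dense.

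For order-completeness the key ingredient is the classical fact---which I would isolate and prove by the usual ``cut'' argument---that a linear order $L$ with a least and a greatest element is order-complete if and only if the union of every non-empty chain of open intervals of $L$ of the form $(x,y)$, $x,y\in L$, is an open interval of $L$; the only delicate point is that when $L$ is not dense a convex open subset need not be an interval, so one shows the union equals the interval with endpoints the infimum of the left endpoints and the supremum of the right endpoints, each replaced by its immediate predecessor, resp.\ successor, when it is attained. Granting this, suppose first $(X,\bot)$ is order-complete and argue the contrapositive: if $(\tilde X,<)$ has a non-empty subset bounded above with no supremum, let $A\sqcup B$ be the induced gap, so that $A$ is downward-closed with minimum $\eel$ and no maximum, $B$ is upward-closed with maximum $\eer$ and no minimum, and $\card A,\card B\geq 2$. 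Fixing $c\in B\setminus\{\eer\}$, the sets $\pi((\eel,a))$ for $a\in A\setminus\{\eel\}$ form a non-empty chain of open intervals of $(X,\bot)$, all contained in the open interval $\pi((\eel,c))$, with union $\pi(A\setminus\{\eel\})$ (using that $A$ has no maximum). Running through the two shapes from \Cref{prop:intervals_from_linear_to_separation}, and translating through the bijection $\pi\colon(\eel,\eer)\to X\setminus\{\re\}$, one sees that $\pi(A\setminus\{\eel\})$ is \emph{not} an open interval of $(X,\bot)$: since $A\setminus\{\eel\}$ is downward-closed in $(\eel,\eer)$ but has no maximum, each of the hypothetical forms $\pi((\alpha,\beta))$ and $X\setminus\pi([\alpha,\beta])$ forces $B$ to have a minimum, a contradiction. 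Hence $(X,\bot)$ is not order-complete. Conversely, suppose $(\tilde X,<)$ is order-complete and let $\mathcal C$ be a non-empty chain of open intervals of $(X,\bot)$ all contained in a fixed open interval $I$; discard the empty members of $\mathcal C$ (if none remain, $\bigcup\mathcal C=\varnothing$ is an open interval). If some member of $\mathcal C$ contains $\re$, the members containing $\re$ form a subchain $\mathcal C_1$ with $\bigcup\mathcal C=\bigcup\mathcal C_1$ (each other member being contained in every member of $\mathcal C_1$), and each member of $\mathcal C_1$ is $X\setminus\pi([a_C,b_C])$ with $\eel<a_C<b_C<\eer$; then $\{[a_C,b_C]\}_{C\in\mathcal C_1}$ is a chain of closed intervals of $\tilde X$, its intersection is $[\sup_C a_C,\inf_C b_C]$ by order-completeness, and this is non-degenerate because $I$ omits two points---necessarily distinct from $\re$---whose $\pi$-preimages lie in $(\eel,\eer)$ and in every $[a_C,b_C]$, so that $\sup_C a_C<\inf_C b_C$; hence $\bigcup\mathcal C_1=X\setminus\pi([\sup_C a_C,\inf_C b_C])$ is an open interval of $(X,\bot)$. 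If no member of $\mathcal C$ contains $\re$, then each member is $\pi(J_C)$ with $J_C=\pi^{-1}(C)\subseteq(\eel,\eer)$ an open interval of $\tilde X$ of the form $(x,y)$, the $J_C$ form a chain, their union $U$ is an open interval of $\tilde X$ by the ingredient above, $U\subseteq(\eel,\eer)$, and $U\neq(\eel,\eer)$---otherwise $\bigcup\mathcal C=\pi(U)=X\setminus\{\re\}\subseteq I$, contradicting that $I$ omits two points---so $\bigcup\mathcal C=\pi(U)$ is an open interval of $(X,\bot)$.

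The hard part is the order-completeness equivalence. On the ``only if'' side, it is the case analysis showing that the explicit union $\pi(A\setminus\{\eel\})$ is not an open interval: one must go through every shape permitted by \Cref{prop:intervals_from_linear_to_separation} and carefully track which of the two ends $\eel,\eer$ lies in each set considered. On the ``if'' side, the obstacles are the non-denseness subtlety in the classical ingredient (where ``open'' does not entail ``interval''), together with the need, throughout, to treat separately the open intervals of $(X,\bot)$ that do and do not contain the basepoint $\re$---a cyclic feature with no analogue in the purely linear statements from which \Cref{prop:intervals_from_linear_to_separation} is derived.
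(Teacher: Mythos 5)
Your proof is correct, and it rests on the same foundation as the paper's: everything is read off the description of the open intervals of $(X,\bot)$ given by \Cref{prop:intervals_from_linear_to_separation}. The difference is in how the order-completeness equivalence is organized. The paper's (very terse) argument hinges on a single translation device — $s_+=\sup S$ if and only if $\bigcup_{s\in S}[s_0,s)=[s_0,s_+)$, and dually for infima — which converts suprema in $\tilde X$ directly into statements about unions of chains of the pieces $\pi([\eel,a))$ and $\pi((b,\eer])$ making up the intervals $X\setminus\pi([a,b])$. You instead prove the two implications separately: the ``only if'' direction by exhibiting, from a Dedekind gap $A\sqcup B$ of $\tilde X$, an explicit chain $\{\pi((\eel,a))\}_{a\in A\setminus\{\eel\}}$ whose union $\pi(A\setminus\{\eel\})$ cannot match either interval shape without giving $B$ a minimum (or, for the shape $X\setminus\pi([\alpha,\beta])$, without containing $\re$, which it does not); and the ``if'' direction by splitting a chain according to whether its members contain $\re$ and computing the union in each case, via your auxiliary fact that in an order-complete linear order with ends the union of a chain of intervals $(x,y)$ is again an open interval. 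Both routes work; yours is self-contained and makes explicit the case analysis (and the non-denseness subtlety in the auxiliary fact) that the paper compresses into ``follows easily,'' at the cost of being considerably longer. The only imprecision worth noting is cosmetic: the form $X\setminus\pi([\alpha,\beta])$ is excluded because it contains $\re$ while $\pi(A\setminus\{\eel\})$ does not, rather than because it forces $B$ to have a minimum.
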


\begin{proof}
    The first point readily follows from \Cref{prop:intervals_from_linear_to_separation}.
    For the second point, observe that for any subset~$S$ of $\tilde{X}$, for any $s_0\in S$, and for any $s_+\in\tilde{X}$ with $s_0\leq s_+$, the set~$S$ admits~$s_+$ as its supremum if and only if $\bigcup_{s\in S}[s_0,s)=[s_0,s_+)$.
    Likewise, for $s_-\in\tilde{X}$ with $s_-\leq s_0$, $S$ admits~$s_-$ as its infimum (namely, $s_-$ is the supremum of the set of lower bounds of $S$) if and only if $\bigcup_{s\in S}(s,s_0]=(s_-,s_0]$.
    Then, the second point follows easily from \Cref{prop:intervals_from_linear_to_separation}.
\end{proof}

We recall a standard result about the comparison of Hausdorff and compact topologies.
This fact is used several times throughout the paper.

\begin{lemma}
  \label{lattice_Hausdorff-compact}
  Let $X$ be a set and let $\tau_1,\tau_2$ be two topologies on $X$ such that $(X,\tau_1)$ is Hausdorff and~$(X,\tau_2)$ is compact.
  If $\tau_1\subseteq \tau_2$, then $\tau_1=\tau_2$.
\end{lemma}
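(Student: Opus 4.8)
The plan is to invoke the classical theorem that a continuous bijection from a compact space to a Hausdorff space is a homeomorphism, applied to the identity map. Concretely, consider the identity map $\mathrm{id} \colon (X, \tau_2) \to (X, \tau_1)$. Since $\tau_1 \subseteq \tau_2$, this map is continuous: the preimage under $\mathrm{id}$ of any $\tau_1$-open set is that same set, which is $\tau_2$-open by hypothesis. It is clearly a bijection. Now $(X, \tau_2)$ is compact and $(X, \tau_1)$ is Hausdorff, so it remains to show that a continuous bijection $f$ from a compact space $K$ to a Hausdorff space $H$ is automatically open (equivalently, closed, equivalently a homeomorphism); applying this with $f = \mathrm{id}$ gives that every $\tau_2$-open set is $\tau_1$-open, i.e.\ $\tau_2 \subseteq \tau_1$, hence $\tau_1 = \tau_2$.

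For the remaining step I would argue directly rather than cite it as a black box, since it is short. Let $C \subseteq X$ be $\tau_2$-closed. Then $C$ is a closed subset of the compact space $(X, \tau_2)$, hence $\tau_2$-compact. Its image under the continuous map $\mathrm{id} \colon (X,\tau_2) \to (X,\tau_1)$ is $\tau_1$-compact, i.e.\ $C$ is a compact subset of the Hausdorff space $(X, \tau_1)$; a compact subset of a Hausdorff space is closed, so $C$ is $\tau_1$-closed. Thus every $\tau_2$-closed set is $\tau_1$-closed, and by taking complements every $\tau_2$-open set is $\tau_1$-open, giving $\tau_2 \subseteq \tau_1$.

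There is no real obstacle here: the only facts used are that closed subspaces of compact spaces are compact, that continuous images of compact sets are compact, and that compact subsets of Hausdorff spaces are closed — all entirely standard. The one thing to be careful about is bookkeeping the direction of the inclusions: the hypothesis $\tau_1 \subseteq \tau_2$ makes $\mathrm{id} \colon (X,\tau_2) \to (X,\tau_1)$ (not the other direction) continuous, and the conclusion we extract, $\tau_2 \subseteq \tau_1$, then combines with the hypothesis to force equality.
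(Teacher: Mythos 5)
Your proof is correct and follows exactly the same route as the paper: the identity map $(X,\tau_2)\to(X,\tau_1)$ is a continuous bijection from a compact space to a Hausdorff space, hence a homeomorphism (the paper cites this as a black box, whereas you also spell out the standard argument). No issues.
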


\begin{proof}
  Since $\tau_1\subseteq \tau_2$, the identity map of~$X$ is a continuous bijection from the compact space~$(X, \tau_2)$ to the Hausdorff space~$(X, \tau_1)$.
  Hence, it is a homeomorphism by \cite[Theorem~17.14]{willard}, which means that $\tau_1 = \tau_2$.
\end{proof}

Using this fact, we conclude this section with a characterization of big circles:

\begin{theorem}
  \label{cor:seprel-big-circ}
  A topological space $(X,\tau)$ is a big circle (\Cref{def:big-circle}) if and only if there is a dense and order-complete separation relation~$\bot$ on~$X$ such that~$\tau$ is the order topology of~$(X,\bot)$.
\end{theorem}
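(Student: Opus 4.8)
The plan is to prove the two implications separately, using the dictionary between linear orders with ends, cyclic orders, and separation relations established in \Cref{thm:linord-cycord,thm:seprel-cycord}, together with the order-theoretic translation in \Cref{prop:from_linear_to_separation_intervals} and the compact-versus-Hausdorff comparison in \Cref{lattice_Hausdorff-compact}.

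For the forward implication, suppose $(X,\tau)$ is a big circle. By \Cref{def:big-circle}, there is a dense order-complete linear order $(\tilde X, <)$ with ends such that $X$ is the quotient $\pi\colon \tilde X \twoheadrightarrow X$ identifying $\min \tilde X$ and $\max \tilde X$, and $\tau$ is the quotient topology. Via \Cref{thm:linord-cycord,thm:seprel-cycord}, the order $<$ induces a separation relation $\bot$ on $X$ (note $\card X \geq 3$ since $\tilde X$ is dense with ends, hence infinite); by \Cref{prop:from_linear_to_separation_intervals}, $\bot$ is dense and order-complete. It remains to identify $\tau$ with the order topology $\tau_\bot$ of $(X,\bot)$. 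Using \Cref{prop:intervals_from_linear_to_separation}, the open intervals of $(X,\bot)$ between $\pi(a)$ and $\pi(b)$ are exactly $\pi\bigl((a,b)\bigr)$ and $X \setminus \pi\bigl([a,b]\bigr)$; I would check that these sets are all $\tau$-open. This is essentially the computation already performed inside the proof of \Cref{prop:big-circle-2f}: an open subset of $\tilde X$ that either avoids or contains $\{\eel,\eer\}$ has saturated preimage, so its $\pi$-image is open, and both $\pi\bigl((a,b)\bigr)$ and $X\setminus\pi\bigl([a,b]\bigr) = \pi\bigl((-\infty,a)\cup(b,+\infty)\bigr)$ arise this way (taking cases according to whether the interval straddles $\re$). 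Hence $\tau_\bot \subseteq \tau$. Now \Cref{prop:big-circle-2f} tells us $(X,\tau)$ is compact and Hausdorff; since $(X,\tau_\bot)$ is Hausdorff as well (it embeds into the argument via \Cref{prop:continuum-facts}, or one checks Hausdorffness of $\tau_\bot$ directly from denseness of $\bot$), \Cref{lattice_Hausdorff-compact} applied to $\tau_\bot \subseteq \tau$ forces $\tau_\bot = \tau$ once we know $\tau_\bot$ is compact — alternatively, and more cleanly, apply the lemma with $\tau_1 = \tau_\bot$, $\tau_2 = \tau$ after checking $\tau_\bot$ is Hausdorff and $\tau$ is compact, giving $\tau = \tau_\bot$. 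So $\tau$ is the order topology of the dense order-complete separation relation $\bot$.

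For the converse, suppose $\bot$ is a dense order-complete separation relation on $X$ with $\tau$ its order topology. Necessarily $\card X \geq 3$ (denseness requires $\card X \geq 3$ for all open intervals to be nonempty; if $\card X = 2$ there are no chords $\{c,d\}$ disjoint from a given one). By \Cref{thm:seprel-cycord} and \Cref{thm:linord-cycord}, $\bot$ is induced by a linear order $(\tilde X, <)$ with ends, and by \Cref{prop:from_linear_to_separation_intervals} that order is dense and order-complete; thus $\tilde X$ equipped with its order topology is a big interval (\Cref{def:lin-cont}), and the quotient $X' := \tilde X / (\min \sim \max)$ with the quotient topology $\tau'$ is a big circle. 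By the forward implication (just proved), $\tau'$ is the order topology of the separation relation induced by $<$ on $X'$ — but that separation relation is exactly $\bot$ under the identification $X' = X$ coming from \Cref{thm:linord-cycord,thm:seprel-cycord}. Hence $\tau = \tau'$ and $(X,\tau)$ is a big circle.

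The main obstacle is the topological identification $\tau = \tau_\bot$ in the forward direction: one must carefully match the subbasis of $\tau_\bot$ (the order-theoretic open intervals of $\bot$, described via \Cref{prop:intervals_from_linear_to_separation}) against the quotient topology $\tau$, handling the intervals that contain the special point $\re = \pi(\eel) = \pi(\eer)$ separately from those that avoid it, and then invoking \Cref{lattice_Hausdorff-compact} to upgrade the resulting inclusion of topologies to an equality without having to compute a basis for $\tau$ explicitly. Everything else is bookkeeping with the established equivalences.
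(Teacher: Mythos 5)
Your proposal is correct and follows essentially the same route as the paper: both directions are reduced via the dictionary of \Cref{thm:linord-cycord,thm:seprel-cycord} and \Cref{prop:from_linear_to_separation_intervals} to the single claim that the quotient topology on the big circle equals the order topology of the induced separation relation, which is then established by checking the inclusion $\tau_\bot\subseteq\tau$ with \Cref{prop:intervals_from_linear_to_separation} and upgrading to equality via \Cref{lattice_Hausdorff-compact} (Hausdorffness of $\tau_\bot$ coming directly from denseness of $\bot$, compactness of $\tau$ from \Cref{prop:big-circle-2f}). Your first phrasing of the application of \Cref{lattice_Hausdorff-compact} was backwards, but the ``more cleanly'' version you settle on is exactly the paper's argument.
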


\begin{proof}
  Using the definition of big circles on the one hand, and the equivalences of \Cref{thm:linord-cycord,thm:seprel-cycord} together with \Cref{prop:from_linear_to_separation_intervals} on the other hand, this reduces to the following claim:
  for any dense and order-complete linear order~$(\tilde X,<)$, with ends~$\tilde 0$ and~$\tilde 1$, letting~$X$ be the quotient of~$\tilde X$ obtained by identifying~$\tilde 0$ and~$\tilde 1$ and letting~$\bot$ be the dense order-complete separation relation on~$X$ induced by~$<$ (cf.~\Cref{linear-to-cyclic,cyclic-to-seprel} and \Cref{prop:from_linear_to_separation_intervals}), the order topology~$\tau_1$ of~$(X,\bot)$ (\Cref{def:order-top_seprel}) coincides with the quotient topology~$\tau_2$ on~$X$ induced by the order topology of~$(\tilde X, {<})$.
  We shall prove that $\tau_1 = \tau_2$ by using \Cref{lattice_Hausdorff-compact}.

  First, we know from \Cref{prop:big-circle-2f} that the big circle $(X,\tau_2)$ is compact.

  Second, if $x,y\in X$ are distinct, then by density there exists $\{w,z\}\in\Cho(X)$ such that $\{x,y\}\bot\{w,z\}$, which lets us observe that $x\in I_{\ni x}(w,z)$ and $y\in I_{\not\ni x}(w,z)$ by \axref{ax:separation-symmetric}.
  Since the open intervals $I_{\ni x}(w,z)$ and $I_{\not\ni x}(w,z)$ of $(X,\bot)$ are disjoint, it follows that $(X,\tau_1)$ is Hausdorff.

  By \Cref{lattice_Hausdorff-compact}, the claim then reduces to verifying that $\tau_1 \subseteq \tau_2$, i.e., that all open intervals of~$(X,\bot)$ belong to~$\tau_2$.
  This is a consequence of \Cref{prop:intervals_from_linear_to_separation}.
  Indeed, for any $a,b\in\tilde{X}$ such that $a<b$, $(a,b)$ avoids the ends of $(\tilde{X},<)$ so the inverse image under the natural projection $\pi \colon \tilde X \twoheadrightarrow X$ of $\pi\bigl((a,b)\bigr)$ is~$(a,b)$, which is open.
  Depending on whether $\{a,b\}$ contains one of the ends of $(\tilde{X},<)$, the inverse image of $X\setminus \pi\bigl([a,b]\bigr)$ is either $(-\infty,a)\cup (b,+\infty)$ or $(\eel,a)$ or $(b,\eer)$, which are all open.
\end{proof}

\section{Flimsy connectivity spaces}
\label{sn:props-2f}

In this section, we prove key properties of $2$-flimsy spaces, using our axiomatic notion of connected subsets.
Notably, we show that:
\begin{description}
  \item[(\Cref{only-two-components})]
    The complement of any pair of two distinct points of a $2$-flimsy space has exactly two connected components.
  \item[(\Cref{flimsy-is-t1})]
    $2$-flimsy spaces can be equivalently characterized as those $\rT_1$ spaces in which the complement of any connected subset is connected.
  \item[(\Cref{thm:no-3f-cspace})]
    There are no $n$-flimsy spaces for any integer $n \geq 3$.
  \item[(\Cref{cor:list-of-connected-parts})]
    The connected subsets of a $2$-flimsy space $X$ are exactly the following: $\emptyset$, $X$, singletons and their complements, and subsets of the form $C$, $C \cup \{x\}$, $C \cup \{x,y\}$ for some pair of distinct points $x,y \in X$ and some connected component $C$ of $X \setminus \{x,y\}$.
\end{description}

\subsection{Connectivity spaces}
\label{subsn:c-spaces}

Whether a topological space $X$ is $n$-flimsy only depends on the set of its connected subsets, which is less information than the full topological structure.
In order to prove things in more generality, we reason just in terms of connected subsets---this will allow us to treat the case of connectedness and (big-)path-connectedness in a unified manner.
To this end, we use the following axioms:

\begin{definition}
  \label{def:c-space}
  A \emph{connectivity space} is a pair $(X, \cC)$ where~$X$ is a set and~$\cC$ is a set of parts of~$X$ satisfying the following properties (we say that~$\cC$ is a \emph{connectivity} on~$X$).
  \begin{enumerate}[label=\bfseries(C\arabic*)]
    \item
      \label{ax:singleton-connected}
      For any $x \in X$, we have $\{x\} \in \cC
      $.
    \item
      \label{ax:union-connected}
      For any $C \subseteq \cC$, if $\bigcap C \neq \emptyset$, then $\bigcup C \in \cC$.
      (In particular, $\emptyset \in \cC$.)
    \item
      \label{ax:complement-connected}
      For any $Y \in \cC$ and any $x,y \in Y$, the subset $C := \bigcup \suchthat{Z \in \cC}{y \in Z \textnormal{ and } Z \subseteq Y \setminus \{x\}}$ satisfies $Y \setminus C \in \cC$.
    \item
      \label{ax:button-zip}
      For any non-empty $A,B\in\cC$ such that $A\cup B\in\cC$, at least one of the following holds:
      \begin{enumroman}
        \item
          \label{ax:button-zip-button}
          there exists $x\in A\cup B$ such that $A\cup\{x\}\in\cC$ and $B\cup\{x\}\in \cC$; or
        \item
          \label{ax:button-zip-seam}
          there exists $S\in\cC$ with $S \subseteq A\cup B$ such that $S\cap A \notin \cC$ or $S\cap B\notin \cC$.
      \end{enumroman}
  \end{enumerate}
\end{definition}

We shall see in \Cref{prop:top-is-cspace} (resp.\ in \Cref{prop:continuum-is-cspace}, in \Cref{prop:path-is-cspace}) that any topological space together with the set of its connected subsets (resp.\ its big-path-connected subsets, its path-connected subsets) is a connectivity space.
\Axref{ax:button-zip} is not used in \Cref{subsn:prop-2f-cspace,subsn:no-3f}---in particular, there are no $3$-flimsy connectivity spaces even if one does not require \axref{ax:button-zip}.

Let~$(X, \cC)$ be a connectivity space.
We say that a subset $Y \subseteq X$ is \emph{$\cC$-connected} when $Y \in \cC$.
Any subset $Y \subseteq X$ is equipped with a canonical structure of connectivity subspace, namely $(Y, \, \cC \cap \mathcal P(Y))$.
\Axref{ax:union-connected} implies that~$Y$ can be partitioned into its (well-defined) maximal $\cC$-connected subsets, which we call \emph{the $\cC$-components of~$Y$}.
With that terminology, \axref{ax:complement-connected} says that if $C$ is a $\cC$-component of $Y \setminus \{x\}$, and if~$Y$ is $\cC$-connected, then $Y \setminus C$ is $\cC$-connected.
For any $y \in Y$, we define $C_{\ni y}(Y) := \bigcup \suchthat{ Z \in \cC }{ y \in Z \textnormal{ and } Z \subseteq Y }$, which is the $\cC$-component of~$Y$ containing $y$, and we let $C_{\not\ni y}(Y) := Y \setminus C_{\ni y}(Y)$ be its complement (which is not $\cC$-connected in general).

\begin{lemma}
  \label{lem:compo_subset}
  Let $(X,\mathcal{C})$ be a connectivity space, and consider two subsets $Z \subseteq Y \subseteq X$.
  Any $\cC$-component of~$Y$ that is contained in~$Z$ is a $\cC$-component of~$Z$.
\end{lemma}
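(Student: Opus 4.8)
The plan is to exploit the fact that $\cC$-connectedness is an \emph{intrinsic} property: for a subset $W \subseteq X$, being a $\cC$-component of~$W$ means precisely being a maximal element of~$\cC$ among those contained in~$W$, and whether a set belongs to~$\cC$ does not depend on any chosen ambient subspace. So let $D$ be a $\cC$-component of~$Y$ with $D \subseteq Z$. Since $D \in \cC$ and $D \subseteq Z$, the set~$D$ is already a $\cC$-connected subset of~$Z$; the whole content of the lemma is thus to verify that~$D$ is \emph{maximal} among $\cC$-connected subsets of~$Z$.

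For maximality, I would suppose $D \subseteq D' \subseteq Z$ with $D' \in \cC$. Then $D' \subseteq Z \subseteq Y$, so $D'$ is a $\cC$-connected subset of~$Y$ containing the $\cC$-component~$D$; by maximality of~$D$ inside~$Y$ this forces $D' = D$. Hence $D$ is a maximal $\cC$-connected subset of~$Z$, i.e.\ a $\cC$-component of~$Z$. (One may also phrase this pointwise via the operator $C_{\ni y}$: either $D = \emptyset$, which forces $Y = \emptyset = Z$ and the claim is trivial, or one picks $y \in D$ and observes $D \subseteq C_{\ni y}(Z) \subseteq C_{\ni y}(Y) = D$, the middle inclusion because $C_{\ni y}(Z) \in \cC$, contains~$y$, and is contained in $Z \subseteq Y$.)

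I expect no real obstacle here: none of the connectivity axioms are needed beyond \axref{ax:union-connected}, which is merely what lets us speak of components in the first place. The only point demanding a moment's attention is not to misread ``$\cC$-connected subset of~$Z$'' as some relativized notion — it is literally ``element of~$\cC$ that is contained in~$Z$'' — after which the statement becomes the elementary order-theoretic fact that a maximal element of a poset which happens to lie below a fixed element~$Z$ stays maximal once attention is restricted to the elements below~$Z$.
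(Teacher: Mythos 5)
Your proof is correct and is essentially the paper's own argument, just written out in more detail: a $\cC$-component is a maximal $\cC$-connected subset, and any $\cC$-connected subset of~$Z$ is in particular a $\cC$-connected subset of~$Y$, so maximality in~$Y$ implies maximality in~$Z$. No further comment is needed.
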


\begin{proof}
  This follows directly from the definitions (a $\cC$-component is a maximal $\cC$-connected subset, and any $\cC$-connected subset of $Z$ is in particular a $\cC$-connected subset of $Y$).
\end{proof}

\begin{definition}
  \label{def:c-space-T1}
  A connectivity space $(X, \cC)$ is \emph{$\rT_1$} when for any distinct $x,y \in X$, we have~$\{x,y\} \notin \cC$.
\end{definition}

\begin{definition}
\label{def:c-space_flimsy}
  Let $n \in \N$.
  A connectivity space $(X, \cC)$ is \emph{$n$-flimsy} if $\card X > n$ and if, for any subset~$S \subseteq X$, we have $(X \setminus S) \in \cC$ whenever $\card S < n$ and $(X \setminus S) \notin \cC$ whenever $\card S = n$.
\end{definition}

\subsection{Properties of $2$-flimsy connectivity spaces}
\label{subsn:prop-2f-cspace}

In this subsection, we fix a $2$-flimsy connectivity space~$(X,\cC)$.
For any subset~$S \subseteq X$, the notation~$\comp S$ always refers to the complement~$X \setminus S$ of~$S$ in~$X$.

\begin{proposition}
  \label{only-two-components}
  For any distinct~$x, y \in X$, the subset~$\compb{x,y}$ has exactly two $\cC$-components.
\end{proposition}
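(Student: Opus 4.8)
The plan is to show that $\comp{\{x,y\}}$ has at least two $\cC$-components and at most two $\cC$-components. Since $(X,\cC)$ is $2$-flimsy and $\card{\{x,y\}} = 2$, we know $\comp{\{x,y\}} \notin \cC$, so it is disconnected and therefore has at least two $\cC$-components; the content is the upper bound. The key tool will be \axref{ax:complement-connected} applied to $Y = \comp{\{x\}}$ (which lies in $\cC$ since $\card{\{x\}} < 2$) together with a second point: more precisely, I would work inside $Y := \comp{\{x\}}$, which is $\cC$-connected, and consider removing $y$ from it, so that $Y \setminus \{y\} = \comp{\{x,y\}}$.

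First I would fix $x, y$ distinct and set $Y := \comp{\{x\}} \in \cC$. Suppose for contradiction that $\comp{\{x,y\}} = Y \setminus \{y\}$ has three or more $\cC$-components; pick three distinct ones and call them $C_1, C_2, C_3$ (with the remaining components, if any, lumped together arbitrarily). By \axref{ax:complement-connected}, for each $i$ the set $Y \setminus C_i$ is $\cC$-connected, i.e.\ $C_{\not\ni}$-type complements of single components stay connected. Now I would like to extract a contradiction with the $2$-flimsiness of the whole space $X$. The natural move: pick points $z_i \in C_i$ for $i = 1, 2, 3$. Consider the set $A := (Y \setminus C_1) = \{x\}^{\complement} \setminus C_1$; wait — more carefully, I want to build a disconnection of $X$ minus a single point, or minus nothing, to contradict $1$-flimsiness / the definition. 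The cleanest route: observe that $X \setminus \{y\} \in \cC$ (as $\card{\{y\}} < 2$), and $X \setminus \{y\} = \{x\} \sqcup \comp{\{x,y\}} = \{x\} \cup C_1 \cup C_2 \cup \cdots$. Since $X \setminus \{y\}$ is connected and the $C_i$ are its components after removing $x$, the point $x$ must "glue" them: for the union $\{x\} \cup C_i$ to interact correctly, I'd examine which $C_i \cup \{x\}$ lie in $\cC$.

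The crucial step — and the one I expect to be the main obstacle — is the following. In $X \setminus \{y\}$, removing $x$ yields the components $C_1, C_2, \dots$. By \axref{ax:complement-connected} applied to $Y' := X \setminus \{y\} \in \cC$ and the point $x$, each $Y' \setminus C_i = \{x\} \cup \bigcup_{j \neq i} C_j$ is $\cC$-connected. In particular $\{x\} \cup C_2 \cup C_3 \in \cC$ if there are exactly three, but with more components one gets $\{x\} \cup (\text{everything but } C_1)$. Now symmetrically, using $Y'' := X \setminus \{x\} = Y \in \cC$ and removing $y$: each $Y \setminus C_i = \{y\} \cup \bigcup_{j\neq i} C_j \in \cC$. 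Intersecting two such connected sets that share a point and applying \axref{ax:union-connected} repeatedly, I would try to show $\{x\} \cup \{y\} \cup C_1 \cup \cdots = X$ decomposes so that actually $C_1^{\complement} = \{x, y\} \cup \bigcup_{j \neq 1} C_j$ is connected and likewise each $C_i^{\complement}$ is connected; then removing any two points from within a single $C_i$ would fail to disconnect $X$ in the expected way — more precisely, I would derive that $X \setminus \{z_2, z_3\}$ (with $z_i \in C_i$) is still $\cC$-connected, contradicting $2$-flimsiness, because $z_2$ and $z_3$ lie in different components of $\comp{\{x,y\}}$ yet the sets $C_i \setminus \{z_i\}$ together with $\{x,y\}$ and the other untouched $C_j$'s can be reassembled via \axref{ax:complement-connected} and \axref{ax:union-connected} into a single $\cC$-connected set. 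Getting the bookkeeping of these unions and intersections exactly right — ensuring every union invoked genuinely has nonempty common intersection so that \axref{ax:union-connected} applies — is the delicate part; \axref{ax:button-zip} is presumably not needed here, consistent with the remark that it is unused in \Cref{subsn:prop-2f-cspace}.
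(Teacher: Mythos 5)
Your overall strategy coincides with the paper's: assume there are at least three $\cC$-components of $\compb{x,y}$, pick one point in each of two of them, and contradict $2$-flimsiness by showing that the complement of those two points is still $\cC$-connected. Your intermediate observation is also correct: applying \axref{ax:complement-connected} to $X\setminus\{y\}$ and to $X\setminus\{x\}$ and gluing with \axref{ax:union-connected} does show that $\comp{C_i}=\{x,y\}\cup\bigcup_{j\neq i}C_j$ is $\cC$-connected for each $\cC$-component $C_i$ of $\compb{x,y}$ (the common intersection $\bigcup_{j\neq i}C_j$ is non-empty because there are at least two components).

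However, the step you defer as ``bookkeeping'' is the entire content of the proposition, and it does not follow from what you have established. Knowing that each $\comp{C_i}$ is $\cC$-connected says nothing about sets of the form $(C_i\setminus\{z_i\})\cup\{x,y\}\cup\cdots$. The natural decomposition $\compb{z_2,z_3}=(\compb{z_2}\setminus C_3)\cup(\compb{z_3}\setminus C_2)$ reduces the problem to showing that $\compb{z_2}\setminus C_3$ is $\cC$-connected, and \axref{ax:complement-connected} does not apply directly to this: $C_3$ is a $\cC$-component of $\compb{x,y}$ --- a set obtained by deleting \emph{two} points --- and need not be a $\cC$-component of $\compb{z_2,x}$, since the component $D$ of $\compb{z_2,x}$ containing $C_3$ may also contain $y$ and, through $y$, swallow other components. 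The paper's proof is precisely the resolution of this difficulty: it splits into the cases $y\notin D$ and $y\in D$, and in the latter case it must separately establish that $A\cup\{x\}$, $A\cup\{y\}$, and $D\setminus C_3$ are $\cC$-connected (where $A$ is the union of the remaining components, non-empty by the contradiction hypothesis --- and this non-emptiness is exactly what legitimizes the final gluings via \axref{ax:union-connected}). Until you supply this case analysis, the argument is incomplete. A minor further point: you cannot ``lump the remaining components together'' with one of the chosen $C_i$, since the resulting set is no longer a $\cC$-component and \axref{ax:complement-connected} would no longer apply to it; keep two genuine components and treat the union of all the others as a separate, possibly non-$\cC$-connected set, as the paper does with its set $A$.
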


\begin{proof}
  Since~$X$ is $2$-flimsy, by definition, $\compb{x,y}$ has at least two $\cC$-components.
  Pick two distinct $\cC$-components~$C_1$ and~$C_2$ of~$X$, two arbitrary points $u_1 \in C_1$ and $u_2 \in C_2$ (in particular, $u_1 \neq u_2$), and let $A := \comp{(\{x,y\}\cup C_1\cup C_2)}$.
  Our goal is to show that~$A$ is empty.
  We reason by contradiction by assuming that~$A$ is non-empty.

  \begin{figure}[H]
    \begin{center}
      \begin{tikzpicture}
        \def\radius{2}
        \def\anglex{90}
        \def\angley{-90}
        \def\angleuone{0}
        \def\angleutwo{180}

        \path (\anglex:\radius) coordinate (x);
        \path (\angley:\radius) coordinate (y);
        \path (\angleuone:\radius) coordinate (u1);
        \path (\angleutwo:\radius) coordinate (u2);

        \draw[thick, color=red] (y) arc (\angley:\anglex:\radius);
        \draw[thick, color=blue] (x) arc (\anglex:\angley+360:\radius);
        \draw[ForestGreen, thick] (x) -- (y);
        \draw[ForestGreen, thick] (x) to[bend right] (y);
        \draw[ForestGreen, thick] (x) to[bend left] (y);

        \node[anchor=south] at (x) {$x$};
        \fill (x) circle (1.5pt);
        \node[anchor=north] at (y) {$y$};
        \fill (y) circle (1.5pt);
        \fill [color=red] (u1) circle (1.5pt);
        \node[anchor=west, color=red] at (u1) {$u_1$};
        \fill [color=blue] (u2) circle (1.5pt);
        \node[anchor=east, color=blue] at (u2) {$u_2$};

        \node[anchor=south, color=red] at (30:\radius+.2) {$C_1$};
        \node[anchor=south, color=blue] at (150:\radius+.2) {$C_2$};
        \node[anchor=east, color=ForestGreen] at (150:.7) {$A$};
      \end{tikzpicture}
    \end{center}
    \caption{A sketch of the situation of the proof of \Cref{only-two-components}.}
  \end{figure}
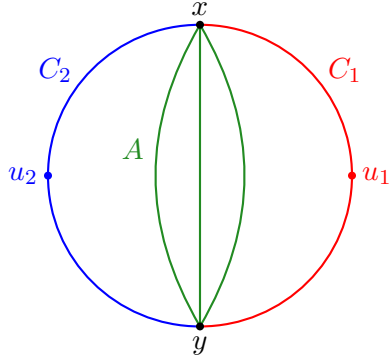

  We are in fact going to show that the subsets~$\compb{u_1} \setminus C_2$ and~$\compb{u_2} \setminus C_1$ are both $\cC$-connected.
  This implies the claim by the following argument: since these two sets both contain $x$, \axref{ax:union-connected} implies that their union $\compb{u_1,u_2}=(\compb{u_1} \setminus C_2) \cup (\compb{u_2} \setminus C_1)$ is $\cC$-connected, contradicting the fact that $X$ is $2$-flimsy.
  Since the two cases are identical by symmetry, we have reduced to showing that the set $\compb{u_1} \setminus C_2 = A \cup \{x,y\} \cup (C_1 \setminus \{u_1\})$ is $\cC$-connected.

  The subset $C_2 \subseteq \compb{u_1,x}$ is $\cC$-connected, so it is contained in a $\cC$-component $D$ of $\compb{u_1,x} = \compb{u_1}\setminus\{x\}$.
  Note that since the space $\compb{u_1}$ is $\cC$-connected by definition of $2$-flimsy connectivity spaces, its subset $\compb{u_1}\setminus D$ is $\cC$-connected by \axref{ax:complement-connected}.

  If $y\notin D$, then $D \subseteq \compb{x,y}$ is $\cC$-connected and contains $C_2$, so $D = C_2$ by maximality of $C_2$.
  Then, the subset $\compb{u_1}\setminus C_2=\compb{u_1}\setminus D$ is $\cC$-connected, completing the proof.

  Further, we assume that $y \in D$.
  By \axref{ax:complement-connected}, the set $D\setminus C_2$ is $\cC$-connected: indeed, $C_2$ is a $\cC$-component of $\compb{x,y}$ by definition, and $D\setminus\{y\}$ is a subset of~$\compb{x,y}$ which contains $C_2$, so $C_2$ is a $\cC$-component of~$D\setminus\{y\}$ by \Cref{lem:compo_subset}.

  We now prove that $A\cup\{y\}$ is $\cC$-connected.
  The space $\compb{x}$ is $\cC$-connected and $C_1$ is a $\cC$-component of $\compb{x,y} =\compb{x}\setminus \{y\}$, so $Y := \compb{x} \setminus C_1 = \{y\} \cup C_2 \cup A$ is $\cC$-connected by \axref{ax:complement-connected}.
  Moreover, $C_2$ is a $\cC$-component of~$Y \setminus \{y\}=C_2\cup A\subseteq \compb{x,y}$ by \Cref{lem:compo_subset}, so $Y \setminus C_2 = A \cup \{y\}$ is $\cC$-connected by \axref{ax:complement-connected}.
  The same arguments show that $A \cup \{x\}$ is $\cC$-connected.
  
  Therefore, $A\cup\{y\}$ is a $\cC$-connected subset of~$\compb{u_1,x}$ which intersects $D$ at $y$, so $A\cup\{y\}\subseteq D$ by maximality of~$D$.
  In fact, it holds that $A \subseteq D\setminus C_2$ because $A$ and $C_2$ are disjoint.
  Hence, we can write $(D\cup\{x\})\setminus C_2=(D\setminus C_2)\cup (A\cup \{x\})$.
  The intersection of the two $\cC$-connected sets~$D\setminus C_2$ and~$A\cup \{x\}$ is non-empty because they both contain~$A$, so their union $(D\cup\{x\})\setminus C_2$ is $\cC$-connected by \axref{ax:union-connected}.
  Using \axref{ax:union-connected} again, $\compb{u_1} \setminus C_2$ is then $\cC$-connected as the union of the two $\cC$-connected sets~$\compb{u_1}\setminus D$ and~$(D\cup\{x\})\setminus C_2$, which both contain~$x$.
  We have proved that $\compb{u_1} \setminus C_2$ is $\cC$-connected, completing the proof.
\end{proof}

An immediate consequence of \Cref{only-two-components} is the following fact:

\begin{corollary}
  \label{cor:comp-not-containing}
  For any three distinct points $x,y,z \in X$, there is a unique $\cC$-component of $\compb{x,y}$ that does not contain~$z$.
  In other words, $C_{\not\ni z}(\compb{x,y})$ is non-empty and $\cC$-connected---we can call it \emph{the} $\cC$-component of~$\compb{x,y}$ not containing~$z$.
\end{corollary}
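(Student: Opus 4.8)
The plan is to read this off directly from \Cref{only-two-components}. Since $z$ is distinct from both $x$ and $y$, it lies in $\compb{x,y}$, hence in exactly one of the $\cC$-components of $\compb{x,y}$; denote that component by $C_{\ni z}(\compb{x,y})$, consistently with the notation introduced after \Cref{lem:compo_subset}. By \Cref{only-two-components}, the set $\compb{x,y}$ has exactly two $\cC$-components, so exactly one of them is different from $C_{\ni z}(\compb{x,y})$; call it $C$. By construction $C$ is the unique $\cC$-component of $\compb{x,y}$ not containing $z$.

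It then remains only to identify $C$ with $C_{\not\ni z}(\compb{x,y}) := \compb{x,y}\setminus C_{\ni z}(\compb{x,y})$. Since the (two) $\cC$-components of $\compb{x,y}$ partition it (an instance of \axref{ax:union-connected}, as recalled before \Cref{lem:compo_subset}), deleting the one containing $z$ leaves precisely the other, so $C_{\not\ni z}(\compb{x,y}) = C$. In particular $C_{\not\ni z}(\compb{x,y})$ is a $\cC$-component of $\compb{x,y}$, hence non-empty and $\cC$-connected.

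There is no real obstacle: the entire content is the counting statement of \Cref{only-two-components}, and the rest is bookkeeping about how $C_{\ni z}$ and $C_{\not\ni z}$ were defined. (Should one wish to avoid even invoking the partition property, one can argue directly: $C_{\not\ni z}(\compb{x,y})$ is the union of all $\cC$-components of $\compb{x,y}$ other than $C_{\ni z}(\compb{x,y})$, and by \Cref{only-two-components} this union consists of a single component, which is therefore non-empty and lies in $\cC$.)
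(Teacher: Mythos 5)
Your argument is correct and is exactly the intended one: the paper states this corollary as ``an immediate consequence'' of \Cref{only-two-components} without further proof, and your deduction (the two components partition $\compb{x,y}$, the point $z$ lies in one of them, so $C_{\not\ni z}(\compb{x,y})$ is the other) is precisely that immediate consequence.
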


\begin{lemma}
  \label{Ccupx}
  For any distinct $x,y\in X$ and any $\cC$-component $C$ of~$\compb{x,y}$, the sets $C\cup\{x\}$ and $C \cup \{x,y\}$ are $\cC$-connected.
\end{lemma}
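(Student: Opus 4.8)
The plan is to leverage \Cref{only-two-components} together with \axref{ax:complement-connected}. Write $\compb{x,y} = C \sqcup C'$, where $C'$ denotes the $\cC$-component of $\compb{x,y}$ other than~$C$ (it exists and is uniquely determined since there are exactly two components). Both~$C$ and~$C'$ are non-empty: the set $\compb{x,y}$ is non-empty because $\card X > 2$, and it is not $\cC$-connected because $\card{\{x,y\}} = 2$, so a partition of it into exactly two components must consist of two non-empty parts. Observe that~$X$ itself decomposes as $X = \{x\} \sqcup \{y\} \sqcup C \sqcup C'$, and fix an arbitrary point $u \in C'$; then~$C'$ is precisely the $\cC$-component of $\compb{x,y}$ containing~$u$.

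First I would prove that $C \cup \{x\} \in \cC$. The space $\compb{y}$ is $\cC$-connected by definition of a $2$-flimsy connectivity space. Moreover $\compb{y} \setminus \{x\} = \compb{x,y}$, whose $\cC$-component containing~$u$ is~$C'$. Applying \axref{ax:complement-connected} to the $\cC$-connected set $Y = \compb{y}$, to the removed point~$x$, and to the base point~$u$, I obtain that $\compb{y} \setminus C' = \{x\} \sqcup C = C \cup \{x\}$ is $\cC$-connected. The symmetric argument, exchanging the roles of~$x$ and~$y$, applied to $Y = \compb{x}$ (which is also $\cC$-connected) with $\compb{x} \setminus \{y\} = \compb{x,y}$ and the same base point~$u \in C'$, shows that $\compb{x} \setminus C' = \{y\} \sqcup C = C \cup \{y\}$ is $\cC$-connected as well.

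Finally, $C \cup \{x,y\} = (C \cup \{x\}) \cup (C \cup \{y\})$ is the union of two $\cC$-connected sets whose intersection contains the non-empty set~$C$, hence it is $\cC$-connected by \axref{ax:union-connected}. I do not expect a genuine obstacle here: this is essentially the same manipulation of \axref{ax:complement-connected} on $\compb{x}$ and $\compb{y}$ that already appears in the proof of \Cref{only-two-components}, and the statement follows quickly once that proposition is available. The only point that deserves care is that both $\cC$-components are non-empty — needed to choose the base point $u \in C'$ and to invoke \axref{ax:union-connected} in the last step — which is immediate from $2$-flimsiness as noted above.
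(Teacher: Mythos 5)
Your proposal is correct and follows essentially the same route as the paper: identify the other $\cC$-component $C'$ via \Cref{only-two-components}, apply \axref{ax:complement-connected} in $\compb{y}$ (resp.\ $\compb{x}$) to get $C\cup\{x\}$ (resp.\ $C\cup\{y\}$), and conclude with \axref{ax:union-connected} since $C\neq\emptyset$. The extra care you take about non-emptiness of the components is implicit in the paper's argument but entirely consistent with it.
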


\begin{proof}
  Let $C' := \compb{x,y}\setminus C$.
  \Cref{only-two-components} shows that $C'$ is a $\cC$-component of~$\compb{x,y}=\compb{y}\setminus \{x\}$.
  Since the space $\compb{y}$ is $\cC$-connected, \axref{ax:complement-connected} implies that $\compb{y}\setminus C'=C\cup\{x\}$ is $\cC$-connected.
  The same argument shows that $C \cup \{y\}$ is $\cC$-connected.
  The $\cC$-connectedness of~$C \cup \{x,y\}$ then follows using \axref{ax:union-connected} as $C$ is non-empty.
\end{proof}

\begin{proposition}
  \label{A^c}
  For any $\cC$-connected subset $A \subseteq X$, its complement $\comp{A}$ is also $\cC$-connected.
\end{proposition}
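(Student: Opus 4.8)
The plan is to handle the degenerate cases by hand and then, in the generic situation where $A$ and $\comp{A}$ each contain at least two points, to exhibit $\comp{A}$ as a union of $\cC$-connected subsets all passing through one fixed common point, so that \axref{ax:union-connected} applies. For the degenerate cases: if $A=\emptyset$ then $\comp{A}=X\setminus\emptyset$ is $\cC$-connected because $(X,\cC)$ is $2$-flimsy; if $A=\{a\}$ is a singleton, then $\comp{A}=\compb{a}$ is $\cC$-connected for the same reason; if $\comp{A}=\emptyset$ then $\comp{A}\in\cC$ by \axref{ax:union-connected}; and if $\comp{A}$ is a singleton then $\comp{A}\in\cC$ by \axref{ax:singleton-connected}. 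Hence we may assume $\card{A}\geq 2$ and $\card{\comp{A}}\geq 2$.

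The key idea for the remaining case is to cut $X$ along pairs of points taken from $\comp{A}$ rather than from $A$, since such a cut automatically confines the $\cC$-connected set $A$ to a single piece. Concretely, I would fix a basepoint $u_0\in\comp{A}$ and, for each $v\in\comp{A}\setminus\{u_0\}$, look at $\compb{u_0,v}$. Since $u_0,v\notin A$, we have $A\subseteq\compb{u_0,v}$, and as $A$ is $\cC$-connected it lies inside exactly one of the two $\cC$-components of $\compb{u_0,v}$ provided by \Cref{only-two-components}; let $E_v$ be the other one. Then $A\cap E_v=\emptyset$, so $E_v\subseteq\comp{A}$, and by \Cref{Ccupx} the set $Z_v:=E_v\cup\{u_0,v\}$ is $\cC$-connected. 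Moreover $Z_v\subseteq\comp{A}$ because $u_0,v\in\comp{A}$ and $E_v\subseteq\comp{A}$, and $u_0\in Z_v$.

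To finish, I would observe that $\bigcup_{v\in\comp{A}\setminus\{u_0\}}Z_v=\comp{A}$ — each such $v$ lies in $Z_v$, and $u_0$ lies in every $Z_v$ — and that this is a union of $\cC$-connected sets sharing the common point $u_0$, hence $\cC$-connected by \axref{ax:union-connected}. I expect the only genuine obstacle to be spotting the right first move: cutting along two points of $A$ leaves the poorly controlled remnant $A\setminus\{x,y\}$ scattered between the two components of $\compb{x,y}$, whereas cutting along two points of $\comp{A}$ pins $A$ inside one component and hands us the complementary arc, ready-made and $\cC$-connected, via \Cref{Ccupx}. As a byproduct, applying the statement to a hypothetical $\cC$-connected pair $A=\{x,y\}$ would force $\compb{x,y}$ to be $\cC$-connected, contradicting \Cref{only-two-components}; so this proposition also shows that every $2$-flimsy connectivity space is $\rT_1$.
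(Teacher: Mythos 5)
Your proof is correct and is essentially the paper's argument: the paper fixes $a\in A$, $\xi\in\comp{A}$, proves $A=\bigcap_{x\in\comp{A}\setminus\{\xi\}}C_{\ni a}(\compb{\xi,x})$, and takes complements to write $\comp{A}$ as the union of the sets $C_{\not\ni a}(\compb{\xi,x})\cup\{\xi,x\}$, which are exactly your $Z_v$ (with $\xi=u_0$, $x=v$), all $\cC$-connected by \Cref{cor:comp-not-containing} and \Cref{Ccupx} and all containing $\xi$. Your presentation just goes straight to the union rather than through the intersection identity, and your closing remark about $\rT_1$ correctly anticipates \Cref{flimsy-is-t1}.
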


\begin{proof}
  We assume that $\card{A}\geq 2$ and $|\comp{A}|\geq 2$, as the result follows from the definition of $2$-flimsy connectivity spaces and from \axref{ax:singleton-connected} otherwise.
  Fix arbitrary elements $a\in A$ and $\xi \in \comp{A}$.
  We begin by proving the identity
  \begin{equation}
    \label{A^c_tool}
    A
    =
    \bigcap_{x\in \comp{A}\setminus\{\xi\}}
      C_{\ni a}(\compb{\xi, x}).
  \end{equation}
  Indeed, let $B$ be the right-hand side.
  We have $A\subseteq B$ because, for any $x\in \comp{A}\setminus\{\xi\}$, the set~$A$ is a $\cC$-connected subset of~$\compb{\xi,x}$ containing~$a$, hence is contained in $C_{\ni a}(\compb{\xi, x})$.
  Conversely, an element~$x \in \comp A$ either equals $\xi$ (which does not lie in $B$), or belongs to $\comp{A}\setminus\{\xi\}$ and then $x \notin C_{\ni a}(\compb{\xi, x})$, so~$x \notin B$.
  Thus, $\comp{A}\subseteq \comp{B}$, completing the proof of \Cref{A^c_tool}.
  
  Taking complements, \Cref{A^c_tool} implies that
  \[
    \comp{A}=
      \bigcup_{x\in \comp{A}\setminus\{\xi\}}
        C_{\not\ni a}(\compb{\xi, x})
        \cup
        \{\xi,x\}.
  \]
  By \Cref{cor:comp-not-containing} and \Cref{Ccupx}, all the summands in this union are $\cC$-connected.
  Since they all contain~$\xi$, their union~$\comp A$ is $\cC$-connected by \axref{ax:union-connected}.
\end{proof}

This yields a characterization of $2$-flimsy connectivity spaces:

\begin{corollary}
  \label{flimsy-is-t1}
  A connectivity space $(X,\cC)$ with at least $3$ points is $2$-flimsy if and only if it is a $\rT_1$ space whose collection of $\cC$-connected subsets is closed under complement.
\end{corollary}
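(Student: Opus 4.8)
The plan is to prove both implications, using the structural results already established for $2$-flimsy connectivity spaces.

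For the forward direction, suppose $(X, \cC)$ is $2$-flimsy with $\card X \geq 3$. The fact that its collection of $\cC$-connected subsets is closed under complement is exactly \Cref{A^c}. It remains to show that $(X, \cC)$ is $\rT_1$ in the sense of \Cref{def:c-space-T1}, i.e., that $\{x,y\} \notin \cC$ for any two distinct points $x, y \in X$. I would argue by contradiction: if $\{x, y\} \in \cC$, then by closure under complement (or directly, since $\card X \geq 3$ and $2$-flimsiness says removing the single point of $X \setminus \{x,y\}$... — more carefully) I would pick a third point $z$ and consider $\compb{x,y}$, which by $2$-flimsiness is disconnected, hence by \Cref{only-two-components} has exactly two $\cC$-components $C_1, C_2$. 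Now $\{x,y\} \in \cC$ together with \axref{ax:union-connected} (gluing $\{x,y\}$, $C_1 \cup \{x\}$, $C_2 \cup \{x\}$ from \Cref{Ccupx}, all containing $x$) would give that $X = \compb{x,y} \cup \{x,y\}$ is $\cC$-connected — that is fine — but I want a contradiction with $2$-flimsiness, so instead I glue $\{x,y\}$ with the two sets $C_i \cup \{y\}$ (which are $\cC$-connected by the proof of \Cref{Ccupx}) to see that $C_1 \cup C_2 \cup \{x,y\} \setminus \{x\}$... Let me restructure: the cleanest route is that $\compb{x} = \compb{x,y} \cup \{y\}$; if $\{x,y\} \in \cC$ then $\{y\}$ can be bridged to both $C_1$ and $C_2$ — but actually the simplest contradiction is: $2$-flimsiness gives $\compb{z} \in \cC$ for the chosen third point $z$, while $\{x,y\} \subseteq \compb{z}$ is $\cC$-connected and... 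I would ultimately use that $\{x,y\} \in \cC$ implies, via \axref{ax:union-connected} applied to $\{x,y\}$ and $C_{\not\ni x}(\compb{y,z}) \cup \{y, z\}$ and similar pieces, that $\compb{z}$ splits in a way contradicting \Cref{only-two-components} applied to a suitable pair; the key point is simply that a $\cC$-connected doubleton lets two $\cC$-components of some $\compb{p,q}$ merge, violating that $\compb{p,q} \notin \cC$.

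For the converse, suppose $(X, \cC)$ has at least $3$ points, is $\rT_1$, and has $\cC$-connected subsets closed under complement. I must show $X$ is $2$-flimsy, i.e.: (a) $X \in \cC$; (b) $\compb{x} \in \cC$ for every $x$; (c) $\compb{x,y} \notin \cC$ for every pair of distinct $x, y$. For (a): pick any $x$; the singleton $\{x\} \in \cC$ by \axref{ax:singleton-connected}, so its complement $\compb{x} \in \cC$ by closure, and then $X = \{x\} \cup \compb{x}$ is $\cC$-connected by \axref{ax:union-connected} provided $\compb{x}$ meets $\{x\}$... it does not, so instead take two distinct points $x \neq y$: then $\compb{x} \ni y$ and $\compb{y} \ni x$ are both $\cC$-connected, and they overlap (any third point $z$ lies in both, using $\card X \geq 3$), so $X = \compb{x} \cup \compb{y} \in \cC$ by \axref{ax:union-connected}. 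This also proves (b) along the way, but more directly (b) is immediate: $\{x\} \in \cC$ so $\compb{x} \in \cC$ by closure. For (c): suppose for contradiction that $\compb{x,y} \in \cC$ for some distinct $x, y$. Then $\{x,y\} = \comp{(\compb{x,y})} \in \cC$ by closure under complement — but this contradicts the $\rT_1$ hypothesis. This last step is the crux and it is short.

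The main obstacle is the forward direction's $\rT_1$ claim: unlike the converse, it is not a one-line consequence of closure under complement, and requires genuinely using \Cref{only-two-components} and \Cref{Ccupx} to derive a contradiction from the existence of a $\cC$-connected doubleton. I expect the cleanest argument to be: assuming $\{x,y\} \in \cC$, fix a third point $z$ and apply \Cref{only-two-components} to $\compb{x,z}$, getting components $C_1 \ni y$ and $C_2$; then $\{x,y\}$, $C_1$ (note $y \in C_1$), and $C_2 \cup \{x\}$ (from \Cref{Ccupx}, which is $\cC$-connected) all pairwise overlap through $x$ or $y$ — chaining them via \axref{ax:union-connected} shows $C_1 \cup C_2 \cup \{x\} = \compb{z} \cup \{x\} \setminus \{x\}$... concretely $\compb{x,z} \cup \{x\} = \compb{z} \in \cC$ — that alone is not a contradiction, so I instead chain to show $\compb{z} \cup \{x,y\}\setminus\{x,z\}$ pieces force $X \setminus \{z\}$... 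I will present the final write-up so that the doubleton merges the two components of $\compb{x,z}$ directly, i.e.\ $\compb{x,z} = C_1 \sqcup C_2$ but $C_1 \cup \{x\} \cup C_2$ is forced $\cC$-connected and equals $\compb{z}$, which IS allowed, so the real contradiction must come from a pair whose complement-minus-two-points we can re-merge: take $\compb{y,z}$ instead, with components $D_1 \ni x$, $D_2$; then $\{x,y\} \cup D_2 \cup \{y\}$ (the latter from \Cref{Ccupx}) overlaps $\{x,y\}$ in $y$, and $D_1 \ni x$ overlaps $\{x,y\}$ in $x$, so $D_1 \cup \{x,y\} \cup D_2 = \compb{z} \in \cC$ — still allowed. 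The genuine contradiction: $\{x,y\}\in\cC$ and $\card X\ge 3$ give, for any $z$, that $\compb{z}\setminus\{x\} = \compb{x,z}$ is the union of $\{y\}\cup(C_2\cup\{y\})$-type pieces all containing $y$, hence $\cC$-connected, contradicting $2$-flimsiness directly. So in the write-up I will simply observe: if $\{x,y\}\in\cC$ then for a third point $z$, $\compb{x,z}$ contains $y$, and the two $\cC$-components $C_1\ni y, C_2$ of $\compb{x,z}$ satisfy (by \Cref{Ccupx} applied in $X$ with the pair $\{x,z\}$) that $C_2\cup\{x\}$ is $\cC$-connected; but $\{x,y\}$ bridges $y\in C_1$ to $x\in C_2\cup\{x\}$, so $C_1\cup C_2\cup\{x\}\cup\{y\}=\compb{z}$ — and separately $C_1\cup\{y\}=C_1$, giving nothing new. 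I therefore expect the published proof to phrase the $\rT_1$ implication slightly differently — perhaps noting that a $2$-flimsy space has $\compb{x}\in\cC$ and $\compb{x}$ is not $\cC$-connected after removing one point, so no two-point subset $\{x,y\}$ can be $\cC$-connected since then $\compb{z}$ for $z\notin\{x,y\}$ would remain connected after removing $x$ — and I will follow that cleaner line.
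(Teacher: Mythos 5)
Your converse direction is correct and essentially matches the paper: $X=\compb{x}\cup\compb{y}$ via \axref{ax:union-connected} (or, more directly, $X=\comp{\emptyset}$), $\compb{x}=\comp{\{x\}}$, and the complement trick for $\compb{x,y}\notin\cC$. The genuine gap is in the forward direction's $\rT_1$ claim, which you yourself flag as ``the main obstacle'': every contradiction you attempt from a $\cC$-connected doubleton $\{x,y\}$ either collapses (as you repeatedly concede with ``that is fine'', ``still allowed'', ``giving nothing new''), or, in your final proposed line, rests on the unjustified assertion that $\compb{z}$ ``would remain connected after removing $x$'' --- there is no reason for $\compb{x,z}$ to be $\cC$-connected just because $\{x,y\}\in\cC$; indeed $\{x,y\}$ is not even a subset of $\compb{x,z}$, so it cannot bridge anything there. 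As written, the forward direction is not proved.

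The missing step is exactly the one-line complement argument you already use for point (c) of the converse, read in the other direction: once \Cref{A^c} gives closure under complement, $\{x,y\}\in\cC$ would force $\comp{\{x,y\}}=\compb{x,y}\in\cC$, contradicting $2$-flimsiness; hence $\{x,y\}\notin\cC$ and $(X,\cC)$ is $\rT_1$. No appeal to \Cref{only-two-components} or \Cref{Ccupx} is needed. This is precisely how the paper argues: it first invokes \Cref{A^c} so that closure under complement may be assumed in both directions, and then observes that, under that assumption, the three conditions defining $2$-flimsiness ($X\in\cC$, $\compb{x}\in\cC$, $\compb{x,y}\notin\cC$) are equivalent, by taking complements, to $\emptyset\in\cC$ and $\{x\}\in\cC$ (both automatic from axioms \ref{ax:union-connected} and \ref{ax:singleton-connected}) together with $\{x,y\}\notin\cC$, which is exactly the $\rT_1$ condition.
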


\begin{proof}
  By \Cref{A^c}, in a $2$-flimsy space, complements of $\cC$-connected subsets are $\cC$-connected, so we can show the equivalence under that hypothesis.
  We can then take complements in the definition of flimsy spaces: the space~$X$ is $2$-flimsy if and only if~$\emptyset$ is $\cC$-connected (which is automatically true by \axref{ax:union-connected}), $\{x\}$ is $\cC$-connected for any $x \in X$ (which is automatically true by \axref{ax:singleton-connected}), and $\{x,y\}$ is not $\cC$-connected for any two distinct $x,y \in X$ (which is \Cref{def:c-space-T1} of $\rT_1$~spaces).
\end{proof}

Note also the following corollary:

\begin{corollary}
  \label{intersection-connected}
  Let $A, B$ be two $\cC$-connected subsets of~$X$.
  If $A \cup B \neq X$, then $A \cap B$ is $\cC$-connected.
\end{corollary}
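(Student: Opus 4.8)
The plan is to use the complement characterization: since $A \cup B \neq X$, the complement $\comp{A \cup B}$ is non-empty, so we can pick a point $\xi \notin A \cup B$, and then reduce everything to a problem inside $\compb{\xi}$ where $\comp{A}$ and $\comp{B}$ both live. The key observation is that $A \cap B = \comp{(\comp A \cup \comp B)}$, so by \Cref{A^c} it suffices to show that $\comp A \cup \comp B$ is $\cC$-connected. Both $\comp A$ and $\comp B$ are $\cC$-connected by \Cref{A^c}, and they both contain $\xi$ (as $\xi \notin A$ and $\xi \notin B$), so their union is $\cC$-connected by \axref{ax:union-connected}.

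More precisely, first I would dispose of degenerate cases: if $A = X$ or $B = X$, then $A \cap B$ equals the other set and there is nothing to prove; and in fact if $A \cup B = X$ is excluded by hypothesis, we may assume both $A$ and $B$ are proper subsets, so \Cref{A^c} applies to each of them and to $A \cap B$ itself once we know $A \cap B$ is a complement of a $\cC$-connected set. So the core argument is: by hypothesis $\comp{A\cup B} = \comp A \cap \comp B \neq \emptyset$; pick $\xi$ in it; then $\comp A$ and $\comp B$ are $\cC$-connected (\Cref{A^c}) and their intersection contains $\xi$, hence their union $\comp A \cup \comp B$ is $\cC$-connected by \axref{ax:union-connected}; finally $A \cap B = \comp{(\comp A \cup \comp B)}$, so $A \cap B$ is $\cC$-connected by another application of \Cref{A^c}.

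I do not anticipate any serious obstacle here; the statement is a short formal consequence of \Cref{A^c} (complements of $\cC$-connected sets are $\cC$-connected) together with \axref{ax:union-connected} (non-disjoint unions of $\cC$-connected sets are $\cC$-connected). The only point requiring the slightest care is making sure the hypothesis $A \cup B \neq X$ is used exactly where it is needed — namely to guarantee $\comp A \cap \comp B \neq \emptyset$, which is what lets \axref{ax:union-connected} apply to $\comp A$ and $\comp B$ — and noting that without this hypothesis the conclusion genuinely can fail (two open arcs of a circle overlapping in two separate pieces whose union is the whole circle). One should also double-check that \Cref{A^c} is being applied to genuinely $\cC$-connected subsets at each step: to $A$, to $B$, and to $\comp A \cup \comp B$, which is legitimate since \Cref{A^c} has no properness hypothesis on its input.
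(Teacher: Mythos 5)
Your proposal is correct and follows exactly the paper's own argument: apply \Cref{A^c} to $A$ and $B$, note that $\comp A \cap \comp B = \comp{(A\cup B)} \neq \emptyset$ so \axref{ax:union-connected} gives that $\comp A \cup \comp B$ is $\cC$-connected, and conclude by applying \Cref{A^c} once more to its complement $A \cap B$. The extra remarks about degenerate cases are harmless but unnecessary, since \Cref{A^c} indeed has no properness hypothesis.
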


\begin{proof}
  By \Cref{A^c}, both $\comp{A}$ and $\comp{B}$ are $\cC$-connected.
  Moreover, their intersection is non-empty, so their union $\comp{A}\cup\comp{B}$ is $\cC$-connected by \axref{ax:union-connected}.
  Using \Cref{A^c} again, we find that $A\cap B$ is $\cC$-connected.
\end{proof}

Finally, we prove two lemmas that will be key to the proof of the forthcoming \Cref{thm:no-3f-cspace}:

\begin{lemma}
  \label{if-you-touch-edges-you-are-component}
  Let $x,y$ be distinct points of~$X$, and let $C$ be a $\cC$-connected subset of $\compb{x,y}$ such that $C \cup \{x,y\}$ is $\cC$-connected.
  Then, $C$ is a $\cC$-component of $\compb{x,y}$.
\end{lemma}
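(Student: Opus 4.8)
The plan is to reduce everything to \Cref{only-two-components} and \Cref{A^c}. First I would invoke \Cref{only-two-components}: the set $\compb{x,y}$ has exactly two $\cC$-components, call them $C_1$ and $C_2$, and since $|X| > 2$ they are both non-empty; in particular $X = \{x,y\} \sqcup C_1 \sqcup C_2$ is a disjoint union. Since $C$ is a $\cC$-connected subset of $\compb{x,y}$, it is contained in a single $\cC$-component (by maximality, using \axref{ax:union-connected}); without loss of generality, $C \subseteq C_1$. The goal is then to show $C = C_1$.

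Next I would argue by contradiction, assuming $C \subsetneq C_1$, so that $C_1 \setminus C \neq \emptyset$. By hypothesis $C \cup \{x,y\} \in \cC$, so \Cref{A^c} gives that its complement is $\cC$-connected. Here the only point requiring a little care is the identification of that complement: using the disjoint decomposition above and $C \subseteq C_1$, we get $\comp{(C \cup \{x,y\})} = (C_1 \setminus C) \cup C_2$. Thus $(C_1 \setminus C) \cup C_2$ is a $\cC$-connected subset of $\compb{x,y}$ that meets $C_1$ (at any point of the non-empty set $C_1 \setminus C$) and also meets $C_2$ (which is non-empty). This contradicts the fact that a $\cC$-connected subset of $\compb{x,y}$ must lie in a single one of the two $\cC$-components $C_1, C_2$. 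Hence $C = C_1$, which is a $\cC$-component of $\compb{x,y}$, as desired.

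There is essentially no hard step here: the argument is a direct combination of the ``exactly two components'' structure and the ``complement of connected is connected'' property, the only thing to keep track of being the bookkeeping of the partition $X = \{x,y\} \sqcup C_1 \sqcup C_2$ when taking complements. (One may also note that the hypothesis forces $C \neq \emptyset$, since $\{x,y\} \notin \cC$ by the $\rT_1$ property of \Cref{flimsy-is-t1}, though this is not needed for the argument.)
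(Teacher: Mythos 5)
Your proof is correct and is essentially the paper's argument: both hinge on applying \Cref{A^c} to $C\cup\{x,y\}$ and comparing the resulting partition of $\compb{x,y}$ with the two $\cC$-components given by \Cref{only-two-components}. The paper phrases this directly (the disjoint $\cC$-connected sets $C$ and $\comp{(C\cup\{x,y\})}$ cover $\compb{x,y}$, hence are exactly its two $\cC$-components), whereas you run the same computation as a proof by contradiction; the difference is purely presentational.
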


\begin{proof}
  Since $C \cup \{x,y\}$ is $\cC$-connected, so is its complement $C'$ by \Cref{A^c}.
  We have $C \sqcup C' = \compb{x,y}$, where~$C$ and~$C'$ are disjoint and are both $\cC$-connected, so $C$ and $C'$ are exactly the two $\cC$-components of~$\compb{x,y}$ (cf.~\Cref{only-two-components}).
\end{proof}

\begin{lemma}
  \label{intersect-intervals}
  Let $a,b,c$ be three distinct points of~$X$.
  We have the identity
  \[
    C_{\ni c}\bigl(\compb{a,b}\bigr)
    \cap
    C_{\ni b}\bigl(\compb{a,c}\bigr)
    =
    C_{\not\ni a}\bigl(\compb{b,c}\bigr).
  \]
\end{lemma}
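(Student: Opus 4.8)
The plan is to prove both inclusions in the claimed identity by identifying the left-hand side with the $\cC$-component of $\compb{b,c}$ that misses $a$. Write $D := C_{\not\ni a}(\compb{b,c})$ and $E := C_{\ni c}(\compb{a,b}) \cap C_{\ni b}(\compb{a,c})$. By \Cref{cor:comp-not-containing}, $D$ is non-empty and $\cC$-connected, and it avoids all three of $a$, $b$, $c$ (being a $\cC$-component of $\compb{b,c}$ distinct from the one containing $a$).

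First I would establish $D \subseteq E$. Applying \Cref{Ccupx} to the pair $\{b,c\}$ and its $\cC$-component $D$ (in both orderings of the pair) shows that $D \cup \{b\}$, $D \cup \{c\}$, and $D \cup \{b,c\}$ are all $\cC$-connected. Since $D$ avoids $a$, $b$, $c$, the set $D \cup \{c\}$ is a $\cC$-connected subset of $\compb{a,b}$ containing $c$, so $D \subseteq D \cup \{c\} \subseteq C_{\ni c}(\compb{a,b})$; symmetrically $D \subseteq C_{\ni b}(\compb{a,c})$, and intersecting gives $D \subseteq E$. In particular $E \neq \emptyset$.

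For the reverse inclusion I would show that $E$ is itself a $\cC$-component of $\compb{b,c}$ and then conclude by uniqueness (\Cref{cor:comp-not-containing}, \Cref{only-two-components}). That $E$ is $\cC$-connected follows from \Cref{intersection-connected}: it is the intersection of the $\cC$-connected sets $C_{\ni c}(\compb{a,b})$ and $C_{\ni b}(\compb{a,c})$, whose union is not all of $X$ because neither of them contains $a$. That $E \cup \{b,c\}$ is $\cC$-connected follows because $D \cup \{b,c\}$ is $\cC$-connected (from the previous paragraph) and, since $D \subseteq E$, one may write $E \cup \{b,c\} = (D \cup \{b,c\}) \cup E$ as a union of two $\cC$-connected sets meeting in the non-empty set $D$, so \axref{ax:union-connected} applies. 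Finally, $E \subseteq \compb{b,c}$ because $C_{\ni c}(\compb{a,b}) \subseteq \compb{a,b}$ misses $b$ and $C_{\ni b}(\compb{a,c}) \subseteq \compb{a,c}$ misses $c$; hence \Cref{if-you-touch-edges-you-are-component} (with the pair $\{b,c\}$ and the set $E$) shows that $E$ is a $\cC$-component of $\compb{b,c}$. Since $a \notin C_{\ni c}(\compb{a,b})$, we have $a \notin E$, so $E$ is the $\cC$-component of $\compb{b,c}$ not containing $a$, i.e. $E = C_{\not\ni a}(\compb{b,c}) = D$, which is the desired identity.

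The argument is essentially careful bookkeeping with the earlier lemmas rather than anything deep, so I do not anticipate a genuine obstacle. The points requiring vigilance are (i) tracking which two points each auxiliary set ($D \cup \{c\}$, $D \cup \{b\}$, $E \cup \{b,c\}$, etc.) is designed to avoid, so that the various "$\cC$-component containing $\ldots$" membership claims are legitimate, and (ii) verifying that $D$, and hence $E$, is non-empty, which is needed both to invoke \axref{ax:union-connected} for $E \cup \{b,c\}$ and to apply \Cref{if-you-touch-edges-you-are-component}.
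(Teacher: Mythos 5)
Your proof is correct and follows essentially the same route as the paper's: both hinge on \Cref{Ccupx}, \axref{ax:union-connected}, and \Cref{if-you-touch-edges-you-are-component} to recognize the relevant set as a $\cC$-component of $\compb{b,c}$. The only difference is presentational: the paper dualizes by complementation at the outset and works with the union $C_{\not\ni c}(\compb{a,b})\cup C_{\not\ni b}(\compb{a,c})\cup\{a\}$, whereas you work directly with the intersection via \Cref{intersection-connected}, which itself encapsulates that same complementation.
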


\begin{figure}[H]
  \begin{center}
    \begin{tikzpicture}[scale=1.3]
      \def\rinner{.95}
      \def\router{1.05}

      \def\angA{0}
      \def\angB{240}
      \def\angC{120}

      \fill[red!40] (\angA:\rinner) arc (\angA:\angB:\rinner) -- (\angB:\router) arc (\angB:\angA:\router) -- cycle;
      \fill[blue!80] (\angB:\rinner) arc (\angB:\angA+360:\rinner) -- (\angA+360:\router) arc (\angA+360:\angB:\router) -- cycle;

      \def\nstripes{15}
      \foreach \i in {0,...,\numexpr\nstripes-1\relax} {
        \pgfmathsetmacro{\a}{\angC-5 + \i*(120/\nstripes)}
        \pgfmathsetmacro{\b}{\angC-5 + (\i+1)*(120/\nstripes)}
        \pgfmathparse{int(mod(\i,2)==0 ? 1 : 0)}
        \ifnum\pgfmathresult=1
          \fill[red!40] (\a:\rinner) arc (\a:\b:\rinner) -- (\b+10:\router) arc (\b+10:\a+10:\router) -- cycle;
        \else
          \fill[blue!80] (\a:\rinner) arc (\a:\b:\rinner) -- (\b+10:\router) arc (\b+10:\a+10:\router) -- cycle;
        \fi
      }

      \foreach \ang/\name in {\angA/a,\angB/b,\angC/c} {
        \fill[black] (\ang:1) circle (3pt);
        \node at (\ang:1.25) {$\name$};
      }
    \end{tikzpicture}
    \caption{
      An illustration of \Cref{{intersect-intervals}}.
      The red region (including the striped region) is $C_{\ni c}(\compb{a,b})$, the blue region (including the striped region) is $C_{\ni b}(\compb{a,c})$, and their intersection (the striped region) is indeed $C_{\not\ni a}(\compb{b,c})$.
    }
  \end{center}
\end{figure}

\begin{proof}
  Taking complements (and ignoring the points~$b$ and~$c$ which do not belong to either side), the claim reduces to showing:
  \[
    C_{\not\ni c}(\compb{a,b})
    \cup
    C_{\not\ni b}(\compb{a,c})
    \cup
    \{a\}
    =
    C_{\ni a}(\compb{b,c}).
  \]
  By \Cref{cor:comp-not-containing} and \Cref{Ccupx}, both $C_1 := C_{\not\ni c}(\compb{a,b}) \cup \{a\}$ and $C_2 := C_{\not\ni b}(\compb{a,c}) \cup \{a\}$ are $\cC$-connected, and since they both contain~$a$, their union $C_1 \cup C_2$ (contained in $\compb{b,c}$) is $\cC$-connected by \axref{ax:union-connected}.
  Similarly, \Cref{Ccupx} implies that $C'_1 := C_{\not\ni c}(\compb{a,b}) \cup \{a,b\}$ and $C'_2 := C_{\not\ni b}(\compb{a,c}) \cup \{a,c\}$ are $\cC$-connected, so $C'_1 \cup C'_2 = (C_1 \cup C_2) \cup \{b,c\}$ is $\cC$-connected by \axref{ax:union-connected}.
  By \Cref{if-you-touch-edges-you-are-component}, $C_1 \cup C_2$ is then a $\cC$-component of~$\compb{b,c}$, and since it contains~$a$ this concludes the proof.
\end{proof}

\subsection{There are no $3$-flimsy connectivity spaces}
\label{subsn:no-3f}

We now prove \Cref{thm:no-3f} for general connectivity spaces:

\begin{theorem}
  \label{thm:no-3f-cspace}
  For any $n \geq 3$, there are no $n$-flimsy connectivity spaces.
\end{theorem}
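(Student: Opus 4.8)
The plan is to first reduce to the case $n=3$, and then to derive a contradiction by applying the structural results of \Cref{subsn:prop-2f-cspace} to the $2$-flimsy ``slices'' $X\setminus\{w\}$.

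First I would reduce to $n=3$: if $(X,\cC)$ were $n$-flimsy with $n\geq 3$, I pick $n-3$ distinct points of $X$ (possible since $\card X>n$) and pass to the connectivity subspace obtained by deleting them; deleting $<3$ (resp.\ exactly $3$) further points of this subspace amounts to deleting $<n$ (resp.\ exactly $n$) points of $X$, so the subspace is $3$-flimsy. Thus it suffices to rule out $3$-flimsy connectivity spaces. So assume $(X,\cC)$ is $3$-flimsy. The key observation is that for every $w\in X$, the subspace $X\setminus\{w\}$ is $2$-flimsy: $\card{X\setminus\{w\}}>2$, and deleting $<2$ (resp.\ exactly $2$) further points amounts to deleting $<3$ (resp.\ exactly $3$) points of $X$. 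Hence every result of \Cref{subsn:prop-2f-cspace} applies to each $X\setminus\{w\}$; moreover, $\cC$-components of a subset of $X\setminus\{w\}$ are intrinsic, so it is irrelevant whether one reads them inside $X$ or inside $X\setminus\{w\}$.

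Next I would fix four distinct points $p,a,b,c\in X$ (possible since $\card X>3$). For $x\in\{p,a,b,c\}$, let $M_x$ denote the $\cC$-component of $X\setminus(\{p,a,b,c\}\setminus\{x\})$ containing $x$; by \Cref{only-two-components} (applied inside $X\setminus\{w\}$ for any $w$ in the removed triple), this set has exactly two $\cC$-components, namely $M_x$ and its relative complement. The crucial claim is that, for a pair $\{x,y\}\subseteq\{p,a,b,c\}$ with complementary pair $\{z,w\}$,
\[
  M_x\cap M_y \;=\; C_{\not\ni z}\bigl(X\setminus\{x,y,w\}\bigr) \;=\; C_{\not\ni w}\bigl(X\setminus\{x,y,z\}\bigr).
\]
This I would obtain by applying \Cref{intersect-intervals} inside the $2$-flimsy space $X\setminus\{w\}$ (resp.\ $X\setminus\{z\}$) to the three distinct points $x,y,z$ (resp.\ $x,y,w$), with the roles assigned so that the two ``$C_{\ni\cdot}$'' factors become $M_x$ and $M_y$ — using that, e.g., $C_{\ni x}\bigl((X\setminus\{w\})\setminus\{y,z\}\bigr)=C_{\ni x}\bigl(X\setminus\{w,y,z\}\bigr)=M_x$. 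The hard part will be precisely this bookkeeping: keeping straight which three points play the roles of ``$a,b,c$'' in \Cref{intersect-intervals} and tracking how the various $C_{\ni\cdot}$ and $C_{\not\ni\cdot}$ transform; everything else is formal.

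Finally I would conclude as follows. The displayed identity says that $M_x\cap M_y$ equals $C_{\not\ni u}\bigl(X\setminus(\{p,a,b,c\}\setminus\{u\})\bigr)$ for \emph{either} of the two points $u\notin\{x,y\}$. Consequently, whenever one point lies outside two different pairs, the corresponding intersections agree; chaining this (the three pairs avoiding $p$ share a common value, the three pairs avoiding $a$ share a common value, and these two triples overlap in $\{b,c\}$, and so on) shows that all six sets $M_x\cap M_y$ are equal. In particular $M_b\cap M_c=M_p\cap M_a$. But $M_b\cap M_c=C_{\not\ni a}\bigl(X\setminus\{p,b,c\}\bigr)$ is the $\cC$-component of $X\setminus\{p,b,c\}$ other than $M_a=C_{\ni a}\bigl(X\setminus\{p,b,c\}\bigr)$, hence disjoint from $M_a$, whereas $M_p\cap M_a\subseteq M_a$; therefore $M_p\cap M_a=\emptyset$. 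This contradicts \Cref{cor:comp-not-containing}, which (applied inside $X\setminus\{p\}$ to the distinct points $a,b,c$) guarantees that $M_p\cap M_a=C_{\not\ni c}\bigl(X\setminus\{p,a,b\}\bigr)$ is nonempty. This contradiction finishes the proof.
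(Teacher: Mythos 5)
Your proof is correct and rests on the same key idea as the paper's: applying \Cref{intersect-intervals} inside two different $2$-flimsy slices $X\setminus\{w\}$ and $X\setminus\{z\}$ to identify one and the same set as a $\cC$-component of two different triple-complements (your ``crucial claim'' for a single pair is exactly the paper's computation of $D$). The only difference is the endgame: the paper stops after one pair and contradicts maximality of that component via \Cref{Ccupx}, whereas you chain the identity over all six pairs and contradict the non-emptiness guaranteed by \Cref{cor:comp-not-containing} --- slightly longer, but equally valid.
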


\begin{proof}
  Since removing any point from an $n$-flimsy space (for $n \geq 3$) yields an $(n-1)$-flimsy space, it suffices to prove the claim for $n = 3$.
  
  Let $(X,\cC)$ be a $3$-flimsy connectivity space, and let $x,y,t,s$ be four distinct points of~$X$.
  Let $D := C_{\ni s}(\compb{x,y,t}) \cap C_{\ni t}(\compb{x,y,s})$.
  Applying \Cref{intersect-intervals} in the two $2$-flimsy spaces~$\compb{x}$ and~$\compb{y}$ shows respectively that $D = C_{\not\ni y}(\compb{x,t,s})$ and $D = C_{\not\ni x}(\compb{y,t,s})$.
  We have shown that~$D$ is a $\cC$-component of $\compb{x,t,s} = \compb{t} \setminus \{x,s\}$, so $D\cup\{x\}$ is $\cC$-connected by \Cref{Ccupx} applied in the $2$-flimsy space $\compb t$.
  But this contradicts the maximality of~$D$ among the $\cC$-connected subsets of~$\compb{y,t,s}$.
\end{proof}

\subsection{$\cC$-connected subsets of $2$-flimsy connectivity spaces}

In this section, we characterize all the $\cC$-connected subsets of an arbitrary $2$-flimsy connectivity space~$(X,\cC)$ in terms of the $\cC$-components of the complements of pairs of distinct points of~$X$.

\begin{proposition}
  \label{prop:list-of-connected-parts}
  Let $(X,\cC)$ be a $2$-flimsy connectivity space, and let $C\in\cC$.
  If $\card{C}\geq 2$ and $|\comp{C}|\geq 2$, then there exist distinct $x,y\in X$ such that $C\cap \compb{x,y}$ is a $\cC$-component of $\compb{x,y}$.
\end{proposition}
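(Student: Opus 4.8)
The plan is to rephrase the statement in terms of "boundary points" and then to produce two of them. Call a point $z \in X$ a \emph{boundary point of $C$} if $C \cup \{z\} \in \cC$ and $\comp C \cup \{z\} \in \cC$; this notion is symmetric under exchanging $C$ with $\comp C$, and one of the two conditions is automatic according to whether $z \in C$ or $z \in \comp C$. I would first record the elementary reduction that $\card C \geq 3$ and $\card{\comp C} \geq 3$: indeed $(X,\cC)$ has more than two points, so it is $\rT_1$ by \Cref{flimsy-is-t1}, and $\comp C \in \cC$ by \Cref{A^c}, so neither $C$ nor $\comp C$ --- being $\cC$-connected --- can have exactly two elements. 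The main reduction is then: if $z_1 \neq z_2$ are boundary points of $C$, then $C \cap \compb{z_1,z_2} = C \setminus \{z_1,z_2\}$ is a $\cC$-component of $\compb{z_1,z_2}$, which is exactly what we want (this is genuinely what we want, since by \Cref{only-two-components} the set $\compb{z_1,z_2}$ has exactly two $\cC$-components and $C\cap\compb{z_1,z_2}$, $\comp C\cap\compb{z_1,z_2}$ partition it). To prove it, note that $C \cup \{z_1\}$ and $C \cup \{z_2\}$ are $\cC$-connected and meet along $C$, so $C \cup \{z_1,z_2\} \in \cC$ by \axref{ax:union-connected}; similarly $\comp C \cup \{z_1,z_2\} \in \cC$, whence $C \setminus \{z_1,z_2\} = \comp{(\comp C \cup \{z_1,z_2\})}$ lies in $\cC$ by \Cref{A^c}. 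Since $\card C \geq 3$ this set is nonempty, and its union with $\{z_1,z_2\}$ is $C \cup \{z_1,z_2\} \in \cC$, so \Cref{if-you-touch-edges-you-are-component} identifies it as a $\cC$-component of $\compb{z_1,z_2}$. It thus remains to exhibit two distinct boundary points of $C$.

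The crux is to produce \emph{one} boundary point, and the key realisation is that \axref{ax:button-zip} must be applied to a pair whose union is a \emph{proper} $\cC$-connected subset of $X$, so that \Cref{intersection-connected} forbids its "seam" alternative --- applying it directly to $(C,\comp C)$ does not work, since seams can genuinely occur there. So I fix an arbitrary $\xi \in \comp C$, choose a $\cC$-component $E$ of $\comp C \setminus \{\xi\}$ (nonempty, as $\comp C \setminus \{\xi\} \neq \emptyset$), and apply \axref{ax:button-zip} to $(C,\, \comp C \setminus E)$: here $\comp C \setminus E \in \cC$ by \axref{ax:complement-connected} (used inside the $\cC$-connected set $\comp C$, with the point $\xi$), and $C \cup (\comp C \setminus E) = X \setminus E = \comp E \in \cC$ by \Cref{A^c} (as $E \in \cC$), while $X \setminus E \neq X$ since $E \neq \emptyset$. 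Because this union is a proper subset of $X$, every $S \in \cC$ contained in it has $S \cap C$ and $S \cap (\comp C \setminus E)$ in $\cC$ by \Cref{intersection-connected}, ruling out a seam; hence the button alternative holds: there is $z$ with $C \cup \{z\} \in \cC$ and $(\comp C \setminus E) \cup \{z\} \in \cC$. Then $\comp C \cup \{z\}$ is the union of the $\cC$-connected sets $(\comp C \setminus E) \cup \{z\}$ and $\comp C$, which share $\xi$, so it is $\cC$-connected by \axref{ax:union-connected}; thus $z$ is a boundary point of $C$. (When $\comp C \setminus \{\xi\}$ happens to be $\cC$-connected, running this with $E = \comp C \setminus \{\xi\}$ produces the boundary point $z = \xi$ itself, using $\rT_1$.)

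To upgrade to two boundary points, suppose $z_0$ were the unique one; by the $C \leftrightarrow \comp C$ symmetry of everything above, assume $z_0 \in \comp C$. Pick $\xi \in \comp C \setminus \{z_0\}$ (possible since $\card{\comp C} \geq 3$). If $\comp C \setminus \{\xi\}$ were $\cC$-connected, the construction above with this $\xi$ would exhibit $\xi \neq z_0$ as a boundary point, contradicting uniqueness; so $\comp C \setminus \{\xi\}$ has at least two $\cC$-components $\{E_i\}$. For each $i$, the construction with this $\xi$ and $E_i$ produces a boundary point that lies outside $E_i$ (the button point lies in $X \setminus E_i$), and by uniqueness it equals $z_0$; hence $z_0 \notin E_i$ for every $i$. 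But $z_0 \in \comp C \setminus \{\xi\} = \bigsqcup_i E_i$, a contradiction. Therefore $C$ has two distinct boundary points, and the reduction of the first paragraph then finishes the proof.

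I expect the main obstacle to be the second step: recognising that one should not feed \axref{ax:button-zip} the pair $(C,\comp C)$ but rather a pair obtained by carving a $\cC$-component out of $\comp C$, precisely so that the union becomes a proper $\cC$-connected subset and the seam case is killed by \Cref{intersection-connected}, and then checking that the button point obtained is a genuine boundary point. The bootstrap in the third step is also a little delicate, though it becomes short once the right invariant --- that the produced boundary point avoids the chosen component --- is isolated.
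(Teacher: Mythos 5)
Your proof is correct. The one non-negotiable idea---that \axref{ax:button-zip} must be applied to a pair whose union is a \emph{proper} $\cC$-connected subset of~$X$, so that \Cref{intersection-connected} kills the seam alternative---is also the engine of the paper's proof, but you deploy it along a genuinely different decomposition. The paper first disposes of the case where two suitable points already lie on one side of~$C$, then chooses $u\in C$ and $v\in\comp{C}$ with $C\cup\{v\}$ and $\comp{C}\cup\{u\}$ both not $\cC$-connected, splits $\compb{u,v}$ into its two $\cC$-components $D_1,D_2$, and applies \axref{ax:button-zip} inside each $D_k\cup\{u,v\}$ to the pieces $(C\cap D_k)\cup\{u\}$ and $(\comp{C}\cap D_k)\cup\{v\}$; this yields one button point per component, and the choice of $u,v$ is what forces the two points to be distinct. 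You instead puncture $\comp{C}$ at a point~$\xi$, remove a $\cC$-component $E$ of $\comp{C}\setminus\{\xi\}$, and apply the axiom to $(C,\,\comp{C}\setminus E)$ inside $\comp{E}$; each choice of~$E$ yields one boundary point, and the existence of two distinct ones comes from your pigeonhole bootstrap (the point produced from~$E$ avoids~$E$, while the components $E_i$ cover $\comp{C}\setminus\{\xi\}$, which a unique boundary point in $\comp C \setminus \{\xi\}$ could not survive). Your packaging via boundary points is conceptually cleaner and avoids both the paper's preliminary case split and its explicit verification that the two button points differ, at the price of an indirect argument in the final step; the paper's route is more constructive in exhibiting both points at once. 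All supporting steps you invoke (\Cref{flimsy-is-t1} to get $\card{C},\card{\comp{C}}\geq 3$, \axref{ax:complement-connected} for $\comp{C}\setminus E\in\cC$, \Cref{if-you-touch-edges-you-are-component} for the final identification) check out.
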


\begin{proof}
  If there are two distinct $x,y\in \comp{C}$ such that $C\cup\{x,y\}$ is $\cC$-connected, then the set $C\cap \compb{x,y} = C$ is a $\cC$-component of $\compb{x,y}$ by \Cref{if-you-touch-edges-you-are-component}.
  Similarly, if there are two distinct $x,y\in C$ such that $\comp{C}\cup\{x,y\}$ is $\cC$-connected, then $\comp{C}$ is a $\cC$-component of $\compb{x,y}$, and then so is $C\cap\compb{x,y}$ by \Cref{cor:comp-not-containing}.
  Thus, we can assume in the following that there is at most one $x'\in \comp{C}$ such that~$C\cup\{x'\}$ is $\cC$-connected, and at most one $y'\in C$ such that $\comp{C}\cup\{y'\}$ is $\cC$-connected.
  Since~$\card{C}\geq 2$ and $|\comp{C}|\geq 2$, we can then pick elements $u\in C$ and $v\in\comp{C}$ such that $C\cup\{v\}$ and $\comp{C}\cup\{u\}$ are not $\cC$-connected.
  We denote by $D_x$ and $D_y$ the two $\cC$-components of $\compb{u,v}$.

  The set $D_x\cup\{u\}$ is $\cC$-connected by \Cref{Ccupx}.
  Moreover, neither $C$ nor $D_x\cup\{u\}$ contains~$v$, so \Cref{intersection-connected} implies that $A:= C\cap (D_x\cup\{u\})=(C\cap D_x)\cup\{u\}$ is $\cC$-connected.
  Symmetrically, we get that $B:= \comp{C}\cap (D_x\cup\{v\})=(\comp{C}\cap D_x)\cup\{v\}$ is $\cC$-connected.
  Furthermore, observe that $A\cup B=D_x\cup\{u,v\}$ is $\cC$-connected by \Cref{Ccupx}.

  Now, we apply \axref{ax:button-zip} to~$A$ and~$B$.
  In the conclusion of the axiom, case~\ref{ax:button-zip-seam} is impossible: for any $\cC$-connected subset $S \subseteq D_x\cup \{u,v\}$, neither $A$ nor $B$ nor $S$ meets $D_y$ (which is non-empty as a $\cC$-component of $\compb{u,v}$), so $S\cap A$ and $S\cap B$ are $\cC$-connected by \Cref{intersection-connected}.
  We are therefore in case~\ref{ax:button-zip-button}: there exists an element $x \in D_x\cup \{u,v\}$ such that $(C\cap D_x)\cup\{u,x\}$ and $(\comp{C}\cap D_x)\cup \{v,x\}$ are $\cC$-connected.
  By symmetry, there also exists $y \in D_y\cup \{u,v\}$ such that $(C\cap D_y)\cup\{u, y\}$ and $(\comp{C}\cap D_y)\cup \{v,y\}$ are $\cC$-connected.
  Taking the respective unions, it follows from \axref{ax:union-connected} that the sets $(C\cap\compb{u,v})\cup\{u,x,y\}=C\cup\{x,y\}$ and $(\comp{C}\cap\compb{u,v})\cup\{v,x,y\}=\comp{C}\cup\{x,y\}$ are $\cC$-connected.

  We now show that $x \neq y$.
  Indeed, if $x=y$, then $x$ would belong to both $D_x\cup\{u,v\}$ and $D_y\cup\{u,v\}$, so it would have to be equal to either~$u$ or~$v$.
  One of the sets $C\cup\{v\}$ or $\comp{C}\cup\{u\}$ would then be $\cC$-connected, contradicting the choice of~$u$ and~$v$.

  Taking the complements of $C\cup\{x,y\}$ and $\comp{C}\cup\{x,y\}$ and using \Cref{A^c}, we get that $C\cap\compb{x,y}$ and~$\comp{C}\cap\compb{x,y}$ are $\cC$-connected.
  Since they are two disjoint $\cC$-connected subsets of~$\compb{x,y}$ that cover it, they are the two $\cC$-components of~$\compb{x,y}$ (cf.~\Cref{only-two-components}).
\end{proof}

\begin{corollary}
  \label{cor:list-of-connected-parts}
  Let $(X,\cC)$ be a connectivity space.
  If $(X,\cC)$ is $2$-flimsy then the elements of $\cC$ are exactly the following subsets of~$X$: $\emptyset$, $X$, singletons and their complements, and subsets of the form~$C$, $C \cup \{x\}$ or $C \cup \{x,y\}$ for some distinct points $x,y \in X$ and some $\cC$-component $C$ of $\compb{x,y}$.
\end{corollary}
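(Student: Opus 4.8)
The plan is to deduce this corollary almost directly from Proposition~\ref{prop:list-of-connected-parts}, combined with the basic facts already established about $2$-flimsy connectivity spaces. First I would argue that every subset in the stated list is $\cC$-connected. The empty set and $X$ are $\cC$-connected by the definition of a $2$-flimsy connectivity space (and \axref{ax:union-connected} for $\emptyset$); singletons are $\cC$-connected by \axref{ax:singleton-connected}, and their complements by the definition of $2$-flimsiness. Finally, for distinct points $x,y \in X$ and any $\cC$-component $C$ of $\compb{x,y}$, the set $C$ is $\cC$-connected by definition of a $\cC$-component, while $C \cup \{x\}$ and $C \cup \{x,y\}$ are $\cC$-connected by \Cref{Ccupx}. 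This establishes one inclusion.

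For the converse inclusion, take an arbitrary $C \in \cC$ and show it appears in the list. I would split into cases according to the sizes of $C$ and $\comp{C}$. If $\card{C} \leq 1$, then $C$ is either $\emptyset$ or a singleton, both of which are in the list. If $|\comp{C}| \leq 1$, then $C$ is either $X$ or the complement of a singleton, again in the list. The remaining case is $\card{C} \geq 2$ and $|\comp{C}| \geq 2$, which is exactly the hypothesis of \Cref{prop:list-of-connected-parts}: that proposition furnishes distinct $x, y \in X$ such that $C \cap \compb{x,y}$ is a $\cC$-component of $\compb{x,y}$. It then remains to recover $C$ itself from this $\cC$-component. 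Writing $C_0 := C \cap \compb{x,y}$, the set $C$ differs from $C_0$ only possibly at the points $x$ and $y$, so $C$ is one of $C_0$, $C_0 \cup \{x\}$, $C_0 \cup \{y\}$, or $C_0 \cup \{x,y\}$ depending on which of $x,y$ lie in $C$. Since $C_0$ is a $\cC$-component of $\compb{x,y}$, each of these four possibilities is of the claimed shape (for the middle two, swap the roles of $x$ and $y$; note $C_0 \cup \{y\}$ is "$C \cup \{x'\}$" with $x' = y$ in the notation of the list).

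The only mild subtlety — and the step I would be most careful about — is the bookkeeping in this last paragraph: one must check that $C \cap \compb{x,y}$ being a $\cC$-component of $\compb{x,y}$, together with $C$ possibly containing $x$ and/or $y$, genuinely exhausts the listed forms and introduces nothing new. This is purely a matter of the three forms "$C$", "$C \cup \{x\}$", "$C \cup \{x,y\}$" being stable under relabeling $\{x,y\}$, so there is no real obstacle; the work is entirely in Proposition~\ref{prop:list-of-connected-parts} and \Cref{Ccupx}, which are already proved.
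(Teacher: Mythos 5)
Your proposal is correct and follows essentially the same route as the paper: the forward inclusion via \axref{ax:singleton-connected}, \axref{ax:union-connected}, the definition of $2$-flimsiness, and \Cref{Ccupx}; the converse via the case split on $\card{C}$ and $\card{\comp{C}}$ and an application of \Cref{prop:list-of-connected-parts}. The final bookkeeping step you flag is exactly the observation $C_0 \subseteq C \subseteq C_0 \cup \{x,y\}$ that the paper also records, so there is nothing to add.
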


\begin{proof}
  By axioms~\ref{ax:singleton-connected} and~\ref{ax:union-connected}, singletons and $\emptyset$ are $\cC$-connected.
  By definition of $2$-flimsy spaces (\Cref{def:c-space_flimsy}), $X$ and complements of singletons are $\cC$-connected.
  For any distinct $x,y \in X$ and any $\cC$-component $C$ of~$\compb{x,y}$, the sets $C$, $C\cup\{x\}$ and $C\cup\{x,y\}$ are $\cC$-connected by definition of $\cC$-components and by 
  \Cref{Ccupx}.
  Conversely, if $D$ is a $\cC$-connected subset, we deduce from \Cref{prop:list-of-connected-parts} that either $\card D \leq 1$, or $\card{\comp{D}} \leq 1$, or there exist distinct $x,y\in X$ and a $\cC$-component~$C$ of~$\compb{x,y}$ such that $C \subseteq D \subseteq C\cup\{x,y\}$.
\end{proof}

\begin{remark}
  \label{rmk:rational_circle}
  \Cref{prop:list-of-connected-parts} and \Cref{cor:list-of-connected-parts} need not hold when $(X,\cC)$ does not satisfy \axref{ax:button-zip}.
  Indeed, consider the following counterexample (due to Fabian Gundlach): $X = \Q/\Z$, and~$\cC$ consists of intersections of connected subsets of $\R/\Z$ (for the usual topology) with~$\Q/\Z$.
  Then, $(X, \cC)$ satisfies axioms \ref{ax:singleton-connected}, \ref{ax:union-connected}, and \ref{ax:complement-connected}, and $(1,\sqrt 2) \cap \Q/\Z$ is a $\cC$-connected subset of~$X$, but it is not of the desired form as $\sqrt 2 \notin \Q/\Z$.
\end{remark}

A useful consequence of \Cref{cor:list-of-connected-parts} is that distinct $2$-flimsy connectivities are incomparable:

\begin{corollary}
  \label{cor:comparison_connectivities}
  Let $X$ be a set, and let $\cC_1,\cC_2$ be two connectivities on~$X$ such that $(X,\cC_1)$ and~$(X,\cC_2)$ are $2$-flimsy.
  If $\cC_1\subseteq \cC_2$, then $\cC_1=\cC_2$.
\end{corollary}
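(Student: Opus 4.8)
The plan is to read off everything from the explicit description of $2$-flimsy connectivities in \Cref{cor:list-of-connected-parts}. Since we are given $\cC_1 \subseteq \cC_2$, it suffices to prove the reverse inclusion $\cC_2 \subseteq \cC_1$, so I would fix an arbitrary $D \in \cC_2$ and show $D \in \cC_1$. By \Cref{cor:list-of-connected-parts} applied to $(X,\cC_2)$, the set $D$ is either one of the ``universal'' members ($\emptyset$, $X$, a singleton, or the complement of a singleton) or of the form $C$, $C \cup \{x\}$, or $C \cup \{x,y\}$ for some distinct $x,y \in X$ and some $\cC_2$-component $C$ of $\compb{x,y}$. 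The universal members lie in $\cC_1$ for free: singletons and $\emptyset$ by \axref{ax:singleton-connected} and \axref{ax:union-connected}, and $X$ together with the complements of singletons by the $2$-flimsiness of $(X,\cC_1)$. So the real content is the case involving a $\cC_2$-component.

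The key step I would isolate is the following claim: for every pair of distinct points $x,y \in X$, the two $\cC_1$-components of $\compb{x,y}$ coincide with its two $\cC_2$-components. To prove it, apply \Cref{only-two-components} to both $(X,\cC_1)$ and $(X,\cC_2)$: since $\card X > 2$, the non-empty set $\compb{x,y}$ decomposes as $A_1 \sqcup A_2$ into exactly two (non-empty) $\cC_1$-components and as $B_1 \sqcup B_2$ into exactly two (non-empty) $\cC_2$-components. As $\cC_1 \subseteq \cC_2$, each $A_i$ is $\cC_2$-connected, hence contained in one of $B_1, B_2$. If both $A_1$ and $A_2$ landed in the same $B_j$, then that $B_j$ would be all of $\compb{x,y}$ and the other $\cC_2$-component would be empty, which is impossible; so, after relabelling, $A_1 \subseteq B_1$ and $A_2 \subseteq B_2$. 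Taking complements inside $\compb{x,y}$ then gives $B_1 = \compb{x,y} \setminus B_2 \subseteq \compb{x,y} \setminus A_2 = A_1$, hence $A_1 = B_1$, and symmetrically $A_2 = B_2$.

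Granting this claim, any $\cC_2$-component $C$ of $\compb{x,y}$ is also a $\cC_1$-component of $\compb{x,y}$, so $C \in \cC_1$; and \Cref{Ccupx}, applied inside the $2$-flimsy connectivity space $(X,\cC_1)$, yields $C \cup \{x\} \in \cC_1$ and $C \cup \{x,y\} \in \cC_1$. Together with the treatment of the universal members, this shows that every possibility for $D$ coming from \Cref{cor:list-of-connected-parts} lies in $\cC_1$, whence $\cC_2 \subseteq \cC_1$ and thus $\cC_1 = \cC_2$.

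I do not expect a genuine obstacle: the argument is essentially bookkeeping around \Cref{cor:list-of-connected-parts}. The only point requiring a little care is the non-emptiness of all four components $A_1, A_2, B_1, B_2$, which is exactly what \Cref{only-two-components} provides together with the standing hypothesis $\card X > 2$; this non-emptiness is what rules out the degenerate scenario of both $A_i$ lying in a single $B_j$ and makes the complement computation go through.
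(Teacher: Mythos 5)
Your proposal is correct and follows essentially the same route as the paper: reduce via \Cref{cor:list-of-connected-parts} to showing that the $\cC_1$-components and $\cC_2$-components of each $\compb{x,y}$ coincide, then deduce this from \Cref{only-two-components} applied to both connectivities. You merely spell out details the paper leaves implicit (the matching of the two components and the handling of $C\cup\{x\}$, $C\cup\{x,y\}$ via \Cref{Ccupx}), and these details are all sound.
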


\begin{proof}
  Thanks to \Cref{cor:list-of-connected-parts}, it suffices to prove that for any distinct $x,y\in X$, the $\cC_1$-components of $\compb{x,y}$ coincide with the $\cC_2$-components of $\compb{x,y}$.
  By \Cref{only-two-components}, $\compb{x,y}$ has exactly two $\cC_1$-components.
  These two subsets partition $\compb{x,y}$ and are $\cC_2$-connected (as $\cC_1\subseteq \cC_2$).
  Again by \Cref{only-two-components}, it follows that these two subsets are in fact the two $\cC_2$-components of $\compb{x,y}$.
\end{proof}

\subsection{The separation relation associated to a $2$-flimsy connectivity space}
\label{sn:from_connectivity_to_separation}

Let $(X, \cC)$ be a $2$-flimsy connectivity space.
In order to relate~$X$ to order-theoretic structures, we must construct such a structure using only the $\cC$-connected subsets of~$X$.
To this end, we equip~$X$ with a relation~$\bot_\cC$ (which will be shown to be a dense order-complete separation relation in \Cref{prop:2f-seprel}) in the following way:

\begin{definition}
  \label{def:2f-seprel}
  If $\{a,b\}$ and $\{c,d\}$ are chords of~$X$, with $a,b,c,d$ four distinct points, we say that $\{a,b\} \bot_\cC \{c,d\}$ if $c$ and $d$ are in distinct $\cC$-components of~$X\setminus\{a,b\}$.
\end{definition}

By definition of~$\bot_\cC$, for any three distinct points $x,y,z\in X$, the open interval $I_{\not\ni z}(x,y)=\suchthat{w\in \compb{x,y,z}}{\{x,y\}\bot_\cC\{w,z\}}$ of $(X,\bot_\cC)$ is exactly the $\cC$-component of~$\compb{x,y}$ that does not contain $z$ (well-defined by \Cref{cor:comp-not-containing}).

\begin{lemma}
  \label{lem:four-points}
  Let $a,b,c,d$ be four distinct points of~$X$.
  The following implications hold:
  \begin{enumroman}
    \item
      \label{separation_step1}
      if $\{a,c\} \bot_\cC \{b,d\}$, then $\{a,b\} \nbot_\cC \{c,d\}$;
    \item
      \label{separation_step2}
      if $\{a,b\} \nbot_\cC \{c,d\}$ and $\{a,d\} \nbot_\cC \{b,c\}$, then $\{b,d\} \bot_\cC \{a,c\}$;
    \item
      \label{separation_step3}
      if $\{a,c\}\bot_\cC \{b,d\}$, then $\{b,d\}\bot_\cC\{a,c\}$.
  \end{enumroman}
\end{lemma}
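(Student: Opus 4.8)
The plan is to deduce all three implications from \Cref{intersect-intervals}, once $\bot_\cC$ and $\nbot_\cC$ are rephrased in terms of $\cC$-components. Recall that by \Cref{only-two-components} each set $\compb{x,y}$ has exactly two $\cC$-components, so for four distinct points $x,y,z,w$ one has $\{x,y\}\nbot_\cC\{z,w\}$ exactly when $z$ and $w$ lie in the same $\cC$-component of $\compb{x,y}$ (equivalently $C_{\ni z}(\compb{x,y}) = C_{\ni w}(\compb{x,y})$), and $\{x,y\}\bot_\cC\{z,w\}$ exactly when $w$ lies in the $\cC$-component of $\compb{x,y}$ not containing $z$ (equivalently $w \in C_{\not\ni z}(\compb{x,y})$, which by \Cref{cor:comp-not-containing} is a well-defined $\cC$-component). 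With this dictionary the statements become assertions about the sets $C_{\ni \bullet}(\compb{\bullet,\bullet})$, and \Cref{intersect-intervals} is exactly the kind of identity relating such sets.

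For \ref{separation_step1}: from $\{a,c\}\bot_\cC\{b,d\}$ we obtain $d \in C_{\not\ni b}(\compb{a,c})$. Applying \Cref{intersect-intervals} with $(b,a,c)$ in place of $(a,b,c)$ gives the identity $C_{\ni c}(\compb{a,b}) \cap C_{\ni a}(\compb{b,c}) = C_{\not\ni b}(\compb{a,c})$, so $d \in C_{\ni c}(\compb{a,b})$; thus $c$ and $d$ lie in the same $\cC$-component of $\compb{a,b}$, which means $\{a,b\}\nbot_\cC\{c,d\}$.

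For \ref{separation_step2}: the two hypotheses read $C_{\ni d}(\compb{a,b}) = C_{\ni c}(\compb{a,b})$ and $C_{\ni b}(\compb{a,d}) = C_{\ni c}(\compb{a,d})$. Applying \Cref{intersect-intervals} with $(a,b,d)$ in place of $(a,b,c)$ gives
\[
  C_{\not\ni a}(\compb{b,d}) = C_{\ni d}(\compb{a,b}) \cap C_{\ni b}(\compb{a,d}) = C_{\ni c}(\compb{a,b}) \cap C_{\ni c}(\compb{a,d}),
\]
and since $c$ lies both in $\compb{a,b}$ and in $\compb{a,d}$ it belongs to the right-hand side; hence $c \in C_{\not\ni a}(\compb{b,d})$. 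As this is the $\cC$-component of $\compb{b,d}$ not containing $a$, the points $a$ and $c$ lie in distinct $\cC$-components of $\compb{b,d}$, i.e.\ $\{b,d\}\bot_\cC\{a,c\}$.

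Finally, \ref{separation_step3} is purely formal from the first two: assuming $\{a,c\}\bot_\cC\{b,d\}$, part \ref{separation_step1} gives $\{a,b\}\nbot_\cC\{c,d\}$, and part \ref{separation_step1} applied to the four points $a,d,c,b$ (that is, after swapping the names of $b$ and $d$) gives $\{a,d\}\nbot_\cC\{b,c\}$; then part \ref{separation_step2} yields $\{b,d\}\bot_\cC\{a,c\}$. I do not anticipate a genuine obstacle here: the only delicate point is bookkeeping — matching the left/right asymmetry of \Cref{def:2f-seprel} with the shape of the identity in \Cref{intersect-intervals}, and observing that each triple fed to \Cref{intersect-intervals} consists of three of the four distinct points $a,b,c,d$ and is therefore automatically a triple of distinct points.
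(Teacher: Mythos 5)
Your proof is correct and follows essentially the same route as the paper: parts \ref{separation_step2} and \ref{separation_step3} are identical to the paper's (part \ref{separation_step2} via \Cref{intersect-intervals} applied to the triple $(a,b,d)$, part \ref{separation_step3} by applying \ref{separation_step1} twice and then \ref{separation_step2}). The only difference is in part \ref{separation_step1}, where the paper argues directly that $C_{\ni d}(\compb{a,c})\cup\{c\}$ is a $\cC$-connected subset of $\compb{a,b}$ containing both $c$ and $d$ (via \Cref{Ccupx}), whereas you deduce the same conclusion from a relabelled instance of \Cref{intersect-intervals}; both are valid one-step arguments.
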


\begin{proof}
  We show each point separately.
  \begin{enumroman}
    \item
      Let $C := C_{\ni d}(\compb{a,c})$.
      The set $C\cup\{c\}$ is $\cC$-connected by \Cref{Ccupx}, contains both $c$ and $d$, and is contained in $\compb{a,b}$, so $c$ and $d$ are in the same $\cC$-component of~$\compb{a,b}$, i.e., $\{a,b\}\nbot_\cC \{c,d\}$.
    \item
      That $\{a,b\}\nbot_\cC \{c,d\}$ means that $c\in C_{\ni d}(\compb{a,b})$.
      Likewise, $\{a,d\}\nbot_\cC \{b,c\}$ means that $c\in C_{\ni b}(\compb{a,d})$, and so the intersection $C := C_{\ni d}(\compb{a,b})\cap C_{\ni b}(\compb{a,d})$ contains $c$.
      But \Cref{intersect-intervals} asserts that $C = C_{\not\ni a}(\compb{b,d})$, so $c \in C$ means that $\{b,d\} \bot_\cC \{a,c\}$.
    \item
      Use \ref{separation_step1} twice, then use \ref{separation_step2}.
      \qedhere
  \end{enumroman}
\end{proof}

\begin{lemma}
  \label{lemma:open-avoids-edges}
  Let $v,w,x,y,z\in X$ be such that $v,w,x$ are distinct and such that $v,y,z$ are distinct.
  If $C_{\ni v}(\compb{w,x}) \subseteq C_{\ni v}(\compb{y,z}) \cup \{y,z\}$, then $y \notin C_{\ni v}(\compb{w,x})$.
\end{lemma}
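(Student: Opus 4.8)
The plan is to argue by contradiction: assume $y \in C_{\ni v}(\compb{w,x})$. Since $y \in C_{\ni v}(\compb{w,x})$ while $v, y, z$ are distinct, we have $v, y$ in the same $\cC$-component of $\compb{w,x}$, hence a fortiori in the same $\cC$-component of $\compb{w,x,y}$... wait, that is not quite right since removing $y$ may disconnect that component. Let me reconsider. The key tension is this: by the hypothesis $C_{\ni v}(\compb{w,x}) \subseteq C_{\ni v}(\compb{y,z}) \cup \{y,z\}$, if $y$ lies in $C_{\ni v}(\compb{w,x})$ then $y$ lies in $C_{\ni v}(\compb{y,z}) \cup \{y,z\}$, which is automatic, so no immediate contradiction. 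Instead, I would look at the $\cC$-component $D := C_{\ni v}(\compb{w,x})$ and use that $y \in D$ together with $y$ being a point one has removed on the right-hand side. The idea is that $D \setminus \{y\}$, being $\compb{w,x,y}$-relative, decomposes into $\cC$-components, and $v$ lies in one of them, say $D'$; by \axref{ax:complement-connected} applied in the $\cC$-connected set $\compb{x,y}$ (note $D \cup \{x\}$ is $\cC$-connected by \Cref{Ccupx}, and $D$ is a $\cC$-component of $\compb{x,y} \setminus \{w\}$... ), one extracts that $D \setminus D'$ is $\cC$-connected. The containment hypothesis should force $D' \subseteq C_{\ni v}(\compb{y,z})$, and then adding $y$ back should contradict maximality.

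More concretely, here is the cleaner route I would take. Assume for contradiction $y \in D := C_{\ni v}(\compb{w,x})$. Since $v \ne y$, the point $v$ lies in a $\cC$-component $D'$ of $D \setminus \{y\} \subseteq \compb{y}$; note $D' \subseteq \compb{w,x,y}$. I claim $D' = C_{\ni v}(\compb{y,z})$ after possibly also removing $z$, or more precisely that $D' \subseteq C_{\ni v}(\compb{y,z})$: indeed $D' \subseteq D \subseteq C_{\ni v}(\compb{y,z}) \cup \{y,z\}$ by hypothesis, and $D'$ avoids $y$, so $D' \subseteq C_{\ni v}(\compb{y,z}) \cup \{z\}$; whether $z \in D'$ or not, since $D'$ is $\cC$-connected and contains $v$, and $D' \setminus \{z\}$ is then a $\cC$-connected subset of $\compb{y,z}$ containing $v$, we get $D' \setminus \{z\} \subseteq C_{\ni v}(\compb{y,z})$ — but I actually want $D'$ itself. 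The cleanest statement: $D'$ is a $\cC$-connected subset of $D$ containing $v$ and not containing $y$; I will then show $D' \cup \{y\}$ is $\cC$-connected, contradicting the maximality of $D'$ as a $\cC$-component of $D \setminus \{y\}$. To produce this, note that $D$ is $\cC$-connected and $D'$ is a $\cC$-component of $D \setminus \{y\}$, so by \axref{ax:complement-connected} the set $D \setminus D'$ is $\cC$-connected; it contains $y$ (since $y \in D \setminus D'$) and is non-trivial. Now use that $D$ is actually a $\cC$-component of $\compb{w,x}$, so by \Cref{Ccupx} the sets $D \cup \{w\}$, $D \cup \{x\}$ and $D \cup \{w,x\}$ are all $\cC$-connected. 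The point is to glue $D'$ back to $y$ through a path that avoids using $D'$ twice: since $D \setminus D'$ is $\cC$-connected and contains $y$, and since $D'$ and $D \setminus D'$ together exhaust $D$, consider whether their union — which is all of $D$ — being $\cC$-connected forces, via \axref{ax:button-zip} or via the structure of $2$-flimsy connectivity established in \Cref{cor:list-of-connected-parts}, that there is a "button" point joining them.

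I expect the main obstacle to be exactly this gluing step: showing $y$ can be attached to the $\cC$-component $D'$ of $D \setminus \{y\}$ without the ambient space "wrapping around." The hypothesis $C_{\ni v}(\compb{w,x}) \subseteq C_{\ni v}(\compb{y,z}) \cup \{y,z\}$ is what rules out the wraparound: it says that, inside $\compb{y,z}$, everything in $D$ that survives removing $y$ and $z$ stays on $v$'s side, so removing $y$ from $D$ cannot split off a piece on the "far side" of $y$ relative to $z$. I would formalize this by observing that $C_{\not\ni v}(\compb{y,z})$, which equals $C_{\ni z}(\compb{\text{something}})$-type intervals by \Cref{cor:comp-not-containing}, is disjoint from $D$ except possibly at... and then use \Cref{lem:four-points}, in particular the separation relation $\bot_\cC$, to translate the containment into the statement $\{w,x\}$ does not separate $y$ from $z$ in the relevant sense, which is contradicted by $\{w,x\} \bot_\cC \{v, ?\}$ type relations coming from $y \in D = C_{\ni v}(\compb{w,x})$. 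Concretely: $y \in C_{\ni v}(\compb{w,x})$ means $\{w,x\} \nbot_\cC \{v,y\}$; combined with the containment hypothesis, which should give $\{y,z\} \nbot_\cC \{v, w\}$ or similar, one derives via \iref{lem:four-points}{separation_step2} a relation that contradicts the distinctness or the $2$-flimsy structure. I would therefore first restate the hypothesis and conclusion purely in terms of $\bot_\cC$, then run the three implications of \Cref{lem:four-points} to close the loop — that translation is the delicate bookkeeping, but once the right $\bot_\cC$-relations are identified the contradiction is immediate.
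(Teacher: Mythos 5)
There is a genuine gap --- in fact two, one in each of the routes you sketch. First, your central claimed contradiction does not exist: showing that $D' \cup \{y\}$ is $\cC$-connected does \emph{not} contradict the maximality of $D'$ as a $\cC$-component of $D \setminus \{y\}$, because $D' \cup \{y\}$ is not a subset of $D \setminus \{y\}$. Worse, in the intended model (an arc $D$ of a circle with $y$ in its interior), $D' \cup \{y\}$ really \emph{is} connected, so no correct argument can derive a contradiction from its connectedness. The contradiction one must aim for is with the maximality of $C_{\not\ni v}(\compb{w,x})$ among the $\cC$-connected subsets of $\compb{w,x}$: the paper first dualizes the hypothesis via \Cref{A^c} to $C_{\not\ni v}(\compb{y,z}) \subseteq C_{\not\ni v}(\compb{w,x}) \cup \{w,x\}$, glues $C_{\not\ni v}(\compb{y,z})\cup\{y\}$ onto $C_{\not\ni v}(\compb{w,x})\cup\{w,x\}$ to make $C_{\not\ni v}(\compb{w,x})\cup\{w,x,y\}$ connected, and then applies \Cref{if-you-touch-edges-you-are-component} twice (peeling off $x$, then $w$) to conclude that $C_{\not\ni v}(\compb{w,x})\cup\{y\}$ is $\cC$-connected, which is the real contradiction since $y\notin\{w,x\}$ under the reductio assumption. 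Your sketch never reaches a step of this kind, and your appeal to \axref{ax:button-zip} to find a ``button'' stalls for the reason you yourself half-notice: the button point could be $y$ itself, which yields nothing.

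Second, the fallback of restating everything as finitely many $\bot_\cC$-relations among $v,w,x,y,z$ and closing the loop with \Cref{lem:four-points} cannot work in principle. Restricted to the five named points, the containment hypothesis is vacuous once you assume $y \in C_{\ni v}(\compb{w,x})$: the instance for $u=y$ is trivially satisfied because $y \in \{y,z\}$, and the instances for $u\in\{v,w,x,z\}$ are likewise automatic. A concrete five-point configuration on the circle (e.g.\ $v=0^\circ$, $w=-10^\circ$, $x=10^\circ$, $y=5^\circ$, $z=170^\circ$) realizes all the pairwise separation relations your sketch extracts, yet the failure of the containment hypothesis there is witnessed only by points \emph{other} than the named five. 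So the hypothesis must be exploited as a genuine inclusion of components-as-sets, which is exactly what the complement-and-gluing argument does; no bookkeeping with \Cref{lem:four-points} alone can be ``immediate.''
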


Informally, \Cref{lemma:open-avoids-edges} means that if an interval~$A$ ``without ends'' is contained in an interval~$B$ ``with ends'', then the ends of $B$ cannot belong to $A$, i.e., $A$ is contained in $B$ minus its ends.
This fact will be used in \Cref{prop:2f-seprel} to prove that $\bot_\cC$ is an order-complete separation relation.

\begin{proof}
  We dualize the claim by taking complements: we have $C_{\not\ni v}(\compb{y,z}) \subseteq C_{\not\ni v}(\compb{w,x}) \cup \{w,x\}$, and we must show that $y \in C_{\not\ni v}(\compb{w,x}) \cup \{w,x\}$.
  We reason by contradiction by assuming the contrary, which amounts to assuming that $C_{\not\ni v}(\compb{w,x}) \cup \{w\} \subseteq \compb{x,y}$.
  
  The two sets $C_{\not\ni v}(\compb{y,z}) \cup \{y\}$ and $C_{\not\ni v}(\compb{w,x}) \cup \{w,x\}$ are $\cC$-connected by \Cref{cor:comp-not-containing} and \Cref{Ccupx}, and $C_{\not\ni v}(\compb{y,z}) \subseteq C_{\not\ni v}(\compb{w,x}) \cup \{w,x\}$ by hypothesis, so their union is $C_{\not\ni v}(\compb{w,x}) \cup \{w,x,y\}$, and it is $\cC$-connected by \axref{ax:union-connected}.
  By \Cref{cor:comp-not-containing} and \Cref{Ccupx}, $C_{\not\ni v}(\compb{w,x})\cup\{w\}$ is $\cC$-connected too.
  Since this is a subset of $\compb{x,y}$ by assumption, it follows from \Cref{if-you-touch-edges-you-are-component} that~$C_{\not\ni v}(\compb{w,x})\cup\{w\}$ is a $\cC$-component of $\compb{x,y}$.
  Then, \Cref{Ccupx} implies that $C_{\not\ni v}(\compb{w,x})\cup\{w,y\}$ is $\cC$-connected.
  Applying \Cref{if-you-touch-edges-you-are-component} again, we see that~$C_{\not\ni v}(\compb{w,x})$ is a $\cC$-component of~$\compb{w,y}$, so $C_{\not\ni v}(\compb{w,x})\cup\{y\}$ is $\cC$-connected by \Cref{Ccupx}.
  This contradicts the maximality of~$C_{\not\ni v}(\compb{w,x})$ among the $\cC$-connected subsets of~$\compb{w,x}$.
\end{proof}

\begin{proposition}
  \label{prop:2f-seprel}
  The relation $\bot_\cC$ is a dense order-complete separation relation on $X$.
\end{proposition}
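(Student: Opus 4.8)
The plan is to verify, in turn, the four axioms \ref{ax:separation-symmetric}--\ref{ax:separation-transitive} defining a separation relation, then density, then order-completeness. Two preliminary observations make everything run smoothly: first, I extend $\bot_\cC$ by declaring $\{a,b\}\nbot_\cC\{c,d\}$ whenever the two chords meet, so that $\bot_\cC$ holds only between chords built from four distinct points; second, by \Cref{only-two-components} the set $\compb{a,b}$ has exactly two $\cC$-components for every chord $\{a,b\}$, so for distinct $a,b,c$ the open interval $I_{\not\ni c}(a,b)$ of $(X,\bot_\cC)$ coincides with the $\cC$-component $C_{\not\ni c}(\compb{a,b})$ and $I_{\ni c}(a,b)=C_{\ni c}(\compb{a,b})$. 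In other words, the open intervals of $(X,\bot_\cC)$ are exactly the $\cC$-components of the various sets $\compb{a,b}$; conversely every such $\cC$-component is an open interval (realize the complementary component by a point $c$, so the given one is $I_{\not\ni c}(a,b)$).

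The axioms are then quick. \ref{ax:separation-symmetric} is \iref{lem:four-points}{separation_step3} after relabeling; \ref{ax:separation-strict} is immediate since $\{a,b\}$ and $\{a,c\}$ share the point $a$. For \ref{ax:separation-total}, I would argue ``at most one'' from the contrapositive of \iref{lem:four-points}{separation_step1} (in its various relabelings, this says that if one of the three crossing relations holds then the other two fail) and ``at least one'' from \iref{lem:four-points}{separation_step2} together with \ref{ax:separation-symmetric} (if two fail, the third holds). For \ref{ax:separation-transitive}, write $P,Q$ for the two $\cC$-components of $\compb{a,b}$: each of $c,d,e$ lies in $P$ or in $Q$, and $\{a,b\}\bot_\cC\{x,y\}$ holds exactly when $x,y$ lie in different components, so sorting $\{c,d,e\}$ into the two classes $P,Q$ yields either all three in one class (none of the three relations holds) or a $2$--$1$ split (exactly two hold). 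Density is equally short: $\card X>2$ forces $\compb{a,b}\neq\emptyset$, and taking a point from each of its two $\cC$-components produces a chord crossing $\{a,b\}$; by the remark after \Cref{def:dense-complete_seprel} this also guarantees that every open interval is non-empty, a fact I use below.

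The substantive part is order-completeness. Given an open interval $I$ and a non-empty chain $\mathcal K$ of open intervals contained in $I$, set $U:=\bigcup\mathcal K$. Since each member of $\mathcal K$ is non-empty and $\mathcal K$ is totally ordered by inclusion, \axref{ax:union-connected} gives $U\in\cC$; moreover $U\subseteq I\subsetneq X$ and $\card{\comp I}\geq 2$ (an open interval omits the two points defining it), hence $\card{\comp U}\geq 2$. By \Cref{cor:list-of-connected-parts} --- this is where \axref{ax:button-zip} enters --- $U$ must be $\emptyset$, $X$, a singleton, the complement of a singleton, or one of $C$, $C\cup\{x\}$, $C\cup\{x,y\}$ for distinct $x,y$ and a $\cC$-component $C$ of $\compb{x,y}$. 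The first, second, and fourth options are impossible ($U$ is non-empty, $U\neq X$, $\card{\comp U}\geq 2$). If $U$ is a singleton, then every $J\in\mathcal K$ satisfies $\emptyset\neq J\subseteq U$, so $J=U$, whence $U\in\mathcal K$ is an open interval. If $U=C$, it is an open interval and we are done. The remaining cases $U=C\cup\{x\}$ and $U=C\cup\{x,y\}$ are the crux: here $x\in U$ and $\card U\geq 2$, so using that $\mathcal K$ is a chain one finds $J\in\mathcal K$ with $x\in J$ and $\card J\geq2$; since $J\in\cC$ is contained in $C\cup\{x,y\}$ and the space is $\rT_1$, $J\neq\{x,y\}$, so there is some $v\in J\setminus\{x,y\}$, necessarily in $C$. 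Writing $J=C_{\ni v}(\compb{a,b})$ and $C=C_{\ni v}(\compb{x,y})$, the inclusion $C_{\ni v}(\compb{a,b})=J\subseteq C\cup\{x,y\}=C_{\ni v}(\compb{x,y})\cup\{x,y\}$ contradicts \Cref{lemma:open-avoids-edges}, which forces $x\notin J$. Hence $U=C$ in all cases, and order-completeness follows.

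The main obstacle is precisely this last point: the union of a growing chain of open intervals could conceivably ``reach'' and absorb its endpoints, becoming a closed-type interval rather than an open one, and what rules this out is exactly \Cref{lemma:open-avoids-edges} (an open interval sitting inside a closed interval misses its endpoints) --- but only after \Cref{cor:list-of-connected-parts}, and with it \axref{ax:button-zip}, has been used to restrict $U$ to one of finitely many admissible shapes. Everything else is bookkeeping on top of \Cref{only-two-components} and \Cref{lem:four-points}.
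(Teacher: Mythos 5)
Your proposal is correct and follows essentially the same route as the paper: the separation axioms and density are read off from \Cref{only-two-components} and \Cref{lem:four-points}, and order-completeness is obtained by applying \axref{ax:union-connected} to the union of the chain, invoking the classification of $\cC$-connected subsets (\Cref{prop:list-of-connected-parts}/\Cref{cor:list-of-connected-parts}) to constrain its shape, and then ruling out the forms $C\cup\{x\}$ and $C\cup\{x,y\}$ via \Cref{lemma:open-avoids-edges}. The only cosmetic difference is that you exclude the degenerate cases (singleton, etc.) explicitly and apply \Cref{lemma:open-avoids-edges} to a single well-chosen member of the chain, whereas the paper notes $\card U\geq 2$ directly and applies the lemma to every member; both variants are sound.
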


\begin{proof}
  We first check the axioms of \Cref{def:sep_rel}: \axref{ax:separation-strict} is clear from the definition of $\bot_\cC$, \axref{ax:separation-symmetric} follows directly from \iref{lem:four-points}{separation_step3}, and \axref{ax:separation-total} follows from points~\ref{separation_step1} and~\ref{separation_step2} of \Cref{lem:four-points}.
  For \axref{ax:separation-transitive}, consider five distinct points $a,b,c,d,e$ of~$X$, and recall from \Cref{only-two-components} that $X\setminus \{a,b\}$ has exactly two $\cC$-components: either $c,d,e$ all lie in the same component, or two of them lie in the same component and the third one does not, which gives the two cases.
  That the separation relation $\bot_\cC$ is dense follows again from \Cref{only-two-components}: given $\{a,b\}\in \Cho(X)$, the set~$\compb{a,b}$ has two $\cC$-components, so we can pick $c$ in one and $d$ in the other, and then $\{a,b\}\bot_\cC\{c,d\}$.
  
  Now, consider a non-empty chain $\{U_i\}$ of open intervals of $(X,\bot_\cC)$ contained in an open interval~$I$.
  Then,  $U := \bigcup U_i$ is $\cC$-connected by \axref{ax:union-connected}, and is contained in $I$ (so $|\comp{U}|\geq |\comp{I}|\geq 2$).
  If~$U$ is not an open interval, then $\card{U}\geq \card{U_i}+1\geq 2$, and by \Cref{prop:list-of-connected-parts} there must be distinct points $x,y,z \in X$ such that $U$ is either of the form $C_{\ni z}(\compb{x,y}) \cup \{x\}$ or $C_{\ni z}(\compb{x,y}) \cup \{x,y\}$.
  But then, by \Cref{lemma:open-avoids-edges}, none of the open intervals~$U_i \subseteq U$ contain~$x$, contradicting $x \in U$.
\end{proof}

\section{Flimsy topological spaces}
\label{sn:2f-top}

In this section, we focus on flimsy topological spaces, i.e., we specialize the results about flimsy connectivity spaces (from \Cref{sn:props-2f}) to the case where~$\cC$ is the set of connected subsets of a topological space.
The main result of this section is \Cref{thm:intro-2f-top} from the introduction.

Throughout the section, we always adopt the convention that~$\emptyset$ is connected.

\begin{proposition}
  \label{prop:top-is-cspace}
  Let~$X$ be a topological space, and let~$\cC$ be the set of connected subsets of~$X$.
  Then, $(X, \cC)$ is a connectivity space.
  Moreover:
  \begin{itemize}
    \item
      $X$ is a $\rT_1$ topological space $\Longleftrightarrow$ $(X, \cC)$ is a $\rT_1$ connectivity space (cf.~\Cref{def:c-space-T1}).
    \item
      For any $n \in \N$,
      $X$ is an $n$-flimsy topological space $\Longleftrightarrow$ $(X, \cC)$ is an $n$-flimsy connectivity space.
  \end{itemize}
\end{proposition}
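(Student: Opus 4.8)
The plan is to check the four axioms \ref{ax:singleton-connected}--\ref{ax:button-zip} in turn, and then to dispatch the two bulleted equivalences by simply unwinding the definitions, using throughout that $\cC$ \emph{is} the set of connected subsets (so that ``$Y\in\cC$'' and ``$Y$ is connected'' are synonymous, with the convention that $\emptyset$ is connected). Axiom \ref{ax:singleton-connected} is trivial. Axiom \ref{ax:union-connected} is the classical fact that a union of connected subspaces sharing a common point is connected: if $\bigcup\mathcal D=E_1\sqcup E_2$ were a separation, each $Z\in\mathcal D$ (being connected) would lie wholly in $E_1$ or in $E_2$, and a point of $\bigcap\mathcal D$ forces them all onto the same side; the parenthetical ``$\emptyset\in\cC$'' is just the stated convention.

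For axiom \ref{ax:complement-connected}: given $Y\in\cC$ and $x,y\in Y$, the set $C$ defined there is, by \ref{ax:union-connected}, the connected component of $y$ in the (connected) subspace $Y\setminus\{x\}$, and the assertion $Y\setminus C\in\cC$ is exactly the classical theorem that removing a connected component of $Z\setminus\{x\}$ from a connected space $Z$ leaves a connected space. I would either cite this (it is a staple of cut-point theory, cf.~\cite{cutpoint}) or reprove it by a separated-sets argument: if $Z\setminus C=P\sqcup Q$ is a separation with $x\in P$, one checks that $\{\,Q,\;C\cup P\,\}$ already separates $Z$ unless $\overline Q\cap C\neq\emptyset$, and in that remaining case one uses that $\overline C\subseteq C\cup\{x\}$ (a component is closed in the space of which it is a component) together with the connectedness of $C$ to see that $\overline Q\cap C$ is clopen in $C$ and hence empty after all.

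The one axiom requiring a genuinely new---though short---argument is \ref{ax:button-zip}, and for topological spaces one always lands in the ``button'' case. Let $A,B\in\cC$ be nonempty with $A\cup B\in\cC$. If $A\cap B\neq\emptyset$, any point $x$ of the intersection works, since $A\cup\{x\}=A$ and $B\cup\{x\}=B$. If $A\cap B=\emptyset$, then $A$ is a nonempty proper subset of the connected space $A\cup B$, hence not clopen in it; so either $A$ is not closed in $A\cup B$---and then $\overline A$ (closure in $X$) contains some $x\in B$---or $A$ is not open in $A\cup B$, so that $B=(A\cup B)\setminus A$ is not closed in $A\cup B$ and $\overline B$ contains some $x\in A$. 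In the first case $A\cup\{x\}$ is connected, being squeezed between the connected set $A$ and its closure $\overline A$ (the standard ``$A\subseteq S\subseteq\overline A\Rightarrow S$ connected''), and $B\cup\{x\}=B$ is connected; the second case is symmetric. Thus \ref{ax:button-zip-button} always holds, and the ``seam'' alternative \ref{ax:button-zip-seam} is never needed here (it becomes relevant only in the path-connected variants).

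It remains to record the two equivalences, which are immediate. A two-point subspace $\{x,y\}$ of a topological space is connected precisely when one of the points lies in the closure of the other; hence $(X,\cC)$ is $\rT_1$ as a connectivity space (\Cref{def:c-space-T1}: no two-point subset lies in $\cC$) exactly when no singleton has a strictly larger closure, i.e.\ exactly when $X$ is a $\rT_1$ topological space. And since ``$X\setminus S\in\cC$'' literally unfolds to ``$X\setminus S$ is connected'', \Cref{def:flimsy_top} and \Cref{def:c-space_flimsy} say the same thing for every $n$. The step I expect to be the main obstacle is \ref{ax:complement-connected}: although classical, it must hold with \emph{no} separation axiom imposed on $X$, so the naive justification ``$C\cup\{x\}$ is connected because $x\in\overline C$'' is unavailable (it can fail in non-Hausdorff spaces), and one really needs the careful closure argument sketched above or a clean citation.
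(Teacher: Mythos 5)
Your proposal follows essentially the same route as the paper: (C1) and (C2) are classical, (C3) is the classical cut-point theorem (which the paper simply cites from Kuratowski), (C4) is settled by showing that the ``button'' case always holds via a closure argument inside $A\cup B$, and the two equivalences are definitional. One caveat: your backup ``separated-sets'' reproof of (C3) has a gap --- the assertion that $\overline{Q}\cap C$ is \emph{open} in $C$ is unjustified (closedness is clear, openness is not), and the standard argument runs differently: from a separation $Z\setminus C=P\sqcup Q$ with $x\in P$ one invokes Kuratowski's theorem that $C\cup Q$ is connected whenever $C$ and $Z$ are, and then $C\cup Q\subseteq Z\setminus\{x\}$ contradicts the maximality of the component $C$. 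Since you offer the citation as your primary justification for (C3), exactly as the paper does, the proof stands.
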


\begin{proof}
  \Axref{ax:singleton-connected} clearly holds.
  For \axref{ax:union-connected}, see \cite[Theorem~26.7a]{willard}.
  \Axref{ax:complement-connected} follows directly from \cite[Chapter~XVI.3, Theorem~4]{kuratowski}.
  
  We now check \axref{ax:button-zip}---in fact, we show that the case~\ref{ax:button-zip-button} always holds.
  Let $A$ and $B$ be non-empty connected subsets of $X$, and let $\bar{A}$ and~$\bar{B}$ be their respective closures in $A \cup B$.
  These two sets are closed in $A\cup B$ and cover $A\cup B$, so $\bar{A}\cap\bar{B} \neq \emptyset$ as $A\cup B$ is connected.
  Let $x\in \bar{A}\cap\bar{B}$.
  We have $A \subseteq A\cup\{x\}\subseteq \bar{A}$ and $B \subseteq B\cup\{x\}\subseteq \bar{B}$.
  Since $A$ and $B$ are connected, it follows from~\cite[Theorem~26.8]{willard} that $A\cup\{x\}$ and $B\cup\{x\}$ are connected.

  The equivalences of the notions of $\rT_1$ spaces and of the notions of $n$-flimsy spaces follow easily from the definitions.
\end{proof}

\Cref{prop:top-is-cspace} implies that the results of \Cref{sn:props-2f} apply to flimsy topological spaces.
In particular, by \Cref{thm:no-3f-cspace}, there are no $n$-flimsy topological spaces when $n \geq 3$.

In what follows, we fix a $2$-flimsy topological space $(X,\tau)$.
Let $(X, \cC)$ be the corresponding $2$-flimsy connectivity space (\Cref{prop:top-is-cspace}), let $\bot=\bot_\cC$ be the corresponding order-complete dense separation relation on~$X$ (\Cref{def:2f-seprel}, \Cref{prop:2f-seprel}), and let~$\tau_\rK$ be the order topology of~$(X,\bot)$ (\Cref{def:order-top_seprel}).

We prove that~$\tau$ is finer than~$\tau_\rK$, i.e., that the open intervals of~$(X, \bot)$ are open subsets of~$X$:

\begin{lemma}
  \label{C_open}
  Let $x, y \in X$ be distinct.
  If $C$ is a connected component of~$\compb{x,y}$, then~$C$ is open
  in~$X$, $C\cup\{x,y\}$ is closed and not open
  in~$X$, and $C\cup\{x\}$ is neither open nor closed in~$X$.
\end{lemma}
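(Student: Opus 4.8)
The plan is to first secure enough separation that connected components behave well, and then to feed in the hypothesis ``$X$ is $2$-flimsy'' through the connectedness of the subspaces $X\setminus\{z\}$.

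First I would record that $X$ is $\rT_1$. Indeed, $(X,\cC)$ is a $2$-flimsy connectivity space by \Cref{prop:top-is-cspace}; if $\{x,y\}\in\cC$ held for some distinct points $x,y$, then \Cref{A^c} would force $\compb{x,y}\in\cC$, contradicting $2$-flimsiness, so $(X,\cC)$ is $\rT_1$ and hence $X$ is a $\rT_1$ topological space by \Cref{prop:top-is-cspace}. In particular $\{x,y\}$ is closed, so $\compb{x,y}$ is open in $X$. By \Cref{only-two-components} (and since $\cC$-components are exactly the topological connected components), $\compb{x,y}$ has exactly two connected components: our $C$ and another one, call it $C'$, both non-empty. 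Each of $C$ and $C'$ is closed in the subspace $\compb{x,y}$ (connected components always are), and since $C=\compb{x,y}\setminus C'$ and $C'=\compb{x,y}\setminus C$, each is also open in $\compb{x,y}$; as $\compb{x,y}$ is open in $X$, it follows that $C$ and $C'$ are both open in $X$, which is the first assertion.

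Next, $C\cup\{x,y\}=X\setminus C'$ is the complement of the open set $C'$, hence is closed in $X$. To see it is not open, I would use that $X$ itself is connected (it equals $X\setminus\emptyset$ and $X$ is $2$-flimsy): if $C\cup\{x,y\}$ were open, then $X$ would be partitioned into the two non-empty open subsets $C\cup\{x,y\}$ and $C'$, contradicting connectedness. For the last assertion, to show $C\cup\{x\}$ is not open I would pass to the subspace $\compb{y}=X\setminus\{y\}$, which is connected because $X$ is $2$-flimsy. If $C\cup\{x\}$ were open in $X$ it would be open in $\compb{y}$; but $C'$ is open in $X$, hence in $\compb{y}$, so $\compb{y}=(C\cup\{x\})\sqcup C'$ would be a partition of the connected space $\compb{y}$ into two non-empty disjoint open sets, a contradiction. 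Finally, to show $C\cup\{x\}$ is not closed I would not argue directly but by symmetry: $C'$ is itself a connected component of $\compb{x,y}=\compb{y,x}$, so the non-openness statement just established, applied to the pair $(y,x)$ and the component $C'$, gives that $C'\cup\{y\}$ is not open in $X$; but if $C\cup\{x\}$ were closed, its complement $C'\cup\{y\}$ would be open, a contradiction.

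The one place where a naive approach stalls is the non-closedness of $C\cup\{x\}$: a direct point-set computation does not produce a contradiction, since a connected space may perfectly well be a disjoint union of a non-empty closed piece and a non-empty open piece (e.g.\ $[0,1)\sqcup\{1\}$). The trick, as above, is to reduce non-closedness to the already proved non-openness by invoking the symmetry of the configuration under swapping $C$ with $C'$ and $x$ with $y$. Apart from that, everything is routine bookkeeping, the essential inputs being $\rT_1$-ness (so that $\compb{x,y}$ is open), the fact that $\compb{x,y}$ has exactly two components (\Cref{only-two-components}), and the $2$-flimsiness of $X$ (so that $X$, $\compb{x}$, and $\compb{y}$ are all connected).
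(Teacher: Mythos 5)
Your proof is correct and follows essentially the same route as the paper: establish $\rT_1$ so that $\compb{x,y}$ is open, use \Cref{only-two-components} to get exactly two components (hence $C$ clopen in $\compb{x,y}$ and open in $X$), and then contradict the connectedness of $X$, $\compb{x}$, or $\compb{y}$. The only cosmetic difference is in the last step: the paper shows non-closedness of $C\cup\{x\}$ directly by exhibiting $C$ as a proper nonempty clopen subset of the connected space $\compb{x}$, whereas you reduce it to the already-proved non-openness of the complement $C'\cup\{y\}$ via the $(x,C)\leftrightarrow(y,C')$ symmetry — both arguments rest on the same fact.
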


\begin{proof}
  By \Cref{only-two-components}, the sets $C$ and $D := \compb{x,y}\setminus C$ are the two connected components of~$\compb{x,y}$, hence they are both closed in $\compb{x,y}$, so $C$ is open and closed in $\compb{x,y}$.
  Since $X$ is~$\rT_1$ by \Cref{flimsy-is-t1} and \Cref{prop:top-is-cspace}, the subset~$\{x,y\}$ is closed in $X$, so~$\compb{x,y}$ is open in $X$.
  Therefore, $C$ is open in~$X$, and so is $D$ by symmetry.
  Hence, $C \cup \{x,y\} = X \setminus D$ is closed, and as a proper subset of the connected space~$X$, it cannot also be simultaneously open.
  Finally, $C\cup\{x\}$ cannot be closed because otherwise~$C$ would be a non-empty open and closed proper subset of the space~$\compb{x}$, which is connected as $X$ is $2$-flimsy.
  Similarly, if $C\cup \{x\}$ were open, then~$C\cup\{x\}$ would be open and closed in the connected space~$\compb{y}$.
\end{proof}

We prove the following result, which was already obtained in \cite[Theorem~15]{pengcao}:

\begin{proposition}
  \label{thm:2f-is-hausdorff}
  The $2$-flimsy space $X$ is Hausdorff.
\end{proposition}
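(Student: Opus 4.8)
The plan is to produce, for any two distinct points of $X$, a pair of disjoint open neighborhoods built out of connected components of two-point complements. The two ingredients are \Cref{only-two-components} (every set $\compb{x,y}$ has exactly two connected components) and \Cref{C_open} (those components are open in $X$).

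Concretely, I fix distinct points $x, y \in X$. First I would choose a point $c$ in one of the two connected components $C$ of $\compb{x,y}$ and a point $d$ in the other component $D$; then $\{c,d\}$ is a chord and $c, d, x, y$ are four distinct points. Since $c$ and $d$ lie in distinct components of $\compb{x,y}$, we have $\{x,y\} \bot_\cC \{c,d\}$ by \Cref{def:2f-seprel}. Invoking the symmetry of $\bot_\cC$ — which is \axref{ax:separation-symmetric} in \Cref{prop:2f-seprel}, or equivalently \iref{lem:four-points}{separation_step3} — this yields $\{c,d\} \bot_\cC \{x,y\}$, i.e.\ $x$ and $y$ lie in distinct connected components of $\compb{c,d}$. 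Writing $C_x$ and $C_y$ for the components of $\compb{c,d}$ containing $x$ and $y$ respectively, these are disjoint by \Cref{only-two-components} and open in $X$ by \Cref{C_open}, so they are the required disjoint open neighborhoods and $X$ is Hausdorff. (Alternatively, one can obtain the separating chord $\{c,d\}$ directly from the density of $\bot_\cC$ in \Cref{prop:2f-seprel}, but the argument above uses only \Cref{only-two-components}.)

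I do not anticipate any genuine difficulty here: all the substance — the two-component structure of two-point complements, the openness of those components, and the symmetry of the induced separation relation — is already available, and what remains is just to chain these facts together. The one point to verify carefully is that $c, d, x, y$ really are four distinct points, so that \Cref{only-two-components} and \Cref{C_open} legitimately apply to $\compb{c,d}$; this is immediate, since $c$ and $d$ are chosen in different components of $\compb{x,y}$ and hence lie outside $\{x,y\}$ and satisfy $c \neq d$.
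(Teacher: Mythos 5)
Your proof is correct and follows essentially the same route as the paper: the paper invokes the density of $\bot_\cC$ (which is itself established by picking one point in each of the two components of $\compb{x,y}$, exactly as you do) to get a chord $\{a,b\}$ with $\{x,y\}\bot\{a,b\}$, then applies symmetry and \Cref{C_open} to separate $x$ and $y$ by the two open components of $\compb{a,b}$. Your inlining of the density argument and your check that the four points are distinct are both fine.
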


\begin{proof}
  Let $x, y$ be two distinct points of~$X$.
  By density of~$\bot$, pick $\{a,b\}$ such that $\{x,y\} \bot \{a,b\}$.
  By symmetry of $\bot$, we have $\{a,b\} \bot \{x,y\}$, so $C_{\ni x}(\compb{a,b})$ and $C_{\ni y}(\compb{a,b})$ are two disjoint open subsets of~$X$ (by~\Cref{C_open}) containing respectively $x$ and $y$.
\end{proof}

We can now show \Cref{thm:intro-2f-top} from the introduction:

\begin{proof}[Proof of \Cref{thm:intro-2f-top}]
  First, the space~$(X,\tau)$ is Hausdorff by \Cref{thm:2f-is-hausdorff} (or \cite[Theorem~15]{pengcao}).
  Second, recall from \Cref{cor:seprel-big-circ} that $(X,\tau_\rK)$ is a big circle, so it is compact and $2$-flimsy by \Cref{prop:big-circle-2f}.
  Third, observe from \Cref{def:open-interval-seprel,def:2f-seprel} that for any distinct $x,y\in X$, the open intervals of $(X,\bot)$ between $x$ and $y$ are exactly the $\cC$-components of~$\compb{x,y}$, so the topology~$\tau_\rK$ is indeed generated by the connected components (with respect to the topology~$\tau$) of the subsets $\compb{x,y}$ for distinct $x,y\in X$.
  Combined with \Cref{C_open}, this shows that~$\tau_\rK$ is coarser than~$\tau$.
  
  Let~$\cC_\rK$ be the set of connected subsets of $(X,\tau_\rK)$.
  Since $\tau_\rK\subseteq \tau$, the identity of~$X$ is a continuous map from $(X,\tau)$ to $(X,\tau_\rK)$, which implies that $\cC\subseteq \cC_\rK$, and so $\cC=\cC_\rK$ by \Cref{cor:comparison_connectivities}.
  In other words, $(X,\tau)$ and $(X,\tau_\rK)$ have the same connected subsets.
  It follows that, for any distinct $x,y\in X$, the connected components of $\compb{x,y}$ are the same for the two topologies $\tau$ and $\tau_\rK$.
  Then, recalling that a $2$-flimsy topological space is always $\rT_1$ and connected on the one hand, and combining \Cref{cor:list-of-connected-parts} with \Cref{C_open} on the other hand, we get that $(X,\tau)$ and $(X,\tau_\rK)$ have the same open connected subsets.

  To show uniqueness, let $\tau'$ be a topology on~$X$ coarser than~$\tau$ such that $(X,\tau')$ is a compact $2$-flimsy space.
  Since $\tau'\subseteq \tau$, any connected subset of $(X,\tau)$ is connected in $(X,\tau')$.
  Then, by \Cref{cor:comparison_connectivities}, $(X,\tau)$ and $(X,\tau')$ have in fact the same connected subsets.
  Hence, for any distinct~$x,y\in X$, the connected components of $\compb{x,y}$ are the same for the two topologies~$\tau$ and~$\tau'$.
  Applying the conclusions of the first paragraph of the proof to the space~$(X,\tau')$, we obtain $\tau_\rK\subseteq \tau'$.
  Since~$(X,\tau')$ is compact, and since $(X,\tau_\rK)$ is Hausdorff by \Cref{thm:2f-is-hausdorff}, \Cref{lattice_Hausdorff-compact} ensures that~$\tau'=\tau_\rK$.
\end{proof}

\begin{example}
  \label{ex:non-compact-2f}
  We now give an example of a non-compact $2$-flimsy topological space.
  We define a function $\gamma \colon (0,2] \to \R^3$ as follows: for any $s\in(0,2]$, we let
  \[
    \gamma(s)
    =
    \begin{cases}
        \bigl(
          s,
          \sin \tfrac{2\pi}{s},
          0
        \bigr)
        & \text{if } s \in (0,1],
        \\
        \bigl(
          2-s,
          0,
          (s-1)(2-s)
        \bigr)
        & \text{if } s \in (1,2].
    \end{cases}
  \]

  \begin{figure}[H]
    \begin{center}
      \begin{tikzpicture}
        \begin{axis}[
            view={120}{30},
            axis lines=center,
            xlabel={$z$},
            ylabel={$x$},
            zlabel={$y$},
            domain=0.01:1,
            samples=2000,
            samples y=0,
            enlargelimits=true,
            grid=major,
        ]
          \addplot3[
            thick,
            ForestGreen,
            domain=0.01:1,
          ] (
            {0},
            {x},
            {sin(deg(2*pi/x))}
          );

          \addplot3[
            thick,
            ForestGreen,
            domain=1:2,
          ] (
            {(x-1)*(2-x)},
            {2-x},
            {0}
          );
        \end{axis}
      \end{tikzpicture}
    \end{center}
    \caption{An illustration of \Cref{ex:non-compact-2f}.}
  \end{figure}

  The map~$\gamma$ is continuous and injective on $(0,2]$.
  Let~$X$ be the image of~$\gamma$, equipped with the subspace topology~$\tau$ inherited from the Euclidean topology of~$\R^3$.
  In particular, the space~$X$ is Hausdorff.
  However, $X$ is not compact since it is not closed in~$\R^3$ (its closure contains the segment $\{0\} \times [-1,1] \times \{0\}$).
  We leave it to the reader to check that~$X$ is $2$-flimsy.

  \Cref{thm:intro-2f-top} states that there is a unique topology $\tau_\rK$ on~$X$ coarser than~$\tau$ such that $(X,\tau_\rK)$ is compact and $2$-flimsy.
  In fact, here, $(X,\tau_\rK)$ is homeomorphic to the standard circle~$\bbS^1 = \R/\Z$.
  Indeed, the functions $s\in (0,2]\longmapsto \gamma(s)\in X$ and $\varphi\colon s\in(0,2]\longmapsto \frac s2 \in\bbS^1$ are bijections, so the set $\tau_\rK := \suchthat{\gamma\circ\varphi^{-1}(U)}{U\text{ open in }\bbS^1}$ is a topology on~$X$ such that~$(X,\tau_\rK)$ is homeomorphic to~$\bbS^1$, and it is easy to check that $\tau_\rK \subseteq \tau$.
\end{example}  

\section{Big-path-flimsy topological spaces}
\label{sn:2f-big-path}

In this section, we study big-path-flimsy topological spaces (\Cref{def:flimsy_path}), which correspond to the notion of big-path-connectedness defined in \Cref{defn:conticonn}.

\begin{proposition}
  \label{prop:continuum-is-cspace}
  Let~$X$ be a topological space, and let $\cC$ be the set of all big-path-connected subsets of~$X$, including~$\emptyset$.
  Then, $(X, \cC)$ is a connectivity space.
  Moreover:
  \begin{itemize}
    \item
      $X$ is a $\rT_1$ topological space $\Longleftrightarrow$ $(X, \cC)$ is a $\rT_1$ connectivity space.
    \item
      For any $n \in \N$,
      $X$ is an $n$-big-path-flimsy topological space $\Longleftrightarrow$ $(X, \cC)$ is an $n$-flimsy connectivity space.
  \end{itemize}
\end{proposition}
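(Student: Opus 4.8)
The plan is to verify axioms \ref{ax:singleton-connected}--\ref{ax:button-zip} of \Cref{def:c-space} for the set $\cC$ of big-path-connected subsets; the two displayed equivalences are then formal, since "$\{x,y\}\notin\cC$ for all distinct $x,y$" is verbatim the failure of there being a big path joining $x$ and $y$, and the conditions defining an $n$-flimsy connectivity space are word-for-word those of an $n$-big-path-flimsy space. Axiom \ref{ax:singleton-connected} holds because a constant map from any big interval (say $[0,1]$) to $\{x\}$ is a big path. Axiom \ref{ax:union-connected} is the concatenation remark made right after \Cref{defn:conticonn}: if $p\in\bigcap C$ and $a\in A$, $b\in B$ with $A,B\in C$, concatenate a big path $a\to p$ inside $A$ with a big path $p\to b$ inside $B$ (gluing the maximum of the first big interval to the minimum of the second).

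\textbf{Axiom \ref{ax:complement-connected}.} Let $Y\in\cC$, $x,y\in Y$, and $C:=\bigcup\{Z\in\cC:y\in Z,\ Z\subseteq Y\setminus\{x\}\}$. If $x=y$ then $C=\emptyset$ and $Y\setminus C=Y\in\cC$, so assume $x\ne y$, so that $C$ is the big-path-component of $Y\setminus\{x\}$ containing $y$. Since the big-path-components of $Y\setminus\{x\}$ partition it, $Y\setminus C=\{x\}\cup\bigcup_{D\ne C}D$ over the remaining components $D$; all these sets contain $x$, so by \axref{ax:union-connected} it suffices to show each $\{x\}\cup D\in\cC$. Fix such a $D$ and a point $w\in D$, and pick a big path $\gamma\colon L\to Y$ from $x$ to $w$; write $0_L,1_L$ for the minimum and maximum of $L$ and set $t^\ast:=\sup\gamma^{-1}(\{x\})$, which exists as $L$ is order-complete and $0_L\in\gamma^{-1}(\{x\})$. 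If $t^\ast<1_L$, then $\gamma\bigl((t^\ast,1_L]\bigr)$ is the image of an interval of $L$, hence big-path-connected by \Cref{cor:bigpathconn-is-conn}, it avoids $x$, and it contains $w$, so it lies in $D$. In the case $\gamma(t^\ast)=x$, the restriction $\gamma|_{[t^\ast,1_L]}$ is then a big path from $x$ to $w$ with image in $\{x\}\cup D$; concatenating with big paths inside $D$ shows every point of $\{x\}\cup D$ is joined to $x$ inside $\{x\}\cup D$, so $\{x\}\cup D\in\cC$. In the case $\gamma(t^\ast)\ne x$, necessarily $t^\ast>0_L$ (as $\gamma(0_L)=x$), and every left-neighbourhood of $t^\ast$ meets $\gamma^{-1}(\{x\})$, so $t^\ast\in\overline{\gamma^{-1}(\{x\})}$ and by continuity $\gamma(t^\ast)\in\overline{\{x\}}$; the same image argument gives $\gamma(t^\ast)\in D$ (also when $t^\ast=1_L$, where $\gamma(t^\ast)=w$). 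Now the two-point map $f\colon[0,1]\to Y$ with $f(0)=\gamma(t^\ast)$ and $f\equiv x$ on $(0,1]$ is continuous \emph{precisely because} $\gamma(t^\ast)\in\overline{\{x\}}$ (an open set containing $x$ but not $\gamma(t^\ast)$ pulls back to $(0,1]$, and an open set missing $x$ also misses $\gamma(t^\ast)$), so it is a big path; reversing it and concatenating with a big path inside $D$ from $\gamma(t^\ast)$ to $w$ gives a big path from $x$ to $w$ inside $\{x\}\cup D$, so $\{x\}\cup D\in\cC$.

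\textbf{Axiom \ref{ax:button-zip}.} Take non-empty $A,B\in\cC$ with $A\cup B\in\cC$. If $A\cap B\ne\emptyset$, any $z\in A\cap B$ is a button (case \ref{ax:button-zip-button}), so assume $A\cap B=\emptyset$ and pick a big path $\gamma\colon L\to A\cup B$ from $a\in A$ to $b\in B$. Put $t^\ast:=\sup\{t\in L:\gamma([0_L,t])\subseteq A\}$, so $\gamma([0_L,t^\ast))\subseteq A$. If $\gamma(t^\ast)\in B$, then $\gamma|_{[0_L,t^\ast]}$ has image in $A\cup\{\gamma(t^\ast)\}$, so by \axref{ax:union-connected} $A\cup\{\gamma(t^\ast)\}\in\cC$ while $B\cup\{\gamma(t^\ast)\}=B\in\cC$: a button. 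Otherwise $\gamma(t^\ast)\in A$, and maximality of $t^\ast$ forces $\gamma^{-1}(B)$ to accumulate at $t^\ast$ from the right. If some $t_1>t^\ast$ has $\gamma((t^\ast,t_1])\subseteq B$, then symmetrically $B\cup\{\gamma(t^\ast)\}\in\cC$ and $A\cup\{\gamma(t^\ast)\}=A\in\cC$: again a button. The remaining case is that $\gamma$ meets both $A$ and $B$ in every right-neighbourhood of $t^\ast$; here one produces a seam (case \ref{ax:button-zip-seam}) of the form $S=\gamma([t^\ast,t_1])$ for a suitable $t_1$: if both $S\cap A$ and $S\cap B$ were big-path-connected, then running the $t^\ast$-analysis on sub-paths of $\gamma|_{[t^\ast,t_1]}$ would, exactly as in the first two cases, exhibit a button, so one of $S\cap A$, $S\cap B$ must fail to be big-path-connected.

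\textbf{Main obstacle.} The delicate point is this last "densely oscillating" configuration in \axref{ax:button-zip}: one must argue that when $A$ and $B$ interleave along a connecting big path with no clean crossing point, there is nevertheless a big-path-connected $S\subseteq A\cup B$ whose trace on $A$ or on $B$ is not big-path-connected (the "seam" of the illustrative figure). A secondary subtlety, visible in \axref{ax:complement-connected} above, is that $X$ need not be $\rT_1$, so one cannot simply choose the cut point $t^\ast$ so that $\gamma(t^\ast)=x$; the two-point path trick through $\overline{\{x\}}$ is what repairs this. Everything else — axioms \ref{ax:singleton-connected}, \ref{ax:union-connected}, and the two equivalences — is routine.
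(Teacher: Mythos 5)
Your verification of axioms~\ref{ax:singleton-connected}, \ref{ax:union-connected}, and~\ref{ax:complement-connected}, and of the two displayed equivalences, is correct. For \axref{ax:complement-connected} you take $t^\ast=\sup\gamma^{-1}(\{x\})$ rather than the paper's $\sup\gamma^{-1}\bigl(\overline{\{x\}}\bigr)$, but the two-point big path through $\overline{\{x\}}$ repairs the non-$\rT_1$ issue in either version, so this part is sound and essentially matches the paper.

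The genuine gap is exactly where you flag it: the densely oscillating case of \axref{ax:button-zip}. Your proposed resolution --- that if both $S\cap A$ and $S\cap B$ were big-path-connected for $S=\gamma([t^\ast,t_1])$, then ``running the $t^\ast$-analysis on sub-paths would exhibit a button'' --- does not go through. Re-running that analysis on $\gamma|_{[t^\ast,t_1]}$ lands you in precisely the same configuration: the new supremum is again $t^\ast$, its image is again in $A$, and $\gamma^{-1}(B)$ still accumulates at $t^\ast$ from the right, so no button appears and no ``suitable $t_1$'' is singled out. The paper's actual argument (\Cref{sec:proof_C4}) is global rather than local: assuming that no button and no seam exist, it constructs by \emph{transfinite recursion} a strictly increasing family $(\ell_\alpha)_{\alpha\in\kappa}$ in $L$ with $\card\kappa>\card L$, together with big paths $h_\alpha$ into $A$ ending at $\gamma(\ell_\alpha)$. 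The no-seam hypothesis is what enables the successor step (joining $\gamma(\ell_\delta)$ to $\gamma(\ell_{\delta+1})$ inside $\gamma\bigl([\ell_\delta,\ell_{\delta+1}]\bigr)\cap A$), the no-button hypothesis is what keeps $\gamma(\ell_\xi)$ in $A$ at limit stages, and the injectivity of $\alpha\mapsto\ell_\alpha$ then contradicts the cardinality bound. (There is also a set-theoretic care point: the connecting big paths must be chosen from a genuine \emph{set}, e.g.\ those of minimal domain cardinality.) None of this is approximated by your sketch, so \axref{ax:button-zip} remains unproven in your proposal.
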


\begin{proof}
  \Axref{ax:singleton-connected} clearly holds.
  We verify axioms~\ref{ax:union-connected}, \ref{ax:complement-connected}, and \ref{ax:button-zip}:
  \begin{description}
    \item[\ref{ax:union-connected}]
      Let $x \in \bigcap C$, and let $y, z \in \bigcup C$.
      Pick $Y, Z \in C$ such that $y \in Y$ and $z \in Z$.
      By hypothesis, there are big paths from $y$ to $x$ in $Y$, and from $x$ to $z$ in $Z$.
      Concatenating these big paths yields a big path from $y$ to $z$ in $\bigcup C$.
    \item[\ref{ax:complement-connected}]
      It suffices to show that $x$ can be connected by a big path in $Y \setminus Z$ to any element $y \in C$, where~$C$ is any big-path-connected component of $Y \setminus \{x\}$ besides $Z$ (in particular, $x \neq y$).
      Since $Y$ is big-path-connected, there is a big path $\gamma \colon L \to Y$ from $x$ to $y$.
      Let $t_0 := \sup\suchthat{t \in L}{\gamma(t) \in \overline{\{x\}}}$, so that $\gamma(t_0) \in \overline{\{x\}}$ but $\gamma(t^*) \notin \overline{\{x\}}$ for all $t^* > t_0$.
      For any $t^* \in ( t_0,\max L)$, we have $\gamma(t^*) \in C$, since the restricted big path $\gamma|_{[t^*, \max L]}$ avoids $x$ and $C$ is a big-path-connected component of~$Y \setminus \{x\}$.
      If $\gamma(t_0) = x$, then the restricted big path $\gamma|_{[t_0, \max L]}$ defines a big path from $x$ to $y$ contained in $C \cup \{x\} \subseteq Y \setminus Z$, and we are done.
      Otherwise, if $\gamma(t_0)\neq x$, then $\gamma|_{[t_0, \max L]}$ is a big path from $\gamma(t_0)$ to $y$, which is contained in $C$ as it avoids~$x$.
      It then suffices to construct a big path from $x$ to $\gamma(t_0)$ contained in $C \!\cup\! \{x\}$.
      But as $\gamma(t_0) \! \in\!  \overline{\{x\}}$, we can simply take
      \[
        \left\lbrace
        \begin{matrix}
          t \in [0,\frac12) & \mapsto & x \\
          t \in [\frac12,1] & \mapsto & \gamma(t_0).
        \end{matrix}
        \right.
      \]
    \item[\ref{ax:button-zip}]
      The proof of \axref{ax:button-zip} is postponed to \Cref{sec:proof_C4}.
  \end{description}
  The equivalence of the notions of $\rT_1$ spaces and the equivalence between $X$ being $n$-big-path-flimsy and $(X, \cC)$ being $n$-flimsy follow easily from the definitions.
\end{proof}

\Cref{prop:continuum-is-cspace} implies that the results of \Cref{sn:props-2f} apply to big-path-flimsy topological spaces.
In particular, by \Cref{thm:no-3f-cspace}, there are no $n$-big-path-flimsy topological spaces when $n \geq 3$.

\begin{proposition}
\label{thm:2-big-path-f-is-compact}
  If~$X$ is a $2$-big-path-flimsy space, then there exist a big interval $L$ and a continuous surjection $\varphi \colon L \twoheadrightarrow X$.
  In particular, $X$ is compact as the continuous image of a compact space.
\end{proposition}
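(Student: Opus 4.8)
The plan is to show that the topology $\tau$ of $X$ is coarser than the order topology attached to a natural separation relation on $X$; the big path and the compactness then come for free from the theory of big circles developed in \Cref{sn:prelim}. Write $\cC$ for the set of big-path-connected subsets of $X$: by \Cref{prop:continuum-is-cspace}, $(X,\cC)$ is a $2$-flimsy connectivity space, which is $\rT_1$ by \Cref{flimsy-is-t1}, so that $\bot_\cC$ (cf.~\Cref{def:2f-seprel}) is a dense order-complete separation relation on $X$ by \Cref{prop:2f-seprel}. Let $\tau_\rH$ denote its order topology (\Cref{def:order-top_seprel}). By \Cref{cor:seprel-big-circ}, $(X,\tau_\rH)$ is a big circle, hence compact by \Cref{prop:big-circle-2f}, and by definition of a big circle (\Cref{def:big-circle}) there is a big interval $L$ and a continuous surjection $\pi\colon L\twoheadrightarrow (X,\tau_\rH)$, namely the quotient map identifying the two ends of $L$. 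Granting $\tau\subseteq\tau_\rH$, the composition of $\pi$ with the continuous identity $(X,\tau_\rH)\to(X,\tau)$ is the desired continuous surjection $L\twoheadrightarrow X$, and $(X,\tau)$ is compact because any open cover of it is an open cover of the compact space $(X,\tau_\rH)$.

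So the whole proof reduces to checking $\tau\subseteq\tau_\rH$. By \Cref{def:2f-seprel}, for distinct $a,b\in X\setminus\{p\}$ the $\cC$-component $C_{\ni p}(\compb{a,b})$ of $\compb{a,b}$ is an open interval of $\bot_\cC$, hence $\tau_\rH$-open; so it is enough to prove that every $\tau$-open $U\subsetneq X$ and every point $p\in U$ admit distinct $a,b\in X\setminus\{p\}$ with $C_{\ni p}(\compb{a,b})\subseteq U$. Setting $F:=\comp U$ (a nonempty $\tau$-closed set avoiding $p$), this amounts to finding a chord $\{a,b\}$ that ``cuts $F$ off from $p$'', i.e.\ with $F\subseteq C_{\not\ni p}(\compb{a,b})\cup\{a,b\}$. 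If $\card{F}=1$, say $F=\{q\}$, take $a=q$ and any $b\in X\setminus\{p,q\}$ (which exists as $\card{X}>2$): then $C_{\ni p}(\compb{q,b})\subseteq\compb{q}=U$.

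The remaining case $\card{F}\geq 2$ is the heart of the matter. Here I would study the $\cC$-component $W:=C_{\ni p}(U)$ of the $\tau$-open set $U$ through $p$: since $\emptyset\neq W\subseteq U\subsetneq X$, \Cref{cor:list-of-connected-parts} forces $W$ to be a singleton, the complement of a singleton, or one of $C$, $C\cup\{a\}$, $C\cup\{a,b\}$ for suitable distinct $a,b$ and a $\cC$-component $C$ of $\compb{a,b}$; in the last group of cases one has $C_{\ni p}(\compb{a,b})=C\subseteq W\subseteq U$ and is done, the complement-of-a-singleton case cannot occur when $\card{F}\geq 2$, and the singleton case $W=\{p\}$ — which would make $p$ ``big-path-isolated'' inside the open set $U$ — has to be excluded using connectedness of $X$ and axioms~\ref{ax:complement-connected} and~\ref{ax:button-zip} (via \Cref{cor:list-of-connected-parts}). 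An alternative organization, avoiding the passage through $\tau_\rH$, is to fix distinct $x,y$, pick a $\cC$-component $C_1$ of $\compb{x,y}$, cut $\bot_\cC$ at a point of the other component so as to realize $A_1:=C_1\cup\{x,y\}$ as a closed interval $[x,y]$ of a big interval, and then show that the order topology so obtained on $A_1$ is finer than $\tau|_{A_1}$; the inclusion of that big interval into $X$ is then a big path with image $A_1$, and concatenating it (after reversal) with the analogous big path onto the other component $C_2\cup\{x,y\}$ gives a continuous surjection onto $X$. Either way, the crux — and the step I expect to be the main obstacle — is the same: a $\tau$-open neighbourhood of a point always contains an open interval of $\bot_\cC$ around that point, equivalently the big-path-connected component of $p$ inside a $\tau$-open set is never just $\{p\}$. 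The reductions surrounding this fact are routine.
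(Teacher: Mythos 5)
The reduction to the claim that every $\tau$-open $U \ni p$ with $U \neq X$ contains an open interval of $\bot_\cC$ around $p$ is sound, but that claim is not something you get to treat as an afterthought: it is essentially the inclusion $\tau \subseteq \tau_\rH$ of \Cref{thm:intro-2f-big_path}, and in the paper that inclusion is derived \emph{from} the compactness asserted in the present proposition (one shows $\tau_\rH \subseteq \tau_\cF$ and then invokes \Cref{lattice_Hausdorff-compact}, with $(X,\tau_\cF)$ compact precisely by this proposition). So your route proves a strictly stronger statement first, and the one step you leave open is the entire difficulty. Moreover, the tools you name for it (\axref{ax:complement-connected}, \axref{ax:button-zip}, \Cref{cor:list-of-connected-parts}) cannot close it: whether $C_{\ni p}(U) = \{p\}$ is not a connectivity-space question---for an arbitrary subset $U \ni p$ of a $2$-flimsy connectivity space this happens all the time (take $U = \{p\}$ union a component avoiding $p$)---so any proof must actually use the openness of $U$, i.e., that $\gamma^{-1}(U)$ is open in $L$ for every big path $\gamma$, via a supremum argument along a big path leaving $p$, in the style of the verification of \axref{ax:complement-connected} in \Cref{prop:continuum-is-cspace}. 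The same issue infects your case analysis: when $W = C \cup \{a\}$ or $C \cup \{a,b\}$ with $p \in \{a,b\}$, the identity $C = C_{\ni p}(\compb{a,b})$ fails (since $p \notin \compb{a,b}$), and ruling out these ``one-sided'' configurations again requires the topological big-path argument, not \Cref{cor:list-of-connected-parts}.

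For contrast, the paper's proof is much shorter and avoids the order topology entirely. Fix distinct $x,y$; by \Cref{only-two-components} and \Cref{Ccupx}, the two big-path-connected components $C_1, C_2$ of $\compb{x,y}$ give big-path-connected sets $C_i \cup \{x,y\}$, hence big paths $\gamma_i \colon L_i \to C_i \cup \{x,y\}$ joining $x$ and $y$; concatenating them yields a big path $\gamma \colon L \to X$ based at $x$. Surjectivity is then a one-line complement trick: $\comp{\gamma_1(L_1)}$ and $\comp{\gamma_2(L_2)}$ are big-path-connected by \Cref{A^c}, their union is $\compb{x,y}$ (because $\gamma_1(L_1) \cap \gamma_2(L_2) = \{x,y\}$), and $\compb{x,y}$ is not big-path-connected, so by \axref{ax:union-connected} the two complements must be disjoint, i.e., $\gamma_1(L_1) \cup \gamma_2(L_2) = X$. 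Your second ``alternative organization'' is close to this in spirit, but you do not need to compare $\tau|_{A_1}$ with any order topology: \Cref{Ccupx} already hands you the big path onto $C_1 \cup \{x,y\}$, and only the surjectivity of the concatenation remains to be checked.
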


\begin{proof}
  Let $x,y\in X$ be distinct.
  \Cref{prop:continuum-is-cspace,only-two-components} ensure that $\compb{x,y}$ has exactly two big-path-connected components, which we denote by~$C_1$ and~$C_2$.
  We know from \Cref{Ccupx} that~$C_1\cup\{x,y\}$ and $C_2\cup\{x,y\}$ are big-path-connected.
  Hence, there exist big paths $\gamma_1 \colon L_1 \to C_1\cup\{x,y\}$ and $\gamma_2 \colon L_2 \to C_2\cup\{x,y\}$ such that $\gamma_1(\min L_1)=\gamma_2( \max L_2)=x$ and $\gamma_1(\max L_1) = \gamma_2(\min L_2) = y$.

  Then, define a big path~$\gamma$ from~$x$ to~$x$ on~$X$ by concatenating~$\gamma_1$ and~$\gamma_2$.
  We want to show that~$\gamma$ is surjective, i.e., that $\gamma_1(L_1)\cup\gamma_2(L_2)=X$.
  As they are continuous images of the big-path-connected spaces~$L_1$ and~$L_2$, the subsets $\gamma_1(L_1)$ and $\gamma_2(L_2)$ are both big-path-connected.
  By \Cref{prop:continuum-is-cspace,A^c}, $\comp{\gamma_1(L_1)}$ and $\comp{\gamma_2(L_2)}$ are also big-path-connected.
  Next, observe that $\gamma_1(L_1)\cap\gamma_2(L_2)=\{x,y\}$ because~$C_1$ and~$C_2$ are disjoint.
  Since $X$ is $2$-big-path-flimsy, $\compb{x,y}=\comp{\gamma_1(L_1)}\cup\comp{\gamma_2(L_2)}$ is not big-path-connected, so the sets $\comp{\gamma_1(L_1)}$ and $\comp{\gamma_2(L_2)}$ are disjoint, which means that $\gamma_1(L_1)\cup\gamma_2(L_2)=X$.
  The last part of the claim follows directly from \iref{prop:continuum-facts}{item:continuum-compact-subsets}.
\end{proof}

We are now going to prove \Cref{thm:intro-2f-big_path} from the introduction, after setting some notation.
Let~$(X,\tau)$ be a $2$-big-path-flimsy topological space, let $(X,\cC)$ be the corresponding $2$-flimsy connectivity space (\Cref{prop:continuum-is-cspace}), and let $\bot=\bot_\cC$ be the corresponding order-complete dense separation relation on~$X$ (\Cref{def:2f-seprel}, \Cref{prop:2f-seprel}).
We also define some other topologies on~$X$: the topology $\tau_\rH$ is the order topology of~$(X,\bot)$ (\Cref{def:order-top_seprel}), and~$\tau_\cF$ is the final topology on~$X$ induced by the family of all big paths on $(X,\tau)$, i.e.,
\begin{equation}
  \label{eqn:def-tau-cF}
  \tau_\cF
  :=
  \suchthat{
    V\subseteq X
  }{
    \gamma^{-1}(V)\text{ is open for all big paths }\gamma \colon L \to (X,\tau)
  }.
\end{equation}

\begin{proof}[Proof of \Cref{thm:intro-2f-big_path}]
  We have already shown that the space $(X,\tau)$ is compact in \Cref{thm:2-big-path-f-is-compact}.
  We also know from \Cref{cor:seprel-big-circ} that $(X,\tau_\rH)$ is a big circle, so it is Hausdorff and $2$-big-path-flimsy by \Cref{prop:big-circle-2f}.
  Next, using \Cref{def:open-interval-seprel,def:2f-seprel}, we see that, for any distinct $x,y\in X$, the open intervals of $(X,\bot)$ between $x$ and $y$ are exactly the $\cC$-components of~$\compb{x,y}$.
  Thus, 
  \begin{equation}
    \label{inter-fact_intro-2f-big_path}
    \begin{matrix}
      \textnormal{
        $\tau_\rH$ is generated by the big-path-connected components of the subsets~$\compb{x,y}$,
      }
      \\
      \textnormal{
        as~$x$ and~$y$ range over pairs of distinct points of~$X$.
      }
    \end{matrix}
  \end{equation}

  In order to prove that $\tau_\rH$ is finer than~$\tau$ and that the topological spaces~$(X,\tau)$ and~$(X,\tau_\rH)$ have the same big paths, we only need to show that $\tau_\rH$ coincides with the topology $\tau_\cF$ from \Cref{eqn:def-tau-cF}.
  Indeed, first, it is clear that $\tau \subseteq \tau_\cF$ as big paths are continuous, so any big path on $(X,\tau_\cF)$ is also continuous with respect to~$\tau$.
  Conversely, for any big path $\gamma$ on $(X,\tau)$ and any $U\in \tau_\cF$, the set~$\gamma^{-1}(U)$ is open in~$L$ by definition of~$\tau_\cF$, so~$\gamma$ is also continuous with respect to~$\tau_\cF$.

  In order to prove that $\tau_\rH=\tau_\cF$, we want to apply \Cref{lattice_Hausdorff-compact}.
  We have already proved that~$(X,\tau_\rH)$ is Hausdorff.
  Since $(X,\tau)$ and $(X,\tau_\cF)$ have the same big paths, their big-path-connected subsets coincide, so $(X,\tau_\cF)$ is $2$-big-path-flimsy and thus compact by \Cref{thm:2-big-path-f-is-compact}.
  It only remains to show that $\tau_\rH\subseteq \tau_\cF$.
  Equivalently, we must prove that, for any distinct $x,y\in X$, if~$C$ is a big-path-connected component of $\compb{x,y}$, then $C \in \tau_\cF$.
  Fix such a~$C$ and a big path $\gamma \colon L \to (X,\tau)$, and consider a $t\in \gamma^{-1}(C)$.
  We have $\compb{x,y}\in\tau$, since $(X,\tau)$ is $\rT_1$ by \Cref{flimsy-is-t1} and \Cref{prop:continuum-is-cspace}, and $\gamma(t) \in C \subseteq \compb{x,y}$, so by continuity of $\gamma$ and by definition of the order topology, there is an open interval~$I$ of~$L$ such that $t\in I$ and $\gamma(I)\subseteq \compb{x,y}$.
  \Cref{cor:bigpathconn-is-conn} implies that $\gamma(I)$ is big-path-connected, so it is contained in the big-path-connected component $C$ of $\gamma(t)$ in $\compb{x,y}$, so~$I$ is an open neighborhood of the (arbitrary) point $t \in \gamma^{-1}(C)$, and it is contained in $\gamma^{-1}(C)$.
  This shows that~$\gamma^{-1}(C)$ is open in~$L$ and thus $C \in \tau_\cF$ as desired.

  By \Cref{lattice_Hausdorff-compact}, we have $\tau_\rH=\tau_\cF$.
  In particular,
  \begin{equation}
    \label{inter2-fact_intro-2f-big_path}
    \tau\subseteq \tau_\rH,
  \end{equation}
  and the big paths on $(X,\tau)$ and $(X,\tau_\rH)$ coincide, so $(X,\tau)$ and $(X,\tau_\rH)$ have the same big-path-connected subsets.

  To prove the uniqueness of~$\tau_\rH$, let $\tau'$ be a topology on $X$ finer than $\tau$ such that $(X,\tau')$ is a Hausdorff $2$-big-path-flimsy space.
  Since $\tau\subseteq \tau'$, any big path on $(X,\tau')$ is a big path on~$(X,\tau)$, and so any big-path-connected subset of~$(X,\tau')$ is also big-path-connected in~$(X,\tau)$.
  By \Cref{cor:comparison_connectivities}, $(X,\tau)$ and $(X,\tau')$ must then in fact have the same big-path-connected subsets.
  Thus, for any distinct $x,y\in X$, the big-path-connected components of $\compb{x,y}$ are the same for the two topologies~$\tau$ and~$\tau'$.
  Applying~\eqref{inter-fact_intro-2f-big_path} and~\eqref{inter2-fact_intro-2f-big_path} to~$(X,\tau')$ entails that $\tau'\subseteq\tau_\rH$.
  Since~$(X,\tau')$ is Hausdorff and since~$(X,\tau_\rH)$ is compact by \Cref{thm:2-big-path-f-is-compact}, \Cref{lattice_Hausdorff-compact} implies that $\tau'=\tau_\rH$.
\end{proof}

\begin{example}
  \label{ex:non-hausdorff-2bpf}
  We now give an example of a non-Hausdorff $2$-big-path-flimsy topological space.
  Let~$\bbS^1 := \R/\Z$ be the standard circle, and let $\mathcal E$ be its usual Euclidean topology.
  Then,
  \[
    \tau :=
    \suchthat{
      U\in\mathcal{E}
    }{
      \bbS^1\setminus U\text{ is at most countable}
    }
    \cup
    \{\varnothing\}
  \]
  is a topology on~$\bbS^1$ that is coarser than~$\mathcal{E}$.
  Any two non-empty elements of $\tau$ have co-countable intersection, so $(\bbS^1,\tau)$ is not Hausdorff.
  Below, we prove that a big path $\gamma \colon L \to \bbS^1$ is continuous with respect to~$\tau$ if and only if it is continuous with respect to~$\mathcal{E}$.
  Thus, the space~$(\bbS^1,\tau)$ is $2$-big-path-flimsy because~$(\bbS^1,\mathcal{E})$ is (for example by \Cref{prop:big-circle-2f}).
  In this case, the unique Hausdorff topology~$\tau_\rH$ finer than~$\tau$ such that $(\bbS^1,\tau_\rH)$ is Hausdorff and $2$-big-path-flimsy, whose existence is ensured by \Cref{thm:intro-2f-big_path}, will hence simply the standard Euclidean topology~$\mathcal E$.

  Consider a big interval~$L$ and a map $\gamma \colon L \to \bbS^1$.
  If~$\gamma$ is continuous with respect to~$\mathcal{E}$, then it is continuous with respect to~$\tau$ since $\tau\subseteq \mathcal{E}$.
  Conversely, suppose that~$\gamma$ is continuous with respect to~$\tau$ and, by contradiction, assume that there exist $U\in\mathcal{E}$ and $\ell\in \gamma^{-1}(U)$ such that~$\ell$ is not in the interior of $\gamma^{-1}(U)$.
  Reversing the order of $L$ if needed, we can assume that $\ell$ is in the closure of $[\min L,\ell)\cap \gamma^{-1}(\bbS^1\setminus U)$.
  For any $s \in [\min L, \ell)$, it follows that $\gamma\bigl([s,\ell]\bigr)\setminus U$ is non-empty---denote by $F_s$ the closure of $\gamma\bigl([s,\ell]\bigr)\setminus U$ in~$(\bbS^1,\mathcal{E})$.
  Observe that for all $s<s'<\ell$, we have $\emptyset\neq F_{s'}\subseteq F_s\subseteq \bbS^1\setminus U$.
  By compactness of~$(\bbS^1, \mathcal E)$, there exists $x\in \bigcap_{s<\ell}F_s$.
  Note that $x\notin U$, so $x\neq \gamma(\ell)$.

  Let $s \in [\min L, \ell)$.
  Since $x\in F_s$ belongs to the closure of $\gamma([s,\ell])$, and since $(\bbS^1, \mathcal E)$ is a sequential space, we can find a sequence of points $t_{s,n}\in[s,\ell]$, indexed by integers $n\in\N$, such that $\gamma(t_{s,n})\longrightarrow x$ with respect to $\mathcal{E}$.
  Since $[s, \ell]$ is sequentially compact,%
  \footnote{
    \label{bigint-seqcomp}
    Any big interval is sequentially compact: as in the real numbers (with the same proof), any sequence either has a non-decreasing subsequence (converging to its supremum) or a decreasing subsequence (converging to its infimum).
  }
  replacing $(t_{s,n})$ by a subsequence if needed, we can assume that $t_{s,n} \longrightarrow t_s$ for some $t_s \in [s,\ell]$.

  We are going to show that $\gamma(t_s) = x$ for all $s < \ell$.
  This will yield the desired contradiction, as the closed subsets $[s,\ell] \cap \gamma^{-1}(\{x\})$ are then non-empty for all $s < \ell$ (they contain the respective~$t_s$), so their intersection in the (compact) big interval $L$ is also non-empty, meaning that $\gamma(\ell) = x$, contradicting $\ell \in \gamma^{-1}(U)$.

  Let $\mathrm M := \suchthat{n \in \N}{\gamma(t_{s,n}) = \gamma(t_s)}$.
  If $\mathrm M$ is infinite, replacing $(t_{s,n})$ by a subsequence if needed, we can assume that $\gamma(t_{s,n}) = \gamma(t_s)$ for all $n \in \N$; then, the fact that $\gamma(t_{s,n})\longrightarrow x$ with respect to~$\mathcal{E}$ implies (because~$(\bbS^1, \mathcal E)$ is Hausdorff) that $\gamma(t_{s,n}) = x$ for all $n$ large enough, and in particular $\gamma(t_s) = x$.
  We now assume that $\mathrm M$ is finite.
  Replacing $(t_{s,n})$ by a subsequence if needed, we can assume that $\gamma(t_{s,n}) \neq \gamma(t_s)$ for all $n \in \N$.
  Since $(\bbS^1, \mathcal E)$ is Hausdorff and $\gamma(t_{s,n}) \longrightarrow x$, the set
  \[
    V:=\suchthat{\gamma(t_{s,n})}{n \in \N} \cup \{x\}
  \]
  is closed for~$\mathcal E$, and hence closed for~$\tau$ as it is at most countable.
  Since $\gamma$ is continuous with respect to~$\tau$ and $t_{s,n} \longrightarrow t_s$, we must have $\gamma(t_s) \in V$ because $\gamma(t_{s,n}) \in V$ for all $n \in \N$.
  Since $\gamma(t_{s,n}) \neq \gamma(t_s)$ for all $n \in \N$, this can only mean that $\gamma(t_s) = x$.
  As explained above, this concludes the proof.
\end{example}

\section{Path-flimsy topological spaces}
\label{sn:2f-path}

In this section, we study path-flimsy topological spaces (\Cref{def:flimsy_path}), which correspond to the following standard notion of path-connectedness (we consider~$\emptyset$ to be path-connected):

\begin{definition}
\label{def:path-connectedness}
  Let~$X$ be a topological space.
  For any points~$x,y \in X$, a \emph{path} from~$x$ to~$y$ is a continuous map $f \colon [0,1] \to X$ such that $f(0) = x$ and $f(1) = y$.
  The space~$X$ is \emph{path-connected} if for any two points~$x,y \in X$, there is a path from~$x$ to~$y$.
\end{definition}

Most of the arguments of this section are straightforwardly adapted from those of \Cref{sn:2f-big-path} by replacing ``big paths'' and ``big-path-connectedness'' with ``paths'' and ``path-connectedness'', respectively.
For the sake of brevity, we omit the details of such adaptations and rather focus on the few points that require new proofs.

\begin{proposition}
  \label{prop:path-is-cspace}
  Let $X$ be a topological space, and let $\cC$ be the set of all path-connected subsets of~$X$, including~$\emptyset$.
  Then, $(X, \cC)$ is a connectivity space.
  Moreover:
  \begin{itemize}
    \item
      $X$ is a $\rT_1$ topological space $\Longleftrightarrow$ $(X, \cC)$ is a $\rT_1$ connectivity space.
    \item
      For any $n \in \N$,
      $X$ is an $n$-path-flimsy topological space $\Longleftrightarrow$ $(X, \cC)$ is an $n$-flimsy connectivity space.
  \end{itemize}
\end{proposition}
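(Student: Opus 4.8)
The plan is to mirror the proof of \Cref{prop:continuum-is-cspace}, replacing big paths (parametrized by big intervals) with ordinary paths (parametrized by $[0,1]$) throughout. \Axref{ax:singleton-connected} holds because any constant map $[0,1]\to X$ is a path. For \axref{ax:union-connected}, given a family $\mathcal D\subseteq\cC$ with a common point $x\in\bigcap\mathcal D$ and given $y,z\in\bigcup\mathcal D$, I would choose $Y,Z\in\mathcal D$ with $y\in Y$ and $z\in Z$, take a path from $y$ to $x$ inside $Y$ and one from $x$ to $z$ inside $Z$, and concatenate them (reparametrizing the two halves onto $[0,\tfrac12]$ and $[\tfrac12,1]$) into a single path from $y$ to $z$ contained in $\bigcup\mathcal D$. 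The equivalences ``$X$ is $\rT_1$ $\Leftrightarrow$ $(X,\cC)$ is $\rT_1$'' and ``$X$ is $n$-path-flimsy $\Leftrightarrow$ $(X,\cC)$ is $n$-flimsy'' then follow immediately from the definitions, exactly as in \Cref{prop:continuum-is-cspace}.

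The substantive point among the first three axioms is \axref{ax:complement-connected}. Reformulating as in \Cref{prop:continuum-is-cspace}, it suffices to show: if $Y$ is path-connected, $Z$ is a path-connected component of $Y\setminus\{x\}$, and $y$ lies in some other component $C$ of $Y\setminus\{x\}$ (in particular $x\neq y$), then there is a path from $x$ to $y$ inside $C\cup\{x\}\subseteq Y\setminus Z$, which yields the path-connectedness of $Y\setminus Z$. I would take any path $\gamma\colon[0,1]\to Y$ from $x$ to $y$ and set $t_0:=\sup\{t\in[0,1]:\gamma(t)\in\overline{\{x\}}\}$. The supremum is attained because $\gamma^{-1}(\overline{\{x\}})$ is closed, and for every $t^*>t_0$ the point $\gamma(t^*)$ lies in $Y\setminus\{x\}$ and is joined to $y$ by $\gamma|_{[t^*,1]}$, hence $\gamma(t^*)\in C$. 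If $\gamma(t_0)=x$, then $\gamma|_{[t_0,1]}$ (reparametrized onto $[0,1]$) is already the desired path. If $\gamma(t_0)\neq x$, then $\gamma|_{[t_0,1]}$ is a path from $\gamma(t_0)$ to $y$ avoiding $x$, hence contained in $C$, and I prepend the map equal to $x$ on $[0,\tfrac12)$ and to $\gamma(t_0)$ on $[\tfrac12,1]$: this is continuous precisely because $\gamma(t_0)\in\overline{\{x\}}$, so every open set containing $\gamma(t_0)$ also contains $x$. Concatenating the two yields the required path.

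I expect the main obstacle to be \axref{ax:button-zip}. Unlike the case of ordinary connectedness (where case \ref{ax:button-zip-button} always holds, as $A\cup B$ connected forces $\overline A\cap\overline B\neq\emptyset$ and $x\in\overline A$ makes $A\cup\{x\}$ connected), for path-connectedness the relation $x\in\overline A$ does not make $A\cup\{x\}$ path-connected, so one genuinely needs the button/seam dichotomy. I would defer this verification and obtain it by adapting the argument used for big-path-connectedness (cf.\ \Cref{sec:proof_C4}), replacing the general big interval by $[0,1]$; this is the one place where the path case does not follow verbatim from \Cref{prop:continuum-is-cspace}, and carrying out that adaptation carefully is the crux of the proof.
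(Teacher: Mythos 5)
Your handling of axioms \ref{ax:singleton-connected}--\ref{ax:complement-connected} and of the two equivalences is correct and coincides with the paper's (which likewise reduces these to the big-path case by noting that concatenations and restrictions of $[0,1]$ are again order-isomorphic to $[0,1]$); your verification of \axref{ax:complement-connected}, including the two-valued path that is continuous because $\gamma(t_0)\in\overline{\{x\}}$, is exactly the paper's argument. You also correctly identify \axref{ax:button-zip} as the crux and observe that the closure trick from ordinary connectedness is unavailable. The gap is that you defer \axref{ax:button-zip} to ``an adaptation of \Cref{sec:proof_C4} with $[0,1]$ in place of big intervals,'' and that adaptation does not go through as stated. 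The proof in \Cref{sec:proof_C4} is a transfinite recursion that keeps appending new nondegenerate intervals $J_\delta$ to the domain of the path, up to an ordinal $\kappa$ of cardinality larger than that of the domain, and derives a contradiction from cardinality; at limit stages it crucially uses that the increasing union of big intervals, plus a top point, is again a big interval. For ordinary paths the domain must remain order-isomorphic to $[0,1]$, and a well-ordered strictly increasing family of pairwise disjoint nondegenerate subintervals of $[0,1]$ is necessarily countable (by separability), so the recursion cannot pass the first uncountable limit stage and no cardinality contradiction is available.

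What the paper actually does for \axref{ax:button-zip} in the path case is a direct countable construction. With $s_A:=\sup\suchthat{t\in[0,1]}{\hat\gamma(t)\in A}$ and $x:=\hat\gamma(s_A)$, the case $x\in A$ immediately yields a button. If $x\in B$, one reparametrizes so that $s_A=1$, chooses a strictly increasing sequence $s_n\longrightarrow 1$ with $\gamma(s_n)\in A$, and sets $S_n:=\gamma\bigl([s_n,s_{n+1}]\bigr)$. Either some $S_n\cap A$ fails to be path-connected, which is a seam (case~\ref{ax:button-zip-seam}), or each $\gamma|_{[s_n,s_{n+1}]}$ can be replaced by a path $\gamma_n$ inside $S_n\cap A$ with the same endpoints; gluing the countably many $\gamma_n$ and sending $1$ to $x$ gives a map $\gamma^*\colon[0,1]\to A\cup\{x\}$, whose continuity at $1$ follows from the containment $\gamma^*\bigl([s_N,1]\bigr)\subseteq\gamma\bigl([s_N,1]\bigr)$ together with the continuity of $\gamma$, so $A\cup\{x\}$ is path-connected and case~\ref{ax:button-zip-button} holds. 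The idea your plan is missing is precisely this cofinal countable choice of the $s_n$ (so the process terminates after $\omega$ steps inside $[0,1]$) and the containment that guarantees continuity at the limit; without it, the verification of \axref{ax:button-zip} remains unproved.
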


\begin{proof}
  The proofs of axioms~\ref{ax:singleton-connected}, \ref{ax:union-connected}, and~\ref{ax:complement-connected} are as in \Cref{prop:continuum-is-cspace}, replacing all big intervals involved by $[0,1]$, remarking that the concatenation of two paths is a path (the concatenation of~$[0,1]$ with itself is order-isomorphic to~$[0,1]$), and that the restriction of a path to any segment is also a path (if $0\leq s<t\leq 1$, then the subinterval $[s,t]$ is order isomorphic to $[0,1]$).
  The details are left to the reader.

  The equivalence of the notions of $\rT_1$ spaces and the equivalence between $X$ being $n$-path-flimsy and $(X, \cC)$ being $n$-flimsy follow easily from the definitions.

  We now focus on the proof of \axref{ax:button-zip}.
  Pick non-empty sets $A,B \in \cC$ such that $A \cup B \in \cC$.
  We assume that~$A$ and~$B$ are disjoint, as the claim is clear otherwise (case~\ref{ax:button-zip-button} holds for any $x \in A \cap B$).
  Pick arbitrary points $a \in A$ and $b\in B$, let $\hat{\gamma}$ be a path from~$a$ to~$b$ in~$A\cup B$, and let
  \[
    s_A
    :=
    \sup\suchthat{
      t\in[0,1]
    }{
      \hat{\gamma}(t)\in A
    }
    \andd
    x
    :=
    \hat{\gamma}(s_A).
  \]
  If $x\in A$, then $\hat\gamma|_{[s_A, 1]}$ is (after reparametrization) a path from $\hat{\gamma}(s_A)=x$ to $\hat{\gamma}(1)=b$ in $B \cup \{x\}$, so $B \cup \{x\}$ is path-connected, and then case~\ref{ax:button-zip-button} holds as $A = A\cup\{x\}$ is also path-connected.
  Therefore, for the rest of the proof, we assume that $x\in B$.
  In particular, $B\cup\{x\}=B$ is path-connected, so if we show that $A\cup\{x\}$ is also path-connected, then case~\ref{ax:button-zip-button} holds, concluding the proof.

  Without loss of generality, replacing $b$ by $x$ and $\gamma$ by the (reparametrized) restricted path $\hat{\gamma}|_{[0,s_A]}$, we may assume that $s_A = 1$, which implies that there is a strictly increasing sequence $(s_n)_{n\geq 0}$ of elements of $\gamma^{-1}(A)\subseteq [0,1)$ with $s_0=0$ and $s_n \longrightarrow 1$.
  For each $n\geq 0$, let $S_n = \gamma\bigl([s_n,s_{n+1}]\bigr)$, which is a path-connected subset of $A\cup B$.
  If $S_n \cap A$ is not path-connected for some $n \geq 0$, then we are in case~\ref{ax:button-zip-seam}, finishing the proof.
  Hence, we further assume that $S_n\cap A$ is path-connected for all $n \geq 0$, so there are continuous maps $\gamma_n \colon [s_n,s_{n+1}] \to S_n\cap A$ such that $\gamma_n(s_n) = \gamma(s_n)$ and $\gamma_n(s_{n+1}) = \gamma(s_{n+1})$.
  Now, define a map $\gamma^*\colon [0,1] \to A\cup \{x\}$ by setting
  \[
    \gamma^*(t)
    :=
    \begin{cases}
      \gamma_n(t) & \textnormal{if } t \in [s_n, s_{n+1}) \textnormal{ with } n \geq 0, \\
      x & \textnormal{if } t=1,
    \end{cases}
  \]
  for any $t\in[0,1]$.
  By construction, the map $\gamma^*$ is continuous on $[0,1)$.
  We have reduced to showing that~$\gamma^*$ is continuous at~$1$, as this implies that $\gamma^*$ is a path from $a$ to $x$ in $A \cup \{x\}$, and then $A \cup \{x\}$ is path-connected.
  So, let $U$ be an open subset of $A\cup B$ containing $\gamma^*(1) = x = \gamma(1)$.
  By continuity of~$\gamma$, there exists $N \geq 0$ such that $\gamma\bigl([s_N,1]\bigr)\subseteq U$.
  We have
  \[
    \gamma^*\bigl([s_N,1]\bigr)
    =
    \{x\} \cup
    \bigcup_{n\geq N}
      \gamma_n
        \bigl([s_n,s_{n+1})\bigr)
    \subseteq
    \{x\} \cup
    \bigcup_{n\geq N}
      S_n
    =
    \{x\} \cup
    \bigcup_{n\geq N}
      \gamma
        \bigl([s_n,s_{n+1}]\bigr)
    =
    \gamma
      \bigl([s_N,1]\bigr)
    \subseteq
    U,
  \]
  which implies that $\gamma^*$ is continuous at $1$.
  As explained above, this concludes the proof.
\end{proof}

\Cref{prop:path-is-cspace} implies that the results of \Cref{sn:props-2f} apply to path-flimsy topological spaces.
In particular, by \Cref{thm:no-3f-cspace}, there are no $n$-big-path-flimsy topological spaces when $n \geq 3$.

\begin{theorem}
  \label{thm:2pf-is-compact}
  If $X$ is a $2$-path-flimsy space, then there exists a continuous surjection $\varphi \colon \bbS^1 \twoheadrightarrow X$.
  In particular, $X$ is compact.
  Furthermore, if $X$ is also Hausdorff, then it is homeomorphic to $\bbS^1$.
\end{theorem}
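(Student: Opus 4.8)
The plan is to mirror the proof of \Cref{thm:2-big-path-f-is-compact}, replacing big intervals by the standard interval~$[0,1]$, and then to upgrade the resulting surjection to a homeomorphism in the Hausdorff case by passing from paths to arcs.

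For the first assertion, I would fix two distinct points $x,y\in X$ (which exist as $|X|>2$). By \Cref{prop:path-is-cspace}, the path-connected subsets of~$X$ form a $2$-flimsy connectivity space $(X,\cC)$, so $\compb{x,y}$ has exactly two $\cC$-components $C_1,C_2$ by \Cref{only-two-components}, and both $C_1\cup\{x,y\}$ and $C_2\cup\{x,y\}$ are path-connected by \Cref{Ccupx}. I would then pick a path $\gamma_1$ from~$x$ to~$y$ with image in $C_1\cup\{x,y\}$ and a path $\gamma_2$ from~$y$ to~$x$ with image in $C_2\cup\{x,y\}$, concatenate them via the pasting lemma into a loop based at~$x$, and note that this loop factors as a continuous map $\varphi\colon\bbS^1\to X$ with image $\gamma_1([0,1])\cup\gamma_2([0,1])$. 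To see that $\varphi$ is onto: these two images are path-connected with intersection exactly $\{x,y\}$ (as $C_1\cap C_2=\emptyset$), so by \Cref{A^c} their complements $\comp{\gamma_1([0,1])}$ and $\comp{\gamma_2([0,1])}$ are path-connected; their union is $\compb{x,y}$, which is not path-connected because $X$ is $2$-path-flimsy, so by \axref{ax:union-connected} these complements must be disjoint, i.e.\ $\gamma_1([0,1])\cup\gamma_2([0,1])=X$. It follows that $X$ is the continuous image of the compact space~$\bbS^1$, hence compact.

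For the ``furthermore'', assume in addition that $X$ is Hausdorff. Then $C_1\cup\{x,y\}$ is a Hausdorff path-connected subspace containing the distinct points $x,y$, so by the classical fact that in a Hausdorff space two distinct points joined by a path are joined by an arc lying inside the image of that path, I would obtain an injective path $\alpha_1\colon[0,1]\to C_1\cup\{x,y\}$ from~$x$ to~$y$; injectivity forces $\alpha_1((0,1))\subseteq C_1$. The key point will be that in fact $\alpha_1((0,1))=C_1$: the set $\alpha_1((0,1))$ is path-connected, is contained in $\compb{x,y}$, and $\alpha_1((0,1))\cup\{x,y\}=\alpha_1([0,1])$ is path-connected, so \Cref{if-you-touch-edges-you-are-component} shows it is a $\cC$-component of $\compb{x,y}$, hence it equals $C_1$ by maximality. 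Symmetrically, I would get an arc $\alpha_2\colon[0,1]\to C_2\cup\{x,y\}$ from~$y$ to~$x$ with $\alpha_2((0,1))=C_2$. Concatenating $\alpha_1$ and $\alpha_2$ would yield a continuous map $\ell\colon\bbS^1\to X$ whose image is $(C_1\cup\{x,y\})\cup(C_2\cup\{x,y\})=X$, so $\ell$ is onto, and a short case analysis using $\alpha_1((0,1))=C_1$, $\alpha_2((0,1))=C_2$, $C_1\cap C_2=\emptyset$, and the injectivity of each $\alpha_i$ would show $\ell$ is injective as well. Then $\ell$ is a continuous bijection from the compact space~$\bbS^1$ onto the Hausdorff space~$X$, hence a homeomorphism, so $X\cong\bbS^1$.

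I expect the main obstacle to be the Hausdorff part, and in particular the assertion that the arcs $\alpha_i$ exhaust the whole components~$C_i$ rather than merely some path-connected subsets of them; this is precisely what \Cref{if-you-touch-edges-you-are-component} is designed to deliver, since an arc running from~$x$ to~$y$ inside $C_i\cup\{x,y\}$ is forced to ``touch the edges''. The only genuinely external input is the classical path-to-arc theorem for Hausdorff spaces; the remaining ingredients---the pasting lemma, the injectivity case check for $\ell$, and the compact-domain/Hausdorff-codomain argument---are routine. I would also be careful to state the first part's surjectivity argument precisely, since that is the one step that genuinely uses $2$-path-flimsiness rather than just $1$-path-flimsiness.
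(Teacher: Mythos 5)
Your proposal is correct and follows essentially the same route as the paper: the same concatenation-of-two-paths construction with the complement argument for surjectivity, and the same appeal to the path-to-arc theorem for Hausdorff spaces followed by an injectivity case analysis. The only (harmless) divergence is in re-establishing surjectivity for the arcs: you deduce $\alpha_i\bigl((0,1)\bigr)=C_i$ from \Cref{if-you-touch-edges-you-are-component}, whereas the paper simply reuses the complement argument, which applies verbatim to the injective paths.
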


\begin{proof}
  Here, we view $\bbS^1$ as the quotient topological space of $[0,1]$ obtained by identifying the points~$0$ and~$1$.
  Denote by $q \colon [0,1]\twoheadrightarrow \bbS^1$ the associated quotient map.

  Let $x,y\in X$ be distinct.
  \Cref{prop:path-is-cspace,only-two-components} ensure that $\compb{x,y}$ has exactly two path-connected components, which we denote by~$C_1$ and~$C_2$.
  We know from \Cref{Ccupx} that~$C_1\cup\{x,y\}$ and~$C_2\cup\{x,y\}$ are path-connected, so there exist paths $\gamma_1 \colon [0,1] \to C_1\cup\{x,y\}$ and $\gamma_2 \colon [0,1] \to C_2\cup\{x,y\}$ such that $\gamma_1(0)=\gamma_2(1)=x$ and $\gamma_1(1) = \gamma_2(0) = y$.
  Then, define another path~$\gamma \colon [0,1] \to X$ by setting for all $t \in [0,1]$,
  \[
    \gamma(t) =
    \begin{cases}
      \gamma_1(2t)   & \textnormal{if } t \leq 1/2,\\
      \gamma_2(2t-1) & \textnormal{if } t \geq 1/2.
    \end{cases}
  \]
  Note that $\gamma(0) = \gamma(1)=x$.
  Thus, there is a continuous map $\varphi \colon \bbS^1 \to X$ such that $\gamma = \varphi \circ q$, and its image is $\varphi(\bbS^1) = \gamma([0,1]) = \gamma_1([0,1]) \cup \gamma_2([0,1])$.
  Following the same arguments as in the proof of \Cref{thm:2-big-path-f-is-compact} (replacing \Cref{prop:continuum-is-cspace} with \Cref{prop:path-is-cspace}), we show that $\gamma_1\bigl([0,1]\bigr)\cup\gamma_2\bigl([0,1]\bigr)=X$, so $\varphi$ is a continuous surjection $\bbS^1 \twoheadrightarrow X$.
  That $X = \varphi(\bbS^1)$ is compact then follows from the fact that~$\bbS^1$ is compact.

  Now, assume that $X$ is Hausdorff.
  The sets $C_1\cup\{x,y\}$ and $C_2\cup\{x,y\}$ are then Hausdorff and path-connected, so they are arcwise connected by \cite[Theorem~31.6]{willard} (see also \cite{pathinj,pathinj-zorn}), meaning that any two distinct points are connected via an \emph{injective} path.
  Thus, we can assume without loss of generality that $\gamma_1$ and $\gamma_2$ are injective, and we want to show that $\varphi$ is injective.
  Assume that there are $0\leq s<t\leq 1$ such that $\gamma(s)=\gamma(t)$.
  Since $\gamma_1$ and $\gamma_2$ are injective, we must have $s<1/2<t$.
  Thus, $\gamma(s)\neq \gamma(1/2)=y$ and $\gamma(s)\in (C_1\cup\{x,y\})\cap(C_2\cup\{x,y\})=\{x,y\}$, so $\gamma(s)=\gamma(t)=x$.
  By injectivity of $\gamma_1$ and $\gamma_2$, we have $s=0$ and $t=1$, and thus $q(s)=q(t)$.
  This proves that~$\varphi$ is injective.
  Hence, $\varphi$ is a continuous bijection from the compact space~$\bbS^1$ to the Hausdorff space~$X$, so~$\varphi$ is a homeomorphism by \cite[Theorem~17.14]{willard}.
\end{proof}

Finally, we prove \Cref{thm:intro-2f-path} from the introduction.
Let $(X,\tau)$ be a $2$-path-flimsy topological space, let $(X,\cC)$ be the corresponding $2$-flimsy connectivity space (cf.~\Cref{prop:path-is-cspace}), let~$\bot = \bot_\cC$ be the corresponding order-complete dense separation relation on $X$ (cf.~\Cref{def:2f-seprel}, \Cref{prop:2f-seprel}), let $\tau_\rH$ be the order topology of $(X,\bot)$, and let~$\tau_\cF$ be the final topology on~$X$ induced by the paths on~$(X,\tau)$, i.e.,
\[
  \tau_\cF
  :=
  \suchthat{
    V\subseteq X
  }{
    \gamma^{-1}(V)\text{ is open for all paths }\gamma \colon [0,1] \to (X,\tau)
  }.
\]

\begin{proof}[Proof of \Cref{thm:intro-2f-path}]
  The compactness of $(X,\tau)$ is ensured by \Cref{thm:2pf-is-compact}.
  Next, recall from \Cref{cor:seprel-big-circ} that $(X,\tau_\rH)$ is a big circle, so it is in particular Hausdorff by \Cref{prop:big-circle-2f}.
  The same observations as in the proof of \Cref{thm:intro-2f-big_path} (in \Cref{sn:2f-big-path}) show that $\tau_\rH$ is generated by the path-connected components of the subsets $\compb{x,y}$ as $(x,y)$ ranges over all pairs of distinct points of~$X$.
  
  In contrast to the proof of \Cref{thm:intro-2f-big_path}, here we prove that $(X,\tau)$ and $(X,\tau_\rH)$ have the same paths before showing that $(X,\tau_\rH)$ is $2$-path-flimsy.
  Nevertheless, we still adopt the same strategy, namely, showing that $\tau_\rH=\tau_\cF$.
  Indeed, the same arguments as in the proof of \Cref{thm:intro-2f-big_path} yield that the paths on $(X,\tau)$ and $(X,\tau_\cF)$ coincide.
  In particular, $(X,\tau_\cF)$ is $2$-path-flimsy and thus compact by \Cref{thm:2pf-is-compact}.
  Moreover, replacing \Cref{prop:continuum-is-cspace} with \Cref{prop:path-is-cspace} in the proof of \Cref{thm:intro-2f-big_path}, we get that $\tau_\rH\subseteq \tau_\cF$.
  Since we have already shown that~$(X,\tau_\rH)$ is Hausdorff and that~$(X,\tau_\cF)$ is compact, \Cref{lattice_Hausdorff-compact} implies that $\tau_\rH = \tau_\cF$.
  Therefore, $(X,\tau)$ and $(X,\tau_\rH)$ have the same paths and thus the same path-connected subsets.
  This implies that~$(X,\tau_\rH)$ is $2$-path-flimsy.
  
  For the two remaining points, namely the inclusion $\tau \subseteq \tau_\cF = \tau_\rH$ and the uniqueness of $\tau_\rH$, the proofs are just as in the proof of \Cref{thm:intro-2f-big_path} (using \Cref{thm:2pf-is-compact} instead of \Cref{thm:2-big-path-f-is-compact}).
  \end{proof}

\begin{example}
  \label{ex:non-hausdorff-2pf}
  In \Cref{ex:non-hausdorff-2bpf}, we have described a non-Hausdorff topological space $(X,\tau)$ that has the same big paths as the standard circle.
  In particular, they have the same \emph{paths}, so~$(X,\tau)$ is also a non-Hausdorff $2$-path-flimsy space.
  Moreover, the topology $\tau_\rH$ whose existence is ensured by \Cref{thm:intro-2f-path} is again the standard topology on $\bbS^1$---this is consistent with \Cref{thm:hausdorff-2pf-is-circle}.
\end{example}

\section{A triple equivalence}
\label{sn:equi}

Previously, we have discussed three types of circle-like structures: the topological \emph{big circles} (\Cref{def:big-circle}), the order-theoretic \emph{separation relations} (\Cref{def:sep_rel}), and the \emph{$2$-flimsy connectivity spaces} (\Cref{subsn:c-spaces}).
Moreover, we have drawn several links between these frameworks.
In this section, we sum up these connections and show that those different notions are in fact equivalent.
We then apply this equivalence to show \Cref{thm:intro-classification}.

Fix a set~$X$.
We introduce the three following sets:
$\BCir(X)$ is the set of all topologies~$\tau$ on~$X$ such that~$(X,\tau)$ is a big circle (\Cref{def:big-circle}), $\Sepa(X)$ is the set of all separation relations~$\bot$ on~$X$ such that~$(X,\bot)$ is dense and order-complete (\Cref{def:sep_rel,def:dense-complete_seprel}), and $\Conn(X)$ is the set of all connectivities~$\cC$ on~$X$ such that~$(X,\cC)$ is a $2$-flimsy connectivity space (\Cref{def:c-space,def:c-space_flimsy}).

Any big circle is a $2$-flimsy topological space (\Cref{prop:big-circle-2f}), so by \Cref{prop:top-is-cspace}, we obtain a map $\Upsilon \colon \BCir(X) \to \Conn(X)$ mapping a topology~$\tau$ to the set of all connected subsets of~$(X,\tau)$.
(We show in the proof of \Cref{thm:intro-classification} that $\Upsilon(\tau)$ is also the set of all big-path-connected subsets of $(X,\tau)$.)
Similarly, by \Cref{prop:2f-seprel}, we obtain a map $\Phi \colon \Conn(X) \to \Sepa(X)$ mapping a connectivity~$\cC$ to the separation relation $\bot_\cC$ associated with~$(X,\cC)$ (\Cref{def:2f-seprel} and \Cref{prop:2f-seprel}).
Finally, by \Cref{cor:seprel-big-circ}, we obtain a map $\Psi \colon \Sepa(X) \to \BCir(X)$ mapping a separation relation $\bot$ to the order topology of~$(X,\bot)$ (\Cref{def:order-top_seprel}).

\begin{lemma}
  \label{lem:diagram}
  Let $x,y\in X$ be two distinct points.
  Then:
  \begin{itemize}
    \item
      For any topology~$\tau\in\BCir(X)$, the $\Upsilon(\tau)$-components of $\compb{x,y}$ are exactly the connected components of $\compb{x,y}$ with respect to~$\tau$;
    \item
      For any connectivity~$\cC\in\Conn(X)$, the open intervals of $(X,\Phi(\cC))$ between $x$ and $y$ are exactly the $\cC$-components of $\compb{x,y}$;
    \item
      For any separation relation~$\bot\in\Sepa(X)$, the connected components of $\compb{x,y}$ with respect to the topology~$\Psi(\bot)$ are exactly the open intervals of $(X,\bot)$ between~$x$ and~$y$.
  \end{itemize}
\end{lemma}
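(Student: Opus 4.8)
The plan is to prove the three bullets one at a time; they all say that, under the correspondences $\Upsilon$, $\Phi$, $\Psi$, the three candidate descriptions of ``the two halves of $\compb{x,y}$'' agree, and the first two are essentially bookkeeping. The first bullet is immediate from the definitions: $\Upsilon(\tau)$ is, by construction, the connectivity whose members are the connected subsets of $(X,\tau)$ (this is legitimate because a big circle is $2$-flimsy by \Cref{prop:big-circle-2f} and \Cref{prop:top-is-cspace}, so $\Upsilon(\tau)\in\Conn(X)$), and a $\cC$-component of a set is by definition a maximal $\cC$-connected subset; hence a $\Upsilon(\tau)$-component of $\compb{x,y}$ is exactly a maximal connected subset of $\compb{x,y}$, that is, a connected component of $\compb{x,y}$. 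For the second bullet, in which $\Phi(\cC)=\bot_\cC$, I would quote the observation recorded right after \Cref{def:2f-seprel}: for distinct $x,y,z$, the open interval $I_{\not\ni z}(x,y)$ of $(X,\bot_\cC)$ equals the $\cC$-component of $\compb{x,y}$ not containing $z$. By \Cref{only-two-components}, $\compb{x,y}$ has exactly two $\cC$-components, so $I_{\ni z}(x,y)=\compb{x,y}\setminus I_{\not\ni z}(x,y)$ is the other one, and conversely any $\cC$-component of $\compb{x,y}$ is of the form $I_{\not\ni z}(x,y)$ with $z$ chosen in the complementary component. Thus the open intervals of $(X,\Phi(\cC))$ between $x$ and $y$ are precisely the two $\cC$-components of $\compb{x,y}$.

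The third bullet is the only one requiring a genuine argument. By \Cref{cor:seprel-big-circ}, $(X,\Psi(\bot))$ is a big circle, hence $2$-flimsy, Hausdorff, and with $\card X\geq 3$ by \Cref{prop:big-circle-2f}; in particular $\{x,y\}$ is closed, $\compb{x,y}$ is open in $X$, and $\compb{x,y}$ has exactly two connected components $C_1,C_2$ (at least two because $X$ is $2$-flimsy, exactly two by \Cref{only-two-components} applied to the connectivity of connected subsets of $(X,\Psi(\bot))$). Now fix an arbitrary $c\in\compb{x,y}$. The two open intervals $I_{\not\ni c}(x,y)$ and $I_{\ni c}(x,y)$ between $x$ and $y$ are open in $\Psi(\bot)$ (being generators of the order topology of $(X,\bot)$), are disjoint, cover $\compb{x,y}$, and are both non-empty because $(X,\bot)$ is dense with $\card X\geq 3$ (cf.\ \Cref{def:dense-complete_seprel}). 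A connected subset of $\compb{x,y}$ cannot meet both pieces of this relatively open partition, so each of $C_1$ and $C_2$ lies inside one of the two intervals; they cannot lie inside the same interval, as that would force the other interval to be empty; hence $\{C_1,C_2\}=\{I_{\not\ni c}(x,y),I_{\ni c}(x,y)\}$. Since every open interval between $x$ and $y$ is an $I_{\not\ni c}(x,y)$ or an $I_{\ni c}(x,y)$ for some $c\in\compb{x,y}$, and since conversely each of $C_1,C_2$ is realized as some $I_{\not\ni c}(x,y)$ by taking $c$ in the opposite component, the open intervals of $(X,\bot)$ between $x$ and $y$ coincide with the connected components of $\compb{x,y}$ for the topology $\Psi(\bot)$.

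I do not anticipate a serious obstacle. The first two bullets are bookkeeping with the definitions of $\Upsilon$ and $\Phi$ and with the remark following \Cref{def:2f-seprel}; in the third bullet, the only delicate points are letting the auxiliary point $c$ range over all of $\compb{x,y}$ when speaking of ``all open intervals between $x$ and $y$'', and invoking density precisely at the step that rules out a degenerate empty interval. Everything else is an appeal to \Cref{prop:big-circle-2f}, \Cref{only-two-components}, and \Cref{cor:seprel-big-circ}.
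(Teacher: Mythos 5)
Your proposal is correct and follows essentially the same route as the paper: the first bullet is definitional, the second is the observation recorded after \Cref{def:2f-seprel} combined with \Cref{only-two-components}, and the third uses that the two open intervals form a non-empty (by density) open partition of $\compb{x,y}$, which must therefore coincide with the two connected components guaranteed by \Cref{only-two-components}. The paper phrases the last step as ``each non-empty interval contains at least one of the two components'' rather than your ``both components cannot lie in the same interval'', but these are the same counting argument.
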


\begin{proof}
  The first point is by definition of $\Upsilon(\tau)$.
  The second point is easily observed from \Cref{def:open-interval-seprel,def:2f-seprel}.
  To prove the third point, let us fix $z\in\compb{x,y}$.
  By \Cref{def:open-interval-seprel,def:order-top_seprel}, the two open intervals of $(X,\bot)$ between~$x$ and~$y$ partition $\compb{x,y}$ and are open in $(X,\Psi(\bot))$.
  Moreover, they are both non-empty as~$(X,\bot)$ is dense.
  Thus, they each contain at least one connected component of $\compb{x,y}$ for the topology $\Psi(\tau)$.
  But since the connectivity space $(X,\Upsilon(\Psi(\tau)))$ is $2$-flimsy, \Cref{only-two-components} together with the first point imply that $\compb{x,y}$ has exactly two connected components with respect to the topology~$\Psi(\tau)$, and these two components must therefore coincide with the two open intervals between~$x$ and~$y$.
\end{proof}

Using \Cref{lem:diagram}, we are going to prove that the sets $\Conn(X)$, $\Sepa(X)$ and $\BCir(X)$ are in canonical bijection.
More precisely, in the following diagram, any loop around the triangle composes to the corresponding identity map:
\[\begin{tikzcd}[row sep=2.5em]
  & \Conn(X) \arrow[dr,"\Phi"] \\
  \BCir(X) \arrow[ur,"\Upsilon"]  && \Sepa(X) \arrow[ll,"\Psi"]
\end{tikzcd}\]

\begin{theorem}
  \label{thm:diagram}
  The following facts hold:
  \begin{enumroman}
    \item
      For any $\tau \in \BCir(X)$, we have $\Psi(\Phi(\Upsilon(\tau))) = \tau$.
    \item
      For any $\cC \in \Conn(X)$, we have $\cC = \Upsilon(\Psi(\Phi(\cC)))$.
    \item
      For any $\bot \in \Sepa(X)$, we have $\bot = \Phi(\Upsilon(\Psi(\bot)))$.
  \end{enumroman}
\end{theorem}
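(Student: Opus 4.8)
The whole argument runs on the engine provided by \Cref{lem:diagram}: each of the three maps $\Upsilon$, $\Phi$, $\Psi$ transports, for every pair of distinct points $x,y\in X$, the unordered pair consisting of the two ``halves'' of $\compb{x,y}$ (its two $\cC$-components, or its two topological connected components, or the two open intervals of the separation relation between $x$ and $y$) to the corresponding pair for the target structure. So the plan is, for each of the three round trips, to check that the resulting object carries the same ``halves of $\compb{x,y}$'' data as the object we started from, and then to invoke the appropriate rigidity statement saying that this data determines the structure. I would treat the assertions in the order (iii), (ii), (i), since (i) needs one extra ingredient.

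For (iii): given $\bot\in\Sepa(X)$, unwind $\Phi(\Upsilon(\Psi(\bot)))$ via \Cref{def:2f-seprel} --- it is the relation ``$\{a,b\}\bot_\cC\{c,d\}$ iff $c$ and $d$ lie in distinct $\cC$-components of $\compb{a,b}$'', where $\cC=\Upsilon(\Psi(\bot))$. Chain the first and third points of \Cref{lem:diagram} to identify those $\cC$-components with the two open intervals of $(X,\bot)$ between $a$ and $b$. Since by \Cref{def:open-interval-seprel} together with \axref{ax:separation-transitive} these two intervals partition $X\setminus\{a,b\}$, with $c$ and $d$ in different halves precisely when $\{a,b\}\bot\{c,d\}$ (take $I_{\not\ni c}(a,b)$ and $I_{\ni c}(a,b)$), the recovered relation is $\bot$ itself.

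For (ii): given $\cC\in\Conn(X)$, chain all three points of \Cref{lem:diagram} to see that $\Upsilon(\Psi(\Phi(\cC)))$ has the same $\compb{x,y}$-components as $\cC$ for every pair $\{x,y\}$, and then conclude equality from \Cref{cor:list-of-connected-parts}, which expresses a $2$-flimsy connectivity purely in terms of those components (plus the ``universal'' members $\emptyset$, $X$, singletons and their complements). For (i): given $\tau\in\BCir(X)$, observe that $\Psi(\Phi(\Upsilon(\tau)))$ is by construction the topology generated by the open intervals of the separation relation $\bot_{\Upsilon(\tau)}$, which by the second and first points of \Cref{lem:diagram} are exactly the connected components of the sets $\compb{x,y}$ with respect to $\tau$. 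Here the new ingredient is \Cref{C_open}: a big circle is a $2$-flimsy topological space (\Cref{prop:big-circle-2f}), so each such component is $\tau$-open, whence $\Psi(\Phi(\Upsilon(\tau)))\subseteq\tau$; and since $(X,\Psi(\Phi(\Upsilon(\tau))))$ is a big circle, hence Hausdorff, while $(X,\tau)$ is a big circle, hence compact, \Cref{lattice_Hausdorff-compact} upgrades this inclusion to equality.

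The main obstacle --- really the only step with content beyond bookkeeping --- is (i): unlike separation relations and $2$-flimsy connectivities, a big circle topology is not literally determined by the combinatorics of its $\compb{x,y}$-components, so \Cref{lem:diagram} must be supplemented by the fact that these components are open (\Cref{C_open}) and by the rigidity of compact-versus-Hausdorff topologies (\Cref{lattice_Hausdorff-compact}). Everything else is a routine diagram chase.
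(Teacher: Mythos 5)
Your proof is correct and follows essentially the same route as the paper: all three identities are reduced via \Cref{lem:diagram} to matching the two components of each $\compb{x,y}$, with (ii) closed by \Cref{cor:list-of-connected-parts} and (iii) by unwinding \Cref{def:2f-seprel}. For (i) the paper instead invokes the uniqueness/generation statement of \Cref{thm:intro-2f-top}, but that statement's proof is precisely your combination of \Cref{C_open} and \Cref{lattice_Hausdorff-compact}, so the two arguments coincide.
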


\begin{proof}
  ~
  \begin{enumroman}
    \item
      Recall from \Cref{prop:big-circle-2f} that any big circle is $2$-flimsy and compact, so by \Cref{thm:intro-2f-top}, its topology is generated by the connected components of the complements of all pairs of distinct points.
      Using all three points of \Cref{lem:diagram}, we see that for distinct $x,y\in X$, the connected components of $\compb{x,y}$ coincide for the two big circles $(X,\tau)$ and $(X,\Psi\circ\Phi\circ\Upsilon(\tau))$.
      Thus, $\Psi\circ\Phi\circ\Upsilon(\tau)=\tau$.
    \item
      Similarly, \Cref{lem:diagram} implies that for any distinct $x,y\in X$ the two components of $\compb{x,y}$ are the same for the two $2$-flimsy connectivity spaces~$(X,\cC)$ and $(X,\Upsilon\circ \Psi\circ \Phi(\cC))$.
      Then, the identity $\Upsilon\circ \Psi\circ \Phi(\cC)=\cC$ is a direct consequence of \Cref{cor:list-of-connected-parts}.
    \item
      To prove the identity $\Phi\circ\Upsilon\circ \Psi(\bot)=\bot$, we need to show that for any four distinct points $a,b,c,d$ of~$X$, we have $\{a,b\}\bot_{\Upsilon\circ\Psi(\bot)} \{c,d\}$ if and only if $\{a,b\}\bot \{c,d\}$ (for non-distinct quadruples of points, this is ensured by \axref{ax:separation-strict}).
      By \Cref{def:2f-seprel} and \Cref{lem:diagram}, this amounts to showing that $\{a,b\}\bot\{c,d\}$ if and only if $c$ and $d$ are in distinct open intervals of $(X,\bot)$ between~$a$ and $b$.
      But since $I_{\not \ni c}(a,b) = \suchthat{x \in \compb{a,b,c}}{\{a,b\} \bot \{c,x\}}$ is one of these two open intervals and does not contain $c$, this is clear.
      \qedhere
  \end{enumroman}
\end{proof}

We now prove \Cref{thm:intro-classification} from the introduction, giving equivalent characterizations of~$\BCir(X)$.

\begin{proof}[Proof of \Cref{thm:intro-classification}]
  The implication \ref{item:intro-classification-big-circle}$\Rightarrow$\ref{item:intro-classification-top-bp} is stated in \Cref{prop:big-circle-2f}.
  The implications \ref{item:intro-classification-top-bp}$\Rightarrow$\ref{item:intro-classification-compact-top} and \ref{item:intro-classification-top-bp}$\Rightarrow$\ref{item:intro-classification-hausdorff-bp} follow directly from \Cref{thm:intro-2f-big_path,thm:intro-2f-top}.
  Now, let us denote by~$\cC$ the set of connected subsets of~$X$ and by~$\cC_{\mathrm{bp}}$ the set of its big-path-connected subsets.
  
  If \ref{item:intro-classification-compact-top} holds, then $(X,\cC)$ is a $2$-flimsy connectivity space by \Cref{prop:top-is-cspace}, and the uniqueness part of \Cref{thm:intro-2f-top} implies that the topology of~$X$ is generated by the connected components of the subsets $\compb{x,y}$ for $\{x,y\} \in \Cho(X)$.
  This means that the topology of~$X$ is $\Psi(\Phi(\cC))$ (cf.~\Cref{def:order-top_seprel} and \Cref{lem:diagram}), so~$X$ is a big circle by \Cref{cor:seprel-big-circ}.
  We have shown \ref{item:intro-classification-compact-top}$\Rightarrow$\ref{item:intro-classification-big-circle}.
  
  The implication \ref{item:intro-classification-hausdorff-bp}$\Rightarrow$\ref{item:intro-classification-big-circle} is proved in the same way, using \Cref{prop:continuum-is-cspace} and \Cref{thm:intro-2f-big_path} to prove that the topology of~$X$ is $\Psi(\Phi(\cC_{\mathrm{bp}})) \in \BCir(X)$.
  This concludes the proof of the equivalence.
  
  Finally, if \ref{item:intro-classification-top-bp} holds, then both $(X,\cC)$ and $(X,\cC_{\mathrm{bp}})$ are $2$-flimsy connectivity spaces.
  Moreover, we have $\cC_{\mathrm{bp}}\subseteq \cC$ by \Cref{cor:bigpathconn-is-conn}.
  By \Cref{cor:comparison_connectivities}, we must then have $\cC=\cC_{\mathrm{bp}}$, i.e., a subset of~$X$ is connected if and only if it is big-path-connected.
\end{proof}

\section{An example of a $2$-path-flimsy space that is not $2$-big-path-flimsy}
\label{sn:2f-path-not-big-path}

In this section, we construct a $2$-path-flimsy topological space $(X,\sT)$ that is not $2$-big-path-flimsy.
The main theorem is \Cref{thm:counter-ex}.

\subsection{Strategy}

\paragraph{Heuristic discussion.}

By \Cref{thm:intro-2f-path,thm:hausdorff-2pf-is-circle}, we know that any $2$-path-flimsy space admits a finer topology making it homeomorphic to the standard circle~$\bbS^1$ with its Euclidean topology~$\sE$.
Hence, we can look without loss of generality for examples whose underlying set~$X$ is~$\bbS^1$, and for a coarser topology $\sT \subseteq\sE$ on~$\bbS^1$ such that~$(\bbS^1,\sT)$ and~$(\bbS^1, \sE)$ share the same path-connected subsets (in particular, $(\bbS^1,\sT)$ is $2$-path-flimsy), but also such that $(\bbS^1,\sT)$ is not $2$-big-path-flimsy.

Since any path-connected subset is also big-path-connected, $X := (\bbS^1,\sT)$ has to be big-path-connected, and so does $X \setminus \{x\}$ for any $x \in X$.
Hence, the fact that $X$ is not $2$-big-path-flimsy can only mean that there are two distinct points $x,y\in X$ such that $X\setminus\{x,y\}$ is big-path-connected (but necessarily \emph{not path-connected} at the same time).
Therefore, there must be a big path $\gamma \colon L \to X\setminus\{x,y\}$ joining the two path-connected components of $X \setminus\{x,y\}$---the map~$\gamma$ has to be continuous with respect to~$\sT$, but cannot be continuous with respect to~$\sE$ as $(\bbS^1, \sE)$ is $2$-big-path-flimsy.

In fact, we will construct the topology~$\sT$ such that a stronger property holds: there will exist an ``\emph{everywhere dense Hamiltonian big path}'' $f \colon L \to \bbS^1$, in the sense that~$f$ is a bijective map between some big interval~$L$ and $\bbS^1$ mapping every non-empty open interval $I \subseteq L$ to a dense subset $f(I)$ of~$(\bbS^1,\sE)$, such that~$f$ is continuous with respect to the topology~$\sT$.
This implies that $X := (\bbS^1, \sT)$ meets our criteria: for any distinct $x,y \in \bbS^1$, if $I \subseteq L$ is the open interval between $f^{-1}(x)$ and~$f^{-1}(y)$, then its image $f(I)$ is a big-path-connected subset of $X \setminus \{x,y\}$, and as a dense subset of $(\bbS^1, \sE)$ it intersects the two path-connected components of $\bbS^1 \setminus \{x,y\}$, as desired.

Now, assume that we have a topology $\sT\subseteq\sE$ and a bijective map $f \colon L \to \bbS^1$ as above, and let $\sF(f) := \suchthat{U\subseteq \bbS^1}{f^{-1}(U)\text{ open in }L}$ be the final topology induced by~$f$.
By hypothesis, $f$ is continuous with respect to~$\sT$, so $\sT \subseteq \sF(f)$.
Since we also have $\sT \subseteq \sE$, the topology $\sT$ is coarser than the topology $\sT' := \sF(f) \cap \sE$.
The coarser a topology, the more paths and path-connected subsets it has.
Since $\sT \subseteq \sT' \subseteq \sE$, and since~$(\bbS^1,\sT)$ and~$(\bbS^1,\sE)$ share the same paths and the same path-connected subsets, so does~$(\bbS^1,\sT')$.
In particular, $(\bbS^1,\sT')$ is $2$-path-flimsy, and since $f$ is continuous with respect to $\sT'$ it is not $2$-big-path-flimsy, as explained above.
Therefore, we can without loss of generality look for examples satisfying $\sT=\sT'=\sE\cap\mathscr{F}(f)$: the construction of the topology~$\sT$ will then directly follow from the construction of an adequate map~$f$.

Thanks to the previous reasoning, we have set ourselves a new goal: construct an everywhere dense Hamiltonian big path $f \colon L \to \bbS^1$ such that $\bbS^1$ equipped with the topology~$\sE\cap\mathscr{F}(f)$ does not admit more paths than with the Euclidean topology~$\sE$.
To avoid creating new paths, we must ensure that no non-trivial path can ``follow the trail mapped out by~$f$'', informally speaking.
To prevent that from happening, the map $f \colon L \to \bbS^1$ shall be chosen to be very chaotic.
Note also that we must have $\card L=\card {\bbS^1} = 2^{\aleph_0}$ as we want~$f$ to be bijective.

\paragraph{Formal construction.}
In the whole section, we see $\bbS^1$ as the quotient of the interval $[0,3]$ obtained by identifying the points~$0$ and~$3$, and we let $\sE$ (the \emph{Euclidean topology} on $\bbS^1$) be the quotient topology inherited from the usual Euclidean topology.
As mere sets, we will also identify $\bbS^1$ with $[0,3)$.

We equip $L := [0,1]^\N$ with the lexicographic order $<_{\mathrm{lex}}$ defined as follows: for any $x=(x_n)_{n\in \N}\in L$ and any $y=(y_n)_{n\in\N}\in L$, set $\ttN(x,y):=\min\suchthat{n\in\N}{x_n\neq y_n}$ with the convention $\min\emptyset=\infty$, and write
\begin{equation}
  \label{super_lexico}
  x\leq_{\mathrm{lex}} y
  \iff
  x=y\text{ or }x_{\ttN(x,y)}<y_{\ttN(x,y)}.
\end{equation}
The following proposition, whose proof is postponed to \Cref{sn:specific_big-interval}, gathers the key properties of the ordered set $(L,<_{\mathrm{lex}})$.

\begin{proposition}
  \label{lexico_order_prop}
  $(L,<_{\mathrm{lex}})$ is a dense order-complete linear order with ends, of cardinality~$2^{\aleph_0}$.
  Equipped with the order topology, the big interval~$L$ is regular Hausdorff, compact (so locally compact and $\sigma$-compact), and first-countable.
  Moreover, its path-connected components are the singletons.
\end{proposition}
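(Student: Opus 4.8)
The plan is to dispatch the order-theoretic claims by hand, deduce the topological ones from \Cref{prop:continuum-facts} (since a dense order-complete linear order yields a linear continuum in the sense of \Cref{def:lin-cont}), and finally rule out nonconstant paths by a separability argument.

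\emph{Order theory.} Totality of $<_{\mathrm{lex}}$ is immediate by comparing two sequences at the first coordinate where they differ, and the constant sequences $(0,0,\dots)$ and $(1,1,\dots)$ are the least and greatest elements, so $(L,<_{\mathrm{lex}})$ has ends. For density, given $x<_{\mathrm{lex}}y$, put $n:=\ttN(x,y)$ (so $x_n<y_n$), pick $w\in(x_n,y_n)$, and check that $z:=(x_0,\dots,x_{n-1},w,0,0,\dots)$ satisfies $x<_{\mathrm{lex}}z<_{\mathrm{lex}}y$. For order-completeness, since the maximum is a universal upper bound it suffices to produce a supremum for each nonempty $S\subseteq L$, which I would build coordinatewise: set $S_{-1}:=S$, and while $S_{n-1}\neq\emptyset$ let $s_n:=\sup\suchthat{z_n}{z\in S_{n-1}}$ and $S_n:=\suchthat{z\in S_{n-1}}{z_n=s_n}$. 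If some $S_n$ is empty, the supremum is $\sigma:=(s_0,\dots,s_n,0,0,\dots)$; otherwise every $S_n$ is nonempty and the supremum is $\sigma:=(s_0,s_1,s_2,\dots)$. In either case one verifies that $\sigma$ is an upper bound (each $z\in S$ either equals $\sigma$ or drops strictly below $\sigma$ at the least $n$ with $z\notin S_n$) and that nothing smaller is (if $u<_{\mathrm{lex}}\sigma$ with first disagreement at coordinate $m$, then $u_m<s_m=\sup\suchthat{z_m}{z\in S_{m-1}}$ forces some $z\in S_{m-1}$ with $z_m>u_m$, whence $z>_{\mathrm{lex}}u$). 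Finally $\card L=(2^{\aleph_0})^{\aleph_0}=2^{\aleph_0}$.

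\emph{Topology.} By the above, $L$ with its order topology is a linear continuum with a least and a greatest element, so \Cref{prop:continuum-facts} gives that it is Hausdorff and compact; a compact Hausdorff space is (normal, hence) regular, and compactness also yields $\sigma$-compactness and local compactness. For first-countability it is enough to exhibit, at each $x$ other than the minimum a sequence increasing to $x$, and at each $x$ other than the maximum a sequence decreasing to $x$: intersecting such one-sided approximations (or using suitable rays at the two ends) gives a countable neighbourhood basis at every point, since $L$ has no jumps. When $\suchthat{n\in\N}{x_n>0}$ is infinite, enumerate it as $n_0<n_1<\cdots$ and take $y^{(k)}:=(x_0,\dots,x_{n_k-1},x_{n_k}/2,0,0,\dots)$; when it is finite and nonempty with maximum $N$, take $y^{(k)}:=(x_0,\dots,x_{N-1},t_k,1,1,1,\dots)$ with $t_k\nearrow x_N$ inside $[0,x_N)$. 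One checks these converge to $x$ from below, and the right side is symmetric (using $\suchthat{n\in\N}{x_n<1}$).

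\emph{Path components.} Let $f\colon[0,1]\to L$ be a path with $f(0)\neq f(1)$. Its image is compact and connected, hence a closed interval $[x,y]$ with $x<_{\mathrm{lex}}y$ by \iref{prop:continuum-facts}{item:continuum-compact-subsets}, and as a continuous image of $[0,1]$ it is separable. Put $n:=\ttN(x,y)$; for each $c$ in the nondegenerate real interval $(x_n,y_n)$, the set
\[
  W^c:=\suchthat{z\in L}{z_j=x_j\text{ for }j<n,\ z_n=c,\ 0<z_{n+1}<1}
\]
is an open interval of $(L,<_{\mathrm{lex}})$ --- namely $W^c=\bigl((x_0,\dots,x_{n-1},c,0,1,1,1,\dots),\,(x_0,\dots,x_{n-1},c,1,0,0,0,\dots)\bigr)$ --- which meets $[x,y]$ (for instance at $(x_0,\dots,x_{n-1},c,\tfrac12,0,0,\dots)$), and the sets $W^c$ are pairwise disjoint. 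Thus $[x,y]$ would contain uncountably many pairwise disjoint nonempty open subsets, contradicting separability. Hence every path is constant and the path-connected components of $L$ are the singletons.

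The main obstacle I anticipate is the bookkeeping in the first-countability argument, together with the careful verification that the coordinatewise construction really yields the supremum: both involve case distinctions according to whether the coordinates of a point (or of members of $S$) become eventually $0$ or eventually $1$. By contrast, once the open sets $W^c$ are identified, the separability argument for the path components is short.
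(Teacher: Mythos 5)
Your proof is correct and follows essentially the same route as the paper's: the same coordinatewise construction of suprema, the same reduction of the topological claims to \Cref{prop:continuum-facts}, the same two-case construction of monotone approximating sequences for first-countability, and the same ``uncountably many pairwise disjoint non-empty open sets'' argument for path components. The only differences are cosmetic (your density witness modifies a single coordinate rather than averaging all of them, and you invoke separability of the image where the paper invokes second countability of $[0,1]$).
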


In all this section, fix $\ell_1=(\tfrac{1}{3},\tfrac{1}{3},\ldots)$ and $\ell_2=(\tfrac{2}{3},\tfrac{2}{3},\ldots)$, so that $\min L<_\lex \ell_1 <_\lex \ell_2 <_\lex \max L$.
Then, we define the following partition $\{L_1,L_2,L_3\}$ of $L$ into linear continua:
\[
  L_1=[\min L,\ell_1],
  \quad
  L_2=(\ell_1,\ell_2),
  \quad
  L_3=[\ell_2,\max L].
\]

We now present the key tool for the construction of the counterexample.
This is an adaptation of the existence of Bernstein sets (cf.~\cite[Theorem~5.3]{oxtoby2013measure} for background) for products of topological spaces: it provides a partition into rectangles that are ``as pathological as possible''.
Its proof is postponed to \Cref{sn:bernstein}.

\begin{theorem}[Partition by Bernstein rectangles]
  \label{bernstein_rectangle}
  Let $Y$ and $Z$ be two topological spaces that are Hausdorff, locally compact and $\sigma$-compact, first-countable, and have cardinality $2^{\aleph_0}$.
  Then, there exist partitions $\{A_1,A_2,A_3\}$ of~$Y$ and $\{B_1,B_2,B_3\}$ of~$Z$ such that for every closed subset~$F \subseteq Y\times Z$, at least one of the following holds:
  \begin{itemize}
    \item
      $F$ can be covered by countably many \emph{lines}, i.e., subsets of the form $\{y\}\times Z$ or $Y\times\{z\}$ with $y\in Y$ and $z\in Z$;
    \item
      $F$ meets each of the nine subsets $A_i\times B_j$ for $i,j\in\{1,2,3\}$ (the ``rectangles'').
  \end{itemize}
\end{theorem}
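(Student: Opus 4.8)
The plan is to build the six sets by a transfinite recursion of length continuum, exactly mimicking the classical Bernstein set construction but over the space $Y \times Z$ and with the extra bookkeeping needed to land points in all nine rectangles. First I would enumerate the ``bad'' closed sets: let $\mathcal{F}$ be the collection of all closed subsets $F \subseteq Y \times Z$ that \emph{cannot} be covered by countably many lines. I claim $\card{\mathcal F} \le 2^{\aleph_0}$. This uses the hypotheses heavily: $Y$ and $Z$ are second-countable (being first-countable, Hausdorff, locally compact and $\sigma$-compact — or at least one can arrange a countable family of compact sets with nice bases), so $Y \times Z$ has a countable base, hence at most $2^{\aleph_0}$ open sets and $2^{\aleph_0}$ closed sets. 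Enumerate $\mathcal F = \{F_\alpha : \alpha < 2^{\aleph_0}\}$, and also fix an enumeration of the nine index pairs, cycling through $\{1,2,3\}^2$ as $\alpha$ ranges, so that each pair $(i,j)$ is hit cofinally often.

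\textbf{The size of each bad closed set.} The second key point, which I would isolate as a lemma, is that every $F \in \mathcal F$ has cardinality $2^{\aleph_0}$; more precisely, for each $(i,j)$ we will want to choose a fresh point of $F$ avoiding finitely (in fact $<2^{\aleph_0}$) many previously-used points, so I need $\card F = 2^{\aleph_0}$. A closed subset of $Y \times Z$ that is not covered by countably many lines must be uncountable; being a closed subset of a $\sigma$-compact, locally compact, first-countable Hausdorff space, it is itself such a space, hence (by a Cantor–Bendixson / perfect-set argument: an uncountable such space contains a copy of the Cantor set, or at least has a nonempty perfect kernel) it has cardinality $2^{\aleph_0}$. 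Actually I need slightly more: after removing the (at most countably many) lines that \emph{do} meet $F$ in a ``large'' way — no, cleaner: I will simply note that since $F$ is not a countable union of lines, for \emph{every} countable set $C$ of lines, $F \setminus \bigcup C$ is still uncountable, hence of size $2^{\aleph_0}$ by the same perfect-set argument. This is what lets the recursion, which at stage $\alpha$ has used $< 2^{\aleph_0}$ points lying on $< 2^{\aleph_0}$ lines, still find room.

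\textbf{The recursion.} I build by transfinite recursion on $\alpha < 2^{\aleph_0}$ a point $p_\alpha = (y_\alpha, z_\alpha) \in F_\alpha$, together with the decision of which rectangle $A_i \times B_j$ it will go into (namely the pair $(i_\alpha, j_\alpha)$ dictated by the enumeration). I maintain the invariant that the chosen points have \emph{distinct first coordinates and distinct second coordinates} — this is the crucial extra constraint beyond the classical construction, and it is what guarantees I can consistently assign $y_\alpha \in A_{i_\alpha}$ and $z_\alpha \in B_{j_\alpha}$ without conflict, since no coordinate value is ever reused. At stage $\alpha$: the set of ``forbidden'' points of $F_\alpha$ is those lying on one of the $< 2^{\aleph_0}$ lines $\{y_\beta\} \times Z$ or $Y \times \{z_\beta\}$ for $\beta < \alpha$; since $F_\alpha$ is not covered by countably many lines, $F_\alpha$ minus these forbidden lines has cardinality $2^{\aleph_0} > \card\alpha$, so I can pick $p_\alpha$ in it. Having run through all $\alpha$, I set $A_{i} := \suchthat{y_\alpha}{i_\alpha = i}$ for $i = 1,2$, put all remaining points of $Y$ into $A_3$, and symmetrically define $B_1, B_2$ and dump the rest into $B_3$. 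By the distinctness invariant these are genuine partitions. (One should double check that $A_3$ and $B_3$ are nonempty and indeed get assigned all the leftover mass — since $\card Y = 2^{\aleph_0}$ and we used only... hmm, we used $2^{\aleph_0}$ first-coordinate values, so leftover could be empty; but that is fine, a partition allows empty parts, or one re-labels. Actually the statement just needs a partition, and even a degenerate one is acceptable; I will note that $A_3$ collects everything not of the form $y_\alpha$ with $i_\alpha \in \{1,2\}$, and similarly for $B_3$, which makes $\{A_1,A_2,A_3\}$ a partition by construction regardless.)

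\textbf{Verification of the Bernstein property.} Finally, given an arbitrary closed $F \subseteq Y \times Z$, either $F$ can be covered by countably many lines — in which case we are in the first alternative and done — or $F \in \mathcal F$, so $F = F_\alpha$ for a cofinal set of indices $\alpha$, in particular for some $\alpha$ realizing each of the nine pairs $(i,j)$. For such $\alpha$, $p_\alpha \in F_\alpha = F$ with $y_\alpha \in A_i$ and $z_\alpha \in B_j$, so $p_\alpha \in F \cap (A_i \times B_j)$. Hence $F$ meets all nine rectangles, which is the second alternative. \textbf{The main obstacle} I anticipate is getting the cardinality bookkeeping airtight: one must be sure that ``not coverable by countably many lines'' is exactly the right notion to make (a) $\card{\mathcal F} \le 2^{\aleph_0}$ and (b) each $F_\alpha$ remain of full cardinality after deleting fewer than $2^{\aleph_0}$ lines, and that the distinct-coordinates invariant is genuinely maintainable at every stage — this last point is where the ``line'' deletions (as opposed to mere point deletions, as in the classical Bernstein argument) are essential, and it is worth spelling out carefully since it is the only place the product structure really bites.
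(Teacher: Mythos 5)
Your overall architecture (a transfinite recursion of length continuum, picking nine points with pairwise distinct coordinates from each ``bad'' closed set while avoiding previously used lines) matches the paper's, but two of your key claims are false or unjustified, and both are precisely what the paper's \Cref{thm:cantor_for_product} exists to supply. First, the enumeration: you assert that $Y$ and $Z$ are second-countable because they are first-countable, Hausdorff, locally compact and $\sigma$-compact, and deduce that $Y\times Z$ has at most $2^{\aleph_0}$ closed sets. This implication is false, and it fails for the very space $L=[0,1]^\N$ to which the theorem is applied: by \Cref{lexico_order_prop} and its proof, $L$ contains an uncountable family of pairwise disjoint non-empty open sets, hence has $2^{2^{\aleph_0}}$ open (and closed) sets; moreover the family of closed sets not coverable by countably many lines also has size $2^{2^{\aleph_0}}$ (take the union of one fixed bad set with an arbitrary closed set), so it cannot be enumerated in order type $2^{\aleph_0}$. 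The paper resolves this by proving that every bad closed set contains a \emph{separable} closed subset that is still bad, and enumerating only the separable ones, of which there are at most $\card{Y\times Z}^{\aleph_0}=2^{\aleph_0}$.

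Second, the survivability of $F_\alpha$ at stage $\alpha$: you need that a closed set not coverable by countably many lines cannot be covered by fewer than $2^{\aleph_0}$ lines. Your justification---that $F_\alpha$ retains cardinality $2^{\aleph_0}$ by ``the same perfect-set argument''---does not give this: under the negation of the continuum hypothesis, $\aleph_1<2^{\aleph_0}$ lines already cover $\aleph_1\cdot 2^{\aleph_0}=2^{\aleph_0}$ points, so cardinality alone cannot prevent $F_\alpha$ from being swallowed by the forbidden lines accumulated at an uncountable stage; and a perfect subset of $F_\alpha$ is useless here because it could sit entirely inside a single line $\{y\}\times Z$. What is needed is a Cantor scheme inside $F$ whose $2^{\aleph_0}$ branches have pairwise disjoint projections to both $Y$ and $Z$, so that each line meets at most one branch. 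Constructing such a scheme (via condensation points, choosing at each step two condensation points that differ in both coordinates) is the actual content of \Cref{thm:cantor_for_product}, and it is the missing idea in your proposal; your distinct-coordinates invariant for the chosen points is correct but addresses only the assignment to rectangles, not the existence of fresh points at later stages.
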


In what follows, we fix partitions $\{A_1,A_2,A_3\}$ of~$L$ and $\{B_1,B_2,B_3\}$ of~$\bbS^1$ as provided by \Cref{bernstein_rectangle} for $Y=L$ and $Z=\bbS^1$, in which case the hypotheses follow from \Cref{lexico_order_prop} and from the usual properties of the standard circle.

\begin{lemma}
  \label{bernstein-sets}
  For any $i \in \{1,2,3\}$, the set~$A_i$ (resp.~$B_i$) is a Bernstein set in~$L$ (resp.~in~$\bbS^1$).
\end{lemma}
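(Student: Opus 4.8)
The plan is to deduce the lemma directly from \Cref{bernstein_rectangle}. By definition, $A_i$ is a Bernstein set in~$L$ precisely when both~$A_i$ and~$L\setminus A_i$ meet every uncountable closed subset of~$L$. Since $\{A_1,A_2,A_3\}$ is a partition of~$L$, the complement of each~$A_i$ contains another piece~$A_j$ with $j\neq i$, so it suffices to prove that every~$A_i$ meets every uncountable closed subset of~$L$; the claim for the sets~$B_i$ in~$\bbS^1$ is treated identically, with the roles of the two factors exchanged.

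First I would fix an uncountable closed subset $C\subseteq L$ and apply \Cref{bernstein_rectangle} to the closed subset $F:=C\times\bbS^1$ of $L\times\bbS^1$. The crux is to rule out the first alternative of that theorem, namely that $F$ be covered by countably many lines of the forms $\{y\}\times\bbS^1$ and $L\times\{z\}$. Given such a countable family of lines, I would separate the ``vertical'' lines $\{y\}\times\bbS^1$ from the ``horizontal'' lines $L\times\{z\}$: only countably many points~$y$ occur among the vertical lines, so since~$C$ is uncountable I can choose $c\in C$ lying on none of them; but then $\{c\}\times\bbS^1\subseteq F$ would be covered by the horizontal lines alone, forcing the uncountable space~$\bbS^1$ into a countable set of second coordinates---a contradiction. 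Hence the second alternative of \Cref{bernstein_rectangle} must hold: $F$ meets every rectangle $A_i\times B_j$. Projecting any point of $F\cap(A_i\times B_1)$ to its first coordinate then yields a point of $C\cap A_i$, as desired.

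For the sets~$B_i$ the argument is the mirror image: for an uncountable closed $C\subseteq\bbS^1$ I would apply \Cref{bernstein_rectangle} to $F:=L\times C$, and rule out the covering-by-lines alternative in the same way---except that this time I would use the uncountability of~$L$ (which has cardinality $2^{\aleph_0}$ by \Cref{lexico_order_prop}) to pick a first coordinate~$y$ avoiding every vertical line, after which $\{y\}\times C$ would be covered by the horizontal lines and hence~$C$ would embed into a countable set. The second alternative of the theorem then gives a point of $F\cap(A_1\times B_j)$ whose second coordinate lies in $C\cap B_j$.

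I do not anticipate any genuine difficulty here beyond \Cref{bernstein_rectangle} itself, whose proof is deferred to \Cref{sn:bernstein}. The only point demanding care is the bookkeeping in the ``not covered by countably many lines'' step: the two kinds of lines in the theorem play asymmetric roles, so one must diagonalize against the coordinate that ranges over only countably many values on the relevant lines---which is exactly why the argument for the~$A_i$'s uses that~$\bbS^1$ is uncountable and the argument for the~$B_i$'s uses that~$L$ is.
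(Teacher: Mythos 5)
Your proposal is correct and follows essentially the same route as the paper: apply \Cref{bernstein_rectangle} to the product $F\times\bbS^1$ (resp.\ $L\times F$), rule out the countable-cover-by-lines alternative, and project a point of the appropriate rectangle. The only difference is that you spell out the (easy) diagonalization showing the product is not covered by countably many lines and the reduction of the Bernstein property to each piece meeting every uncountable closed set, both of which the paper leaves implicit.
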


\begin{proof}
  For any uncountable closed subset $F \subseteq L$, the product $F \times \bbS^1$ is a closed subset of $L\times\bbS^1$ that cannot be covered by countably many lines, so it intersects $A_i \times B_1$, which means that~$F$ intersects~$A_i$.
  The case of $B_i$ is analogous.
\end{proof}

\begin{lemma}
  \label{lem:same-card-rectangles}
  For any $(i,j) \in \{1,2,3\}^2$, we have $\card {A_i \cap L_j} = \card {[i-1,i) \cap B_j} = 2^{\aleph_0}$.
\end{lemma}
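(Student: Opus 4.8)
The plan is to prove each equality by noting the upper bound $\le 2^{\aleph_0}$ for free and getting the lower bound $\ge 2^{\aleph_0}$ from the Bernstein property of the $A_i$ and the $B_j$. For the upper bounds, $A_i\cap L_j\subseteq L$ and $[i-1,i)\cap B_j\subseteq\bbS^1$, so $\card{A_i\cap L_j}\le\card{L}=2^{\aleph_0}$ by \Cref{lexico_order_prop} and $\card{[i-1,i)\cap B_j}\le\card{\bbS^1}=2^{\aleph_0}$. The crucial point for the lower bounds is that the proof of \Cref{bernstein-sets} in fact shows that $A_i$ meets \emph{every} uncountable closed subset of $L$, and likewise that $B_j$ meets every uncountable closed subset of $\bbS^1$. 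Hence it is enough, in each of the two cases, to produce a family of $2^{\aleph_0}$ pairwise disjoint uncountable closed subsets that all lie inside the relevant region — $L_j$ for the first equality, $[i-1,i)$ for the second — because then $A_i$ (resp.\ $B_j$) meets each member of the family, and choosing one point from each of those $2^{\aleph_0}$ pairwise disjoint intersections yields $2^{\aleph_0}$ distinct points of $A_i\cap L_j$ (resp.\ of $[i-1,i)\cap B_j$).

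For the circle, I would first pick inside the half-open arc $[i-1,i)$ a closed arc bounded away from both ends, say $K_i=[i-\tfrac{9}{10},\,i-\tfrac{1}{10}]$, which is closed in $\bbS^1$ and homeomorphic to $[0,1]$. Inside $K_i$ choose a Cantor set $C_i$, homeomorphic to $2^{\N}$; as a compact subset it is closed in $\bbS^1$. Fix a homeomorphism $C_i\cong 2^{\N}\times 2^{\N}$ and transport the partition $2^{\N}\times 2^{\N}=\bigsqcup_{\sigma\in 2^{\N}}\bigl(\{\sigma\}\times 2^{\N}\bigr)$: this exhibits $C_i$ as a disjoint union of $2^{\aleph_0}$ subsets, each homeomorphic to $2^{\N}$ and compact, hence each an uncountable closed subset of $\bbS^1$ contained in $[i-1,i)$. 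Since $B_j$ is a Bernstein set in $\bbS^1$ by \Cref{bernstein-sets}, it meets each of these $2^{\aleph_0}$ sets, so $\card{[i-1,i)\cap B_j}\ge 2^{\aleph_0}$.

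For $L$, the non-separability of $(L,<_{\lex})$ makes the required family very explicit: take $\bigl\{\,\{z\in L:z_0=t\}\ :\ t\in T_j\,\bigr\}$ where $T_1=(0,\tfrac13)$, $T_2=(\tfrac13,\tfrac23)$, $T_3=(\tfrac23,1)$. For any $t\in[0,1]$ one has $\{z\in L:z_0=t\}=\bigl[(t,0,0,\dots),(t,1,1,\dots)\bigr]$, a closed interval of $(L,<_{\lex})$; hence this set is closed in $L$ and has cardinality $2^{\aleph_0}$, and these sets are pairwise disjoint as $t$ ranges over distinct values. Moreover, reading \eqref{super_lexico}, for $t\in T_j$ every $z\in L$ with $z_0=t$ lies in $L_j$: e.g.\ $t<\tfrac13$ forces $z<_{\lex}\ell_1$, so $\{z\in L:z_0=t\}\subseteq[\min L,\ell_1]=L_1$, and the cases $j=2,3$ are analogous using $\ell_1<_{\lex}z<_{\lex}\ell_2$ and $z>_{\lex}\ell_2$ respectively. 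Since $A_i$ meets every uncountable closed subset of $L$ (by \Cref{bernstein-sets} and its proof), it meets each member of this family, and picking one point from each of the $2^{\aleph_0}$ pairwise disjoint intersections gives $\card{A_i\cap L_j}\ge 2^{\aleph_0}$.

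The only point that needs care is that the Bernstein property gives just one point inside each uncountable closed set, so one genuinely needs $2^{\aleph_0}$ \emph{pairwise disjoint} such sets. In $L$ these are immediate as coordinate slices precisely because $(L,<_{\lex})$ is far from separable; by contrast, in the separable circle $\bbS^1$ there is no family of $2^{\aleph_0}$ pairwise disjoint non-degenerate closed intervals, which is exactly why one has to descend to a Cantor set first and split it by means of $2^{\N}\cong 2^{\N}\times 2^{\N}$. Apart from this, the verifications are routine unpackings of \eqref{super_lexico} and of the definitions of $L_1,L_2,L_3$.
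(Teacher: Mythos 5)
Your proof is correct and follows essentially the same route as the paper's: for $L$ you use the pairwise disjoint closed coordinate slices $\{z\in L: z_0=t\}$ with $t$ ranging over the appropriate open third of $[0,1]$ (the paper's $\{s\}\times[0,1]^{\N^*}$ for $s\in(\tfrac{j-1}{3},\tfrac{j}{3})$), and for $\bbS^1$ you split a Cantor set inside the arc into $2^{\aleph_0}$ disjoint uncountable closed pieces via $2^{\N}\cong 2^{\N}\times 2^{\N}$, exactly as the paper does, then invoke the Bernstein property from \Cref{bernstein-sets} in both cases.
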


\begin{proof}
  Observe that for any $s\in(\tfrac{j-1}{3},\tfrac{j}{3})$, the set $\{s\}\times[0,1]^{\N^*}$---which is the closed interval of~$L$ between $(s,0,0,\ldots)$ and $(s,1,1,\ldots)$---is an uncountable closed subset of~$L_j$, and hence intersects $A_i$ by \Cref{bernstein-sets}.
  Since these sets are disjoint for various values of $s$, since there are $\card{(\tfrac{j-1}{3},\tfrac{j}{3})}=2^{\aleph_0}$ possible values of $s$, and since we also have an obvious upper bound, this implies $|A_i\cap L_j|=2^{\aleph_0}$.

  Similarly, any non-empty open interval of~$(0,3)$ contains $2^{\aleph_0}$-many disjoint copies of the Cantor set: indeed, it contains a dilated and translated copy of the Cantor ternary set, which is homeomorphic to $\{0,1\}^{\N}$ and so to $\{0,1\}^{\N}\times \{0,1\}^{\N}=\bigsqcup_{\omega\in \{0,1\}^{\N}}\, \{\omega\}\times \{0,1\}^{\N}$ (see \cite[Chapter~XVI.8]{kuratowski} for details).
  Each one of these copies is an uncountable closed subset of $\bbS^1$ and so meets $B_j$ by \Cref{bernstein-sets}.
  Thus, $[i-1,i)\cap B_j$ also has cardinality $2^{\aleph_0}$.
\end{proof}

For all $(i,j) \in \{1,2,3\}^2$, pick arbitrary bijections $f_{i,j} \colon A_i\cap L_j \to [i-1,i)\cap B_j$, which is possible as these two sets have the same cardinality by \Cref{lem:same-card-rectangles}.
Then, define a map $f \colon L \to \bbS^1$ by
\[
  \forall\ell\in L, \quad
  f(\ell) := f_{i,j}(\ell) \quad
  \text{if }\ell\in A_i\cap L_j,
\]
and observe that~$f$ is a bijection from~$L$ to~$\bbS^1$, satisfying $f(L_j) = B_j$ and $f(A_i) = [i-1,i)$.
Finally, as explained above, we define the topology $\sT := \sE \cap \sF(f)$ on~$\bbS^1$.
More concretely:
\begin{equation}
  \label{topo_counter-ex}
  \sT
  :=
  \suchthat{
    U \subseteq \bbS^1
  }{
    U \textnormal { is open in } (\bbS^1, \sE), \textnormal{ and }
    f^{-1}(U) \textnormal{ is open in } L
  }.
\end{equation}
Let~$X$ be the topological space $(\bbS^1,\sT)$.
In what follows, we will be careful to only use~$\bbS^1$ to denote the topological space~$(\bbS^1, \sE)$ equipped with the Euclidean topology (or its underlying set), and to systematically use~$X$ when the topology is~$\sT$.
When talking about a map $[0,1] \to X$ (which is also a map $[0,1] \to \bbS^1$), we say that it is \emph{$\sE$-continuous} or \emph{$\sT$-continuous} when it is continuous with respect to the topologies~$\sE$ or~$\sT$, respectively.
Note that~$f$ is a ($\sT$-continuous) big path on~$X$, as $L$ is a big interval by \Cref{lexico_order_prop}.
The space~$X$ will be our desired counterexample:

\begin{theorem}
  \label{thm:counter-ex}
  $X$ is $2$-path-flimsy, but $X\setminus\{x,y\}$ is big-path-connected for all distinct $x,y\in X$.
\end{theorem}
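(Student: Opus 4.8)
The plan is to prove the two assertions separately. For the first, the strategy is to show that $(\bbS^1,\sT)$ and $(\bbS^1,\sE)$ have \emph{exactly the same paths} (continuous maps from $[0,1]$); since $(\bbS^1,\sE)=\bbS^1$ is $2$-path-flimsy, the property then transfers to $X$. One inclusion is immediate from $\sT\subseteq\sE$: every $\sE$-path is a $\sT$-path, so $X$ and $X\setminus\{x\}$, being continuous images of the path-connected spaces $(\bbS^1,\sE)$ and $(\bbS^1\setminus\{x\},\sE)$, are $\sT$-path-connected. Moreover, once we know the reverse inclusion, any $\sT$-path with image in $\bbS^1\setminus\{x,y\}$ is an $\sE$-path lying in a single Euclidean arc, so $X\setminus\{x,y\}$ is not $\sT$-path-connected. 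Thus the whole first assertion reduces to the claim that \emph{every $\sT$-continuous $\gamma\colon[0,1]\to\bbS^1$ is $\sE$-continuous.}

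To prove this I would argue by contradiction, invoking the closed graph theorem: since $(\bbS^1,\sE)$ is compact Hausdorff, it is enough to show the graph of $\gamma$ is closed in $[0,1]\times(\bbS^1,\sE)$. If it is not, there are $t_n\to t_0$ with $\gamma(t_n)\to p$ in $\sE$ and $p\neq\gamma(t_0)$. Note first that $(\bbS^1,\sT)$ is $\rT_1$: the complement of a point is cofinite, hence $\sE$-open, and has cofinite (hence $L$-open) $f$-preimage. Hence, after passing to a subsequence, one may assume the $\gamma(t_n)$ are pairwise distinct and distinct from $\gamma(t_0)$ and $p$ (otherwise $\bbS^1$ minus a finite set is already a $\sT$-neighbourhood of $\gamma(t_0)$ missing infinitely many $\gamma(t_n)$). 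Using sequential compactness of $L$ (the footnote after \Cref{lexico_order_prop}), pass to a further subsequence along which $f^{-1}(\gamma(t_n))$ converges in $L$. The heart of the matter is then to produce, from this data and the Bernstein rectangle partition $\{A_i\},\{B_j\}$ of \Cref{bernstein_rectangle} — which makes $f$ maximally ``chaotic'' relative to $\sE$ — a countable $\sE$-closed set $C$ with $p\in C$, $\gamma(t_0)\notin C$ and $f^{-1}(C)$ closed in $L$: one enlarges the $\sE$-closure of $\{\gamma(t_n)\}$ by the $f$-images of the relevant $L$-accumulation points (and $\sE$-images of relevant $\bbS^1$-accumulation points), the point being that these ``correction'' points stay countable and can be absorbed without dragging in $\gamma(t_0)$, the rectangle property applied to the closure of $\{(f^{-1}(\gamma(t)),\gamma(t)):t\in[0,1]\}$ in $L\times(\bbS^1,\sE)$ being what rules out the degenerate configurations. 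Then $\bbS^1\setminus C$ is $\sT$-open, contains $\gamma(t_0)$, and is eventually missed by the $\gamma(t_n)$, contradicting $\sT$-continuity of $\gamma$ at $t_0$. I expect this step — threading the needle between what $\sT$-continuity forces and what the rectangle partition forbids — to be the main obstacle.

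For the second assertion, fix distinct $x,y\in X$ and put $\beta=f^{-1}(x)$, $\beta'=f^{-1}(y)\in L$. Since $f$ is $\sT$-continuous, for every connected subset $J$ of $L$ avoiding $\{\beta,\beta'\}$ the image $f(J)$ is a $\sT$-big-path-connected subset of $X\setminus\{x,y\}$: $J$ is big-path-connected as a connected subset of the linear continuum $L$ (\Cref{cor:bigpathconn-is-conn}), and continuous images of big-path-connected spaces are big-path-connected. In particular, any two points of $X\setminus\{x,y\}$ whose $f$-preimages lie in the same connected component of $L\setminus\{\beta,\beta'\}$ are joined by a big path inside $X\setminus\{x,y\}$, so it remains to connect the (at most three) components $[\min L,\beta)$, $(\beta,\beta')$, $(\beta',\max L]$. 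Here one exploits the extreme coarseness of $\sT$ and its failure of Hausdorffness: because $\sT$ has so few open sets (the spreading of $f$ forces every nonempty $\sT$-open set to be $\sE$-dense, and in fact co-countable), one can concatenate restrictions of $f$ along the two sides of a removed point through a suitable ``fresh'' transfinite parameter and still retain $\sT$-continuity, obtaining a big path that avoids $x$ and $y$ in the target while bridging adjacent components. Assembling these bridging big paths shows $X\setminus\{x,y\}$ is big-path-connected, completing the proof; the care needed to make the concatenation continuous without reintroducing $x$ or $y$ is the delicate point of this part.
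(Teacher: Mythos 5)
Your overall architecture (prove the two assertions separately; reduce $2$-path-flimsiness to ``$\sT$-paths $=$ $\sE$-paths'') matches the paper's, but both halves have genuine gaps at exactly the points you flag as ``the main obstacle'' and ``the delicate point'', and in each case the missing step is where the real work lies.

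For the first assertion, the closed-graph reduction is fine, but the single-sequence argument cannot close it. If $t_n\to t_0$, $\gamma(t_n)\convE p$ and $f^{-1}(\gamma(t_n))\to\ell$ in $L$, then $\sT$-continuity only forces the dichotomy ``$\gamma(t_0)=p$ \emph{or} $\gamma(t_0)=f(\ell)$'' (this is precisely \Cref{alternative_sequence}); when $p\neq\gamma(t_0)$ you land in the second branch, and your candidate countable set $C$ cannot be made $\sT$-closed without absorbing $f(\ell)=\gamma(t_0)$ itself. So no contradiction arises from one bad sequence. Moreover, applying the rectangle dichotomy to the closure of the \emph{full} graph $\{(f^{-1}(\gamma(t)),\gamma(t))\}$ proves nothing: for an honest $\sE$-continuous surjective path that closure is an uncountable closed set meeting all nine rectangles. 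The paper's proof has to (a) reparametrize to a \emph{light} path (\Cref{reparametrization}, which is why $\rT_1$-ness is needed), (b) use the two ``alternative continuity'' lemmas (\Cref{alternative-continuity_pointwise,alternative-continuity_uniform}) to localize to a nondegenerate interval $J$ over which the graph provably avoids one specific rectangle $A_j\times B_i$, (c) invoke \Cref{bernstein_rectangle} to conclude $g(J)$ is countable, and (d) apply Sierpi\'nski's theorem on countable partitions of $[0,1]$ into closed sets to force constancy on $J$, contradicting lightness. None of steps (a)--(d) appears in your sketch, and they are not routine.

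For the second assertion, your bridging of the components of $L\setminus\{f^{-1}(x),f^{-1}(y)\}$ by ``concatenating restrictions of $f$ through a fresh transfinite parameter'' is not an argument, and the supporting claim that every non-empty $\sT$-open set is co-countable (or even $\sE$-dense) is unjustified---the bijections $f_{i,j}$ are arbitrary, so $f$ need not map open intervals to $\sE$-dense sets. The step you are missing is much simpler than what you attempt: the two Euclidean arcs $C_1,C_2$ of $\bbS^1\setminus\{x,y\}$ are $\sE$-path-connected, hence $\sT$-path-connected (as $\sT\subseteq\sE$), hence big-path-connected subsets of $X\setminus\{x,y\}$, and they already cover it as a set. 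One then picks $i$ with $L_i$ avoiding both $f^{-1}(x)$ and $f^{-1}(y)$, so that $B_i=f(L_i)$ is a big-path-connected set avoiding $x,y$; since each $C_j\cup\{x,y\}$ is an uncountable $\sE$-closed set, the Bernstein property (\Cref{bernstein-sets}) forces $C_j\cap B_i\neq\emptyset$, and $C_1\cup C_2\cup B_i=X\setminus\{x,y\}$ is big-path-connected by \axref{ax:union-connected}. Your proposal never uses the arcs $C_1,C_2$ as big-path-connected glue, which is the key observation.
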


We prove the two assertions of \Cref{thm:counter-ex} separately, starting with the second, easier one.

\begin{proposition}
  \label{prop:counter-ex_big-path-compo}
  For any $x,y\in X$, the space $X\setminus \{x,y\}$ is big-path-connected.
\end{proposition}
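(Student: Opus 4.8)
The plan is to use the $\sT$-continuous bijection $f \colon L \to \bbS^1$ to produce, for any fixed pair of distinct points $x, y \in X$, a single big path on $X$ that avoids $\{x,y\}$ yet meets both Euclidean arcs cut out by $x$ and $y$. Let $J_1, J_2$ be the two connected components of $\bbS^1 \setminus \{x,y\}$ for the topology $\sE$; each is a non-degenerate open arc, and since $\sT \subseteq \sE$ the identity map $(\bbS^1,\sE) \to X$ is continuous, so $J_1$ and $J_2$ are path-connected---hence big-path-connected---subsets of $X \setminus \{x,y\}$. It therefore suffices to exhibit a big interval $M$ and a $\sT$-continuous map $g \colon M \to X$ with $g(M) \subseteq X \setminus \{x,y\}$, $g(M) \cap J_1 \neq \emptyset$ and $g(M) \cap J_2 \neq \emptyset$: the image $g(M)$ is then big-path-connected as a continuous image of the big-path-connected space $M$, so $J_1 \cup g(M)$, and then $\bigl(J_1 \cup g(M)\bigr) \cup J_2$, are big-path-connected by \axref{ax:union-connected} (concatenation of big paths), and the latter set equals $J_1 \cup J_2 = X \setminus \{x,y\}$.

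To build $g$, put $a := f^{-1}(x)$ and $b := f^{-1}(y)$ and assume $a <_\lex b$ (else swap $x$ and $y$), so that $L \setminus \{a,b\}$ is the disjoint union of the three intervals $R_0 := [\min L, a)_\lex$, $R_1 := (a,b)_\lex$ and $R_2 := (b, \max L]_\lex$; since $f$ is a bijection sending $a$ to $x$ and $b$ to $y$, we get $f(R_0) \sqcup f(R_1) \sqcup f(R_2) = J_1 \sqcup J_2$. If some $f(R_k)$ meets both arcs, pick $p_1, p_2 \in R_k$ with $f(p_1) \in J_1$ and $f(p_2) \in J_2$; then $p_1 \neq p_2$, and (swapping if needed) $M := [p_1, p_2]_\lex$ is contained in the order-convex set $R_k$ and is itself a big interval, being a non-degenerate closed subinterval of the big interval $L$ (hence a dense order-complete linear order with ends, cf.~\Cref{lexico_order_prop} and \Cref{prop:continuum-facts}). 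Then $g := f|_M$ works: it is $\sT$-continuous, and $f(M) \subseteq f(R_k) \subseteq X \setminus \{x,y\}$ contains $f(p_1) \in J_1$ and $f(p_2) \in J_2$.

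So everything reduces to excluding the possibility that each $f(R_k)$ is contained in a single arc. Assume it is. As the $f(R_k)$ are pairwise disjoint with union $J_1 \sqcup J_2$ and both arcs are non-empty, one arc---say $J_{i_0}$---has $f^{-1}(J_{i_0})$ equal to a single one of the intervals $R_0, R_1, R_2$, call it $R_{k_0}$, while the other arc has preimage $\bigl(L \setminus \{a,b\}\bigr) \setminus R_{k_0}$. This is where the Bernstein rectangles come in: by \Cref{bernstein-sets} each $B_j = f(L_j)$ is a Bernstein set in $\bbS^1$, so, since each arc contains a non-degenerate closed sub-arc (uncountable and closed in $\bbS^1$), each $B_j$ meets both $J_1$ and $J_2$; equivalently, each of $L_1 = [\min L, \ell_1]_\lex$, $L_2 = (\ell_1, \ell_2)_\lex$ and $L_3 = [\ell_2, \max L]_\lex$ meets both $R_{k_0}$ and $\bigl(L \setminus \{a,b\}\bigr) \setminus R_{k_0}$. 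A case distinction on which of $R_0, R_1, R_2$ the interval $R_{k_0}$ is, using only $\ell_1 <_\lex \ell_2$ and the shapes of these half-open intervals, then yields a contradiction: for instance, if $R_{k_0} = R_1 = (a,b)_\lex$, then ``$L_1$ meets $R_1$'' forces $a <_\lex \ell_1$ and ``$L_3$ meets $R_1$'' forces $\ell_2 <_\lex b$, whence $L_2 = (\ell_1,\ell_2)_\lex$ can meet neither $[\min L, a)_\lex$ nor $(b, \max L]_\lex$, contradicting ``$L_2$ meets $R_0 \cup R_2$''; the cases $R_{k_0} = R_0$ and $R_{k_0} = R_2$ are symmetric (pitting $L_3$ against $R_0$, resp.\ $L_1$ against $R_2$, and then $L_2$ against the remaining two pieces). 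I expect this final case analysis---essentially the single observation that the ``middle'' piece $L_2$ of $L \setminus \{\ell_1,\ell_2\}$ cannot reach both ``extreme'' pieces of $L \setminus \{a,b\}$ without the extreme pieces $L_1, L_3$ then failing to reach the middle one---to be the only point requiring care.
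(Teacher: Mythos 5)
Your proof is correct, but it reaches the conclusion by a noticeably more laborious route than the paper's. The paper exploits the fact that the partition $\{L_1,L_2,L_3\}$ has \emph{three} pieces while only \emph{two} points $f^{-1}(x),f^{-1}(y)$ need to be avoided: by pigeonhole some $L_i$ contains neither, so $B_i=f(L_i)$ is already a big-path-connected subset of $X\setminus\{x,y\}$ (the $\sT$-continuous image of a linear continuum is big-path-connected by \Cref{cor:bigpathconn-is-conn} --- no need to pass to a closed subinterval with ends), and the Bernstein property (\Cref{bernstein-sets}, applied to the uncountable closed sets $C_j\cup\{x,y\}$) shows $B_i$ meets both Euclidean arcs; \axref{ax:union-connected} then finishes exactly as in your first paragraph. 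You instead anchor your decomposition of the domain at $a=f^{-1}(x)$ and $b=f^{-1}(y)$, producing the pieces $R_0,R_1,R_2$, and must then show that some single piece has image meeting both arcs; this is what forces the case analysis pitting the $R_k$'s against the $L_j$'s. I checked that analysis (including the degenerate cases where some $R_k$ is empty) and it is sound --- both proofs ultimately invoke the Bernstein property in the same way, via a non-degenerate closed sub-arc of each $J_i$ --- but the extra combinatorics buys you nothing here. The one small lesson worth extracting: your restriction to a \emph{closed} subinterval $[p_1,p_2]$ is unnecessary, since an open interval of $L$ such as $L_2=(\ell_1,\ell_2)$ is itself a linear continuum and hence big-path-connected; had you used the $L_j$'s directly as the candidate bridges, the existence argument would have collapsed to the paper's one-line pigeonhole.
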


\begin{proof}
  Because $L_1,L_2,L_3$ are disjoint, there is $i\in\{1,2,3\}$ such that~$L_i$ contains neither~$f^{-1}(x)$ nor~$f^{-1}(y)$.
  By construction, $f(L_i)=B_i$, so~$B_i$ is a big-path-connected subset of~$X$ as the $\sT$-continuous image of a big-path-connected space (cf.~\Cref{cor:bigpathconn-is-conn}), and avoids $\{x,y\}$.
  Let~$C_1$ and~$C_2$ be the two path-connected components of $\bbS^1\setminus \{x,y\}$ with respect to the Euclidean topology~$\sE$, which are in particular big-path-connected subsets of~$X$.
  For any $j \in \{1,2\}$, $C_j \cup \{x,y\}$ is an uncountable closed subset of $(\bbS^1, \sE)$, so the sets $C_j \cup \{x,y\}$ intersect~$B_i$ by \Cref{bernstein-sets}.
  Since $x,y \notin B_i$, this means that~$C_j \cap B_i \neq \emptyset$, so $C_j \cup B_i$ is big-path-connected by \Cref{prop:continuum-is-cspace} (\axref{ax:union-connected}), and then the union $C_1 \cup C_2 \cup B_i = X \setminus \{x,y\}$ is also big-path-connected.
\end{proof}

In particular, \Cref{prop:counter-ex_big-path-compo} implies that~$X$ is not $2$-big-path-flimsy.
In order to prove \Cref{thm:counter-ex}, it remains to prove that~$X$ is $2$-path-flimsy, which follows from the following result:

\begin{proposition}
  \label{prop:path_on_counter-ex}
  The space~$X$ has the same paths as the standard circle $(\bbS^1,\sE)$.
\end{proposition}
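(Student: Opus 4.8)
\textbf{The easy inclusion and a reduction.}
Since $\sT\subseteq\sE$, every $\sE$-continuous map $[0,1]\to\bbS^1$ is $\sT$-continuous, so every path on $(\bbS^1,\sE)$ is a path on $X$. The content is the converse: every $\sT$-continuous $\gamma\colon[0,1]\to X$ is $\sE$-continuous. As $(\bbS^1,\sE)$ is compact Hausdorff and $[0,1]\times\bbS^1$ is metrizable, it suffices to show that the $\sE$-graph of $\gamma$ is (sequentially) closed; equivalently, to rule out the existence of a sequence $t_k\to t_0$ in $[0,1]$ with $\gamma(t_k)\to x$ in $(\bbS^1,\sE)$ but $x\neq\gamma(t_0)$. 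Suppose, for contradiction, that such a configuration exists; replacing the sequence by a subsequence we may take $t_k$ strictly monotone, say $t_k\uparrow t_0$.

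\textbf{Extracting structure from $\sT$-continuity.}
The first key observation is that, for every $p\in\bbS^1$, the singleton $\{p\}$ is $\sT$-closed: it is $\sE$-closed, and $f^{-1}(\{p\})$ is a single point of $L$, hence $L$-closed. More generally, if $q_n\to q$ in $(\bbS^1,\sE)$ \emph{and} $f^{-1}(q_n)\to f^{-1}(q)$ in $L$, then $\{q_n:n\in\N\}\cup\{q\}$ — and this set with finitely many extra points adjoined — is $\sT$-closed, since both it and its $f$-preimage are convergent sequences together with their limit (plus finitely many closed points) in Hausdorff spaces. Now, using sequential compactness of $L$ (\Cref{lexico_order_prop}) and of $(\bbS^1,\sE)$, pass to a further subsequence so that $f^{-1}(\gamma(t_k))\to\ell^\ast$ in $L$; discarding finitely many indices we may also assume $\gamma(t_k)\neq\gamma(t_0)$ and $\gamma(t_k)\neq x$ for all $k$. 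Then the set $\{\gamma(t_k):k\}\cup\{x,f(\ell^\ast)\}$ is $\sT$-closed by the above, so $\gamma^{-1}$ of it is closed in $[0,1]$ and contains every $t_k$, hence contains $t_0$; since $\gamma(t_0)$ equals neither $x$ nor any $\gamma(t_k)$, we conclude $f(\ell^\ast)=\gamma(t_0)$, i.e.
\[
  f^{-1}(\gamma(t_k))\longrightarrow f^{-1}(\gamma(t_0))\ \text{ in }L,\qquad\text{while }\gamma(t_k)\longrightarrow x\neq\gamma(t_0)\ \text{ in }(\bbS^1,\sE).
\]
In particular $\gamma$ is non-constant, so its image is uncountable: otherwise $[0,1]=\bigsqcup_{q\in\gamma([0,1])}\gamma^{-1}(q)$ would be a partition of a continuum into countably many disjoint nonempty closed sets (each $\gamma^{-1}(q)$ is closed because $\{q\}$ is $\sT$-closed), contradicting Sierpi\'nski's theorem.

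\textbf{The Bernstein step (the crux).}
Attach to the hypothetical bad path $\gamma$ a suitable $\sE$-closed subset $F\subseteq L\times(\bbS^1,\sE)$ built from the ``lifted graph'' $\{(f^{-1}(\gamma(t)),\gamma(t)):t\in[0,1]\}$ together with the jump data $(\ell^\ast,x)$ produced above, and apply \Cref{bernstein_rectangle} to $F$. In the first alternative of that theorem, $F$ is covered by countably many lines $\{\ell\}\times\bbS^1$ and $L\times\{z\}$; pushing this forward through $f$ and using $f^{-1}(\gamma(t))\in A_i\iff\gamma(t)\in[i-1,i)$ forces $\gamma([0,1])$ to lie in a countable subset of $\bbS^1$, contradicting the previous paragraph (via Sierpi\'nski). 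Hence the second alternative holds: $F$ meets each of the nine rectangles $A_i\times B_j$. Exploiting that $f$ maps the piece $A_i$ onto the geometric third $[i-1,i)$ and the piece $L_j$ onto the Bernstein set $B_j$, together with the $\sT$-closedness facts and the jump structure above, one derives a contradiction with the $\sT$-continuity of $\gamma$. This shows no bad path exists, so $(X,\sT)$ and $(\bbS^1,\sE)$ have exactly the same paths; combined with \Cref{prop:counter-ex_big-path-compo} this also completes the proof that $X$ is $2$-path-flimsy but not $2$-big-path-flimsy (\Cref{thm:counter-ex}).

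\textbf{Main obstacle.} Everything outside the Bernstein step is routine: the easy inclusion, the closed-graph reduction, the elementary computations of which sets are $\sT$-closed, the sequential extraction, and the Sierpi\'nski argument. The delicate point — and the reason the whole elaborate construction of $f$ via Bernstein rectangles is needed — is to pin down the right closed set $F\subseteq L\times\bbS^1$: it must be large enough that the non-$\sE$-continuity of $\gamma$ prevents it from being covered by countably many lines, yet confined enough, through the $\sT$-continuity of $\gamma$ and the way $f$ interchanges the geometric thirds with the Bernstein pieces, that it cannot meet all nine rectangles.
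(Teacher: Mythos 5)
There is a genuine gap, and it sits exactly where you flag it: the ``Bernstein step'' is not an implementation detail but the entire content of the proof, and your proposal does not supply it. The paper's argument runs as follows. First it reduces to \emph{light} paths via \Cref{reparametrization} (using that $X$ is $\rT_1$); this reduction is essential because the final contradiction is ``$g$ is constant on an interval of positive length''. Then, at a point of $\sE$-discontinuity, it plays the two coordinates against each other: a regularity/dichotomy lemma (\Cref{alternative-continuity_uniform}, itself built on the analogue of your $\sT$-closedness observation) produces a nonempty open interval $I$ with $f^{-1}(g(I))\cap L_i=\emptyset$ for some $i$. If $f^{-1}\circ g$ were continuous on $I$, its image would be a path-connected subset of $L$, hence a singleton by \Cref{lexico_order_prop}, contradicting lightness; so $f^{-1}\circ g$ is discontinuous somewhere in $I$, and a second application of the same lemma yields a closed subinterval $J\subseteq I$ with $g(J)\cap[j-1,j]=\emptyset$ for some $j$. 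These two avoidances, combined with $f(A_j)=[j-1,j)$ and $f(L_i)=B_i$ and with \Cref{alternative_sequence}, show that the closure $F$ of the lifted graph $(f^{-1}\circ g,\,g)(J)$ misses the rectangle $A_j\times B_i$. Hence the \emph{first} alternative of \Cref{bernstein_rectangle} holds, $g(J)$ is countable, and Sierpi\'nski forces $g$ constant on $J$ --- the contradiction. Your sketch leaves all of this out: you never produce the interval $J$, never establish the two avoidance properties, and never verify that $F$ misses a rectangle.

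Two further points. Your logical flow through \Cref{bernstein_rectangle} is reversed relative to what actually works: you propose to rule out the countable-lines alternative and then derive a contradiction from ``$F$ meets all nine rectangles'', but you give no mechanism for that second contradiction --- and the natural mechanism is precisely the avoidance argument above, which instead rules out the \emph{second} alternative. Also, your closed-graph reduction (a single bad sequence $t_k\to t_0$) is a valid criterion for $\sE$-continuity, but it does not localize the discontinuity to an interval on which the Bernstein machinery can be run, and without the light-path reparametrization the endgame ``$g$ constant on $J$'' is not a contradiction. So while your auxiliary observations (the $\sT$-closedness of countable convergent sets, the identification $f(\ell^\ast)=\gamma(t_0)$, which is the paper's \Cref{alternative_sequence}, and the use of Sierpi\'nski) are correct and do appear in the paper, the proof as proposed is incomplete at its central step.
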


The proof of \Cref{prop:path_on_counter-ex} represents most of the remaining work and is divided across the two following subsections.
In \Cref{sn:reparametrization}, we show that any path can be reparametrized to prevent it from ``halting'' (\Cref{reparametrization}).
In \Cref{sn:non-euclidean_path_stay_constant}, we prove that any path on~$X$ that is not $\sE$-continuous must ``halt'', which together with \Cref{reparametrization} implies \Cref{prop:path_on_counter-ex}.
This will conclude the proof of \Cref{thm:counter-ex}.

\subsection{Reparametrization of paths into light paths}
\label{sn:reparametrization}

In this subsection, we prove \Cref{reparametrization}, which is a reparametrization result in the spirit of the ``monotone-light factorization'' (compare to \cite[Theorem~6]{collins}).

\begin{definition}
  Let~$Y$ be a topological space.
  We say that a continuous map $g \colon [0,1] \to Y$ is \emph{light} if it is not constant on any non-empty open interval, i.e., if its fibers are all totally disconnected.
\end{definition}

For us, the importance of this notion comes from the fact that proving \Cref{prop:path_on_counter-ex} will be easier for light paths.
The following fact allows us to reduce the general case to the light case:

\begin{proposition}
  \label{reparametrization}
  Let~$Y$ be a $\rT_1$ topological space, and let $g \colon [0,1] \to Y$ be a non-constant continuous map on~$Y$.
  Then, there exist two continuous maps $\tilde{g} \colon [0,1] \to Y$ and $\phi \colon [0,1] \to [0,1]$ such that $g = \tilde{g}\circ\phi$ and such that $\tilde g$ is light.
\end{proposition}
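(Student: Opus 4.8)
The plan is to construct $\tilde g$ by "collapsing" the maximal intervals on which $g$ is constant. Concretely, first I would consider the equivalence relation $\sim$ on $[0,1]$ generated by $s \sim t$ whenever $g$ is constant on $[\min(s,t), \max(s,t)]$. The equivalence classes are exactly the fibers' connected components that are nondegenerate intervals, together with singletons; each class is a (possibly degenerate) closed subinterval of $[0,1]$, and there are at most countably many nondegenerate ones (since their interiors are disjoint nonempty open sets). Let $Q := [0,1]/{\sim}$ with the quotient topology and $\pi \colon [0,1] \to Q$ the quotient map. The key point is that $Q$, equipped with the order induced by $[0,1]$ (which descends to $Q$ because each class is an interval), is a compact connected linearly ordered space with a countable dense subset — in fact it has at most countably many points that are not order-accessible from both sides — hence $Q$ is homeomorphic to $[0,1]$ (or a point, but $g$ is non-constant so $Q$ is nondegenerate). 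Here I would invoke the standard order-theoretic characterization of $[0,1]$: a linear continuum with two ends that is separable is order-isomorphic to $[0,1]$; alternatively, since $Q$ is a quotient of $[0,1]$ by an upper semicontinuous decomposition into subcontinua, it is a Peano continuum of covering dimension $\le 1$ with no branch points, which forces $Q \cong [0,1]$.

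Next I would set $\phi := h \circ \pi \colon [0,1] \to [0,1]$ where $h \colon Q \xrightarrow{\sim} [0,1]$ is the order-isomorphism, and define $\tilde g \colon [0,1] \to Y$ by $\tilde g := \bar g \circ h^{-1}$, where $\bar g \colon Q \to Y$ is the unique map with $\bar g \circ \pi = g$ — this exists because $g$ is constant on each $\sim$-class. Then $\phi$ is continuous (composite of continuous maps), $\tilde g$ is continuous because $\bar g$ is continuous (a map out of a quotient is continuous iff its precomposition with the quotient map is, and $\bar g \circ \pi = g$ is continuous), and $g = \tilde g \circ \phi$ by construction. It remains to check that $\tilde g$ is light: if $\tilde g$ were constant on some nonempty open interval $J \subseteq [0,1]$, then $\bar g$ would be constant on the nonempty open set $h^{-1}(J) \subseteq Q$, so $g = \bar g \circ \pi$ would be constant on the open set $\pi^{-1}(h^{-1}(J))$, which contains a nonempty open interval $I$ of $[0,1]$; but then $I$ would lie inside a single $\sim$-class (as $g$ is constant on the closure of any subinterval of $I$), so $\pi(I)$ is a single point, contradicting that $h^{-1}(J)$ is open and nondegenerate. (This is where $\rT_1$-ness of $Y$ is used only insofar as it guarantees that "$g$ constant on a set" is detected correctly; in fact the argument above needs no separation axiom, but the hypothesis is harmless.)

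The main obstacle I anticipate is proving cleanly that $Q \cong [0,1]$, i.e., that collapsing the constancy intervals of a continuous $[0,1] \to Y$ yields an arc. The cleanest route is probably: (i) show the decomposition $\{\pi^{-1}(q) : q \in Q\}$ is upper semicontinuous with all elements subcontinua of $[0,1]$, so $Q$ is a metrizable continuum (Moore's theorem on decompositions of $[0,1]$ actually already gives $Q$ homeomorphic to $[0,1]$ or to a point when the elements are intervals); (ii) alternatively, argue directly that the induced order on $Q$ is a complete dense separable linear order with endpoints and that the order topology agrees with the quotient topology (using that $\pi$ is a closed monotone surjection), then cite the classical fact that such an order is isomorphic to $[0,1]$. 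I would take route (ii) and supply the short verification that the quotient topology on $Q$ coincides with its order topology, which follows because $\pi$ maps the basic open intervals $(a,b)$ of $[0,1]$ to open order-intervals of $Q$ and vice versa. Everything else is bookkeeping.
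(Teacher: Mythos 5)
Your proposal is correct and follows essentially the same route as the paper: collapse the maximal constancy intervals of $g$, identify the quotient (with its induced linear order) with $[0,1]$ by showing the quotient topology agrees with the order topology and invoking the order-theoretic characterization of $[0,1]$ as a separable linear continuum with endpoints, and then deduce lightness from the maximality of the equivalence classes. One small correction to your parenthetical aside: the $\rT_1$ hypothesis is genuinely needed rather than harmless decoration, since it is what makes the fibers of $g$ closed and hence the $\sim$-classes \emph{closed} intervals, without which the comparison between the order topology and the quotient topology on the quotient can fail.
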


A key tool is the following lemma:

\begin{lemma}
  \label{lem:quotient-is-reparam}
  Let $\sim$ be an equivalence relation on $[0,1]$ whose equivalence classes are closed intervals.
  Let $I$ be the quotient $[0,1]/\sim$, and let $q \colon [0,1] \twoheadrightarrow I$ be the natural surjection.
  Then:
  \begin{itemize}
    \item
      $I$ is equipped with a natural strict linear order~$\prec$, making $q$ into an order-preserving surjection.
    \item
      The order topology of $(I, \prec)$ coincides with the quotient topology on~$I$ inherited from the Euclidean topology on $[0,1]$.
    \item
      If $\sim$ has at least two equivalence classes, then $(I, \prec)$ is order-isomorphic to $[0,1]$.
  \end{itemize}
\end{lemma}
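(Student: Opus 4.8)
The plan is to treat the three bullet points in order, the last being the only one that needs an external classical input.

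\emph{The order and surjectivity.} For $t\in[0,1]$, write $[t]$ for the $\sim$-class of $t$; by hypothesis $[t]$ is a closed subinterval $[m_t,M_t]$ of $[0,1]$, in particular a convex set. Two distinct classes are disjoint convex subsets of the linearly ordered set $[0,1]$, and a short case analysis shows that any two disjoint convex subsets $P,Q$ of a linear order are comparable, meaning one lies entirely below the other: if not, one finds $p,p'\in P$ and $y,y'\in Q$ with $y<p$ and $p'<y'$, and then one of the four points lies strictly between two points of the other set, contradicting convexity. I therefore define, for distinct classes $P,Q$, that $P\prec Q$ when $p<z$ for all $p\in P,\ z\in Q$ (equivalently $M_P<m_Q$). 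This is a strict total order on $I=[0,1]/\sim$; the surjection $q$ is order-preserving because if $a<b$ in $[0,1]$ then either $[a]=[b]$, or $[a],[b]$ are disjoint and the containments $a\in[a]$, $b\in[b]$ rule out $[b]\prec[a]$, so $q(a)\preceq q(b)$.

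\emph{Identification of the topologies.} I will show that $q\colon[0,1]\to(I,\prec)$ is continuous for the order topology of $\prec$. A subbasic open set is a ray $\{z\in I: z\prec q(c)\}$ or $\{z\in I: q(c)\prec z\}$ for some $c\in[0,1]$. Using that $q$ is order-preserving and that $[c]=[m_c,M_c]$, one checks directly that $q^{-1}(\{z: z\prec q(c)\})=[0,m_c)$ and $q^{-1}(\{z: q(c)\prec z\})=(M_c,1]$, both open in $[0,1]$ (empty if $m_c=0$, resp.\ $M_c=1$). Hence $q$ is continuous into $(I,\prec)$, and by the universal property of the quotient topology the order topology is contained in the quotient topology. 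On the other hand, the order topology of a total order is always Hausdorff, while the quotient topology on $I$ is compact as a continuous image of $[0,1]$. Thus the identity of $I$ is a continuous bijection from a compact space to a Hausdorff space, hence a homeomorphism by \Cref{lattice_Hausdorff-compact}, and the two topologies coincide.

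\emph{The case of at least two classes.} Assume $\sim$ has at least two equivalence classes. Equipped with its topology (order $=$ quotient), $I$ is a continuous image of the connected space $[0,1]$, hence connected, hence a linear continuum: $\prec$ is dense and order-complete. Moreover $q(0)$ and $q(1)$ are the minimum and the maximum of $(I,\prec)$ since $q$ is an order-preserving surjection and $0,1$ are the extrema of $[0,1]$, and they are distinct because there are at least two classes. Finally $I$ is separable, being a continuous image of the separable space $[0,1]$. It remains to invoke the classical fact that a separable linear continuum whose minimum and maximum are distinct is order-isomorphic to $[0,1]$: a countable topologically dense subset of $I$ is also order-dense (as $\prec$ has no jumps), so by Cantor's back-and-forth theorem it is order-isomorphic to $\Q\cap[0,1]$, and extending this isomorphism to all of $I$ by $s\mapsto\sup\{\,\text{image of elements}\ \preceq s\,\}$ and using order-completeness produces an order-isomorphism $I\cong[0,1]$. (An order-isomorphism between linear orders is automatically a homeomorphism for the order topologies, which we have already identified with the quotient topologies.)

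\emph{Main obstacle.} The only point that is not purely formal is this last one: connectedness and order-completeness by themselves would still permit pathologies such as the closed long ray $[0,\omega_1]$, so one must use something more. The extra ingredient is separability, which $I$ inherits from $[0,1]$ through $q$, and which is exactly what forces the quotient to be the standard interval.
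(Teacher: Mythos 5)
Your proof is correct and follows essentially the same route as the paper: the same definition of $\prec$ via disjointness of convex classes, the same compact-vs-Hausdorff comparison (via \Cref{lattice_Hausdorff-compact}) to identify the order and quotient topologies, and the same appeal to separability plus Cantor's isomorphism theorem for separable linear continua with endpoints. The only differences are cosmetic (you check continuity on rays rather than intervals, and you unpack the back-and-forth argument that the paper simply cites).
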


\begin{proof}
  An element of $I$ is a closed interval $[a,b]$, and these intervals do not overlap: if $[a,b] \neq [a',b']$ are two elements of $I$, then either $b < a'$ or $b' < a$.
  Hence, we can define a linear order $\preceq$ on $I$ by:
  \[
    [a,b] \preceq [a',b']
    \iff
    b \leq a'
    \quad
    \textnormal{(equivalently, $a \leq b'$)}.
  \]
  The surjection $q$ is order-preserving, because if $s,t \in [0,1]$ belong respectively to the equivalence classes $[a,b]$ and $[a',b']$ and satisfy $s \leq t$, then we have $a \leq s \leq t \leq b'$, so $[a,b] \preceq [a',b']$.
  We let $\prec$ be the corresponding strict linear order.
  
  Let~$\tau_q$ be the quotient topology on $I$ 
   and let~$\tau_o$ be the order topology of~$(I,\prec)$.
  As a linearly ordered topological space, $(I, \tau_o)$ is Hausdorff.
  As a quotient of a compact space, $(I, \tau_q)$ is compact.
  For any open interval $U$ of $(I, \prec)$ of the form $\bigl([a,b],[c,d]\bigr)$, its preimage $q^{-1}(U) = (b,c)$ is open in~$[0,1]$.
  Similarly, using the notation of \Cref{sn:prelim_linear}, the preimages of the open intervals $\bigl(-\infty,[a,b]\bigr)$ and $\bigl([a,b],+\infty\bigr)$ are respectively $(-\infty,a)$ and $(b,+\infty)$, which are open in~$[0,1]$.
  Therefore, $\tau_o \subseteq \tau_q$, and by \Cref{lattice_Hausdorff-compact} we then have $\tau_o = \tau_q$.

  Since $\tau_o = \tau_q$, the space $(I, \tau_o)$ is a quotient of $[0,1]$, so it is compact, connected, and separable just like $[0,1]$.
  Assuming that $\card I \geq 2$, this implies that the order~$\prec$ is dense and order-complete (corresponding to connectedness, cf.~\cite[26G]{willard}), has a minimum and a maximum (corresponding to compactness, cf.~\iref{prop:continuum-facts}{item:continuum-compact-subsets}), and admits a dense countable subset.
  By Cantor's isomorphism theorem \cite[\S11]{cantor}, this implies that~$(I, \prec)$ is order-isomorphic to $[0,1]$.
\end{proof}

\begin{proof}[Proof of \Cref{reparametrization}]
  Define an equivalence relation $\sim$ as follows: for any $s,t\in [0,1]$ with $s\leq t$,
  \[
    s \sim t
    \iff
    t\sim s
    \iff
    g|_{[s,t]}
    \textnormal{ is constant.}
  \]
  Since the connected subsets of $[0,1]$ are exactly the intervals, we see that $s\sim t$ if and only if there is $y\in Y$ such that $s$ and $t$ belong to the same connected component of $g^{-1}(\{y\})$.
  In other words, the equivalence classes of~$\sim$ are the connected components of the fibers of $g$.
  In particular, each equivalence class is an interval and a closed subset of $g^{-1}(\{y\})$ for some $y\in Y$, which is itself closed in~$[0,1]$ since $Y$ is $\rT_1$ and $g$ is continuous.
  Therefore, all equivalence classes of~$\sim$ are closed intervals.

  Define the quotient space $I := [0,1]/\sim$.
  Since~$g$ is non-constant, we have $\card I \geq 2$.
  By \Cref{lem:quotient-is-reparam}, $I$ admits a homeomorphism $\eta \colon I \to [0,1]$ such that its composition with the projection $q \colon [0,1] \twoheadrightarrow I$ is a non-decreasing continuous map $\varphi := \eta \circ q \colon [0,1] \to [0,1]$.
  Moreover, since $g$ is constant on every equivalence class of $\sim$ by construction, $g$ factors through the projection $q$ via a continuous map $\bar g \colon I \to Y$.
  Define $\tilde g := \bar g \circ \eta^{-1}$, which is a continuous map $[0,1] \to Y$ satisfying $\tilde g \circ \varphi = \bar g \circ \eta^{-1} \circ \eta \circ q = \bar g \circ q = g$.
  Checking that the path~$\tilde g$ is light amounts to checking that~$\bar g$ is not constant on any non-empty open interval of~$I$, which follows by maximality of the equivalence classes.
  Indeed, if~$\bar g$ were constant on a non-empty open interval~$J$ of~$I$, then~$q^{-1}(J)$ would be an open interval of~$[0,1]$ (since~$q$ is continuous and order-preserving) that is not mapped to a single equivalence class of~$\sim$ but on which~$g$ is constant, contradicting the definition of~$\sim$.
\end{proof}

\subsection{Proof of \Cref{prop:path_on_counter-ex}}
\label{sn:non-euclidean_path_stay_constant}

The goal of this subsection is to prove \Cref{prop:path_on_counter-ex}: we show that the space $X = (\bbS^1, \sT)$ (the circle equipped with the topology defined in \Cref{topo_counter-ex}) has the same paths as the standard circle~$(\bbS^1, \sE)$.
In order to apply \Cref{reparametrization}, which lets us reduce the proof of \Cref{prop:path_on_counter-ex} to the case of light paths, we first prove that~$X$ is $\rT_1$:

\begin{proposition}
  \label{counter-ex_T1}
  The space~$X$ is $\rT_1$.
\end{proposition}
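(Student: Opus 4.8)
The plan is to verify directly that every singleton of $X$ is closed, which is the defining property of a $\rT_1$ space. So I would fix a point $x \in \bbS^1$ and aim to show that $\bbS^1 \setminus \{x\}$ belongs to $\sT$. By the definition of $\sT = \sE \cap \sF(f)$ in~\eqref{topo_counter-ex}, this amounts to checking two things: that $\bbS^1 \setminus \{x\}$ is open for the Euclidean topology~$\sE$, and that $f^{-1}(\bbS^1 \setminus \{x\})$ is open in~$L$.

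The first point is immediate, since $(\bbS^1, \sE)$ is Hausdorff and hence $\rT_1$, so $\{x\}$ is $\sE$-closed. For the second point, the key observation is that $f$ is a bijection from~$L$ to~$\bbS^1$, so $f^{-1}(\bbS^1 \setminus \{x\}) = L \setminus \{f^{-1}(x)\}$; and by \Cref{lexico_order_prop} the big interval~$L$ (with its order topology) is Hausdorff, hence $\rT_1$, so the singleton $\{f^{-1}(x)\}$ is closed in~$L$ and its complement is open. Combining the two points yields $\bbS^1 \setminus \{x\} \in \sT$, hence $\{x\}$ is $\sT$-closed, and therefore~$X$ is $\rT_1$.

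I do not expect any real obstacle: the only inputs are the bijectivity of~$f$ (which was arranged in the construction) and the already-established facts that both $(\bbS^1, \sE)$ and $(L, <_\lex)$ are $\rT_1$. The one step worth spelling out is that the preimage under a bijection of the complement of a point is the complement of a point, which is exactly what lets the $\rT_1$ property of~$L$ pass through the final topology~$\sF(f)$ and survive intersection with~$\sE$.
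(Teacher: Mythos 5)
Your proposal is correct and is essentially identical to the paper's own proof: both verify that $\{x\}$ is $\sE$-closed and that $f^{-1}(\{x\})$ is a singleton (by bijectivity of $f$), hence closed in the Hausdorff space $L$, so that $\bbS^1\setminus\{x\}\in\sT$. No gaps.
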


\begin{proof}
  Recall that~$(\bbS^1,\sE)$ and~$L$ are~$\rT_1$, as they are Hausdorff.
  By definition of~$\sT$ (\Cref{topo_counter-ex}), checking that a singleton $\{x\}$ is closed in $X$ means checking that $\{x\}$ is closed in $(\bbS^1, \sE)$ (which is true) and that $f^{-1}(\{x\})$, which is a singleton as $f$ is bijective, is closed in~$L$ (which is again true).
\end{proof}

We first prove three lemmas, gathering more and more information about the topology~$\sT$ and about the local behavior of paths on~$X$.
We will use the notations $x_n \convE x$ and $x_n \convT x$ to denote convergence of sequences in~$\bbS^1 = (\bbS^1, \sE)$ and in $X = (\bbS^1, \sT)$, respectively.

\begin{lemma}
  \label{alternative_sequence}
  Let $x,y\in \bbS^1$, let $\ell\in L$, and let $x_n\in \bbS^1$ for all $n\in \N$.
  If $x_n \convT x$, if $x_n \convE y$, and if $f^{-1}(x_n) \longrightarrow \ell$ in $L$, then $x=y$ or $x=f(\ell)$.
\end{lemma}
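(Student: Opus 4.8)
The plan is to argue by contradiction: assume $x \neq y$ and $x \neq f(\ell)$, and then produce a $\sT$-open neighbourhood of $x$ that is avoided by infinitely many of the $x_n$, contradicting $x_n \convT x$. The key observation is that, by the definition of $\sT$ in \eqref{topo_counter-ex}, a subset of $\bbS^1$ is closed in $X$ if and only if it is closed for the Euclidean topology $\sE$ \emph{and} its preimage under the bijection $f$ is closed in $L$. So it suffices to find one set $R$ that is closed in both of these senses, that contains $x_n$ for all $n$ in some infinite index set, and that does not contain $x$.

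First I would find an infinite set $S \subseteq \N$ with $x_n \neq x$ for all $n \in S$. This is possible: otherwise $x_n = x$ for all but finitely many $n$, hence $x_n \convE x$, and since $(\bbS^1,\sE)$ is Hausdorff and $x_n \convE y$, this forces $x = y$, contradicting our assumption. I would then set
\[
  R := \{x_n : n \in S\} \cup \{y, f(\ell)\}
\]
and check that $R$ is closed in $X$. For $\sE$-closedness, the point is the elementary fact that if a sequence converges in a Hausdorff space then the range of the sequence together with that limit is closed; applying this to $x_n \convE y$ shows $\{x_n : n \in S\} \cup \{y\}$ is $\sE$-closed, and adjoining the single point $f(\ell)$ keeps it closed. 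For closedness of the preimage, since $f$ is a bijection we have $f^{-1}(R) = \{f^{-1}(x_n) : n \in S\} \cup \{f^{-1}(y), \ell\}$, and the same principle applied to $f^{-1}(x_n) \longrightarrow \ell$ in the Hausdorff space $L$ (cf.~\Cref{lexico_order_prop}) shows $\{f^{-1}(x_n) : n\in S\} \cup \{\ell\}$ is closed in $L$; adjoining $f^{-1}(y)$ again keeps it closed. Hence $R$ is closed in $X$, so $U := \bbS^1 \setminus R$ is open in $X$.

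To finish, note that $x \notin R$: indeed $x \neq y$, $x \neq f(\ell)$, and $x \neq x_n$ for every $n \in S$ by the choice of $S$. Thus $U$ is an open neighbourhood of $x$ in $X$, yet $x_n \in R$, i.e.\ $x_n \notin U$, for every $n$ in the infinite set $S$; this contradicts $x_n \convT x$ and completes the proof.

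The main obstacle — and the reason the statement needs an argument at all — is that $X$ is not Hausdorff, being the intersection of the two Hausdorff topologies $\sE$ and $\sF(f)$: the sequence $x_n$ automatically $\sT$-converges to $y$ (since $\sT \subseteq \sE$) and to $f(\ell)$ (since $f^{-1}(x_n)\to\ell$ in $L$ and $\sT\subseteq\sF(f)$), so under the contradiction hypothesis it has at least three distinct $\sT$-limits. The tempting route of separating $x$ from the tail of $(x_n)$ by intersecting a small $\sE$-open set with a small $\sF(f)$-open set breaks down, because such an intersection need not be $\sT$-open. The resolution is to work with \emph{closed} sets instead: a set that is simultaneously $\sE$-closed and $f^{-1}$-closed is automatically $\sT$-closed, and the only reason the range $\{x_n : n\in S\}$ itself fails to be such a set is the two ``missing'' accumulation points $y$ (the $\sE$-limit) and $f(\ell)$ (the limit detected through $f$) — precisely the two points that the contradiction hypothesis allows us to throw in.
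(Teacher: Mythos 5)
Your proof is correct and follows essentially the same route as the paper's: both arguments exploit that a set which is $\sE$-closed and whose $f$-preimage is closed in $L$ is $\sT$-closed, and apply this to the tail of the sequence together with the two limit points $y$ and $f(\ell)$. The only organizational difference is that the paper works with the closed sets $S_N = \{f(\ell)\}\cup\{y\}\cup\{x_n : n\geq N\}$ for every $N$ and handles the case where $x$ recurs in the sequence at the end, whereas you extract in advance an infinite subsequence avoiding $x$; this is an immaterial variation.
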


\begin{proof}
  Let $N\in\N$.
  The spaces~$\bbS^1$ and~$L$ are Hausdorff, so their respective subsets $\{y\}\cup\suchthat{x_n}{n\geq N}$ and $\{\ell\}\cup\suchthat{f^{-1}(x_n)}{n\geq N}$ are closed.
  Since the singletons of~$\bbS^1$ and $L$ are closed, it follows that \[S_N := \{f(\ell)\}\cup \{y\}\cup\suchthat{x_n}{n\geq N}\] is closed in $\bbS^1$, and that $f^{-1}(S_N)=\{f^{-1}(y)\}\cup \{\ell\}\cup\suchthat{f^{-1}(x_n)}{n\geq N}$ is closed in $L$.
  By definition of $\sT$ (\Cref{topo_counter-ex}), this means that~$S_N$ is a closed subset of~$X$.
  Therefore, the convergence $x_n \convT x$ implies that $x \in S_N$.
  This holds for any $N\in\N$, therefore either $x=f(\ell)$, or $x=y$, or the sequence $(x_n)$ takes the value $x$ infinitely many times.
  However, in the last case, $x$ is also the limit of a subsequence of $(x_n)$ for the Euclidean topology, and then $x = y$ since~$\bbS^1$ is Hausdorff.
\end{proof}

\begin{lemma}
  \label{alternative-continuity_pointwise}
  Let $g \colon [0,1] \to X$ be a $\sT$-continuous map, and let $s\in[0,1]$.
  If $U\subseteq \bbS^1$ and $V\subseteq L$ are open neighborhoods of~$g(s)$ and of~$f^{-1}(g(s))$ respectively, then there exists an open neighborhood $W\subseteq [0,1]$ of~$s$ such that, for all $t\in W$, we have $g(t)\in U$ or $f^{-1}(g(t))\in V$.
\end{lemma}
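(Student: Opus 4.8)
The plan is to argue by contradiction using sequences, exploiting that $[0,1]$ is first-countable while both $(\bbS^1,\sE)$ and $L$ are sequentially compact, and then to invoke \Cref{alternative_sequence} to pin down the possible limit of $g(s)$.

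First I would assume that no neighborhood $W$ of $s$ with the stated property exists. Since $[0,1]$ is metrizable, $s$ has a countable decreasing basis of neighborhoods (for instance $(s-\tfrac1n,s+\tfrac1n)\cap[0,1]$); negating the statement therefore produces, for each $n\in\N$, a point $t_n\in[0,1]$ with $t_n\longrightarrow s$ such that $g(t_n)\notin U$ and $f^{-1}(g(t_n))\notin V$. Because $g$ is $\sT$-continuous, this gives $g(t_n)\convT g(s)$. Now I extract convergent subsequences twice: since $(\bbS^1,\sE)$ is compact metric (hence sequentially compact), pass to a subsequence along which $g(t_n)\convE y$ for some $y\in\bbS^1$; then, since $L$ is compact and first-countable by \Cref{lexico_order_prop} and hence sequentially compact, pass to a further subsequence along which $f^{-1}(g(t_n))\longrightarrow \ell$ in $L$ for some $\ell\in L$. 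The convergence $g(t_n)\convT g(s)$ survives these passages.

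Applying \Cref{alternative_sequence} with $x_n:=g(t_n)$ and $x:=g(s)$ then yields $g(s)=y$ or $g(s)=f(\ell)$. In the first case $g(t_n)\convE g(s)$, so eventually $g(t_n)\in U$ because $U$ is an $\sE$-neighborhood of $g(s)$, contradicting the choice of the $t_n$. In the second case $f$ is bijective, so $f^{-1}(g(s))=\ell$, whence $f^{-1}(g(t_n))\longrightarrow f^{-1}(g(s))$ in $L$ and eventually $f^{-1}(g(t_n))\in V$ since $V$ is an $L$-neighborhood of $f^{-1}(g(s))$, again a contradiction. In either case we reach a contradiction, so the desired $W$ exists.

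This argument is essentially routine; the only steps requiring a little care are the double subsequence extraction and recording that $L$ is sequentially compact, which follows from its compactness and first-countability (\Cref{lexico_order_prop}), or directly from a monotone-subsequence argument as in $\R$. I do not anticipate any serious obstacle.
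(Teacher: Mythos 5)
Your proof is correct and follows essentially the same route as the paper's: argue by contradiction, extract a sequence $t_n\to s$ violating the conclusion, pass to subsequences using sequential compactness of $(\bbS^1,\sE)$ and of $L$, and apply \Cref{alternative_sequence} to derive a contradiction in each of the two cases. The only cosmetic difference is that the paper concludes by noting $x\notin U$ and $\ell\notin V$ directly (since $U$, $V$ are open and the terms avoid them), whereas you phrase the same contradiction via eventual membership of the sequence in $U$ or $V$.
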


\begin{proof}
  Assume, by way of contradiction, that there is a sequence $(s_n)\in[0,1]^\N$ such that $s_n \longrightarrow s$, $g(s_n)\notin U$ and $f^{-1}(g(s_n))\notin V$ for all $n \in \N$.
  Since~$\bbS^1$ and~$L$ are sequentially compact (this is well-known for~$\bbS^1$, cf.~\Cref{bigint-seqcomp} or \Cref{lexico_order_prop} for~$L$), replacing~$(s_n)$ by a subsequence if needed, we can assume that $g(s_n) \convE x$ and that $f^{-1}(g(s_n)) \longrightarrow \ell$ in~$L$.
  Since $U$ and $V$ are open, we have $x\notin U$ and $\ell\notin V$.
  Furthermore, by $\sT$-continuity of $g$, we have $g(s_n) \convT g(s)$.
  By \Cref{alternative_sequence}, we have $g(s)=x$ or $f^{-1}(g(s))=\ell$.
  The first case is impossible because $g(s)\in U$, and the second is impossible because $f^{-1}(g(s))\in V$.
  This is a contradiction.
\end{proof}

\begin{lemma}
  \label{alternative-continuity_uniform}
  Let $g \colon [0,1] \to X$ be a $\sT$-continuous map, and let $s\in[0,1]$.
  Let $U \subseteq \bbS^1$, $V \subseteq L$, and $W \subseteq [0,1]$ be open neighborhoods of $g(s)$, $f^{-1}(g(s))$, and $s$, respectively.
  Then:
  \begin{enumroman}
    \item
      \label{alternative-continuity_uniform1}
      If~$g$ is not $\sE$-continuous at~$s$, then there exists a non-empty open interval $I\subseteq W$ such that $f^{-1}(g(I))\subseteq V$.
    \item
      \label{alternative-continuity_uniform2}
      If the map $f^{-1}\circ g \colon [0,1] \to L$ is not continuous at~$s$, then there exists a non-empty open interval $I\subseteq W$ such that $g(I)\subseteq U$.
  \end{enumroman}
\end{lemma}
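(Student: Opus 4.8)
The plan is to prove both statements by contradiction, upgrading the ``pointwise'' dichotomy of \Cref{alternative-continuity_pointwise} to the desired ``uniform'' conclusion by superposing two instances of that lemma — one based at $s$, one based at a suitably chosen nearby point — whose ``escape'' alternatives can be arranged to be mutually exclusive. The two statements are mirror images of one another (exchange the roles of $\bbS^1$ and $L$, and of $g$ and $f^{-1}\circ g$, and use the two alternatives in the conclusion of \Cref{alternative_sequence}), so I would spell out only the first and indicate the changes for the second.

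For the first statement, I would assume for contradiction that no nonempty open interval $I \subseteq W$ satisfies $f^{-1}(g(I)) \subseteq V$. Since $g$ is not $\sE$-continuous at $s$, there is an $\sE$-open $U_0 \ni g(s)$ with $g^{-1}(U_0)$ not a neighborhood of $s$; as $[0,1]$ is first countable, this yields a sequence $s_n \to s$ with $g(s_n) \notin U_0$. Both $\bbS^1$ and $L$ are sequentially compact (the latter by \Cref{lexico_order_prop}), so after passing to a subsequence I may assume $g(s_n) \convE y$ and $f^{-1}(g(s_n)) \to \ell$ in $L$; since $\bbS^1 \setminus U_0$ is closed, $y \neq g(s)$. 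As $g$ is $\sT$-continuous, $g(s_n) \convT g(s)$, so \Cref{alternative_sequence} forces $g(s) = y$ or $g(s) = f(\ell)$; the first being impossible, $\ell = f^{-1}(g(s))$, i.e.\ $f^{-1}(g(s_n)) \to f^{-1}(g(s))$. Using that $\bbS^1$ is Hausdorff, I would then pick disjoint $\sE$-open sets $U^* \ni y$ and $\tilde U \ni g(s)$, and note that for every large enough $n$ one has $g(s_n) \in U^*$, $f^{-1}(g(s_n)) \in V$, and $s_n \in W$.

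For a fixed such $n$, I would apply \Cref{alternative-continuity_pointwise} to $g$ at the point $s_n$ with the neighborhoods $U^*$ of $g(s_n)$ and $V$ of $f^{-1}(g(s_n))$, obtaining an open neighborhood $W_n \ni s_n$ on which every $t$ satisfies $g(t) \in U^*$ or $f^{-1}(g(t)) \in V$, and apply the same lemma at the point $s$ with the neighborhoods $\tilde U$ of $g(s)$ and $V$ of $f^{-1}(g(s))$, obtaining an open neighborhood $W' \ni s$ on which every $t$ satisfies $g(t) \in \tilde U$ or $f^{-1}(g(t)) \in V$. Taking $n$ large enough that also $s_n \in W'$, the set $\Omega := W_n \cap W' \cap W$ is an open neighborhood of $s_n$; and on $\Omega$, any $t$ with $f^{-1}(g(t)) \notin V$ would satisfy both $g(t) \in U^*$ and $g(t) \in \tilde U$, i.e.\ $g(t) \in U^* \cap \tilde U = \varnothing$, which is absurd. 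Hence $f^{-1}(g(\Omega)) \subseteq V$, and since $\Omega$ is an open neighborhood of $s_n$ it contains a nonempty open interval $I \subseteq W$, contradicting the assumption. For the second statement I would run the mirror argument: the failure of continuity of $f^{-1} \circ g$ at $s$ produces $s_n \to s$ with $f^{-1}(g(s_n)) \to m \neq f^{-1}(g(s))$ and, via the other alternative of \Cref{alternative_sequence}, with $g(s_n) \convE g(s)$; using that $L$ is Hausdorff (\Cref{lexico_order_prop}) I separate $m$ from $f^{-1}(g(s))$ by disjoint $L$-open sets and repeat the two-instance overlay of \Cref{alternative-continuity_pointwise} to get an open interval $I \subseteq W$ with $g(I) \subseteq U$.

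The step I expect to be the main obstacle is precisely this overlay idea: noticing that the alternative ``$g(t) \in U^*$'' in \Cref{alternative-continuity_pointwise} can be annihilated by superposing a second copy of that lemma whose first alternative rests on a neighborhood disjoint from $U^*$, and that \Cref{alternative_sequence} combined with sequential compactness is exactly the tool that lets one place the base point of that second copy arbitrarily near $s$ while dictating where its ``escape'' neighborhood lies. The remaining ingredients — first countability of $[0,1]$, sequential compactness of $\bbS^1$ and $L$, and Hausdorffness of $\bbS^1$ and $L$ — enter only as routine bookkeeping.
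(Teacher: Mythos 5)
Your proof is correct and rests on the same key idea as the paper's: two applications of \Cref{alternative-continuity_pointwise}, one based at $s$ and one at a nearby point where the failure of continuity is visible, with disjoint ``escape'' neighborhoods chosen so that on the intersection of the two resulting neighborhoods the first alternatives annihilate each other and the second alternative is forced. The only difference is in how the auxiliary base point and the disjoint neighborhoods are produced: you use sequential compactness, \Cref{alternative_sequence}, and Hausdorffness, whereas the paper picks a point $\sigma$ directly from the non-interiority of $g^{-1}(U')$ and separates $g(s)$ from $\bbS^1\setminus U'$ by regularity of the circle (resp.\ of $L$) --- both routes are valid.
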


\begin{proof}
  First, we assume that~$g$ is not $\sE$-continuous at~$s$, and we prove \ref{alternative-continuity_uniform1}.
  Then, there is an open neighborhood~$U'$ of~$g(s)$ in $\bbS^1$ such that~$s$ is not in the interior of~$g^{-1}(U')$.
  Since the Euclidean topology on $\bbS^1$ is regular, we can choose two disjoint open subsets~$U_{\mathrm{in}}$ and~$U_{\mathrm{out}}$ of~$\bbS^1$ such that $g(s)\in U_{\mathrm{in}}$ and $\bbS^1\setminus U'\subseteq U_{\mathrm{out}}$.
  By \Cref{alternative-continuity_pointwise}, there is an open neighborhood~$W_1$ of~$s$ in $[0,1]$ such that $g(t)\in U_{\mathrm{in}}$ or $f^{-1}(g(t))\in V$ for all $t\in W_1$.
  
  Note that~$W\cap W_1$ is an open neighborhood of~$s$.
  Because~$s$ is not in the interior of $g^{-1}(U')$, there exists $\sigma\in W\cap W_1$ such that $g(\sigma)\notin U'$.
  In particular, $U_{\mathrm{out}}$ is an open neighborhood of $g(\sigma)$ in~$\bbS^1$.
  Moreover, $g(\sigma)\notin U_{\mathrm{in}}$ because $U_{\mathrm{in}}$ and $U_{\mathrm{out}}$ are disjoint, so $f^{-1}(g(\sigma))\in V$ by the properties of~$W_1$.
  Thus, $V$ is an open neighborhood of $f^{-1}(g(\sigma))$ in $L$.
  By \Cref{alternative-continuity_pointwise}, there is then an open neighborhood~$W_2$ of~$\sigma$ in~$[0,1]$ such that $g(t)\in U_{\mathrm{out}}$ or $f^{-1}(g(t))\in V$ for all $t\in W_2$.
  
  The set~$W\cap W_1\cap W_2$ is an open neighborhood of~$\sigma$ in~$[0,1]$, so there exists an open interval $I\subseteq W\cap W_1\cap W_2\subseteq W$ that contains $\sigma$.
  We claim that $f^{-1}(g(t))\in V$ for any $t\in I$.
  Indeed, $U_{\mathrm{in}}$ and~$U_{\mathrm{out}}$ are disjoint, so $g(t)\notin U_{\mathrm{in}}$ or $g(t)\notin U_{\mathrm{out}}$.
  But since $t\in W_1\cap W_2$, we can use at least one of the properties defining~$W_1$ and~$W_2$ to prove that~$f^{-1}(g(t))\in V$.
  This concludes the proof of \ref{alternative-continuity_uniform1}.

  The proof of \ref{alternative-continuity_uniform2} is done in the exact same way (using the regularity of~$L$ given by \Cref{lexico_order_prop} instead of that of~$\bbS^1$).
  We omit the details.
\end{proof}

\begin{proof}[Proof of \Cref{prop:path_on_counter-ex}]
  Since $\sT\subseteq \sE$ by construction, any path on $(\bbS^1,\sE)$ is $\sT$-continuous.
  To prove the converse, we fix a $\sT$-continuous map $g \colon [0,1] \to X$, and our goal is to prove that it is $\sE$-continuous.

  If~$g$ is constant, then this is clear.
  Otherwise, since $X$ is $\rT_1$ by \Cref{counter-ex_T1}, we can fix a light path~$\tilde g$ and a map~$\phi$ as given by \Cref{reparametrization}.
  If $\tilde g$ is $\sE$-continuous, then so is $g = \tilde g \circ \phi$ as a composition of continuous maps.
  This shows that it suffices to deal with light paths.
  Without loss of generality, we now assume that~$g$ is light.
  
  We assume, by way of contradiction, that there exists an $s \in [0,1]$ such that~$g$ is not $\sE$-continuous at~$s$.
  Because~$L_1$ and~$L_3$ are disjoint, there exists $i\in\{1,3\}$ such that $f^{-1}(g(s))\notin L_i$.
  Since~$L_i$ is closed, \Cref{alternative-continuity_uniform} yields a non-empty open interval~$I \subseteq [0,1]$ such that $f^{-1}(g(I))\cap L_i=\emptyset$.
  If $f^{-1}\circ g$ is continuous on~$I$, then $f^{-1}(g(I))$ is a non-empty path-connected subset of~$L$, hence a singleton by \Cref{lexico_order_prop}, so~$g$ is constant on~$I$, contradicting its lightness.
  For the rest of the proof, we therefore assume that $f^{-1}\circ g$ is not continuous on~$I$.
  
  By this new assumption, there is $t \in I$ such that $f^{-1} \circ g$ is not continuous at~$t$.
  Even when they are viewed as subsets of $\bbS^1=[0,3]/\{0,3\}$ via the identification of the points~$0$ and~$3$, no point belongs to all three sets $[0,1]$, $[1,2]$, and $[2,3]$, so there exists $j\in\{1,2,3\}$ such that $g(s)\notin [j-1,j]\subseteq \bbS^1$.
  Since~$[j-1,j]$ is closed in~$\bbS^1$, \Cref{alternative-continuity_uniform} yields a closed interval $J\subseteq I$ with positive length such that $g(J)\cap [j-1,j]=\emptyset$.
  Note that we also have $f^{-1}(g(J))\cap L_i =\emptyset$.

  Now, define $F$ as the closure in $L\times\bbS^1$ of $\suchthat{\big( f^{-1}(g(r)) ,\, g(r) \big)}{r\in J}=(f^{-1}\circ g, \, g)(J)$.
  Our goal is to show that~$F$ does not meet~$A_j\times B_i$.
  By contradiction, let $(\ell,x)\in F\cap (A_j\times B_i)$.
  Observe from \Cref{lexico_order_prop} that $L \times \bbS^1$ is first-countable as a product of two first-countable topological spaces.
  Thus, there is a sequence $(r_n)_{n\in\N}$ of points of~$J$ such that $g(r_n)\convE x$ and such that $f^{-1}(g(r_n)) \longrightarrow \ell$ in~$L$.
  By sequential compactness of the bounded closed real interval~$J$, we can assume without loss of generality that $r_n \longrightarrow r\in J$, so that $g(r_n) \convT g(r)$ by $\sT$-continuity of~$g$.
  By \Cref{alternative_sequence}, we then have $g(r)=x$ or $f^{-1}(g(r))=\ell$.
  If $g(r)=x$, then $g(r)\in B_i$ and so $f^{-1}(g(r))\in L_i$ by definition of~$f$, contradicting $f^{-1}(g(J))\cap L_i=\emptyset$.
  If $f^{-1}(g(r))=\ell$, then $f^{-1}(g(r))\in A_j$ and so $f(\, f^{-1}(g(r))\, )=g(r)\in [j-1,j)$, contradicting $g(J)\cap [j-1,j]=\emptyset$.
  Since both cases lead to a contradiction, we have shown that~$F$ does not meet $A_i\times B_j$.
  
  By choice of~$A_i$ and~$B_j$ (cf.~\Cref{bernstein_rectangle}), this implies that $F$, and $(f^{-1}\circ g,g)(J)$ \emph{a fortiori}, can be covered by countably many lines.
  In other words, there are two countable sets $D\subseteq L$ and $D'\subseteq \bbS^1$ such that $g(J)\subseteq f(D)\cup D'$, so~$g(J)$ is at most countable.
  Now, recall that $g \colon [0,1] \to X$ is $\sT$-continuous and that~$X$ is $\rT_1$ (\Cref{counter-ex_T1}), so $g^{-1}(\{x\})$ is closed in $[0,1]$ for all $x\in X$.
  Hence, the decomposition
  \[
    J
    =
    \bigcup_{x\in g(J)}
      J \cap g^{-1}(\{x\})
  \]
  expresses the compact real interval $J$ as an at-most-countable union of disjoint non-empty closed sets.
  By a classic theorem of Sierpiński~\cite{sierpinski} (see also~\cite[Theorem 6.1.27]{engelking}), this is only possible when $|g(J)|=1$, so~$g$ is constant on~$J$, contradicting its lightness.
\end{proof}

\subsection{The lexicographic order on sequences of real numbers}
\label{sn:specific_big-interval}

Recall from \Cref{super_lexico} that $L=[0,1]^{\N}$ is endowed with the lexicographic order $<_{\mathrm{lex}}$.
Also recall that for any $x,y\in L$, we write $\ttN(x,y)=\min\suchthat{n\in\N}{x_n\neq y_n}$.
Here, we prove \Cref{lexico_order_prop}.

\begin{proof}[Proof of \Cref{lexico_order_prop}]
  That the lexicographic order is a total order is standard.
  The cardinality of~$L$ is $\card L = \card{[0,1]}^{\card \N} = (2^{\aleph_0})^{\aleph_0} = 2^{\aleph_0^2} = 2^{\aleph_0}$.
  The elements $(0,0,\ldots)$ and $(1,1,\ldots)$ are respectively the minimum and maximum of $(L,<_{\mathrm{lex}})$.

  In what follows, if $x \in L$, we denote by~$x_n \in [0,1]$ its $n$-th coordinate.

  Let $x,y \in L$ be such that $x<_\lex y$.
  Then, we have $x<_\lex z<_\lex y$ where $z=(\frac{x_n+y_n}{2})_{n\in \N}$.
  Indeed, $\ttN(x,z)=\ttN(y,z)=\ttN(x,y)$ and $x_{\ttN(x,y)}<z_{\ttN(x,y)}<y_{\ttN(x,y)}$.
  Thus, $(L,<_{\mathrm{lex}})$ is dense.

  Let $A \subseteq L$.
  Using the convention $\sup\emptyset = 0$, recursively define $(m_n)_{n\in\N}$ as follows:
  \begin{align}
      \nonumber
      m_0 &:=  \sup \suchthat{ x_0 }{ x \in A },\\
      \label{formula_sup-lex_step}
      m_{k+1} &: = \sup\suchthat{ x_{k+1} }{ x \in A\textnormal{ such that }x_i=m_i\text{ for all }0\leq i\leq k }.
  \end{align}
  We claim that~$m = (m_n)_{n\in\N} \in L$ is the least upper bound of~$A$.
  Indeed, if $x \in A$ is distinct from~$m$, then we have $x_i=m_i$ for all $0\leq i\leq \ttN(x,m)-1$ by definition of $\ttN(x,m)$, and so $x_{\ttN(x,m)}\leq m_{\ttN(x,m)}$ by definition of $m$.
  Since $x_{\ttN(x,m)}\neq m_{\ttN(x,m)}$, we get that $x<_\lex m$.
  Conversely, let $y \in L$ be an upper bound of $A$ distinct from $m$.
  For all $x \in A$ such that $\ttN(x,m)\geq\ttN(y,m)$, we have $\ttN(x,y)\geq \min\bigl(\ttN(x,m),\ttN(y,m)\bigr)=\ttN(y,m)$, and so $x_{\ttN(y,m)}\leq y_{\ttN(y,m)}$ because $x\leq_\lex y$.
  It follows that $y_{\ttN(y,m)}\geq \sup \suchthat{ x_{\ttN(y,m)} }{ x \in A\textnormal{ such that }\ttN(x,m)\geq \ttN(y,m) }=m_{\ttN(y,m)}$ and so $m<_\lex y$.
  We have proved that $(L,<_{\mathrm{lex}})$ is order-complete and that
  \begin{equation}
    \label{formula_sup-lex}
    m=\sup A.
  \end{equation}

  By \Cref{prop:continuum-facts}, $L$ is then compact, connected, and Hausdorff.
  In particular, $L$ is locally compact and $\sigma$-compact, but also regular (and even normal, see~\cite[Theorem~32.3]{munkres}).

  \medskip

  To prove that~$L$ is first-countable, it suffices to prove that for any~$x\in L$, if $x\neq \min L$ (resp.~if $x\neq\max L$), then there are sequences $(x^k)_{k \geq 1}$ (resp.~$(y^k)_{k\geq 1}$) of points of~$L \setminus \{x\}$ such that $\sup_{k\geq 1} x^k=x$ (resp.~$\inf_{k\geq 1} y^k=y$).
  Indeed, the open intervals $(x^k,y^k)$ for all $k \geq 1$ then form a countable local base of~$x$, where we have let $x^k=-\infty$ if $x = \min L$ and $y^k=+\infty$ if $x=\max L$.
  In fact, since $L = (L, <_\lex)$ is order-isomorphic to its opposite $(L,>_\lex)$ via $(z_n)_{n\in\N} \mapsto (1-z_n)_{n\in\N}$, the two cases are symmetrical and we only need to find~$(x^k)_{k\geq 1}$ for $x\neq\min L$.

  If there is a $\mathtt J \in \N$ such that $x_{\mathtt J} > 0$ but $x_n = 0$ for all $n > \mathtt J$, then we let $x^k_n := x_n$ if $n \neq \mathtt J$, and $x^k_{\mathtt J} := (1-\tfrac1k)x_{\mathtt J}$.
  In that case, the sequence $(x^k)_{k \in \N}$ is strictly increasing.

  Otherwise, we let $x_n^k := x_n$ if $n < k$, and $x_n^k := 0$ if $n \geq k$.
  This time, the sequence~$(x^k)$ is non-decreasing, and $x^k <_\lex x$ for all $k \geq 1$ as~$x$ has infinitely many non-zero coordinates.

  In both cases, using the explicit formula for the supremum given by \Cref{formula_sup-lex_step,formula_sup-lex}, we see that $\sup x^k = x$, concluding the proof that~$L$ is first-countable.

  \medskip

  Let $\gamma \colon [0,1] \to L$ be continuous.
  As a continuous image of a compact connected set, $\gamma\bigl([0,1]\bigr)$ is a compact and connected subset of $L$.
  Thus, by \iref{prop:continuum-facts}{item:continuum-compact-subsets}, there are two elements $x=(x_n)_{n\in\N}$ and $y=(y_n)_{n\in\N}$ of $L$ such that $x\leq_\lex y$ and $\gamma\bigl([0,1]\bigr)=\suchthat{ z\in L }{ x\leq_\lex z\leq_\lex y }$.
  Assume, by way of contradiction, that $x_n<y_n$ for some $n\in \N$.
  Then, for any $s\in(x_n,y_n)$, the non-empty open set 
  \[
    I_n(s)
    :=
    \suchthat{
      z\in L
    }{
      (x_0,\ldots,x_{n-1},s,0,0,0,\ldots)
      <_\lex
      z
      <_\lex
      (x_0,\ldots,x_{n-1},s,1,1,1,\ldots)
    }
  \]
  is contained in $\gamma\bigl([0,1]\bigr)$.
  Moreover, the subsets $I_n(s)$ are disjoint for distinct values of $s\in(x_n,y_n)$, so $\suchthat{ \gamma^{-1}(I_n(s)) } { s \in(x_n,y_n) }$ is an uncountable family of disjoint non-empty open subsets of~$[0,1]$, but this cannot exist since~$[0,1]$ is second-countable.
  By contradiction, we have shown that $x=y$.
  Thus, all paths on~$L$ are constant and the path-connected components of~$L$ are the singletons.
\end{proof}

\appendix
\crefalias{section}{appendix}

\section{Proof of \axref{ax:button-zip} for big-path-connected subsets}
\label{sec:proof_C4}

In this appendix, we complete the proof of \Cref{prop:continuum-is-cspace}.
Let~$X$ be a topological space, and let~$\cC$ be the set of all its big-path-connected subsets.
We are going to show that $(X,\cC)$ satisfies \axref{ax:button-zip}.

\begin{proof}
  Pick non-empty sets $A,B \in \cC$ such that $A \cup B \in \cC$.
  If there is a big path $f\colon J \to A \cup B$ such that $f(J\setminus\{\max J\})\subseteq A$ and $f(\max J)\in B$, then both $A\cup\{f(\max J)\}$ and $B\cup\{f(\max J)\} = B$ are big-path-connected, completing the proof (case~\ref{ax:button-zip-button} holds).
  Therefore, we can exclude this case and assume the following:
  \begin{equation}
    \label{right-end_in_A}
    \text{for any big path $f\colon J \to A\cup B$, if $f\bigl(J\setminus\{\max J\}\bigr)\subseteq A$, then $f(\max J)\in A$.}
  \end{equation}
  Symmetrically, swapping the roles of~$A$ and~$B$, we can assume:
  \begin{equation}
    \label{right-end_in_B}
    \text{for any big path $f\colon J \to A\cup B$, if $f\bigl(J\setminus\{\max J\}\bigr)\subseteq B$, then $f(\max J)\in B$.}
  \end{equation}
  If case~\ref{ax:button-zip-seam} of \axref{ax:button-zip} ever happens, then we are done, so we can also assume:
  \begin{equation}
    \label{no-seam}
    \text{if $S \subseteq A\cup B$ is big-path-connected, then $S \cap A$ and $S \cap B$ are big-path-connected.}
  \end{equation}

  Fix arbitrary points $a\in A$ and $b\in B$, and let $\gamma \colon L \to A \cup B$ be a big path from~$a$ to~$b$ in~$A\cup B$.
  Let $s_A := \sup \gamma^{-1}(A)$.
  We have $\gamma\bigl((s_A, \max L]\bigr) \subseteq B$, so \eqref{right-end_in_B} implies that $\gamma(s_A) \in B$.
  Without loss of generality, replacing $b$ with $\gamma(s_A)$, $L$ with $[\min L, s_A]$, and $\gamma$ with the (reparametrized) restricted big path~$\gamma|_{[\min L, s_A]}$, we can assume that $s_A = \max L$.
  Hence,
  \begin{equation}
    \label{A-cofinal}
    \text{for any $\ell \in L \setminus \{\max L\}$, there exists $\ell' > \ell$ such that $\gamma(\ell') \in A$.}
  \end{equation}

  Fix an ordinal number~$\kappa$ of cardinality strictly larger than that of~$L$ (e.g., the smallest ordinal number of cardinality $2^{\card L}$, by Cantor's theorem).
  For the rest of the proof, we are going to construct by transfinite recursion a list~$(\ell_\alpha)_{\alpha\in\kappa}$ of points of $L$, a list~$(L_\alpha,<_\alpha)_{\alpha\in\kappa}$ of big intervals, and a list~$(h_\alpha)_{\alpha\in\kappa}$ of big paths $h_\alpha \colon L_\alpha \to A\cup B$ satisfying the following properties for all $\xi\in \kappa$:
  \begin{enumerate}[label=($\text{\roman*}_\xi$)]
      \item
      \label{recursion1}
        for all $\alpha<\beta\leq \xi$, we have $\ell_\alpha<\ell_\beta$;
      \item
      \label{recursion2}
        for all $\alpha\leq \xi$, $h_\alpha(\min L_\alpha)=a$ and $h_\alpha(\max L_\alpha)=\gamma(\ell_\alpha)$;
      \item
      \label{recursion3}
        for all $\alpha < \beta \leq \xi$, $L_\alpha$ is a proper initial segment of $L_\beta$;
      \item
      \label{recursion4}
        for all $\alpha < \beta \leq \xi$, the restriction of $h_\beta$ to $L_\alpha$ is $h_\alpha$, and $h_\beta\bigl([\max L_\alpha,\max L_\beta]\bigr)\subseteq \gamma\bigl([\ell_\alpha,\ell_\beta]\bigr)$;
      \item
      \label{recursion5}
        for all $\alpha\leq \xi$, $h_\alpha(L_\alpha)\subseteq A$.
  \end{enumerate}

  By point (i$_\xi$), the map $\alpha\in\kappa \longmapsto \ell_\alpha\in L$ will then be injective, contradicting the assumption on the cardinality of $\kappa$ and hence concluding the proof.

  First, we initialize the recursion as follows: let $(L_0,<_0)$ be the real segment $[0,1]$ equipped with its usual order, let $\ell_0=\min L$, and let $h_0 \colon s\in[0,1]\mapsto a\in A \cup B$ be a constant path.
  For the induction step, fix an ordinal number $0<\xi<\kappa$ and assume that we have constructed $(\ell_\alpha)_{\alpha<\xi}$, $(L_\alpha,<_\alpha)_{\alpha<\xi}$, and $(h_\alpha)_{\alpha<\xi}$.
  Our goal is to construct $\ell_\xi$, $(L_\xi,<_\xi)$, and $h_\xi$.
  We treat the cases of a successor ordinal and of a limit ordinal separately:
  \begin{itemize}
    \item
      Assume that $\xi$ is a successor ordinal, and write $\xi=\delta+1$.
      Since $\gamma(\ell_\delta)\in h_\delta(L_\delta)\subseteq A$ by (ii$_\delta$) and (v$_\delta$), we have $\ell_\delta\neq \max L$, so by \eqref{A-cofinal} we can pick $\ell_{\delta+1}\in L$ with $\ell_{\delta+1}>\ell_\delta$ such that $\gamma(\ell_{\delta+1})\in A$.
      The subset $S := \gamma\bigl([\ell_\delta,\ell_{\delta+1}]\bigr)$ of $A\cup B$ is big-path-connected, so $S\cap A$ is big-path-connected by \eqref{no-seam}.
      Now, we would like to choose a big path from $\gamma(\ell_{\delta})$ to $\gamma(\ell_{\delta+1})$ on~$S\cap A$.
      However, the (non-empty) collection of big paths on any space forms a proper class and we will have to make infinitely many such choices throughout the recursion.
      Hence, to apply the axiom of choice properly, we need to determine an explicit non-empty \emph{set} contained in that class.
      To do that, let 
      \[
        \mathfrak{m}
        :=
        \min\suchthat{
          |J|
        }{
          f \colon J \to S\cap A \text{ big path from $\gamma(\ell_\delta)$ to $\gamma(\ell_{\delta+1})$}
        },
      \]
      which is a well-defined cardinal number, let $\mathcal J$ be a set formed of at least one representative from each homeomorphism class of big intervals $J$ of cardinality~$\mathfrak m$ (for example, $\mathcal J \subseteq \{\mathfrak m\} \times \mathcal P(\mathfrak m)$ is the set of all topological spaces $(\mathfrak m, \tau)$ obtained by equipped $\mathfrak m$ with some topology~$\tau$ making it into a big interval), and define the non-empty set
      \[
        E
        :=
        \suchthat{
          f \colon J \to S\cap A
        }{
          \text{big path from $\gamma(\ell_\delta)$ to $\gamma(\ell_{\delta+1})$ with }  J  \in \mathcal J
        }.
      \]
      This technical issue out of the way, pick an arbitrary element of~$E$, which is a big path $f_\delta\colon J_\delta \to S\cap A$ from $\gamma(\ell_\delta)$ to $\gamma(\ell_{\delta+1})$ as desired.
      
      Now, let $(L_{\delta+1},<_{\delta+1})$ be the big interval obtained by concatenating $L_\delta$ and $J_\delta$ and by identifying $\max L_{\delta}$ and $\min J_\delta$.
      Since $h_\delta(\max L_\delta)=f_\delta(\min J_\delta)$, we can define a big path $h_{\delta+1}\colon L_{\delta+1} \to A \cup B$ by setting for all $t\in L_{\delta+1}$,
      \[
        h_{\delta+1}(t)
        :=
        \begin{cases}
          h_\delta(t) &\text{ if }x\in L_\delta,\\
          f_\delta(t) &\text{ if }x\in J_\delta.
        \end{cases}
      \]
      By recursion hypothesis, ($\text{i}_{\delta+1}$--$\text{iii}_{\delta+1}$) and ($\text{v}_{\delta+1}$) are clear.
      For ($\text{iv}_{\delta+1}$), observe that for any $\alpha\leq \delta$ we have 
      $h_{\delta+1}\bigl([\max L_\alpha,\max L_{\delta+1}]\bigr) = h_\delta\bigl([\max L_\alpha,\max L_\delta]\bigr) \cup f_\delta(J_\delta)$ which is contained in $\gamma\bigl([\ell_\alpha,\ell_{\delta}]\bigr)\cup S=\gamma\bigl([\ell_\alpha,\ell_{\delta+1}]\bigr)$.
      This concludes the induction step for successor ordinals.

  \item
    Assume now that $\xi$ is a limit ordinal.
    Let $\ell_\xi := \sup_{\alpha<\xi}\ell_\alpha$, and let $L_\xi$ be the union of the~$(L_\alpha)_{\alpha<\xi}$ supplemented with an additional point, i.e.,
    \[
      L_\xi
      :=
      \left(
        \bigcup_{\alpha<\xi}L_\alpha
      \right)
      \sqcup
      \{\xi\}.
    \]
    For $s,t \in L_\xi$, write $s\leq_\xi t$ if $t=\xi$ or if there is $\alpha<\xi$ such that $s,t\in L_\alpha$ and $s\leq_\alpha t$.
    As the non-strict orders $\leq_\alpha$ are compatible for all $\alpha < \xi$ by ($\text{iii}_\alpha$), the binary relation~$\leq_\xi$ is a well-defined (non-strict) total order on $L_\xi$, making $(L_\xi\setminus\{\xi\}, <_\xi)$ into a dense order-complete linear order.
    Moreover, using ($\text{iii}_\alpha$) for all $\alpha<\xi$ together with the fact that~$\xi$ is a limit ordinal, we see that $\xi=\sup_{\alpha<\xi}\max L_\alpha =\sup (L_\xi\setminus\{\xi\})$.
    Hence, $(L_\xi,<_\xi)$ is a big interval.
    Furthermore, for any $\alpha < \xi$, $L_\alpha$ is a proper initial segment of $L_\xi$, so (iii$_\xi$) holds.
    
    Define the map $h_\xi \colon L_\xi \to A\cup B$ as follows:
    \[
      h_\xi(t)
      :=
      \begin{cases}
          h_\alpha(t) &\text{ if }t\in L_\alpha\text{ for some }\alpha<\xi,\\
          \gamma(\ell_\xi) &\text{ if }t=\xi.
      \end{cases}
    \]
    Thanks to ($\text{iv}_\alpha$) for $\alpha<\xi$, $h_\xi$ is well-defined.
    Before proving that~$h_\xi$ is continuous, let us show~($\text{iv}_\xi$).
    By definition, the restriction of~$h_\xi$ on~$L_\alpha$ is~$h_\alpha$ for any $\alpha < \xi$.
    Moreover, we have
    \begin{align*}
      h_\xi\bigl([\max L_\alpha,\max L_\xi]\bigr)=h_\xi\bigl([\max L_\alpha,\xi]\bigr)
      &=
      \bigcup_{\alpha< \beta<\xi}
        h_\xi\bigl(
          [\max L_\alpha,\max L_\beta]
        \bigr)
      \cup \{h_\xi(\xi)\}
      \\
      &=
      \bigcup_{\alpha< \beta<\xi}
        h_\beta\bigl(
          [\max L_\alpha,\max L_\beta]
        \bigr)
      \cup \{\gamma(\ell_\xi)\}
      \\
      &
      \subseteq
      \bigcup_{\alpha< \beta<\xi}
        \gamma\bigl(
          [\ell_\alpha,\ell_\beta]
        \bigr)
      \cup \{\gamma(\ell_\xi)\}
      \subseteq
      \gamma\bigl([\ell_\alpha,\ell_\xi]\bigr).
    \end{align*}
    The map $h_\xi$ is continuous on $L_\alpha$ for all $\alpha<\xi$ as it coincides there with the continuous map~$h_\alpha$, and it is thus continuous on $L_\xi\setminus\{\xi\} = \bigcup_{\alpha < \xi} L_\alpha$.
    Furthermore, for any neighborhood~$V$ of $h_\xi(\xi)=\gamma(\ell_\xi)$, by continuity of $\gamma$ and because $\ell_\xi = \sup_{\alpha < \xi} \ell_\alpha$, there exists $\alpha<\xi$ such that $\gamma\bigl([\ell_\alpha,\ell_\xi]\bigr)\subseteq V$, and then $h_\xi\bigl([\max L_\alpha,\max L_\xi]\bigr)\subseteq V$ by (iv$_\xi$).
    This proves that $h_\xi$ is also continuous at $\xi$, so $h_\xi$ is indeed a big path.

    It remains to show ($\text{i}_\xi$), ($\text{ii}_\xi$), and (v$_\xi$).
    Point~($\text{i}_\xi$) immediately follows from~($\text{i}_\beta$) for $\beta < \xi$, and from the fact that~$\xi$ is a limit ordinal (for any $\alpha < \xi$, we also have $\alpha + 1 < \xi$, so $\ell_\alpha < \ell_{\alpha+1} \leq \ell_\xi$ by ($\text{i}_{\alpha+1}$) and the definition of $\ell_\xi$).
    Point~(ii$_\xi$) follows from~(ii$_\alpha$) when $\alpha < \xi$, and by construction of $h_\xi$ when $\alpha=\xi$.
    To prove ($\text{v}_\xi$), observe from ($\text{v}_\alpha$) and~(iv$_\xi$) that $h_\xi(L_\alpha) = h_\alpha(L_\alpha) \subseteq A$ for all $\alpha<\xi$, which implies that $h_\xi(L_\xi\setminus\{\xi\})\subseteq A$, and then apply \eqref{right-end_in_A} to deduce that $h_\xi(L_\xi)\subseteq A$.
    This concludes the induction step for limit ordinals.
  \end{itemize}
  As previously explained, this concludes the proof.
\end{proof}

\begin{remark}
  In the previous proof, in order to properly apply the axiom of choice, we have had to restrict to those big paths whose domain has minimal cardinality among those joining two given points.
  Explicit upper bounds are known about this minimal cardinality, cf.~\cite{cannon2} (for Hausdorff spaces) and~\cite[Corollary~4.5]{penrod}.
\end{remark}

\section{Bernstein rectangles}
\label{sn:bernstein}

This section is devoted to the proof of \Cref{bernstein_rectangle}.
Let $Y$ and $Z$ be topological spaces that are Hausdorff, locally compact and $\sigma$-compact, and first-countable.
We endow the space $Y\times Z$ with the product topology.
We call \emph{line} a subset of~$Y\times Z$ of the form $\{y\}\times Z$ or $Y\times\{z\}$, where $y\in Y$ and $z\in Z$.
For any $S_Y\subseteq Y$ and $S_Z\subseteq Z$, we set
\[
  S_Y\oplus S_Z
  :=
  (S_Y\times Z)\cup (Y\times S_Z).
\]
To prove \Cref{bernstein_rectangle}, we will mimic the usual construction of a Bernstein set in the real line by transfinite recursion, by choosing an enumeration of all closed subsets of $Y \times Z$ and, for each such subset, adding some of its points to~$A_i$ and~$B_j$.
This approach requires us to control the number of such subsets, and to ensure that at each step of the recursion, the next subset is not covered by the lines induced by the points already selected.
Both of these tasks will be taken care of by the following theorem (which is of independent interest).

\begin{theorem}
  \label{thm:cantor_for_product}
  If a closed subset $F \subseteq Y\times Z$ cannot be covered by countably many lines, then it contains a separable closed subset that cannot be covered by fewer than $2^{\aleph_0}$ lines.
\end{theorem}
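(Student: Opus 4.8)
The plan is to extract the desired subset by a Cantor-scheme construction inside $F$, using the hypothesis to keep splitting and first-countability to pin down the limits. First I would reduce to the case that $F$ is compact: writing $Y\times Z=\bigcup_n E_n$ with each $E_n$ compact (it is $\sigma$-compact) and $F_n=F\cap E_n$, some $F_n$ is not covered by countably many lines, since a countable union of sets each covered by countably many lines is itself covered by countably many lines; replace $F$ by such an $F_n$. I would also record the elementary reformulation that $F$ is \emph{not} covered by countably many lines if and only if, for every countable $S\subseteq Y$ (resp.\ $T\subseteq Z$), the set $F\setminus(S\times Z)$ (resp.\ $F\setminus(Y\times T)$) is again not covered by countably many lines. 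From this, $p(F)$ and $q(F)$ are uncountable; being uncountable compact Hausdorff spaces, each has a nonempty \emph{perfect} core (its set of condensation points), every nonempty relatively open subset of which is uncountable. (For the cardinality assertion in the conclusion one also invokes that an uncountable compact first-countable Hausdorff space has cardinality $2^{\aleph_0}$ and contains a copy of $2^{\N}$.)

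The crux is a \textbf{splitting lemma}: if $K\subseteq Y\times Z$ is compact and not covered by countably many lines, then there are open sets $P_0,P_1\subseteq Y$ and $Q_0,Q_1\subseteq Z$ with $\overline{P_0}\cap\overline{P_1}=\varnothing=\overline{Q_0}\cap\overline{Q_1}$, all four closures compact and contained in prescribed neighborhoods of chosen points, such that both $K\cap(\overline{P_0}\times\overline{Q_0})$ and $K\cap(\overline{P_1}\times\overline{Q_1})$ are nonempty and not covered by countably many lines. I would prove this by contradiction through a compactness argument: were it the case that no good choice of a neighborhood $P$ around a point $y$ of the perfect core of $p(K)$ kept the localized piece $K\cap(\overline P\times Z)$ from being covered by countably many lines, then covering that (compact) perfect core by finitely many such neighborhoods, and handling the countably many leftover columns via the reformulation above, would exhibit $K$ itself as covered by countably many lines; and one checks that the perfect core has enough ``good'' points, distributed densely enough in it, to carry out the separation in both coordinates with disjoint small closures.

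Iterating the splitting lemma over $s\in 2^{<\N}$ yields nonempty open sets $U_s\subseteq F$ with compact closures, $\overline{U_{s0}},\overline{U_{s1}}\subseteq U_s$, with $\overline{U_{s0}}$ and $\overline{U_{s1}}$ having disjoint $Y$-projections and disjoint $Z$-projections, each $\overline{U_s}\cap F$ not covered by countably many lines. Using first-countability, I would additionally shrink the $U_s$ inside a decreasing neighborhood basis at chosen centers so that, for every branch $\sigma\in 2^{\N}$, the intersection $\bigcap_n\overline{U_{\sigma|n}}$ is a single point $(y_\sigma,z_\sigma)$; the disjointness of sibling projections then forces the families $(y_\sigma)$ and $(z_\sigma)$ to be injective. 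Putting $F':=\bigcap_n\bigcup_{|s|=n}\overline{U_s}$, the branch map $F'\to 2^{\N}$ is a continuous bijection, so $F'$ is homeomorphic to $2^{\N}$ --- in particular closed in $F$, separable, and of cardinality $2^{\aleph_0}$ --- and $p|_{F'}$, $q|_{F'}$ are injective, so every line meets $F'$ in at most one point; hence $F'$ cannot be covered by fewer than $2^{\aleph_0}$ lines, which is what we want.

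I expect the two genuinely delicate points to be: (i) the splitting lemma, i.e.\ guaranteeing that the two small product-neighborhood pieces retain the ``not covered by countably many lines'' property rather than collapsing into a single column or a single row; and (ii) forcing the Cantor scheme to converge to points in the first-countable --- but possibly non-metrizable --- spaces $Y$ and $Z$, where the familiar ``diameters tend to $0$'' device is unavailable and the neighborhood nesting must be organized with some care.
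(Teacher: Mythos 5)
Your overall architecture---a Cantor scheme $(\overline{U_s})_{s\in 2^{<\N}}$ whose siblings have disjoint $Y$-projections and disjoint $Z$-projections, so that each line meets at most one branch class---is exactly the paper's. But the endgame has a genuine gap: the theorem asks for a \emph{separable} closed subset, and you obtain separability only via the claim that each branch intersection $\bigcap_n\overline{U_{\sigma|n}}$ can be forced to be a singleton, so that $F'\cong 2^{\N}$. You offer no mechanism for this beyond ``shrink inside a decreasing neighborhood basis at chosen centers,'' but the centers change at every level (the centers of $U_{s0}$ and $U_{s1}$ are \emph{new} doubly good points inside $U_s$, not the center of $U_s$), so there is no single countable local base along which to nest; in a first-countable but non-metrizable compact space there is no ``diameters tend to $0$'' substitute, and nested compact neighborhoods of moving points can intersect in a large set. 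Without singletons your $F'=\bigcap_n\bigcup_{|s|=n}\overline{U_s}$ still requires $2^{\aleph_0}$ lines to cover (disjointness of projections of distinct branch classes suffices for the counting), but it need not be separable. The paper sidesteps this entirely: it picks one point $c_s$ in each \emph{finite}-stage set, takes $G$ to be the closure of this countable set (separable by construction), and uses Cantor's intersection theorem to see that $G$ still meets every branch class. Your proof is repaired by the same move.

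The other divergence is the splitting step, which is a genuinely different route. The paper first extracts an uncountable $A\subseteq F$ whose points have pairwise distinct coordinates in both projections, and splits using condensation points of $C_u\cap A$; the point of $A$ is that deleting the two lines through a condensation point removes at most two points of $A$, so a second condensation point off those lines is immediately available. You instead work directly with ``not covered by countably many lines'' via a splitting lemma for compact $K$. This is workable, but its crux---that there exist \emph{two} doubly good points differing in both coordinates---is exactly where you wave your hands (``one checks that the perfect core has enough good points''). It can be proved: if all doubly good points lay on a single line $\ell$, then every point of $K\setminus\ell$ would have a compact rectangle meeting $K$ in a set covered by countably many lines, and $K\setminus\ell$ is Lindel\"of (first-countability makes the complement of a point in a compact Hausdorff space $\sigma$-compact), so $K$ itself would be covered by countably many lines, a contradiction. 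As written, though, this is asserted rather than proved, and it is the heart of your lemma.
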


\Cref{thm:cantor_for_product} establishes a weak form of the continuum hypothesis, in the spirit of the Cantor--Bendixson theorem (\cite[Corollary 6.5]{kechris1995classical}): any closed subset of~$Y\times Z$ covered by fewer than $2^{\aleph_0}$ lines can be covered by countably many lines.
Our proof is mainly based on an idea that was privately communicated to us by Rolf Suabedissen.
Just like for the Cantor--Bendixson theorem, our proof relies on the construction of a Cantor scheme (cf.~\cite[Definition 6.1]{kechris1995classical}) whose blocks are in disjoint horizontal and vertical strips, and are centered on the \emph{condensation points} of~$F$, defined as follows:

\begin{definition}
  Let $S$ be a topological space, and let $A\subseteq S$.
  A point $x\in S$ is a \emph{condensation point} of $A$ if every neighborhood of $x$ contains uncountably many points of $A$.
  (Note that $x$ does not need to belong to $A$.)
\end{definition}

\begin{lemma}
  \label{lemma:acc_point}
  Let $S$ be a $\sigma$-compact topological space.
  If $A\subseteq S$ is uncountable, then there exists a condensation point of~$A$.
\end{lemma}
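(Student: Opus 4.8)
The strategy is to reduce to the compact case and then run a standard compactness argument against the existence of an open cover by ``countable'' neighborhoods. The hypotheses of Hausdorffness and first-countability play no role here; only $\sigma$-compactness of~$S$ is used.

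First, since~$S$ is $\sigma$-compact, write $S = \bigcup_{n \in \N} K_n$ with each~$K_n$ compact. Then $A = \bigcup_{n \in \N} (A \cap K_n)$; since~$A$ is uncountable, at least one summand $A \cap K_n$ must be uncountable (a countable union of countable sets is countable). Fix such an~$n$, set $K := K_n$ and $B := A \cap K$, so that $B \subseteq K$ is uncountable and~$K$ is compact.

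Next I would show that~$K$ contains a condensation point of~$B$. Suppose not: then every $x \in K$ admits an open neighborhood $U_x \subseteq S$ with $U_x \cap B$ at most countable. The family $\suchthat{U_x \cap K}{x \in K}$ is an open cover of the compact space~$K$, so it admits a finite subcover $U_{x_1} \cap K, \dots, U_{x_m} \cap K$. Since $B \subseteq K$, we get $B = \bigcup_{i=1}^m (B \cap U_{x_i})$, a finite union of at-most-countable sets, hence~$B$ is countable---a contradiction. Therefore some $x \in K$ is a condensation point of~$B$.

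Finally, such an~$x$ is automatically a condensation point of~$A$: since $B \subseteq A$, every neighborhood of~$x$ contains uncountably many points of~$B$ and thus uncountably many points of~$A$. This proves the lemma. I do not anticipate any real obstacle; the only point to be careful about is that a condensation point need not belong to the set in question (as remarked in the definition), which is precisely why the reduction via $B \subseteq A$ and the compactness argument both go through without needing $x \in B$.
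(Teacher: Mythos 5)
Your proof is correct and follows essentially the same compactness argument as the paper: assuming no condensation point exists, cover a compact piece by neighborhoods meeting the set countably and extract a finite subcover to force countability. The only cosmetic difference is that you first isolate a single $K_n$ with uncountable trace, whereas the paper argues directly that every $K_n\cap A$ would be countable; the two organizations are equivalent.
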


\begin{proof}
  Let $(K_n)_{n\in\N}$ be a sequence of compact subsets of $S$ such that $S=\bigcup_{n\in\N} K_n$.
  Assume that~$A$ has no condensation point, i.e., that
  $
    \bigcup
      \suchthat{
        U \textnormal{ open in }S
      }{
        U\cap A \textnormal{ is countable}
      }
    =
    S
  $.
  Then, each compact subspace~$K_n$ is covered by finitely many open sets $U_1,\ldots,U_m$ for which $U_i \cap A$ is countable, so $K_n\cap A\subseteq \bigcup_{i=1}^m (U_i\cap A)$ is countable.
  But then~$A = \bigcup_{n \in \N} (K_n \cap A)$ is countable, which is a contradiction.
\end{proof}

\begin{proof}[Proof of \Cref{thm:cantor_for_product}]
  First, using the hypothesis on $F$ and by transfinite recursion on the first uncountable ordinal number, we construct an uncountable subset $A$ of~$F$ such that distinct points of~$A$ have both coordinates distinct.
  Next, since $Y$ is first-countable, Hausdorff, and locally compact, \cite[Theorem 29.2]{munkres} ensures that we can fix for every $y\in Y$ and for every $z \in Z$ a countable local base $(N_k^y)_{k\geq 1}$ of compact neighborhoods of $y$ and a countable local base $(N_k^z)_{k\geq 1}$ of compact neighborhoods of $z$.

  The main idea of the proof (due to Rolf Suabedissen) is to construct the desired subset of~$F$ via a Cantor scheme.
  To do so, let us introduce some notation.
  We denote the natural projections by $\pi_Y\colon Y\times Z \to Y$ and by $\pi_Z\colon Y\times Z \to Z$.
  Denote by $\words$ the set of all finite binary words (that is, whose terms belong to $\{0,1\}$), and by $\varnothing \in \words$ the empty word.
  For each word $u\in \words$, denote by $u\zero$ (resp.~$u\one$) the word obtained by appending~$0$ (resp.~$1$) at the end of~$u$.
  We will recursively construct a list $(C_u)_{u\in \words}$ of closed subsets of $F$ such that, for all $u\in \words$:
  \begin{enumroman}
    \item
      if $u\neq \varnothing$, then $C_u$ is compact;
    \item
      $\pi_Y(C_{u\zero})\cap \pi_Y(C_{u\one})=\pi_Z(C_{u\zero})\cap \pi_Z(C_{u\one})=\emptyset$;
    \item
      $C_{u\zero}\cup C_{u\one}\subseteq C_u$;
    \item
      $C_u\cap A$ is uncountable.
  \end{enumroman}

  We begin by setting $C_\varnothing=F$, which is closed, and then $C_\varnothing \cap A = A$ is uncountable.
  Let $u\in \words$ and assume that we have already constructed $C_u$.
  Since~$Y$ and~$Z$ are $\sigma$-compact, $Y\times Z$ is also $\sigma$-compact, so by \Cref{lemma:acc_point} there is a condensation point~$(y_0, z_0)$ of~$C_u\cap A$.
  The set
  \[
    \bigcap_{k\geq 1}
      (N_k^{y_0}\oplus N_k^{z_0})
      \cap
      (C_u\cap A)
    =
    (\{y_0\}\oplus \{z_0\})
    \cap
    C_u
    \cap
    A
  \]
  cannot have more than two elements because any two distinct points of~$A$ have both of their coordinates distinct.
  Hence, the sets $(C_u\cap A)\setminus (N_{k}^{y_0}\oplus N_{k}^{z_0})$ for $k\geq 1$ cover an uncountable set by (iv).
  It follows that there is $k_0\geq 1$ such that $(C_u\cap A)\setminus (N_{k_0}^{y_0}\oplus N_{k_0}^{z_0})$ is uncountable.
  Using \Cref{lemma:acc_point} again, we fix a condensation point $(y_1,z_1)$ of $(C_u\cap A)\setminus (N_{k_0}^{y_0}\oplus N_{k_0}^{z_0})$.
  Observe that $y_1\neq y_0$ and $z_1\neq z_0$, because otherwise $N_{k_0}^{y_0}\oplus N_{k_0}^{z_0}$ would be a neighborhood of $(y_1,z_1)$ that does not meet $(C_u\cap A)\setminus (N_{k_0}^{y_0}\oplus N_{k_0}^{z_0})$.
  Thus, since $Y$ and $Z$ are Hausdorff, there exists $k\geq 1$ such that $N_{k}^{y_0}\cap N_{k}^{y_1}=N_{k}^{z_0}\cap N_{k}^{z_1}=\emptyset$.
  Then, let
  \[
    C_{u\zero}:=C_u\cap (N_k^{y_0}\times N_k^{z_0})
    \andd
    C_{u\one}:=C_u\cap (N_k^{y_1}\times N_k^{z_1}).
  \]
  Points (ii) and (iii) are clear.
  By definition of condensation points, $N_{k}^{y_0}\times N_{k}^{z_0}$ and $N_{k}^{y_1}\times N_{k}^{z_1}$ both have uncountable intersections with $C_u\cap A$: this entails (iv).
  Finally, $C_{u\zero}$ and $C_{u\one}$ are compact as the intersections of the closed subset $C_u$ with the compact sets $N_{k}^{y_0}\times N_{k}^{z_0}$ and $N_{k}^{y_1}\times N_{k}^{z_1}$.
  Since $Y\times Z$ is Hausdorff, $C_{u\zero}$ and $C_{u\one}$ are also closed.
  This concludes the construction of $(C_u)_{u\in \words}$.

  We are now ready to build the desired separable subset of $F$.
  Thanks to (iv), we can pick arbitrarily $c_u \in C_u$ for all $u \in \words$.
  Next, define~$G$ as the closure of $\suchthat{c_u}{u\in \words}$.
  By definition, $G$ is closed and separable (since $\words$ is countable).
  Moreover, we have $G\subseteq F$ because $F$ is closed and $c_u\in C_u\subseteq F$ for all $u\in \words$.
  It only remains to show that $G$ cannot be covered by fewer than~$2^{\aleph_0}$ lines.
  For any $v=(v_n)_{n\in \N}\in \{0,1\}^{\N}$, let
  \[G_v := \bigcap_{n\geq 0}G\cap C_{(v_0,\ldots,v_n)}.\]
  Since $Y\times Z$ is Hausdorff, Cantor's intersection theorem ensures that $G_v$ is non-empty.
  Furthermore, if $w\in \{0,1\}^{\N}$ is distinct from $v$, then $\pi_Y(G_v)\cap \pi_Y(G_w)=\pi_Z(G_v)\cap \pi_Z(G_w)=\emptyset$ by (ii).
  It follows that each line can only cover $G_v$ for at most one $v \in \{0,1\}^\N$, so $G$ cannot be covered by fewer than $|\{0,1\}^{\N}|=2^{\aleph_0}$ lines.
\end{proof}

\begin{proof}[Proof of \Cref{bernstein_rectangle}]
  The closure operator induces a surjection of $(Y\times Z)^{\N}$ into the set of separable closed subsets of $Y\times Z$, so the latter has cardinality at most $|Y\times Z|^{\aleph_0}=(2^{\aleph_0}\cdot 2^{\aleph_0})^{\aleph_0}=2^{2\cdot \aleph_0^2}=2^{\aleph_0}$.
  This allows us to enumerate all separable closed subsets of $Y\times Z$ that cannot be covered by fewer than $2^{\aleph_0}$ lines as $(F^{(\alpha)})_{\alpha\in \kappa}$, where $\kappa$ is an ordinal number not bigger than the smallest ordinal number with cardinality $2^{\aleph_0}$.
  By transfinite recursion, we are going to construct two injective maps
  \begin{align*}
    (i,j,\alpha)\in \{1,2,3\}\times \{1,2,3\}\times \kappa
    &\longmapsto
    a_{i,j}^{(\alpha)} \in Y,\\
    (i,j,\alpha)\in \{1,2,3\}\times \{1,2,3\}\times \kappa
    &\longmapsto
    b_{i,j}^{(\alpha)} \in Z,
  \end{align*}
  such that $\big(a_{i,j}^{(\alpha)},b_{i,j}^{(\alpha)}\big)\in F^{(\alpha)}$ for all $i,j\in\{1,2,3\}$ and $\alpha\in\kappa$.

  Let $\alpha\in \kappa$ and assume that we have already constructed $a_{i,j}^{(\beta)}$ and $b_{i,j}^{(\beta)}$ for all $i,j\in\{1,2,3\}$ and $\beta<\alpha$.
  Since $|\alpha|<2^{\aleph_0}$, $F^{(\alpha)}$ cannot be covered by $18 \cdot |\alpha| +18$ lines by assumption.
  Therefore,
  \[
    F^{(\alpha)}
    \setminus
    \Big(
      \suchthat{
        a_{k,l}^{(\beta)}
      }{
        k,l\in \{1,2,3\}, \, \beta<\alpha
      }
      \oplus
      \suchthat{
        b_{k,l}^{(\beta)}
      }{
        k,l\in \{1,2,3\}, \, \beta<\alpha
      }
    \Big)
  \]
  contains at least $9$ points whose both coordinates are pairwise distinct.
  Denote them by $\big(a_{i,j}^{(\alpha)},b_{i,j}^{(\alpha)}\big)$ for $i,j\in\{1,2,3\}$, thereby defining $a_{i,j}^{(\alpha)}$ and $b_{i,j}^{(\alpha)}$ as desired.
  Note that $a_{i,j}^{(\alpha)}$ (resp.~$b_{i,j}^{(\alpha)}$) is indeed distinct from the $a_{k,l}^{(\beta)}$ (resp.~$b_{k,l}^{(\beta)}$) for $k,l\in \{1,2,3\}$ and $\beta<\alpha$.

  Now, for all $i,j\in\{1,2,3\}$, let us set
  \[
    A^{\circ}_i
    :=
    \suchthat{
      a_{i,k}^{(\alpha)}
    }{
      \alpha\in\kappa, \, k\in\{1,2,3\}
    }
    \andd
    B_j^\circ :=
    \suchthat{
      b_{k,j}^{(\alpha)}
    }{
      \alpha\in\kappa, \, k\in\{1,2,3\} 
    }.
  \]
  By injectivity, the sets $A_1^\circ,A_2^\circ,A_3^\circ$ (resp.~$B_1^\circ,B_2^\circ,B_3^\circ$) are pairwise disjoint.
  Thus, setting
  \begin{align*}
    A_1 := A_1^\circ, \quad
    A_2 := A_2^\circ, &\andd
    A_3 := Y\setminus (A_1^\circ\cup A_2^\circ)
    = A_3^\circ \cup \comp{(A_1^\circ \cup A_2^\circ \cup A_3^\circ)},
    \\
    B_1 := B_1^\circ, \quad
    B_2 := B_2^\circ, &\andd
    B_3 := Z\setminus (B_1^\circ\cup B_2^\circ)
    = B_3^\circ \cup \comp{(B_1^\circ \cup B_2^\circ \cup B_3^\circ)},
  \end{align*}
  we obtain partitions $\{A_1,A_2,A_3\}$ and $\{B_1,B_2,B_3\}$ of $Y$ and $Z$ respectively.
  Let $F$ be a closed subset of $Y\times Z$ that cannot be covered by countably many lines.
  By \Cref{thm:cantor_for_product}, there is an ordinal number $\alpha\in\kappa$ such that $F^{(\alpha)}\subseteq F$.
  Then, for any $i,j\in\{1,2,3\}$, $F\cap (A_i\times B_j)$ contains the point $\big(a_{i,j}^{(\alpha)},b_{i,j}^{(\alpha)}\big)$ by construction.
\end{proof}

\printbibliography

\end{document}